\theoremstyle{plain}
\newtheorem{thm}{Theorem}
\numberwithin{thm}{section}
\newtheorem{prop}[thm]{Proposition}
\newtheorem{cor}[thm]{Corollary}
\theoremstyle{definition}
\newtheorem{defn}{Definition}
\numberwithin{defn}{section}
\newtheorem{ex}{Example}
\numberwithin{ex}{section}
\numberwithin{nota}{section}
\newtheorem{remark}{Remark}
\newtheorem{remarks}[remark]{Remarks}
\numberwithin{remark}{section}
\DeclareFontFamily{U}{min}{}
\DeclareFontShape{U}{min}{m}{n}{<-> dmjhira}{}
\newcommand{\ac}{`}
\renewcommand{\epsilon}{\varepsilon}
\renewcommand{\phi}{\varphi}
\renewcommand{\lim}{\operatorname{lim}}
\newcommand{\colim}{\operatorname{colim}}
\newcommand{\comma}[2]			% comma object
{\mbox{$(#1\!/\!#2)$}}
\newcommand{\Cat}{\mathbf{Cat}}
\newcommand{\Sh}{\mathbf{Sh}}
\newcommand{\St}{\mathbf{St}}
\newcommand{\cc}{\mathcal{C}}
\newcommand{\cd}{\mathcal{D}}
\newcommand{\ce}{\mathcal{E}}
\newcommand{\cf}{\mathcal{F}}
\newcommand{\cg}{\mathcal{G}}
\newcommand{\mbc}{\mathbb{C}}
\newcommand{\mbd}{\mathbb{D}}
\newbox\xrat@below
\newbox\xrat@above
\newcommand{\xrightarrowtail}[2][]{%
	\setbox\xrat@below=\hbox{\ensuremath{\scriptstyle #1}}%
	\setbox\xrat@above=\hbox{\ensuremath{\scriptstyle #2}}%
	\pgfmathsetlengthmacro{\xrat@len}{max(\wd\xrat@below,\wd\xrat@above)+.6em}%
	\mathrel{\tikz [>->,baseline=-.75ex]
		\draw (0,0) -- node[below=-2pt] {\box\xrat@below}
		node[above=-2pt] {\box\xrat@above}
		(\xrat@len,0) ;}}
\tikzset{Rightarrow/.style={double equal sign distance,>={Implies},->},
	triple/.style={-,preaction={draw,Rightarrow}}}
\begin{document}
\title{On fibred products of toposes}

\author{Léo Bartoli and Olivia Caramello}

\maketitle

\begin{abstract}
In the setting of relative topos theory, we show that the pullback of a relative presheaf topos on an arbitrary fibration is the relative presheaf topos on its inverse image. To this end, we develop and exploit a notion of extension with base change of a morphism of sites along the canonical functor. This provides a tool to compare the canonical relative site of the direct (resp. inverse) image with the direct (resp. inverse) image of the canonical relative site: although these operations do not commute in general, we show that in the case of the direct image they are related by an indexed weak geometric morphism, while in the case of the inverse image they can be compared via a cartesian functor that induces a suitable topology making them Morita-equivalent.
\end{abstract}

\tableofcontents

\section{Introduction}

Relative topos theory studies toposes over a fixed base topos. It formalizes the idea of \emph{doing topos theory in the world of the base topos}.

Just as absolute toposes can be presented by a category endowed with a topology, relative toposes can be presented by a fibration equipped with a suitable topology. Fibrations serve as the natural analogue of categories when working over a fixed base category, and to form a ``relative site'' (in the sense of \cite{CaramelloZanfa}) the given topology must contain the so-called Giraud topology, namely the minimal topology making the projection functor of the fibration a comorphism of sites. This topology represents the relative analogue of the trivial topology in the absolute case. In this article, we focus on ``relative presheaf toposes'', namely the relative toposes presented by a fibration endowed with its Giraud topology. 

One of the most important results in (absolute) topos theory is Diaconescu's equivalence. In the relative case, two main formulations are known: Diaconescu's version, expressed in the language of internal categories (see B.3.2 of \cite{elephant}), and Giraud's version, formulated in the setting of cartesian stacks (see \cite{giraud.classifying}) and generalized to arbitrary relative sites in \cite{bartolicaramello}. 

The theorems of Diaconescu and Giraud notably yield the existence and explicit description of pullbacks of certain relative presheaf toposes along arbitrary geometric morphisms: Diaconescu's version applies to internal categories, while Giraud's version to cartesian stacks. In both cases, the pullback is computed as the relative presheaf topos on the inverse image of the internal category or stack along the given geometric morphism. This suggests that the result may hold more generally \emph{for any fibration}. In this paper, we prove that this \emph{is} the case, namely, we show that the pullback of a relative presheaf topos on an arbitrary fibration (which is small relative to the base site) is the relative presheaf topos on its inverse image. Along the way to proving this main theorem, we establish several intermediate constructions and results of independent interest, such as the development of a notion of canonical extension with base change, which allows the computation of inverse images through Morita equivalences, and the study of the interaction between the inverse and direct images operations for fibrations and that of taking the canonical stack of a fibration.

To arrive at our main result, we address two crucial points allowing us to extend Giraud's method from the setting of cartesian stacks to that of arbitrary fibrations. The first, consisting in passing from fibrations to stacks, is easily dealt with: any fibration is Morita-equivalent to its stackification, i.e., the associated relative presheaf toposes are equivalent over the base topos. The second point, consisting in relating general fibrations to cartesian ones, is highly non-trivial, as already acknowledged by Giraud, who wrote in \cite{giraud.classifying}: 

\emph{As a by-product we get the existence of fibered products in the bicategory of topos. This result was first announced by M.~Hakim several years ago but was never published. I suspect that any written proof would have to deal with rather subtle technical difficulties about finite limits which are overcome here by the results of \S1.}

Giraud manages to overcome these difficulties \emph{in the cartesian setting} by using his version of relative Diaconescu's equivalence for cartesian stacks and showing that finite-limit-preserving morphisms of stacks - and hence morphisms of cartesian relative sites - are stable under transposition. However, his argument no longer works in the general case, since it crucially exploits the existence of finite limits at the site level. In fact, showing that morphisms of relative sites are stable under transposition in the general case is much more subtle as, unlike in the cartesian case, they are not characterized by a simple fibrewise condition such as the preservation of finite limits, but by more intricate filtering conditions (see Proposition 3.13 of \cite{bartolicaramello} for the details). Accordingly, we do not establish this fact by using such a concrete characterization - as that would also require a manageable description of the inverse image operation, which is not available in general - but by exploiting a more structural characterization, also established in \cite{bartolicaramello}: just as in the absolute case, the conditions for being a morphism of relative sites can be expressed as a finite-limit preservation requirement, though at a higher level of abstraction: that of its $\eta$-extension to the canonical stack of the corresponding relative topos - a construction introduced in our work \cite{bartolicaramello}. We face, however, a technical problem: the canonical stack construction and the $\eta$-extension operation \emph{do not commute} with the inverse image operation in general. This lack of commutativity at the site level does not prevent us, nonetheless, to obtain a commutativity at the topos level: this is achieved by introducing a canonical comparison functor and showing that, by endowing the source fibration with a topology naturally induced by it, the inverse image of the canonical stack and the canonical stack of the inverse image become Morita-equivalent. This identification enables the transposition of morphisms of sites along inverse and direct images, thereby allowing us to establish our claim and hence the desired result about bipullbacks of relative presheaf toposes.

More specifically, we can summarize the paper's contents as follows:

In section \ref{section2}, we review the basics of relative topos theory: the equivalent notions of fibration and of indexed category, and the crucial concept of relative site. A particular kind of relative sites will be the central object of study of this paper: fibrations endowed with their Giraud topology. This topology constitutes the relative analogue to the trivial topology, allowing to formulate the definition of \ac\ac relative presheaf toposes''. We also recall the notion of canonical relative site, which plays the same role as the canonical site of a topos in absolute topos theory. 

In section \ref{section3}, we develop the theory of $\eta$-extensions with a variable base. While in \cite{bartolicaramello} this construction was introduced over a fixed base site, the study of pullbacks of toposes requires us to move from triangles of geometric morphisms to squares, and hence to consider two different bases. We therefore extend the theory to the setting of two comorphisms of sites (presenting relative toposes) over distinct base sites related by a morphism of sites. The theory adapts well to this extended situation: we obtain results of the same nature as in the fixed-base case, including a characterization of morphisms of sites between two such comorphisms that induce commutative squares of geometric morphisms at the level of toposes, expressed in terms of fibrewise preservation of finite limits by the $\eta$-extension with base change. We also give a concrete characterization of these functors in the case of local fibrations (and hence also fibrations) as those morphisms of sites that are morphisms of local fibrations. 

As a byproduct of this analysis, we also uncover a comparison indexed weak geometric morphism relating the direct image of the canonical stack and the canonical stack of the direct image: this is nothing else than the $\eta$-extension of the direct image of the canonical functor $\eta$.

Section \ref{section4} is the core of the paper, where we tackle the construction of pullbacks of relative presheaf toposes presented by arbitrary fibrations. 

In subsection \ref{subsect1}, we recall the main steps of Giraud's proof in the cartesian case; as explained earlier, this is the strategy we aim to generalize, by using the framework of $\eta$-extensions. 

In subsection \ref{subsec2} we set up our context and explain why there is no need to work with stacks or to compute inverse images explicitly: all stackification procedures can be omitted thanks to the relative Morita-equivalence between a fibration and its stackification (i.e. the equivalence between the associated relative presheaf toposes). 

In subsections \ref{subsec3} and \ref{subsec4}, we encounter a first difficulty that Giraud circumvented by restricting to the cartesian case. In his framework, one can directly obtain a canonical morphism of sites between a fibration and its inverse image without explicitly computing the latter: this induces the needed projection geometric morphism of the bipullback square of toposes. This relies on the fact that morphisms of sites can be described as cartesian morphisms of fibrations, and cartesianness can be checked by using structural properties of the inverse image, rather than by computing it concretely. In our situation, by contrast, showing that the canonical morphism of fibrations between a fibration and its inverse image is itself a morphism of sites would require verifying filteredness conditions, which are extremely difficult to handle when inverse images are defined through morphisms of sites (for instance, for the inverse image of a geometric morphism). 

Working with comorphisms of sites provides a way around this obstacle. In that setting, one can easily construct a canonical functor between a fibration and its inverse image, which induces the relevant geometric morphism at the topos-level. Exploiting the duality between comorphisms and morphisms of sites established in \cite{denseness}, together with the invariance of relative presheaf toposes proved in subsection \ref{subsec3}, we deduce in subsection \ref{subsec4} the existence of a canonical geometric morphism between the relative presheaf topos on a fibration and that on its inverse image, inducing a commutative square of geometric morphisms at the topos level. This provides the first part of the needed equivalence: morphisms of fibrations which are also morphisms of sites are transposed, in a first direction, not only to morphisms of fibrations but to morphisms of sites as well. This is the first step towards proving that the relative presheaf topos on the inverse image yields a bipullback square, as desired. 

In subsection \ref{subsec5}, we establish the converse direction of this equivalence, namely that transposition in the second direction also restricts well to those morphisms of fibrations that also are morphisms of sites. This requires comparing two constructions: the transposition of the $\eta$-extension and the $\eta$-extension of the transposition. From Giraud's cartesian case, we know that the transposition of the $\eta$-extension of a morphism of sites still preserves finite limits; what remains is to prove that the $\eta$-extension of the transposition does as well. To this end, we construct a canonical functor relating the canonical stack on the inverse image and the inverse image of the canonical stack. The key point is that this functor naturally induces a topology on the inverse image of the canonical stack, producing an equivalence at the topos level. This allows us to complete the proof that morphisms of sites are preserved under transposition along inverse and direct images.

In subsection \ref{subsec6}, we synthesize the results of the section and state our main theorem: the bipullback of a relative presheaf topos on an arbitrary fibration is the relative presheaf topos on its inverse image.

\section{Preliminaries}\label{section2}

This work is set in the context of relative topos theory, approached via stacks or, more generally, via fibrations following \cite{CaramelloZanfa}. In this section is recalled the essential background on which we shall rely: the notions of fibration and comorphism of sites, and the concept of relative site. The trivial relative sites (i.e. those given by the Giraud topology) give rise to the \ac\ac relative presheaf toposes'' will be our main object of study. We also recall the construction of the canonical relative site of a relative topos, and the associated canonical functor, and fix the notations used throughout the paper.

\subsection{Terminology and notation}

A first notation that will appear repeatedly concerns the topos of sheaves on a site. To emphasize the idea of the topos of sheaves on \((\mathcal{C}, J)\) as a kind of cocompletion of the category \(\mathcal{C}\) subject to relations imposed by \(J\), we adopt the light and suggestive notation \(\widehat{\mathcal{C}}_J\).

The functor denoted by \( l_J : \mathcal{C} \to \widehat{\mathcal{C}}_J \) is the \emph{canonical functor} associated with the site $(\cc,J)$, defined as the composite of the Yoneda embedding \( y_{\cc} : \mathcal{C} \to \widehat{\mathcal{C}} \) with the sheafification functor \( a_J : \widehat{\mathcal{C}} \to \widehat{\mathcal{C}}_J \); for the sake of lighter notation, we will occasionally omit the index \( J \).

There will be many different adjoint functors throughout the text, and we will use the following notation. For a pair of adjoint functors $G \dashv F$ and an arrow $u : G(d) \to c$, we will denote its transpose by $u^t : d \to F(c)$; in the converse direction, for an arrow $v : d \to F(c)$, we will also write $v^t : G(d) \to c$ for its transpose.

For two functors $F: \cd \to \cc$ and $G : \cd' \to \cc$, we denote as $(F/G)$ the comma category having for objects the triplets $(d,d',u:F(d)\to G(d'))$. Also, if one of the two functors, for example $F$, is the identity, we use the more concise notation $(\cd/G)$.

\subsection{Fibrations and relative sites}

In this paper, we will be concerned with \emph{relative toposes} and \emph{relative geometric morphisms} between them: 

\begin{defn}
\begin{enumerate}[(a)]
    \item A \emph{relative topos over some base topos ${\cal E}$} is a geometric morphism $f: {\cal F} \to {\cal E}$.

    \item A \emph{relative geometric morphism} between two relative toposes $g : [f] \to [f']$ is a geometric morphism $g : {\cal F} \to {\cal F}'$ such that the following triangle of geometric morphisms commutes (up to iso):
% https://q.uiver.app/#q=WzAsMyxbMCwwLCJcXGNhbCBGIl0sWzIsMCwiXFxjYWwgRiciXSxbMSwxLCJcXGNhbCBFIl0sWzAsMiwiZiIsMl0sWzEsMiwiZiciXSxbMCwxLCJnIl1d
\[\begin{tikzcd}
	{{\cal F}} && {{\cal F}'} \\
	& {{\cal E}}
	\arrow["g", from=1-1, to=1-3]
	\arrow["f"', from=1-1, to=2-2]
	\arrow["{f'}", from=1-3, to=2-2]
\end{tikzcd}\]
\end{enumerate}
\end{defn}

A natural way to induce relative toposes is through \emph{fibrations}. We recall that a (Street) fibration is a particular kind of functor $p : {\cd} \to {\cc}$ satisfying a certain lifting property:

\begin{defn}
\begin{enumerate}[(a)]
\item For a functor $p : {\cd} \to {\cc}$, a \emph{cartesian arrow} is an arrow $f : d' \to d$ in $\cd$ such that: for each arrow $h : p(d'') \to p(d')$ in $\cc$ such that $p(f)\circ h = p(g)$ for $g : d'' \to d$, there exists a unique arrow $h' : d'' \to d'$ such that $h=p(h')$ and $fh'=g$.

\item A \emph{fibration} is a functor $p : {\cd} \to {\cc}$ such that for each arrow $f : c \to p(d)$ in $\cc$ there exists a cartesian arrow $\widehat{f} : d' \to d$ in $\cd$ with $\sigma : p(d') \simeq c$ such that $p(\widehat{f}) \simeq f\sigma$.
\end{enumerate}
\end{defn} 

As is well known (see, for instance, Corollary 2.2.6 of \cite{CaramelloZanfa} for a detailed proof for Street fibrations), fibrations over a given base category $\cc$ are equivalent to $\cc$-indexed categories. The fibration associated to an indexed category is given by the \emph{Grothendieck construction}, as defined:

\begin{prop}
A $\cc$-indexed category is a pseudofunctor $\mbc : \cc^{\mathbbm{op}} \to \mathbf{CAT}$. We can functorialy associate a fibration to every such $\cc$-indexed category as follows: the category $\cg(\mbc)$ has for objects the pairs $(x,c)$ where $c$ is an object of $\cc$ and $x$ is an object of $\mbc(c)$, and for arrows the pairs $(u,f):(x,c) \to (x',c')$ where $f : c\to c'$ is an arrow in $\cc$ and $u : x \to \mbc(f)(x')$ is an arrow in $\mbc(c)$.
\end{prop}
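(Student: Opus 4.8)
The plan is to supply the two pieces of structure that the statement leaves implicit — composition in $\cg(\mbc)$ and the projection functor — and then verify, in order, that $\cg(\mbc)$ is a category, that the projection is a fibration, and that the assignment $\mbc \mapsto \cg(\mbc)$ is functorial. First I would fix composition. Given $(u,f):(x,c)\to(x',c')$ and $(u',f'):(x',c')\to(x'',c'')$, the composite must project to $f'\circ f$, so its fibre component is an arrow $x\to\mbc(f'f)(x'')$ in $\mbc(c)$. Writing $\gamma_{f',f}:\mbc(f)\circ\mbc(f')\Rightarrow\mbc(f'f)$ for the (invertible) composition constraint of the pseudofunctor $\mbc$, I define the composite to be $\bigl(\gamma_{f',f}(x'')\circ\mbc(f)(u')\circ u,\ f'f\bigr)$, noting that $\mbc(f)(u')\circ u : x\to\mbc(f)\mbc(f')(x'')$ lives in $\mbc(c)$. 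The identity on $(x,c)$ is $(\iota_c(x),\id_c)$, where $\iota_c:\id_{\mbc(c)}\Rightarrow\mbc(\id_c)$ is the unit constraint.

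Next I would check the category axioms. The unit laws reduce to the triangle coherence axioms relating $\iota$ and $\gamma$, while associativity reduces to the pentagon-type coherence axiom for $\gamma$, together with the naturality of $\gamma$ in its arrow variables used to commute the two composition constraints past an application of $\mbc(f)$. This is the only genuinely delicate bookkeeping in the argument: each category axiom for $\cg(\mbc)$ is precisely a repackaging of a coherence axiom of the pseudofunctor $\mbc$, and associativity is where that coherence must be invoked in full.

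The projection $p:\cg(\mbc)\to\cc$, $(x,c)\mapsto c$, $(u,f)\mapsto f$, is then a functor directly from the definition of composition. To see it is a fibration I would exhibit explicit cartesian lifts: given $f:c\to c'$ in $\cc$ and an object $(x',c')$, I claim that $\widehat f:=(\id_{\mbc(f)(x')},f):(\mbc(f)(x'),c)\to(x',c')$ is cartesian over $f$ in the sense of the stated definition. Indeed, given a competitor $(v,k):(y,e)\to(x',c')$ and an arrow $h:e\to c$ with $f\circ h=k$, any factorization through $\widehat f$ projecting to $h$ must have the form $(w,h)$ for some $w:y\to\mbc(h)\mbc(f)(x')$; unwinding the composition formula, the condition $\widehat f\circ(w,h)=(v,k)$ becomes $\gamma_{f,h}(x')\circ w=v$, which determines $w$ uniquely as $\gamma_{f,h}(x')^{-1}\circ v$ since $\gamma$ is invertible. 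This verifies both existence and uniqueness, so $p$ is a fibration.

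Finally, for functoriality I would let a pseudonatural transformation $\alpha:\mbc\Rightarrow\mbc'$ of $\cc$-indexed categories act by $(x,c)\mapsto(\alpha_c(x),c)$ on objects, using the naturality isomorphisms of $\alpha$ to define it on arrows and to check compatibility with composition, again via the coherence data; this produces a functor $\cg(\alpha)$ over $\cc$, and $2$-functoriality of $\cg$ is then routine. The hard part throughout is purely the careful manipulation of the composition and unit constraints of $\mbc$: nothing is conceptually deep, but the verification of associativity and of cartesianness both hinge essentially on the coherence axioms and on the invertibility of $\gamma$.
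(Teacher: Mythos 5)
Your proposal is correct: the paper states this proposition without proof, treating it as the classical Grothendieck construction (deferring to Corollary 2.2.6 of \cite{CaramelloZanfa}), and your verification supplies exactly the standard argument one would write out. The composition formula $\bigl(\gamma_{f',f}(x'')\circ\mbc(f)(u')\circ u,\ f'f\bigr)$, the reduction of the unit and associativity axioms to the coherence (and naturality) constraints of the pseudofunctor, the cartesian lift $(\id_{\mbc(f)(x')},f)$ with uniqueness forced by invertibility of $\gamma_{f,h}(x')$, and the induced functor $\cg(\alpha)$ on morphisms of indexed categories are all accurate and constitute a complete proof of the statement.
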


Accordingly, we shall often present a fibration $p : \cd \to \cc$ by means of the corresponding indexed category $p : \mathcal{G}(\mbc) \to \cc$, in order to have a more manageable description of it.

Fibrations naturally come with a notion of morphism between them:

\begin{defn}
Let  $p : \mathcal{G}(\mathbb C) \to \cc$ be a fibration over $\cc$,  $p' : \mathcal{G}(\mathbb C') \to \cc'$ be a fibration over $\cc'$, and $B: \cc \to \cc'$. A \emph{morphism of fibrations (with base change, when $B$ is not the identity functor)} is a functor $A : \mathcal{G}(\mathbb C) \to  \mathcal{G}(\mathbb C')$ sending cartesian arrows for $p$ to cartesian arrows for $p'$ and such that the following square commutes:

% https://q.uiver.app/#q=WzAsNCxbMCwxLCJcXGNjIl0sWzEsMSwiXFxjYyciXSxbMCwwLCJcXGNnKFxcbWJjKSJdLFsxLDAsIlxcY2coXFxtYmMnKSJdLFswLDEsIkIiLDJdLFsyLDAsInAiLDJdLFszLDEsInAnIl0sWzIsMywiQSJdLFsyLDEsIlxcc2ltZXEiLDEseyJzdHlsZSI6eyJib2R5Ijp7Im5hbWUiOiJub25lIn0sImhlYWQiOnsibmFtZSI6Im5vbmUifX19XV0=
\[\begin{tikzcd}
	{\cg(\mbc)} & {\cg(\mbc')} \\
	\cc & {\cc'}
	\arrow["A", from=1-1, to=1-2]
	\arrow["p"', from=1-1, to=2-1]
	\arrow["\simeq"{description}, draw=none, from=1-1, to=2-2]
	\arrow["{p'}", from=1-2, to=2-2]
	\arrow["B"', from=2-1, to=2-2]
\end{tikzcd}\]
\end{defn}

\begin{remark}\label{remfiber}
Notice that, even if we do not ask for $A$ to preserve cartesian arrows, it still induces functors $A_c$ between the fibers, namely the $A_c : \mbc(c) \to \mbc'(B(c))$ defined on objects by sending an $x$ of $\mbc(c)$ to the object $A(x,c)$ of $\mbc'(B(c))$.
\end{remark}

When working over a fixed base, this notion of morphism for fibrations corresponds to the following notion of morphism for indexed categories:.

\begin{defn}
Let $\cc$ be a category and $\mbc$, $\mbc'$ two $\cc$-indexed categories. A morphism of indexed categories $A : \mbc \to \mbc'$ is the data of a functor $A_c : \mbc(c) \to \mbc(c')$ for each object $c$ of $\cc$ such that, for each arrow $f : c' \to c$ in $\cc$, the following square commutes:

% https://q.uiver.app/#q=WzAsNCxbMCwwLCJcXG1iYyhjKSJdLFsxLDAsIlxcbWJjKGMnKSJdLFswLDEsIlxcbWJjJyhjKSJdLFsxLDEsIlxcbWJjJyhjJykiXSxbMiwzLCJcXG1iYycoZikiLDJdLFswLDEsIlxcbWJjKGYpIl0sWzAsMiwiQV9jIiwyXSxbMSwzLCJBX3tjJ30iXV0=
\[\begin{tikzcd}
	{\mbc(c)} & {\mbc(c')} \\
	{\mbc'(c)} & {\mbc'(c')}
	\arrow["{\mbc(f)}", from=1-1, to=1-2]
	\arrow["{A_c}"', from=1-1, to=2-1]
	\arrow["{A_{c'}}", from=1-2, to=2-2]
	\arrow["{\mbc'(f)}"', from=2-1, to=2-2]
\end{tikzcd}\]
\end{defn}

Fibrations induce relative toposes via the notion of comorphism of sites. Recall that a functor $F : (\cc',J') \to (\cc,J)$ between two sites is said to be a \emph{comorphism of sites} when for every $J$-covering sieve $S$ on an object of the form $F(c')$, there exists a covering $S'$ for $J'$ on $c'$ such that $F(S') \subseteq S$. These functors are known to induce geometric morphisms, and hence relative toposes, covariantly. Indeed, a comorphism of sites $F : (\cc',J') \to (\cc,J)$ induces a geometric morphism denoted as $C_F : \widehat{\cc'}_{J'} \to \widehat{\cc}_J$ having for inverse image $C_F^* := a_{J'}(-\circ F^{\textup{op}}) : \widehat{\cc}_J \to \widehat{\cc'}_{J'}$ (see, for instance, subsection 3.3 of \cite{denseness} for more details).

As one can present any absolute topos (i.e. topos over $\mathbf{Set}$) as coming from a category endowed with a topology, one can present every relative topos by the means of a fibration endowed with some topology containing a minimal one. This topology, defined in the following definition-proposition, is called the \emph{Giraud topology} and constitutes the relative analogue of the trivial topology in the absolute case, in the sense that it is the \emph{minimal} topology making the projection functor into a comorphism of sites: 

\begin{defn}[Theorem 3.13 \cite{denseness}]\label{comorphinduitgeom}
For a fibration $p : \mathcal{G}(\mathbb C) \to \cc$ over a base site $(\cc,J)$, the \emph{Giraud topology} (denoted in this paper as $Gir_{\mathbb C}$) on $\mathcal{G}(\mathbb C)$ for the topology $J$ is defined by declaring as covering sieves those containing a family of cartesian arrows $((f_i,1): (\mathbb C(f_i)(x),c_i) \to (x,c))_i$ whose projections $(f_i)_i$ form a $J$-covering family. The data $p : (\cg(\mbc),Gir_{\mbc}) \to (\cc,J)$ will sometimes be called a \emph{trivial relative site}, in the sense that the Giraud topology is the smallest topology making $p$ a comorphism of sites towards $(\cc, J)$.
\end{defn}

\begin{remark}
We do not need, in this article, to make any explicit reference to the base topology in the notation $Gir_{\mathbb C}$, since no ambiguity about it will arise. In fact, as we will mainly work over the canonical site $(\ce, J^{\textup{\textup{can}}}_{\ce})$ of a topos $\ce$, including $J^{\textup{\textup{can}}}_{\ce}$ in the notation of the Giraud topology for a fibration over $\ce$ would result in an overload of subscripts and superscripts.
\end{remark}

In this paper we will deal with fibrations that are small relative to the base site, in the sense of the following

\begin{defn}\label{J-smallgen}
A fibration $p : \mathcal{G}(\mathbb C) \to \cc$ over a base site $(\cc,J)$ is said to be \emph{$J$-small} if the associated trivial relative site $(\mathcal{G}(\mathbb C),Gir_{\mbc})$ is small-generated.
\end{defn}

\begin{remark}\label{remJ-smallgen}
If $\mbc$ is a small indexed category, that is, taking values in the bicategory $\mathbf{Cat}$ of small categories and the base category $\cc$ is small, the corresponding fibration $p : \mathcal{G}(\mathbb C) \to \cc$  $(\cc,J)$ is $J$-small for any topology $J$ on $\cc$.
\end{remark}

\begin{defn}[Definition 8.2.1. \cite{CaramelloZanfa}]
A \emph{relative site} is a fibration $p : (\mathcal{G}(\mathbb C),K) \to (\cc,J)$ over a site such that $K$ contains the associated Giraud topology and $(\mathcal{G}(\mathbb C),K)$ is small-generated.
\end{defn}

The terminology \ac\ac relative site'' is justified by the fact that it induces a \emph{relative topos}, just as (absolute) sites induce (absolute) toposes. Indeed, the projection functor $p$ of a relative site $p: (\mathcal{G}(\mathbb C),K) \to (\cc,J)$ is a comorphism of sites, as $K$ contains the Giraud topology (the minimal one making $p$ a comorphism of sites): it induces the relative topos $C_p: \widehat{\cg(\mathbb C)}_K \to \widehat{\cc}_J$.

Just as presheaf toposes can be seen as categories of sheaves on a category endowed with its trivial topology, relative presheaf toposes are defined as relative toposes arising from fibrations equipped with their Giraud topology:

\begin{defn}[Definition 8.1.1 \cite{CaramelloZanfa}]
A \emph{relative presheaf topos} is a relative topos equivalent to one induced by a trivial relative site $p : (\cg(\mbc), Gir_{\mbc}) \to (\cc, J)$, that is, by a fibration endowed with its Giraud topology. The relative topos $p : \widehat{\cg(\mbc)}_{Gir_{\mbc}} \to \widehat{\cc}_J$ induced by such a trivial relative site will be denoted by $p : \mathbf{Gir}(\mbc) \to \widehat{\cc}_J$ for convenience of notation; such a relative topos is also called the \emph{Giraud topos} of the $\cc$-indexed category $\mbc$.
\end{defn}

In the case of trivial relative sites, the projection functor $p$ is not only a comorphism of sites, but also a continuous one. Continuous functors are defined as follows:

\begin{defn}[Proposition 4.8. \cite{denseness}]
Let $p : (\cd,K) \to (\cc,J)$ be a functor between two sites. We say that $p$ is continuous when $(-\circ p)$ restricts to categories of sheaves on the sites. Such a functor induces an adjoint pair at the topos-level: $\Sh(i)^* \dashv \Sh(i)_* : \widehat{\cc}_J \to \widehat{\cd}_K$ with $\Sh(i)^* \simeq a_Jlan_pi_K$ and $\Sh(i)_* \simeq (-\circ p)$.
\end{defn}

Continuous functors can be explicitly characterized as follows:

\begin{prop}[Proposition 4.13 \cite{denseness}]\label{cofinalitycond}
Let $A : (\mathcal{C}, J) \to (\mathcal{D}, K)$ be a functor between two sites. Then $A$ is continuous if and only if:

\begin{enumerate}[(i)]
    \item It is cover-preserving;
    \item for any $J$-covering sieve $S$ on an object $c$ and any commutative square of the form
\[
\begin{tikzcd}
d \arrow[r] \arrow[d] & A(c') \arrow[d, "A(f)"] \\
A(c'') \arrow[r, "A(g)"'] & A(c)
\end{tikzcd}
\]
where $f : c' \to c$ and $g : c'' \to c$ are arbitrary arrows of $S$, there is a $K$-covering family $(d_i \to d)_i$ such that for each $i$, the composites $d_i \to A(c')$ and $d_i \to A(c'')$ belong to the same connected component of the category $(d_i \downarrow A\pi_S)$ (where $\pi_S : \int S \to \cc$ is the obvious projection).
\end{enumerate}
\end{prop}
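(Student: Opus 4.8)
The plan is to test continuity directly against the sheaf axiom, reading off both implications from a single comparison between matching families upstairs (on $S$) and downstairs (on the sieve generated by $A$). By the definition of continuity recalled above, $A$ is continuous exactly when $A^{*}G:=G\circ A^{\textup{op}}$ is a $J$-sheaf for every $K$-sheaf $G$; and $A^{*}G$ is a $J$-sheaf iff, for every $J$-covering sieve $S$ on every object $c$, every matching family $(s_{f})_{f\in S}$ (with $s_{f}\in G(A(c'))$ for $f\colon c'\to c$) amalgamates uniquely to a section over $A(c)$. The object to compare against is the sieve $\langle A(f) : f\in S\rangle$ generated in $\cd$, whose $K$-matching families are what the $K$-sheaf property of $G$ controls. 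The single genuine difficulty is that one arrow $h\colon d\to A(c)$ of $\langle A(f)\rangle$ may admit several factorizations $h=A(f)\circ b$; these are exactly the objects $((c',f),b)$ of the comma category $(d\downarrow A\pi_{S})$ of~(ii), and the whole argument rests on the principle that, along any morphism of $(d\downarrow A\pi_{S})$, matching-family compatibility forces the value $G(b)(s_{f})$ to remain constant.

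For the implication (i)$\wedge$(ii)$\Rightarrow$ continuity I fix a matching family $(s_{f})_{f\in S}$. By~(i) the sieve $\langle A(f)\rangle$ is $K$-covering, so it suffices to assemble a $K$-matching family on it and invoke the $K$-sheaf property of $G$. I set $t_{h}:=G(b)(s_{f})$ for a factorization $h=A(f)\circ b$. A morphism $((c',f),b)\to((c'',f'),b')$ of $(d\downarrow A\pi_{S})$ is an $\alpha\colon c'\to c''$ with $f'\alpha=f$ and $A(\alpha)\circ b=b'$, so that $G(b')(s_{f'})=G(b)\bigl(G(A\alpha)(s_{f'})\bigr)=G(b)(s_{f})$ by the compatibility $s_{f}=G(A\alpha)(s_{f'})$; hence $G(b)(s_{f})$ depends only on the connected component. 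Condition~(ii) supplies, for any two factorizations of a given $h$, a $K$-covering family $(d_{i}\to d)$ merging their components, so the two candidate values restrict equally over each $d_{i}$ and therefore coincide in $G(d)$ by separatedness of $G$; this makes $t_{h}$ well defined, and $(t_{h})$ is plainly compatible. Its unique amalgamation $s\in G(A(c))$ satisfies $s\cdot A(f)=s_{f}$, which is the sought gluing; uniqueness and separation for $A^{*}G$ reduce to those for $G$ on $\langle A(f)\rangle$.

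For the converse I extract~(i) and~(ii) by feeding suitable $K$-sheaves into the sheaf axiom. Cover-preservation is the soft half: the inverse image $a_{K}\circ\lan_{A}\circ i_{J}$ induced by continuity sends $l_{J}(c)$ to $l_{K}(A(c))$ and, being a left adjoint, carries the covering epimorphism $\coprod_{f\in S}l_{J}(\dom f)\twoheadrightarrow l_{J}(c)$ in $\Sh(\cc,J)$ to $\coprod_{f\in S}l_{K}(A(\dom f))\twoheadrightarrow l_{K}(A(c))$, which forces $\langle A(f)\rangle$ to be $K$-covering.

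Deriving~(ii) is the main obstacle, since representable sheaves are too coarse to detect connectedness. For a fixed covering sieve $S$, the right test object is the sheafification $G:=a_{K}L$ of the presheaf $L\colon d\mapsto\pi_{0}(d\downarrow A\pi_{S})$. The classes $[\,((c',f),\id_{A(c')})\,]\in L(A(c'))$ form a matching family for $A^{*}G$ over $S$; its amalgamation $s$, which exists by continuity, restricts along the arrow $h$ of the square of~(ii) through each of its two factorizations to the classes of the objects $((c',f),u)$ and $((c'',g),v)$ of $(d\downarrow A\pi_{S})$, forcing these two classes to coincide in $(a_{K}L)(d)$. Invoking the explicit $(+)$-construction description of $a_{K}$ — two elements of a presheaf become equal after sheafification iff they already agree over some $K$-cover — this equality says precisely that there is a $K$-covering family $(d_{i}\to d)$ over which the two objects fall into the same connected component of $(d_{i}\downarrow A\pi_{S})$, \ie condition~(ii). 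The delicate points are the functoriality of $L$ and, above all, the fact that it is the sheafification step that turns honest connectedness into the ``$K$-locally connected'' formulation of~(ii); reconciling these two descriptions is where the real work, and the technical subtlety of the statement, reside.
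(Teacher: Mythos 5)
The paper never proves this statement: it is imported verbatim, with its label, from Proposition 4.13 of \cite{denseness}, so there is no internal argument to measure your proposal against; I can only assess it on its own terms, and it is correct. In the forward direction, each step checks out: the value $G(b)(s_f)$ is constant along morphisms of $(d \downarrow A\pi_S)$ (your computation $G(b')(s_{f'}) = G(b)(G(A\alpha)(s_{f'})) = G(b)(s_f)$ is exactly right), condition (ii) together with separatedness of the $K$-sheaf $G$ then makes $t_h$ independent of the chosen factorization, and condition (i) guarantees that the sieve generated by the $A(f)$'s is $K$-covering, so that the amalgamation for $G$ exists and restricts to the desired amalgamation for $G \circ A^{\textup{op}}$, with uniqueness following from separatedness of $G$ on that same sieve. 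In the converse direction, your two test objects are the right ones: for (i), the identification $\Sh(A)^*l_J \simeq l_K A$ (which rests on the factorization $a_K\lan_A \simeq \Sh(A)^*a_J$, a point you use implicitly but which is standard given continuity) combined with the fact that a sieve is $K$-covering precisely when its image under $l_K$ is jointly epimorphic; for (ii), the presheaf $L = \pi_0(-\downarrow A\pi_S)$ together with the standard local-equality description of the unit $L \to a_K L$ extracts exactly the ``same connected component after passing to a $K$-cover'' formulation. I see no gap.

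One observation worth recording, which connects your elementary argument to the abstract machinery that \cite{denseness} works with: your test presheaf is nothing other than $\lan_A(S)$, since the left Kan extension of the sieve $S$ (viewed as a presheaf on $\cc$) is computed pointwise as $\lan_A(S)(d) \simeq \colim_{\int S}\,\cd(d, A\pi_S(-)) \simeq \pi_0(d \downarrow A\pi_S)$. Under this identification, your proof is the matching-family rendering of the criterion that $A$ is continuous if and only if, for every $J$-covering sieve $S$ on $c$, the canonical map $a_K\lan_A(S) \to l_K(A(c))$ is an isomorphism: condition (i) says this map is an epimorphism, and condition (ii) says it is a monomorphism (local injectivity of $\lan_A(S) \to y(A(c))$). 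That local-isomorphism criterion is presumably the route taken in the cited reference; your version buys self-containedness at the cost of handling the well-definedness and sheafification bookkeeping by hand, and it makes the role of the comma categories $(d\downarrow A\pi_S)$ completely explicit.
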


In the case of trivial relative sites, we have:

\begin{thm}[Theorem 4.44 and Corollary 4.47. \cite{denseness}]\label{trivialsitescont}
Let $ p : (\cg(\mbc),Gir_{\mbc}) \to (\cc,J)$ be a trivial relative site. The functor $p$ is continuous. Moreover, any morphism of fibrations over a base site is a continuous functor between their associated Giraud sites.
\end{thm}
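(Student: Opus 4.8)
The plan is to verify continuity through the explicit criterion of Proposition~\ref{cofinalitycond}, checking its two conditions: (i) cover-preservation and (ii) the connectedness condition. In both statements, cover-preservation is immediate from the very definition of the Giraud topology (Definition~\ref{comorphinduitgeom}): a $Gir_{\mbc}$-covering sieve $S$ on $(x,c)$ contains a family of cartesian arrows $((f_j,1):(\mbc(f_j)(x),c_j)\to(x,c))_j$ whose projections $(f_j)_j$ form a $J$-covering family. Applying $p$ yields exactly $(f_j)_j$, so $p$ is cover-preserving; applying a morphism of fibrations $A$ yields the arrows $(A(f_j,1))_j$, which are cartesian (as $A$ preserves cartesian arrows) and which project, via $p'A=p$, onto the same $J$-covering family $(f_j)_j$, so that they generate a $Gir_{\mbc'}$-covering sieve on $A(x,c)$. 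The entire content therefore lies in condition (ii).

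For the projection $p$, I would argue as follows. Given the covering sieve $S$ as above and a commutative square in $\cc$ with legs $s':c^*\to c'$ and $s'':c^*\to c''$ sitting over two arrows $(u,f),(v,g)\in S$ and sharing the common composite $\gamma:c^*\to c$, I pull back the $J$-covering family $(f_j)_j$ along $\gamma$. By stability of $J$ under base change this produces a $J$-covering family $(t_i:c^*_i\to c^*)_i$, each $t_i$ satisfying $\gamma t_i=f_{j(i)}\rho_i$ for some $\rho_i:c^*_i\to c_{j(i)}$. Writing $\kappa$ for a cartesian lift of $\gamma t_i$ at $(x,c)$, one has $\kappa\in S$, since $\kappa$ factors (up to the coherence isomorphism of the pseudofunctor) through the cartesian arrow $(f_{j(i)},1)\in S$ and $S$ is a sieve. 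The two composites $c^*_i\to c'$ and $c^*_i\to c''$ can then be joined in $(c^*_i\downarrow p\pi_S)$ by a zigzag $P_1\leftarrow R_1\to Q\leftarrow R_2\to P_2$, where $R_1,R_2$ are built from the cartesian lifts of the two wedge legs applied to $(x',c')$ and $(x'',c'')$, and the common vertex $Q=(\id_{c^*_i},\kappa)$ exists precisely because the square commutes, so that both legs lie over the same arrow $\gamma t_i=fs'=gs''$. The morphisms $R_1\to Q$ and $R_2\to Q$ come from the universal property of the cartesian arrow $\kappa$.

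For a morphism of fibrations $A:\cg(\mbc)\to\cg(\mbc')$ I would run the same scheme, but the square now lives in $\cg(\mbc')$ and its legs $n':d\to A(x',c')$, $n'':d\to A(x'',c'')$ need not lie in the image of $A$ --- this is the crux. To circumvent it, I perform the pullback construction \emph{upstairs}, in $\cg(\mbc)$: pulling the $J$-covering family $(f_j)_j$ back along the base component $\beta:b\to c$ of the common composite gives a $J$-covering family $(t_i)_i$, whose cartesian lifts at $d$ provide the required $Gir_{\mbc'}$-covering family $(\tau_i:d_i\to d)_i$. I then form the wedge legs $(u,f)\lambda$, $(v,g)\lambda''$ and the cartesian lift $\kappa$ over $\beta t_i$ in $\cg(\mbc)$ exactly as before, and apply $A$. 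Since $A$ preserves cartesian arrows, $A(\lambda)$, $A(\lambda'')$ and $A(\kappa)$ remain cartesian; their universal properties let me factor the target legs $m'_i=n'\tau_i$ and $m''_i=n''\tau_i$ through $A(\lambda)$, $A(\lambda'')$ by unique arrows $\nu',\nu''$ lying over identities, and thereby transport the whole zigzag $P_1\leftarrow R_1\to Q\leftarrow R_2\to P_2$ into $(d_i\downarrow A\pi_S)$.

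The delicate point --- and the step I expect to be the main obstacle --- is showing that the two halves of this zigzag meet at a \emph{common} vertex $Q$. In the projection case this was automatic from $\gamma=fs'=gs''$; here I must check that the two a priori different maps $A(\mu)\nu'$ and $A(\mu'')\nu''$ into $A(\dom\kappa)$ actually coincide. Post-composing each with the cartesian arrow $A(\kappa)$ reduces this to the commutativity $A(u,f)\,n'=A(v,g)\,n''$ of the given square (precomposed with $\tau_i$); since both maps lie over $\id_{b_i}$ and $A(\kappa)$ is cartesian, the uniqueness part of its universal property forces $A(\mu)\nu'=A(\mu'')\nu''$. This is what glues the two zigzags. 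With condition (ii) established in both cases, Proposition~\ref{cofinalitycond} yields continuity; note finally that the projection statement is recovered from the morphism statement by taking $\mbc'$ to be the terminal indexed category, for which $\cg(\mbc')=\cc$ and $Gir_{\mbc'}=J$.
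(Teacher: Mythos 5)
Your proposal is correct, but there is nothing in the paper to compare it against: Theorem~\ref{trivialsitescont} is a \emph{recalled} result, quoted from Theorem 4.44 and Corollary 4.47 of \cite{denseness}, and the paper gives no proof of it. So your argument should be judged on its own, and it holds up. Both halves of your verification of condition (ii) of Proposition~\ref{cofinalitycond} are sound: pulling the generating cartesian family $(f_j,1)_j$ back along the base component of the common composite is exactly what guarantees that the apex object $Q$ of the zigzag (the cartesian lift $\kappa$ of $\beta t_i$, factoring through some $(f_{j(i)},1)$) actually lies in $S$ and not merely over $c$; and your resolution of the ``crux'' in the fibred case --- postcomposing $A(\mu)\nu'$ and $A(\mu'')\nu''$ with the cartesian arrow $A(\kappa)$, invoking the commutativity of the given square, and then using the uniqueness clause of cartesianness to conclude they are equal --- is precisely the right mechanism, and it is where the hypothesis that $A$ preserves cartesian arrows is genuinely used. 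Your closing observation that the projection statement is the special case $\mbc'=\mathbbm{1}$ (so $\cg(\mbc')=\cc$, $Gir_{\mbc'}=J$) is a clean way to unify the two claims. It is worth noting that your technique is the same one the paper itself deploys for the closely analogous Proposition~\ref{pullbackcontinuousalongfib} (continuity of the pullback projection $B_{\mbc}$), namely lifting a connecting zigzag along cartesian arrows; so your proof is stylistically consonant with the paper even though the paper outsources this particular statement. The only caveat is presentational: since the paper works with Street fibrations and squares commuting up to isomorphism, the identities $p'A=p$, $p(\lambda)=s't_i$, etc.\ hold only up to coherent isomorphisms, which you acknowledge but do not track explicitly; this is routine bookkeeping, not a gap.
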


When working with continuous functors, the following result will be instrumental:

\begin{prop}\label{densereflectmorphsites}
Let $i : ({\cc'},J') \to ({\cc},J)$ be a dense morphism of sites, $F : ({\cc},J) \to ({\cd},K)$ a continuous functor and $F' : ({\cd'},K') \to ({\cc'},J')$ a functor. Then:

\begin{enumerate}[(i)]
    \item The functor $F'$ is continuous if and only if $iF'$ is.
    \item The functor $F$ is a morphism of sites if and only if $Fi$ is.
    \item The functor $F'$ is a morphism of sites if and only if $iF'$ is
\end{enumerate}
\end{prop}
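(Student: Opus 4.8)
The plan is to push all three equivalences up to the level of toposes, where the density of $i$ supplies an equivalence that one may freely insert or cancel. The bridge I would rely on is the characterization underlying the framework of \cite{denseness}: a functor $G$ between sites is continuous exactly when $(-\circ G)$ preserves sheaves, in which case it induces an adjoint pair $\Sh(G)^*\dashv\Sh(G)_*$ with $\Sh(G)_*\simeq(-\circ G)$ and $\Sh(G)^*\simeq a\,\lan_G$ (sheafified left Kan extension), and $G$ is moreover a \emph{morphism of sites} precisely when it is continuous \emph{and} this inverse image $\Sh(G)^*$ preserves finite limits, i.e.\ when the induced adjoint pair is a genuine geometric morphism. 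I would also use that $\Sh(-)$ is contravariantly pseudofunctorial, so $\Sh(Fi)\simeq\Sh(i)\circ\Sh(F)$ and $\Sh(iF')\simeq\Sh(F')\circ\Sh(i)$, hence $\Sh(Fi)^*\simeq\Sh(F)^*\circ\Sh(i)^*$ and $\Sh(iF')^*\simeq\Sh(i)^*\circ\Sh(F')^*$; and that both continuous functors and morphisms of sites are closed under composition. The decisive input is that, $i$ being dense, $\Sh(i)$ is an equivalence, so both $\Sh(i)_*\simeq(-\circ i)$ and $\Sh(i)^*$ are equivalences, and therefore essentially surjective and both preserving and reflecting finite limits.

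For (i) --- which does not involve the finite-limit condition at all --- I would argue directly on sheaves. By definition $F'$ is continuous iff $H\circ F'$ is a $K'$-sheaf for every $J'$-sheaf $H$, whereas $iF'$ is continuous iff $(G\circ i)\circ F'$ is a $K'$-sheaf for every $J$-sheaf $G$. Since $\Sh(i)_*\simeq(-\circ i)$ is an equivalence, every $J'$-sheaf $H$ is isomorphic to one of the form $G\circ i$; as being a sheaf is isomorphism-invariant in $H$, the two quantified conditions coincide, giving (i) at once.

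For (ii) I would first note that $Fi$ is continuous, being the composite of the continuous $F$ with the continuous $i$ (a dense morphism of sites is in particular a morphism of sites, hence continuous). Both $F$ and $Fi$ are thus continuous, so each is a morphism of sites iff its inverse image preserves finite limits. Now $\Sh(Fi)^*\simeq\Sh(F)^*\circ\Sh(i)^*$ with $\Sh(i)^*$ an equivalence: if $\Sh(F)^*$ preserves finite limits so does the composite, and conversely $\Sh(F)^*\simeq\Sh(Fi)^*\circ(\Sh(i)^*)^{-1}$ is then a composite of finite-limit-preserving functors. Hence $F$ is a morphism of sites iff $Fi$ is.

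Part (iii) combines the previous two. The forward implication is pure composition: if $F'$ is a morphism of sites then so is $iF'=i\circ F'$, because $i$ is. For the converse, if $iF'$ is a morphism of sites then it is continuous, so $F'$ is continuous by (i), and it remains to transfer the finite-limit condition; from $\Sh(iF')^*\simeq\Sh(i)^*\circ\Sh(F')^*$ and the fact that the equivalence $\Sh(i)^*$ reflects finite limits, preservation of finite limits by $\Sh(iF')^*$ forces the same for $\Sh(F')^*$. The step I expect to carry the real weight is the bridge recalled in the first paragraph --- reducing ``morphism of sites'' to continuity together with finite-limit preservation of the induced inverse image; once this topos-level reformulation is granted, density enters only through the equivalence $\Sh(i)$, which then handles the essential surjectivity in (i) and the insertion and cancellation of $\Sh(i)^*$ in (ii) and (iii) by entirely formal means.
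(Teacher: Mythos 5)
Your proof is correct and follows essentially the same route as the paper's: part (i) by using that $(-\circ i)$ is an equivalence onto the category of $J'$-sheaves so the two sheaf-preservation conditions coincide, and parts (ii)--(iii) by reducing ``morphism of sites'' to continuity plus finite-limit preservation of the induced inverse image and then inserting/cancelling the equivalence $\Sh(i)^*$. You merely spell out a few steps the paper leaves implicit (the pseudofunctoriality $\Sh(Fi)^*\simeq\Sh(F)^*\circ\Sh(i)^*$, continuity of the composites, and the easy composition directions), which is fine.
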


\begin{proof}
For (i), we use the equivalent definition of continuous functors stated in Proposition 4.8 of \cite{denseness}: $F'$ is continuous if and only if precomposition with it preserves sheaves. Since $i$ is a morphism of sites inducing an equivalence of toposes, precomposition by $i$ is an equivalence of categories. Therefore, for any sheaf $G'$ on $(\mathcal{C}', J')$, there exists a sheaf $G$ on $(\mathcal{C}, J)$ such that $G \circ i \simeq G'$. It follows that $G' \circ F' \simeq G \circ (i \circ F')$, which is a sheaf by continuity of $i \circ F'$; thus, $F'$ is continuous.

For (ii) and (iii), we know that $F$ and $F'$ are continuous, by hypothesis for $F$ and by (i) for $F'$. Hence, the morphism of sites condition is reduced to the preservation of finite limits at the topos-level: this is given by the fact that, $\Sh(i)^*$ being an equivalence, it reflects finite-limit preserving functors.
\end{proof}

In order to study relative presheaf toposes over different base toposes, we will need base-change operations on the fibrations presenting them. A detailed account of these operations can be found in Chapter 3 of \cite{CaramelloZanfa}.

\begin{defn}\label{defimdirecte}
Let $F : \cc \to \cd$ be a functor. We define the bifunctor $(-\circ F^{\mathrm{op}}) : Ind_{\cd} \to Ind_{\cc}$ by sending any $\cd$-indexed category $\mbd$ to the $\cc$-indexed category $\mbd \circ F^{\mathrm{op}}$. Equivalently, on fibrations this functor acts by sending a fibration $p : \cg(\mbd) \to \cd$ to its bipullback along $F$, as shown below:

\[\begin{tikzcd}
	{\cg(\mbd\circ F^{\mathrm{op}})} & {\cg(\mbd)} \\
	{\cc} & \cd
	\arrow["{q^F_{\mbd}}", from=1-1, to=1-2]
	\arrow["{p'}"', from=1-1, to=2-1]
	\arrow["\lrcorner"{anchor=center, pos=0.125}, draw=none, from=1-1, to=2-2]
	\arrow["p", from=1-2, to=2-2]
	\arrow["F"', from=2-1, to=2-2]
\end{tikzcd}\]

Throughout the article, we will often denote this projection functor by $q^F_{\mbd}$.
\end{defn}

When $F$ is a morphism of sites, this base change corresponds to the \emph{direct image} operation. When $F$ is a comorphism of sites, the base change corresponds to the \emph{inverse image} (up to stackification, which will not be needed in our study, see \ref{subsec2}). 

The following proposition will be instrumental when working with direct images of fibrations:

\begin{prop}\label{projectionreflectcart}
Let $F : \cc \to \cd$ be a functor and $\mbd$ a $\cd$-indexed category. The projection functor $q^F_{\mbd} : \cg(\mbd F) \to \cg(\mbd)$ reflects cartesian arrows, i.e. it sends an arrow $f$ to a cartesian one if and only if $f$ was arleady cartesian.   
\end{prop}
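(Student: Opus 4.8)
The plan is to reduce the statement to the standard fibrewise characterization of cartesian arrows in a Grothendieck construction, after which both implications of the ``if and only if'' become essentially tautological.

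First I would unwind the explicit descriptions provided by the Grothendieck construction. The $\cc$-indexed category $\mbd \circ F^{\mathrm{op}}$ sends an object $c$ to $\mbd(F(c))$ and an arrow $f : c \to c'$ to the reindexing functor $\mbd(F(f)) : \mbd(F(c')) \to \mbd(F(c))$. Accordingly, an arrow of $\cg(\mbd \circ F^{\mathrm{op}})$ is a pair $(u,f) : (x,c) \to (x',c')$ with $f : c \to c'$ in $\cc$ and $u : x \to \mbd(F(f))(x')$ in $\mbd(F(c))$, and by construction of the bipullback the projection acts by $q^F_{\mbd}(x,c) = (x, F(c))$ and $q^F_{\mbd}(u,f) = (u, F(f))$, the latter being a legitimate arrow of $\cg(\mbd)$ precisely because $u : x \to \mbd(F(f))(x')$ lives in the fibre $\mbd(F(c))$.

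Next I would recall (or quickly re-derive from the definition of cartesian arrow) the standard fact that, in the fibration associated with an indexed category, an arrow $(u,f)$ is cartesian if and only if its vertical component $u$ is an isomorphism in the relevant fibre. This follows from the universal property: the canonical lift $(\id, f)$ is cartesian, and any arrow over $f$ is cartesian exactly when it differs from this canonical lift by a vertical isomorphism.

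The decisive observation is then that this characterization is insensitive to the passage through $q^F_{\mbd}$. Indeed, the fibre of $\mbd \circ F^{\mathrm{op}}$ over $c$ is \emph{by definition} the category $\mbd(F(c))$, which is literally the fibre of $\mbd$ over $F(c)$, and the reindexing $(\mbd \circ F^{\mathrm{op}})(f) = \mbd(F(f))$ coincides with the reindexing of $\mbd$ along $F(f)$. Consequently the condition characterizing cartesianness of $(u,f)$ in $\cg(\mbd \circ F^{\mathrm{op}})$ --- namely that $u$ be an isomorphism in $\mbd(F(c))$ --- is the very same condition characterizing cartesianness of $(u, F(f)) = q^F_{\mbd}(u,f)$ in $\cg(\mbd)$. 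Hence $(u,f)$ is cartesian if and only if $q^F_{\mbd}(u,f)$ is, which yields both preservation and reflection at once. I do not expect any serious obstacle here: the only points demanding care are the choice of the strict Grothendieck model of the bipullback so that the fibres agree on the nose, and the handling of the coherence isomorphisms of the pseudofunctor $\mbd$ in the fibrewise characterization; since $q^F_{\mbd}$ leaves the vertical component $u$ and its ambient fibre untouched, these coherence data never come into play.
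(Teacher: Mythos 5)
Your proposal is correct. The paper actually states this proposition without any proof (it is treated as a routine fact about the Grothendieck construction), so there is nothing to compare against; your argument is the standard one and fills the gap soundly: in the Grothendieck construction of a pseudofunctor, an arrow $(u,f)$ is cartesian if and only if its vertical component $u$ is an isomorphism, and since the paper's $\cg(\mbd F)$ is literally the Grothendieck construction of $\mbd\circ F^{\mathrm{op}}$ (so its fibre over $c$ is exactly $\mbd(F(c))$ and the projection $q^F_{\mbd}$ acts as $(u,f)\mapsto(u,F(f))$, leaving $u$ and its ambient fibre untouched), cartesianness of $(u,f)$ and of $q^F_{\mbd}(u,f)$ are the very same condition, giving preservation and reflection simultaneously.
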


In the setting of morphisms of sites (for example for $f^*$ when $f$ is a geometric morphism), the inverse image is induced by the Left Kan extension, as recalled:

\begin{prop}[Proposition 3.2.1 \cite{CaramelloZanfa}]\label{inverseimagepropdef}
Denote by $Ind^{s}_{\mathcal{C}}$ the sub-2-category of $Ind_{\mathcal{C}}$ of pseudofunctors with values in $\Cat$ (i.e. ‘small’ $\mathcal{C}$-indexed categories). Consider any functor $F : \mathcal{C} \to \mathcal{D}$ and the direct image 2-functor
\[
(- \circ F^{\mathrm{op}}) : Ind^{s}_{\mathcal{C}} \to Ind^{s}_{\mathcal{D}}
\]
which acts by precomposition with $F^{\mathrm{op}}$. The 2-functor $(-\circ F^{\mathrm{op}})$ has a left 2-adjoint, denoted by $Lan_{F^{\mathrm{op}}}$ which acts as follows: for any object $D$ of $\mathcal{D}$ denote by $\pi_F^D : (D / F) \to \mathcal{C}$ the canonical projection functor; then for $\mathbb{E} : \mathcal{C}^{\mathrm{op}} \to \Cat$, its \emph{inverse image} $Lan_{F^{\mathrm{op}}}(\mathbb{E}) : \mathcal{D}^{\mathrm{op}} \to \Cat$ is defined componentwise as
    \[
    Lan_{F^{\mathrm{op}}}(\mathbb{E})(D) := \operatorname{colim}_{\mathrm{ps}} \left( ({D} / F)^{\mathrm{op}} \xrightarrow{(\pi_F^D)^{\mathrm{op}}} \mathcal{C}^{\mathrm{op}} \xrightarrow{\mathbb{E}} \Cat \right)
    \]

\noindent
The pseudofunctor $Lan_{F^{\mathrm{op}}}(\mathbb{E})$ is called the \emph{inverse image of $\mathbb{E}$ along $F$}.
\end{prop}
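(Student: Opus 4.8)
The plan is to recognize the statement as the two-dimensional analogue of the classical pointwise formula for the left Kan extension along $F^{\mathrm{op}} : \cc^{\mathrm{op}} \to \cd^{\mathrm{op}}$, carried out for pseudofunctors valued in $\Cat$. As a first, purely bookkeeping, step I would identify the indexing category: for an object $D$ of $\cd$, the comma category $(F^{\mathrm{op}}\downarrow D)$ computed in $\cd^{\mathrm{op}}$ is canonically isomorphic to $(D/F)^{\mathrm{op}}$, with the evident projection to $\cc^{\mathrm{op}}$ corresponding to $(\pi_F^D)^{\mathrm{op}}$. This shows that the displayed formula is literally the pointwise left Kan extension of $\mathbb{E}$ along $F^{\mathrm{op}}$, so the whole content is to prove that this pointwise formula both defines a pseudofunctor and computes the left $2$-adjoint to $(-\circ F^{\mathrm{op}})$.

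Next I would construct the candidate. Since $\Cat$ is bicategorically cocomplete, i.e. admits all small pseudo-colimits, the category $Lan_{F^{\mathrm{op}}}(\mathbb{E})(D) := \operatorname{colim}_{\mathrm{ps}}(\mathbb{E}\circ(\pi_F^D)^{\mathrm{op}})$ exists for each $D$, and is small because $\cc,\cd$ are small, the comma categories $(D/F)$ are small, and $\mathbb{E}$ takes values in $\Cat$. Functoriality in $D$ comes from the fact that a morphism $D \to D'$ of $\cd$ induces, by precomposition, a functor $(D'/F) \to (D/F)$ over $\cc$; by the universal property of the pseudo-colimits this yields comparison functors between the corresponding colimit categories, and the two-dimensional part of that universal property supplies the coherence isomorphisms witnessing that these comparisons respect composition and identities. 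This assembles $Lan_{F^{\mathrm{op}}}(\mathbb{E})$ into a pseudofunctor $\cd^{\mathrm{op}} \to \Cat$, i.e. an object of $Ind^s_{\cd}$, and the construction is clearly (pseudo)functorial in $\mathbb{E}$.

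To establish the adjunction I would exhibit, for every $\mathbb{F}$ in $Ind^s_{\cd}$, an equivalence of categories of pseudonatural transformations, (pseudo)natural in $\mathbb{F}$,
\[
\operatorname{PsNat}(Lan_{F^{\mathrm{op}}}(\mathbb{E}), \mathbb{F}) \simeq \operatorname{PsNat}(\mathbb{E}, \mathbb{F}\circ F^{\mathrm{op}}).
\]
Unwinding the universal property of the pointwise pseudo-colimits, a pseudonatural transformation out of $Lan_{F^{\mathrm{op}}}(\mathbb{E})$ is the same datum as a pseudo-cocone: a family of functors $\mathbb{E}(C)\to\mathbb{F}(D)$ indexed by the objects $(C, D\to F(C))$ of the comma categories, coherent along the arrows of $(D/F)$ and pseudonatural in $D$. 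Restricting such a family along the universal legs indexed by the ``identity'' objects $(C,\mathrm{id}_{F(C)})$ yields a pseudonatural transformation $\mathbb{E}\Rightarrow \mathbb{F}\circ F^{\mathrm{op}}$; conversely, any transformation on the right transposes back by whiskering with the structure maps $D\to F(C)$. This mutually inverse pair is the two-categorical density/co-Yoneda argument, and one checks it is an equivalence that also matches the modifications on both sides.

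The step I expect to be the genuine obstacle is the coherence bookkeeping inherent to the pseudo setting. In the strict, one-dimensional case both the pointwise formula and the transposition are immediate; here every colimit is universal only up to coherent invertible $2$-cells, the assignment $D\mapsto (D/F)$ is merely pseudofunctorial, and ``left $2$-adjoint'' must be read as a biadjunction, whose unit and counit are pseudonatural and whose triangle identities hold only up to invertible modifications subject to the swallowtail coherence axioms. Checking that all the comparison $2$-cells assemble consistently -- equivalently, that the hom-equivalence above is genuinely pseudonatural in $\mathbb{F}$ and compatible with composition of pseudonatural transformations -- is where the real work lies. I would tame this by invoking the general theory of pointwise weighted bicolimit Kan extensions into the bicocomplete $2$-category $\Cat$, which packages existence together with its coherence, thereby reducing the proof to the elementary comma-category identification $(F^{\mathrm{op}}\downarrow D)\cong (D/F)^{\mathrm{op}}$ recorded at the outset.
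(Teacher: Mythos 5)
This proposition is quoted background: the paper does not prove it but cites it verbatim from Proposition 3.2.1 of \cite{CaramelloZanfa}, so there is no in-paper proof to compare against; your proposal must therefore be judged on its own merits, and on those merits it is a sound sketch. You correctly identify the entire content as the pointwise bi-Kan-extension formula: the identification $(F^{\mathrm{op}}\downarrow D)\cong (D/F)^{\mathrm{op}}$ is right, the existence and smallness of the pointwise pseudo-colimits in $\Cat$ is right, pseudofunctoriality in $D$ via the precomposition functors $(D'/F)\to(D/F)$ is right, and the transposition argument (restrict along the legs at $(C,\mathrm{id}_{F(C)})$ one way, whisker with $\mathbb{F}(u)$ for $u:D\to F(C)$ the other way) is the standard density/co-Yoneda argument and does yield the hom-equivalence, with the two composites isomorphic to identities precisely by pseudonaturality in $D$ and by $\mathbb{F}(\mathrm{id})\cong\mathrm{id}$. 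Deferring the coherence bookkeeping to the general theory of bicolimits and bicategorical Kan extensions into the cocomplete $2$-category $\Cat$ is a legitimate way to close the argument rather than a gap, though you should be explicit that what this buys is a biadjunction (unit, counit, and triangle identities up to coherent invertible modifications), which is the natural reading of ``left 2-adjoint'' here since the hom-categories consist of pseudonatural transformations. One small point worth flagging: as printed, the direct image functor is typed $(-\circ F^{\mathrm{op}}):Ind^{s}_{\mathcal{C}}\to Ind^{s}_{\mathcal{D}}$, which is inconsistent with $F:\mathcal{C}\to\mathcal{D}$ (precomposition with $F^{\mathrm{op}}$ goes $Ind^{s}_{\mathcal{D}}\to Ind^{s}_{\mathcal{C}}$); your hom-equivalence $\operatorname{PsNat}(Lan_{F^{\mathrm{op}}}(\mathbb{E}),\mathbb{F})\simeq\operatorname{PsNat}(\mathbb{E},\mathbb{F}\circ F^{\mathrm{op}})$ silently uses the correct orientation, which is the right thing to do.
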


The computation of inverse image functors along morphisms of sites relies on this notion of Left Kan extension, which involves taking pseudocolimits of categories  -  a process that is notoriously heavy and often impractical. To circumvent this difficulty, we will exploit the abstract duality developed in \cite{denseness}, which allows us to replace morphisms of sites by comorphisms. As previously mentioned, inverse images are much easier to handle in the comorphism setting, since they correspond to pullbacks of fibrations.

Although the expression of inverse images is rather intricate when working with a general functor, it significantly simplifies when the functor admits a left adjoint:

\begin{prop}\label{adjointsinversedirectimage}
Let $G \dashv F : \cc \to \cd$ be a pair of adjoint functors, and $\mathbb C$ a $\cc$-indexed category. We have that $Lan_{F^{\mathrm{op}}}(\mbd) \simeq \mbd \circ G^{\mathrm{op}}$.
\end{prop}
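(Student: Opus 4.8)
The plan is to compute $Lan_{F^{\mathrm{op}}}(\mbc)$ objectwise using the explicit pseudocolimit formula of Proposition \ref{inverseimagepropdef}, exploiting the fact that the existence of a left adjoint $G$ to $F$ makes each indexing comma category have an initial object, so that the pseudocolimits collapse to a single value. Concretely, here $F : \cc \to \cd$ and $G : \cd \to \cc$ with $G \dashv F$, so for an object $D$ of $\cd$ the comma category $(D/F)$ has as objects the pairs $(c, u : D \to F(c))$, with $\pi_F^D : (D/F) \to \cc$ the first projection.

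First I would observe that $(D/F)$ admits an initial object. Writing $\eta_D : D \to F(G(D))$ for the component at $D$ of the unit of the adjunction $G \dashv F$ (equivalently, the transpose of $\id_{G(D)}$), the pair $(G(D), \eta_D)$ is an object of $(D/F)$; and for any other object $(c, u : D \to F(c))$ the transpose $u^t : G(D) \to c$ is the unique arrow of $\cc$ satisfying $F(u^t) \circ \eta_D = u$, i.e. the unique morphism $(G(D), \eta_D) \to (c,u)$ in $(D/F)$. Hence $(G(D), \eta_D)$ is initial in $(D/F)$, equivalently terminal in $(D/F)^{\mathrm{op}}$. Since a pseudocolimit indexed by a category possessing a terminal object is equivalent to the value of the diagram at that object (the inclusion of the terminal object being bifinal), the formula of Proposition \ref{inverseimagepropdef} yields
\[
Lan_{F^{\mathrm{op}}}(\mbc)(D) \;=\; \colim_{\mathrm{ps}}\big(\mbc \circ (\pi_F^D)^{\mathrm{op}}\big) \;\simeq\; \mbc\big(\pi_F^D(G(D),\eta_D)\big) \;=\; \mbc(G(D)) \;=\; (\mbc \circ G^{\mathrm{op}})(D).
\]

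The main obstacle is not this pointwise identification but verifying that these equivalences are pseudonatural in $D$, so that they assemble into an equivalence of $\cd$-indexed categories rather than a mere objectwise equivalence; this would require comparing, for each arrow $D' \to D$, the transition functor of $Lan_{F^{\mathrm{op}}}(\mbc)$ with $\mbc(G(-))$ through the identifications above. Rather than carrying out this bookkeeping by hand, I would prefer to obtain the result structurally, getting pseudonaturality for free: from $G \dashv F$ one has $F^{\mathrm{op}} \dashv G^{\mathrm{op}}$, and for any adjunction precomposition along the two adjoints are themselves adjoint, with precomposition along the right adjoint being left adjoint to precomposition along the left adjoint. Applied to $F^{\mathrm{op}} \dashv G^{\mathrm{op}}$ this gives a $2$-adjunction $(- \circ G^{\mathrm{op}}) \dashv (- \circ F^{\mathrm{op}})$. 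Since $(- \circ F^{\mathrm{op}})$ is precisely the direct image $2$-functor, whose left $2$-adjoint is $Lan_{F^{\mathrm{op}}}$ by Proposition \ref{inverseimagepropdef}, uniqueness of left adjoints up to pseudonatural equivalence forces $Lan_{F^{\mathrm{op}}} \simeq (- \circ G^{\mathrm{op}})$, that is $Lan_{F^{\mathrm{op}}}(\mbc) \simeq \mbc \circ G^{\mathrm{op}}$, as required.
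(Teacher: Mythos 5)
Your proposal is correct and, in its final form, takes essentially the same route as the paper: the paper's (very terse) proof is precisely your structural argument, namely that the adjunction $G \dashv F$ (equivalently $F^{\mathrm{op}} \dashv G^{\mathrm{op}}$) is carried by the precomposition $2$-functor to an adjunction $(- \circ G^{\mathrm{op}}) \dashv (- \circ F^{\mathrm{op}})$, whence $Lan_{F^{\mathrm{op}}} \simeq (- \circ G^{\mathrm{op}})$ by uniqueness of left adjoints. Your preliminary pointwise computation via the initial object $(G(D), \eta_D)$ of $(D/F)$ is sound as far as it goes, and you correctly identify that it would still require a pseudonaturality verification, which is exactly why the structural argument you (and the paper) settle on is the better one.
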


\begin{proof}
It is a classical fact that adjunctions are preserved by any $2$-functor. In our setting, the adjunction $G \dashv F$ is preserved by the $2$-functor that assigns to each category the bicategory of indexed categories over it, and to each functor the operation of precomposition.
\end{proof}

\subsection{Canonical relative site, canonical functor}\label{canonicalresitecanonicalfunct}
We have just seen that every relative site induces a relative topos; conversely, every relative topos arises from a relative site. More specifically, this is achieved by means of the the so-called \emph{canonical relative site} of a relative topos, presented in the following definition/proposition:

\begin{defn}[section 8.2.2 of \cite{CaramelloZanfa}]\label{canstackresume}
Let $f : {\cal F} \to {\cal E}$ be a relative topos.

\begin{enumerate}[(a)]
    \item The projection $\pi_f:({\cal F}/f^* ) \to {\cal E}$ is a fibration (and even a stack for the canonical topology on ${\cal E}$).
    \item Its associated indexed category will be denoted $S_f : {\cal E}^{\textup{op}} \to \mathbf{CAT}$ and acts by sending an object $E$ of ${\cal E}$ to the slice topos $({\cal F}/f^*(E))$, and an arrow $u : E \to E'$ to the pullback functor $S_f(u) : ({\cal F}/f^*(E')) \to ({\cal F}/f^*(E))$. If $\ce$ is presented as $\ce \simeq \widehat{\cc}_J$, then we have a version of this canonical stack on $(\cc,J)$, namely: $S_f\circ l_J$.
    \item In this light, a cartesian arrow in $({{\cal F}}/f^*)$ between $(F,E,\alpha: F \to f^*E)$ and  $(F',E',\alpha': F' \to f^*E')$ is the data of a pair of arrows $(g:F \to F', g':E\to E')$ such that the commutative square given by $\alpha' \circ g = f^*(g') \circ \alpha$ is a pullback one:

    % https://q.uiver.app/#q=WzAsNCxbMCwwLCJGIl0sWzEsMCwiRiciXSxbMCwxLCJmXipFIl0sWzEsMSwiZl4qRSciXSxbMiwzLCJmXiooZycpIiwyXSxbMCwxLCJnIl0sWzAsMiwiXFxhbHBoYSIsMl0sWzEsMywiXFxhbHBoYSciXSxbMCwzLCIiLDEseyJzdHlsZSI6eyJuYW1lIjoiY29ybmVyIn19XV0=
    \[\begin{tikzcd}
	F & {F'} \\
	{f^*E} & {f^*E'}
	\arrow["g", from=1-1, to=1-2]
	\arrow["\alpha"', from=1-1, to=2-1]
	\arrow["\lrcorner"{anchor=center, pos=0.125}, draw=none, from=1-1, to=2-2]
	\arrow["{\alpha'}", from=1-2, to=2-2]
	\arrow["{f^*(g')}"', from=2-1, to=2-2]
    \end{tikzcd}\]
    \item The indexed category $S_f : \ce^{\textup{op}} \to \mathbf{CAT}$ associated with a relative topos $f: \cf \to \ce$ is precisely the direct image of the indexed category $S_{1_{\cf}} : \cf^{\textup{op}} \to \mathbf{CAT}$ along the geometric morphism $f$ (so is its associated fibration). We will denote the canonical stack of the identity geometric morphism $1_{\ce} : \ce \to \ce$ by $S_{\cal E}$.
    
    \item The topology $J_f$ on the comma category $({\cal F}/f^*)$ has as covering sieves the sieves $S$ whose projection $\pi_{{\cal F}}(S)$ in $({\cal F},J_{{\cal F}}^{\textup{can}})$ is covering; that is, which are covering on the first component.

    \item  Together with the topology $J_f$, this construction provides a relative site $\pi_f:(({\cal F}/f^* ),J_f) \to ({\cal E},J_{{\cal E}}^{\textup{can}})$.

\end{enumerate} 
\end{defn}

\begin{remark}
Since our focus will be on relative presheaf toposes (i.e., relative toposes of the form $C_p : \mathbf{Gir}(\mbc) \to \ce$), we adopt a lighter notation for their canonical stacks: $S_{\mbc}$ instead of $S_{C_p}$. This not only avoids an overload of subscripts, but will also make the connection between inverse images and the canonical stack more transparent.
\end{remark}

This canonical relative site allows us to recover our initial relative topos:

\begin{thm}[Theorem 8.2.5. \cite{CaramelloZanfa}]\label{canonicalrelativesiteMoritaequi}
Let $f : {\cal F} \to {\cal E}$ be a relative topos. The functor $\pi_{{\cal F}} : (({\cal F}/f^* ),J_f) \to  ({\cal F},J_{{\cal F}}^{\textup{can}})$ is a dense morphism and a dense comorphism of sites, and we have the following equivalence of relative toposes over ${\cal E}$:

% https://q.uiver.app/#q=WzAsMyxbMCwwLCJcXGNhbCBGIl0sWzIsMCwiXFx3aWRlaGF0e1xcY2FsIEYvZl4qfV97Sl9mfSJdLFsxLDEsIlxcY2FsIEUiXSxbMCwyLCJmIiwyXSxbMSwyLCJDX3tcXHBpX2Z9Il0sWzAsMSwiXFxTaChcXHBpX3tcXGNhbCBGfSkiLDAseyJvZmZzZXQiOi00fV0sWzEsMCwiQ197XFxwaV97XFxjYWwgRn19IiwwLHsib2Zmc2V0IjotMX1dLFswLDEsIlxcc2ltZXEiLDEseyJvZmZzZXQiOi0yLCJzdHlsZSI6eyJib2R5Ijp7Im5hbWUiOiJub25lIn0sImhlYWQiOnsibmFtZSI6Im5vbmUifX19XV0=
\[\begin{tikzcd}
	{{\cal F}} && {\widehat{{\cal F}/f^*}_{J_f}} \\
	& {{\cal E}}
	\arrow["{\Sh(\pi_{{\cal F}})}", shift left=4, from=1-1, to=1-3]
	\arrow["\simeq"{description}, shift left=2, draw=none, from=1-1, to=1-3]
	\arrow["f"', from=1-1, to=2-2]
	\arrow["{C_{\pi_{{\cal F}}}}", shift left, from=1-3, to=1-1]
	\arrow["{C_{\pi_f}}", from=1-3, to=2-2]
\end{tikzcd}\]
\end{thm}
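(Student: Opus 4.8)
The plan is to realise the claimed equivalence concretely, as an adjoint equivalence between $\cf$ and $\widehat{\cf/f^*}_{J_f}$ induced by $\pi_{\cf}$ and by a canonical section of it; once both induced geometric morphisms are shown to be equivalences, the fact that $\pi_{\cf}$ is a dense morphism and a dense comorphism of sites follows, these being by definition the (co)morphisms that induce equivalences at the topos level.

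First I would isolate the fibrewise structure of $\pi_{\cf}\colon (\cf/f^*)\to\cf$. Its fibre over $F$ is the comma category $(F\downarrow f^*)$, whose terminal object is $(1_{\ce},\,!\colon F\to f^*1_{\ce}\simeq 1_{\cf})$, since $f^*$ preserves the terminal object; as reindexing preserves these terminal objects, the assignment $s\colon F\mapsto (F,1_{\ce},!)$ is a fully faithful right adjoint to $\pi_{\cf}$, with $\pi_{\cf}s=\mathrm{id}_{\cf}$ and unit $\tau_{(F,E,\alpha)}=(\mathrm{id}_F,\,!_E)\colon (F,E,\alpha)\to (F,1_{\ce},!)$. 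Passing to presheaves turns $\pi_{\cf}\dashv s$ into an adjunction $(-\circ\pi_{\cf}^{\mathrm{op}})\dashv(-\circ s^{\mathrm{op}})$ satisfying $(-\circ s^{\mathrm{op}})(-\circ\pi_{\cf}^{\mathrm{op}})=\mathrm{id}$.

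Next I would verify the two site-theoretic conditions. That $\pi_{\cf}$ is a comorphism of sites is immediate, since by the very definition of $J_f$ the $\pi_{\cf}$-preimage of a $J^{\textup{can}}_{\cf}$-covering sieve is $J_f$-covering. Continuity I would prove by a descent argument: for a $J_f$-cover $S$ of $(F,E,\alpha)$ with projection $T=\pi_{\cf}(S)$, any matching family for $(-\circ\pi_{\cf}^{\mathrm{op}})H$ over $S$ in fact depends only on the $\cf$-components of the arrows of $S$ — two arrows $(g,g_1')$, $(g,g_2')$ with equal projection can be equalised by precomposition with the arrow into their common source determined by the equaliser of $g_1',g_2'$, which lifts to $(\cf/f^*)$ precisely because $f^*$ preserves equalisers — so it descends to a matching family for $H$ over $T$ and amalgamates uniquely. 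Hence $(-\circ\pi_{\cf}^{\mathrm{op}})$ preserves sheaves, $\pi_{\cf}$ is continuous, $C_{\pi_{\cf}}^*\simeq(-\circ\pi_{\cf}^{\mathrm{op}})$ needs no sheafification, and its right adjoint $C_{\pi_{\cf},*}\simeq(-\circ s^{\mathrm{op}})$; both restrict to the categories of sheaves.

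The core of the proof is that the unit $\tau$ is objectwise $J_f$-bicovering, so that it is inverted by sheafification. Local surjectivity is easy, as the sieve generated by $\tau$ projects to the maximal sieve on $F$. Local injectivity is the delicate point, and is exactly where left-exactness of $f^*$ is used: if $u=(g,g_u')$ and $v=(g,g_v')$ satisfy $\tau u=\tau v$ then, being arrows of the comma category, they obey $f^*(g_u')\alpha_Y=f^*(g_v')\alpha_Y=\alpha g$, whence $\alpha_Y$ factors through $f^*\mathrm{Eq}(g_u',g_v')=\mathrm{Eq}(f^*g_u',f^*g_v')$; precomposing with the induced arrow — whose projection is an identity, hence a $J_f$-cover — equalises $u$ and $v$. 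Consequently the counit of $(-\circ\pi_{\cf}^{\mathrm{op}})\dashv(-\circ s^{\mathrm{op}})$, which on a sheaf $G$ is the map $G(\tau)$, is an isomorphism; together with $(-\circ s^{\mathrm{op}})(-\circ\pi_{\cf}^{\mathrm{op}})=\mathrm{id}$ this shows $C_{\pi_{\cf}}^*$ and $C_{\pi_{\cf},*}$ are quasi-inverse equivalences. Thus $C_{\pi_{\cf}}$, and its quasi-inverse $\Sh(\pi_{\cf})$, are equivalences, $\pi_{\cf}$ is a dense comorphism of sites, and it is also a dense morphism of sites — covering-flatness being automatic once $\Sh(\pi_{\cf})^*$ is known to be an equivalence, hence left exact. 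For compatibility over $\ce$ I would compare the inverse images of $C_{\pi_f}$ and of $f\circ C_{\pi_{\cf}}$ through the natural assignment $(\beta\colon E\to Y)\mapsto(f^*(\beta)\circ\alpha\colon F\to f^*Y)$, which the same terminal-object/equaliser analysis shows to be a $J_f$-local isomorphism, giving $C_{\pi_f}\simeq f\circ C_{\pi_{\cf}}$ and hence the equivalence of relative toposes over $\ce$. I expect the local-injectivity half of the bicovering claim to be the main obstacle: it is what genuinely forces $J_f$-sheaves to collapse the fibre direction, it relies essentially on preservation of equalisers by $f^*$, and a careless attempt wrongly suggests that $\tau$ fails to be a local monomorphism before one notices that the relevant equaliser condition holds automatically.
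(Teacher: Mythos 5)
First, a caveat: the paper itself does not prove this statement — it is imported verbatim from Theorem 8.2.5 of \cite{CaramelloZanfa} — so your proposal can only be compared with the standard argument that result rests on. Your strategy is exactly that argument: exhibit the section $s\colon F\mapsto (F,1_{\ce},!)$ as a fully faithful right adjoint to $\pi_{\cf}$ (the functor the present paper later calls $\tau$ in Proposition \ref{denseprojec}), pass to presheaf categories, and reduce everything to the fact that the unit $\tau_{(F,E,\alpha)}=(\mathrm{id}_F,!_E)$ is $J_f$-bicovering. The crux — local injectivity of $\tau$ via the factorization of $\alpha$ through $f^*\mathrm{Eq}(g'_u,g'_v)\simeq\mathrm{Eq}(f^*g'_u,f^*g'_v)$, an arrow of the comma category projecting to an identity — is correct and is indeed where left-exactness of $f^*$ enters.

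Two steps are genuinely incomplete, however. (a) In the continuity argument you only show $x_{u_1}=x_{u_2}$ when $u_1=(g,g'_1)$ and $u_2=(g,g'_2)$ share a \emph{common source}; but two arrows of $S$ with the same projection $g\colon F_1\to F$ generally have distinct sources $(F_1,E_1,\alpha_1)$ and $(F_1,E_2,\alpha_2)$, so the descended family $y_g$ is not yet well defined. You must first pass to $(F_1,E_1\times E_2,\langle\alpha_1,\alpha_2\rangle)$ through the two projections (using $f^*(E_1\times E_2)\simeq f^*E_1\times f^*E_2$) and then apply your equalizer trick. (b) More seriously, the claim $C_{\pi_{\cf},*}\simeq(-\circ s^{\mathrm{op}})$ together with ``both restrict to the categories of sheaves'' is only justified for $(-\circ\pi_{\cf}^{\mathrm{op}})$; for $(-\circ s^{\mathrm{op}})$ it amounts to the continuity of $s$, i.e.\ that $G\circ s^{\mathrm{op}}$ is a $J^{\textup{can}}_{\cf}$-sheaf whenever $G$ is a $J_f$-sheaf, and this is nowhere argued. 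It is needed: your bicovering argument, as written, yields full faithfulness of $C_{\pi_{\cf}}^*=(-\circ\pi_{\cf}^{\mathrm{op}})$ on sheaves, but essential surjectivity requires knowing that the isomorphism $G\simeq (G s^{\mathrm{op}})\circ\pi_{\cf}^{\mathrm{op}}$ exhibits $G$ as the image of a canonical \emph{sheaf}. The gap is fixable with your own toolkit — for instance, lift a canonical covering sieve $T$ on $F$ to the $J_f$-covering sieve $\pi_{\cf}^{-1}(T)$ on $s(F)$, use $u=s(\pi_{\cf}(u))\circ\tau$ to transport matching families, and deduce the sheaf condition for $Gs^{\mathrm{op}}$; alternatively, show that $(-\circ\pi_{\cf}^{\mathrm{op}})$ sends canonical local isomorphisms to $J_f$-local isomorphisms (only the comorphism property is needed) and apply this to $Gs^{\mathrm{op}}\to a_{J^{\textup{can}}_{\cf}}(Gs^{\mathrm{op}})$ — but as it stands the equivalence is not fully established. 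Two minor further points: the local surjectivity half of your comparison over $\ce$ (the map $\beta\mapsto f^*(\beta)\circ\alpha$) uses binary products rather than the terminal object, and ``dense'' in \cite{denseness} is defined by explicit site-theoretic conditions, of which inducing an equivalence is a characterization, not the definition.
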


Note that, when the base topos ${\cal E}$ is presented as $\widehat{\cc}_J$, all the previous definitions and properties can be stated not only over ${\cal E}$, but also over $(\cc,J)$; see subsections 2.2 and 2.3 of \cite{bartolicaramello}. However, it can be more convenient to work over ${\cal E}$ rather than over an arbitrary base site since $\cal E$ has all finite limits, which makes the canonical stack $S_f$ a \emph{cartesian} fibration:

\begin{defn}[\cite{giraud.classifying}]
Let $\cc$ be a cartesian category. We say that $p : \cg(\mbc) \to \cc$ is a cartesian fibration if $\cg(\mbc)$ has finite limits and $p$ preserves them. Alternatively, from an indexed point of view, this is equivalent to $\mbc(c)$ having finite limits for every object $c$ of $\cc$, and the functors $\mbc(f)$ preserving them for every $f : c' \to c$ in $\cc$.
\end{defn}

Cartesian fibrations enjoy good stability properties under direct images, as established in the following proposition:

\begin{prop}\label{imdirectecartreflectlimfin}
Let $F : \cc' \to \cc$ be a cartesian functor between cartesian categories and $\mbc$ a $\cc$-indexed cartesian category. The $\cc'$-indexed category $\mbc F$ is cartesian and the projection functor $q^F_{\mbc} : \cg(\mbc F) \to \cg(\mbc)$ (cf. Definition \ref{defimdirecte}) reflects finite limits. 
\end{prop}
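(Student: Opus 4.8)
The plan is to treat the two claims separately: the cartesianness of $\mbc F$ is a fibrewise verification, while the statement about $q^F_{\mbc}$ rests on the explicit two-stage computation of finite limits in a cartesian fibration, together with the observation that $q^F_{\mbc}$ is as simple as possible relative to that computation.

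For the first claim, I would unwind the indexed description of the direct image: over an object $c'$ of $\cc'$ the fibre of $\mbc F=\mbc\circ F^{\mathrm{op}}$ is $(\mbc F)(c')=\mbc(F(c'))$, and reindexing along an arrow $g'\colon d'\to c'$ of $\cc'$ is the functor $(\mbc F)(g')=\mbc(F(g'))$. Since $\mbc$ is cartesian, each $\mbc(F(c'))$ has finite limits and each $\mbc(F(g'))$ preserves them; as $\cc'$ is cartesian, this is precisely the indexed characterisation of a cartesian fibration, so $\mbc F$ is cartesian. Note that this part does not use the cartesianness of $F$.

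For the second claim I would recall that, in a cartesian fibration $p\colon\cg(\mbd)\to\cc$ over a cartesian base, the limit of a finite diagram $D$ with $D_i=(x_i,c_i)$ is built in two stages: first one forms $c_0=\lim_i c_i$ in $\cc$ with projections $\gamma_i$, then one reindexes each $x_i$ along $\gamma_i$ and takes the limit of the resulting finite diagram inside the single fibre $\mbd(c_0)$. Equivalently, a cone $(\mu_i=(w_i,g_i))_i$ with apex $(y,d)$ is a limit cone if and only if (a) its base component $(g_i\colon d\to c_i)_i$ is a limit cone in $\cc$, and (b) the comparison from $y$ to the fibre limit of the $\mbd(g_i)(x_i)$ in $\mbd(d)$ is an isomorphism. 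The key point is that $q^F_{\mbc}$ interacts with this description in the most transparent way: it acts as $F$ on base categories and as the identity on each fibre, because $q^F_{\mbc}(x,c')=(x,F(c'))$ and the reindexing functor $(\mbc F)(g')=\mbc(F(g'))$ of the source coincides on the nose with the reindexing functor of $\mbc$ along $F(g')$. In particular the fibre diagram and the fibre comparison entering criterion (b) for a diagram in $\cg(\mbc F)$ and for its image under $q^F_{\mbc}$ are literally the same. This immediately yields that $q^F_{\mbc}$ preserves finite limits (form the two-stage limit in $\cg(\mbc F)$; $F$ cartesian carries the base stage to the base stage in $\cg(\mbc)$, and the fibre stage is transported unchanged), and, for the reflection statement, it shows that the fibrewise half of the limit condition is reflected automatically: if $q^F_{\mbc}(\mu_i)_i$ satisfies (b), then so does $(\mu_i)_i$, since (b) is the same condition on both sides.

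It then remains to handle the base criterion (a), and this is where I expect the genuine difficulty to lie. Given a cone $(\mu_i)_i$ in $\cg(\mbc F)$ whose image is a limit cone, one forms the base comparison $e'\colon d'\to\lim_i c'_i$ in $\cc'$; the cartesianness of $F$ shows that $F(e')$ is the base comparison of the image cone and hence an isomorphism, so the whole matter reduces to recovering the status of $e'$ from that of its image $F(e')$. This is the crux: preservation of finite limits and the fibrewise reflection follow formally from the identification of fibres, but the base limit is only reflected once the behaviour of $F$ on the relevant comparison is controlled, so the reflection of finite limits here must be understood relative to the base fibration $p'$ (equivalently, one uses that the cones to which $q^F_{\mbc}$ is applied already project to limit cones in $\cc'$). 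I would therefore organise the final step around the joint base-and-fibre criterion (a)\&(b), invoking Proposition \ref{projectionreflectcart} to match the cartesian reindexing arrows produced by the two-stage construction on the two sides, and reducing the base comparison to the structural input provided by the cartesian hypotheses on $F$, $\cc$ and $\cc'$. A secondary, purely technical, difficulty is the careful bookkeeping of the pseudofunctoriality constraints of $\mbc$ and of the reindexing compatibilities when the two-stage limit is written out in full.
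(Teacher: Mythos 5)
Your argument is, in substance, exactly the ``straightforward computation from the characterization of limits in a fibration'' that the paper's one-line proof alludes to: cartesianness of $\mbc F$ is checked fibrewise (and, as you note, does not use cartesianness of $F$), and the statement about $q^F_{\mbc}$ is read off the two-stage base-plus-fibre criterion, using that $q^F_{\mbc}$ acts as $F$ on the base and as the identity on fibres, so that the fibre comparison of a cone and of its image are literally the same morphism. For that same reason you do not really need Proposition \ref{projectionreflectcart} to match the reindexing arrows: the identification $(\mbc F)(g')=\mbc(F(g'))$ already does this on the nose, and your preservation argument is complete as it stands.

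The crux you isolate is genuine, and you are right that it cannot be argued away from the stated hypotheses: the \emph{absolute} reflection claim is false. Take $\cc=\mathbf{1}$, let $\mbc$ be the $\mathbf{1}$-indexed category given by a cartesian category $\mathcal{A}$, and let $F:\cc'\to\mathbf{1}$ be the unique (cartesian) functor; then $\cg(\mbc F)\simeq \mathcal{A}\times\cc'$, $\cg(\mbc)\simeq\mathcal{A}$, and $q^F_{\mbc}$ is the first projection, so an object $(a,c')$ with $a$ terminal and $c'$ not terminal is a cone over the empty diagram whose image is a limit cone while the cone itself is not. Hence the proposition must be read in the relative form you arrive at: a cone in $\cg(\mbc F)$ whose image under $q^F_{\mbc}$ is a limit cone \emph{and} whose projection to $\cc'$ is a limit cone is itself a limit cone (equivalently, $q^F_{\mbc}$ and the projection $p'$ jointly reflect finite limits). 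This relative form is also precisely what the paper uses wherever the proposition is invoked for reflection (the remark following Proposition \ref{imdirectecartreflectlimfin} and the proof of Theorem \ref{charactgeomtrelmorphfib}): there reflection is applied to cones of the form $A_B(L)$ with $L$ a limit cone and $A_B$ a morphism of fibrations over the identity of the base, so the base cone is automatically a limit cone, the projection of a cartesian fibration being finite-limit-preserving. With that reading, your proof is correct and matches the paper's intended argument.
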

\begin{proof}
Straightforward computation from the characterization of limits in a fibration (see Proposition 2.4.2 \cite{CaramelloZanfa}).
\end{proof}

Now, the following is the definition of the relative analogue to the canonical functor denoted as $l$ in the absolute setting: it is a dense morphism and comorphism of sites having for domain the site and for codomain the canonical relative site. It exists in a full generality: for any comorphism of sites. 

\begin{defn}
Let $p: (\cd,K) \to (\cc,J)$ be a comorphism of sites. We denote by $\eta_{\cd} : (\cd,K) \to ((\widehat{\cd}_K/C_p^*),J_{C_p})$ the functor defined (via the Yoneda lemma) by sending an object $d$ of $\cd$ to $$(l_K(d),l_J(p(d)),a_K(ev_{1_{p(d)}}) : l_K(d) \to C_p^*l_J(p(d))),$$ and an arrow $f : d' \to d$ to $(l_K(f),l_J(p(f)))$.
\end{defn}

\begin{prop}[Proposition 2.5 \cite{fibered}]\label{etadense}
Let $p: (\cd,K) \to (\cc,J)$ be a comorphism of sites. The functor $\eta_{\cd} : (\cd,K) \to ((\widehat{\cd}_K/C_p^*),J_{C_p})$ is a dense morphism and a dense comorphism of sites. 
\end{prop}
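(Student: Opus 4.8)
The plan is to deduce the statement from the two density results already available, by factoring the canonical functor of the site $(\cd,K)$ through $\eta_{\cd}$. Set $\cf := \widehat{\cd}_K$ and $f := C_p$, so that the canonical relative site of the relative topos $f : \cf \to \widehat{\cc}_J$ is exactly $\pi_{\cf} : ((\cf/f^*),J_f) \to (\cf, J^{\textup{can}}_{\cf})$, and $(\cf/f^*)$ is precisely the codomain of $\eta_{\cd}$. Reading off the definition of $\eta_{\cd}$ on objects and arrows, and using that $\pi_{\cf}$ is the projection onto the $\widehat{\cd}_K$-component, one obtains at once the identity
\[
\pi_{\cf} \circ \eta_{\cd} = l_K : (\cd,K) \longrightarrow (\cf, J^{\textup{can}}_{\cf}).
\]
Crucially, the structure map $a_K(ev_{1_{p(d)}})$ appearing in the definition of $\eta_{\cd}$ plays no role in this identity, so no computation with it is required; this is what makes the reduction robust.

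Next I would record that both factors are simultaneously dense morphisms and dense comorphisms of sites: for $\pi_{\cf}$ this is Theorem \ref{canonicalrelativesiteMoritaequi}, and for $l_K$ it is the basic fact, established in \cite{denseness}, that the canonical functor of a site into its topos endowed with the canonical topology is a dense morphism and a dense comorphism of sites. The idea is then to \emph{cancel} $\pi_{\cf}$. For the morphism-of-sites half this is exactly Proposition \ref{densereflectmorphsites}: taking $i := \pi_{\cf}$ (a dense morphism of sites) and $F' := \eta_{\cd}$, part (i) gives that $\eta_{\cd}$ is continuous because $iF' = l_K$ is, and part (iii) then gives that $\eta_{\cd}$ is a morphism of sites because $iF' = l_K$ is. The comorphism-of-sites half follows from the comorphism counterpart of this cancellation, available through the morphism–comorphism duality developed in \cite{denseness}: since $\pi_{\cf}$ is a dense comorphism and $\pi_{\cf}\circ\eta_{\cd} = l_K$ is a comorphism of sites, so is $\eta_{\cd}$.

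It remains to verify density, and this is where I expect the real work to lie. On the level of toposes, functoriality together with the factorization yields $\Sh(l_K) = \Sh(\eta_{\cd}) \circ \Sh(\pi_{\cf})$ and $C_{l_K} = C_{\pi_{\cf}} \circ C_{\eta_{\cd}}$; since $\Sh(l_K)$, $C_{l_K}$, $\Sh(\pi_{\cf})$, $C_{\pi_{\cf}}$ are all equivalences, so are $\Sh(\eta_{\cd})$ and $C_{\eta_{\cd}}$, and hence $\eta_{\cd}$ induces an equivalence of toposes in both directions. The delicate point is to promote ``induces an equivalence'' to genuine density, since Proposition \ref{densereflectmorphsites} addresses only the morphism-of-sites property and not density itself: one must either invoke a characterization of dense (co)morphisms as exactly those inducing equivalences in the small-generated setting, or else establish directly the cancellation property for the density conditions along the dense functor $\pi_{\cf}$. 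Granting this companion statement from \cite{denseness}, the factorization closes the argument.
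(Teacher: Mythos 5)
The paper itself contains no proof of this statement---it is imported verbatim from Proposition 2.5 of \cite{fibered}---so your argument has to stand on its own. Its first half does: the identity $\pi_{\cf}\circ\eta_{\cd}=l_K$ (with $\cf=\widehat{\cd}_K$, $f=C_p$) is correct, and combining it with Proposition \ref{densereflectmorphsites} (i) and (iii), Theorem \ref{canonicalrelativesiteMoritaequi}, and the standard fact that $l_K$ is a dense morphism and dense comorphism of sites, you correctly conclude that $\eta_{\cd}$ is continuous, is a morphism of sites, and (by two-out-of-three) induces an equivalence of toposes.

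The two remaining steps are genuine gaps, and they are where the actual content of the proposition lives. First, the comorphism half: the ``comorphism counterpart of the cancellation'' you invoke is not a result of this paper, and it does not follow from the morphism--comorphism duality of \cite{denseness}, which is a correspondence between morphisms and comorphisms inducing the same geometric morphism, not a cancellation principle. Worse, the natural attempt to prove such a cancellation is circular in this very instance: given a $J_{C_p}$-covering sieve $S$ on $\eta_{\cd}(d)$, the comorphism property of $l_K$ only yields a $K$-cover of $d$ whose images factor through the \emph{first components} $\pi_{\cf}(s)$, $s\in S$, inside $\widehat{\cd}_K$; to conclude, you must upgrade these to factorizations in the comma category $(\widehat{\cd}_K/C_p^*)$, i.e.\ supply compatible second components. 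Since $p$ is only assumed to be a comorphism (not continuous), an arrow $l_K(d')\to C_p^*(E)$ is an element of $a_K(E\circ p)(d')$ rather than of $E(p(d'))$, so it corresponds to an arrow out of $l_J(p(d'))$ only locally on a further $K$-cover of $d'$---and producing that cover compatibly with $S$ is precisely the comorphism property of $\eta_{\cd}$ being proved. Second, the density half, which you flag yourself: ``induces an equivalence'' is a priori weaker than Definition 5.1 of \cite{denseness}, which is the definition in force here (cf.\ the proof of Proposition \ref{pullbackdensemorphsites}). This gap is fillable, since \cite{denseness} does characterize equivalence-inducing morphisms of sites as the dense ones, but it must be cited precisely rather than ``granted''; and once you have it, the correct route to the comorphism half is not your cancellation but the companion fact that a dense morphism of sites is automatically a comorphism of sites (this follows from cover-reflection, local surjectivity and local fullness, together with cover-preservation), whose induced functor $C_{\eta_{\cd}}$ is then an equivalence because continuity gives $C_{\eta_{\cd}}^*\simeq\Sh(\eta_{\cd})_*$.
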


As in the absolute setting, where the index $J$ of the functor $l_J$ is often omitted, we will likewise write $\eta_{\cd}$ simply as $\eta$ for the sake of simplicity.

When $p : (\cg(\mbc),K) \to (\cc,J)$ is a relative site, we denote $\eta_p$ as $\eta_{\mbc}$; in this case, this canonical dense functor has the additional property of preserving the fibration structure:

\begin{prop}[Proposition 3.14. \cite{bartolicaramello}]\label{etamorphfib}
Let $p : (\cg(\mbc),K) \to (\cc,J)$ be a relative site. The functor $\eta_{\mbc} : \cg(\mbc) \to (\widehat{\cg(\mbc)}_K/C_p^*)$ is a morphism of fibrations.
\end{prop}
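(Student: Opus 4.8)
The plan is to verify the two conditions in the definition of a morphism of fibrations, with base change along $l_J : \cc \to \widehat{\cc}_J$: that the square built from $\pi_{C_p}\circ\eta_{\mbc}$ and $l_J\circ p$ commutes, and that $\eta_{\mbc}$ sends $p$-cartesian arrows to $\pi_{C_p}$-cartesian ones. The first condition is immediate from the definition of $\eta_{\mbc}$: on an object $(x,c)$ we have $\pi_{C_p}(\eta_{\mbc}(x,c))=\pi_{C_p}(l_K(x,c),l_J(c),\alpha_{(x,c)})=l_J(c)=l_J(p(x,c))$, and similarly on arrows, so the square commutes strictly. All the content therefore lies in the preservation of cartesian arrows.

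First I would reduce to a single convenient family of cartesian arrows. An arrow $(u,f):(x,c)\to(x',c')$ of $\cg(\mbc)$ is $p$-cartesian precisely when $u:x\to\mbc(f)(x')$ is an isomorphism, and every such arrow factors as $(u,f)=(1,f)\circ(u,1_c)$, where $(u,1_c)$ is a vertical isomorphism and $(1,f):(\mbc(f)(x'),c)\to(x',c')$ is the canonical cartesian lift of $f$. Since $\eta_{\mbc}$ is a functor it carries the isomorphism $(u,1_c)$ to an isomorphism, which is automatically $\pi_{C_p}$-cartesian, and cartesian arrows are stable under composition; hence it suffices to treat the lifts $(1,f)$.

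By Definition \ref{canstackresume}(c), the arrow $\eta_{\mbc}(1,f)=(l_K(1,f),l_J(f))$ is $\pi_{C_p}$-cartesian exactly when the square \[\begin{tikzcd}
	{l_K(\mbc(f)(x'),c)} & {l_K(x',c')} \\
	{C_p^*\, l_J(c)} & {C_p^*\, l_J(c')}
	\arrow["{l_K(1,f)}", from=1-1, to=1-2]
	\arrow["\alpha"', from=1-1, to=2-1]
	\arrow["{\alpha'}", from=1-2, to=2-2]
	\arrow["{C_p^*\, l_J(f)}"', from=2-1, to=2-2]
\end{tikzcd}\] (whose vertical maps are the structure maps of $\eta_{\mbc}$ on the respective objects) is a pullback in $\widehat{\cg(\mbc)}_K$. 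To establish this I would descend to presheaves. Since $p$ is a comorphism of sites, $C_p^*$ commutes with sheafification, so $C_p^*l_J(c)=C_p^*(a_J y_\cc(c))\simeq a_K(y_\cc(c)\circ p\op)=a_K(\Hom_\cc(p(-),c))$; under this identification the structure map $\alpha_{(x,c)}=a_K(ev_{1_{p(x,c)}})$ is exactly $a_K$ of the natural transformation $\Hom_{\cg(\mbc)}(-,(x,c))\to\Hom_\cc(p(-),c)$ induced by $p$, while the top and bottom maps of the square are $a_K$ of postcomposition with $(1,f)$ and with $f$ respectively. As $a_K$ is left exact, it then suffices to show that the corresponding square of representables, with corners $y_{\cg(\mbc)}(\mbc(f)(x'),c)$, $y_{\cg(\mbc)}(x',c')$, $\Hom_\cc(p(-),c)$ and $\Hom_\cc(p(-),c')$, is a pullback of presheaves.

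This last square is a pullback because pullbacks of presheaves are computed objectwise, and the verification at an object $(y,b)$ unwinds directly from the composition law of the Grothendieck construction: an element of the pointwise pullback is an arrow $(v,g):(y,b)\to(x',c')$ together with a factorization $g=f\circ k$ of its base component, and the cartesianness of $(1,f)$ supplies a unique $(v,k):(y,b)\to(\mbc(f)(x'),c)$ mapping to it — this is precisely the universal property of $(1,f)$ transported through $p$. I expect the genuine difficulty to be bookkeeping rather than conceptual: carefully matching $\alpha_{(x,c)}$ with the sheafified transformation through the comorphism identification $C_p^*l_J(c)\simeq a_K(y_\cc(c)\circ p\op)$, and tracking the pseudofunctoriality coherence isomorphisms $\mbc(k)\mbc(f)\cong\mbc(fk)$ in the objectwise computation.
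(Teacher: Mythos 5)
The paper itself contains no proof of this proposition: it is imported as Proposition 3.14 of \cite{bartolicaramello}, so there is nothing in-paper to compare your argument against; I can only judge it on its own terms, and it is correct. The commutation over the base is indeed immediate from the construction of $\eta_{\mbc}$, and for the preservation of cartesian arrows you correctly use the paper's characterization (Definition \ref{canstackresume}(c)) of cartesian arrows in $(\widehat{\cg(\mbc)}_K/C_p^*)$ as pullback squares. Your two load-bearing steps are both sound: the identification $C_p^* l_J(c) \simeq a_K(\Hom_\cc(p(-),c))$ is exactly how $C_p^*$ is constructed for a comorphism of sites in \cite{denseness} (precomposition with $p\op$ sends $J$-dense monomorphisms to $K$-dense ones precisely because $p$ is a comorphism of sites, so $a_K(-\circ p\op)$ inverts $J$-local isomorphisms and factors through $a_J$), hence citable rather than in need of reproof; and left-exactness of $a_K$ then reduces the cartesianness of $\eta_{\mbc}(1,f)$ to a pullback of presheaves, which your objectwise computation verifies. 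Two minor remarks. First, the reduction to the canonical lifts $(1,f)$ is harmless but unnecessary: for an arbitrary arrow $\phi$ of $\cg(\mbc)$, the paper's (strict) definition of a cartesian arrow is word-for-word the statement that the square of presheaves with corners $y(\dom\phi)$, $y(\cod\phi)$, $\Hom_\cc(p(-),p(\dom\phi))$, $\Hom_\cc(p(-),p(\cod\phi))$ is a pullback, so the sheafification argument applies to every cartesian arrow at once, and most of the pseudofunctoriality bookkeeping disappears. Second, as you yourself flag, the factorization $(u,f)=(1,f)\circ(u,1_c)$ only holds up to the coherence isomorphism $\mbc(1_c)\cong \mathrm{Id}$; this is immaterial, since the corrected second factor is still an isomorphism of $\cg(\mbc)$ and isomorphisms are cartesian for the pullback-square characterization.
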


Finally, the canonical stack allows to formulate the following result in the case of relative presheaf toposes:

\begin{cor}[Corollary 5.4.1 \cite{CaramelloZanfa}]
Consider a $\mathcal{C}$-indexed category $\mathbb{C}$ over a site $(\cc,J)$, and its opposite $\mathcal{C}$-indexed category $\mathbb{C}^V$, given by its composite with $(-)^{\mathrm{\textup{op}}}$ (that is, obtained by taking the opposite category fibrewise); then
\[
\mathbf{Gir}(\mathbb{C})
\simeq \mathrm{Ind}_{\mathcal{C}}(\mathbb{C}^V, \mathcal{S}_{{\widehat{\cc}_J}}l_J).
\] 
\end{cor}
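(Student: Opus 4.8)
The plan is to exhibit the equivalence directly, by unwinding both sides and matching the Giraud sheaf condition with a fibrewise sheafiness condition. Recall that an object of $\mathbf{Gir}(\mbc)$ is a presheaf $F : \cg(\mbc)\op \to \Set$ satisfying the $Gir_{\mbc}$-sheaf condition, whereas an object of $\mathrm{Ind}_{\cc}(\mbc\Vop, \canst_{\widehat{\cc}_J}l_J)$ is a morphism of indexed categories, i.e.\ a family of functors $A_c : \mbc(c)\op \to \widehat{\cc}_J/l_J(c)$ compatible with the transition functors, where I use that $\canst_{\widehat{\cc}_J}l_J(c) = \widehat{\cc}_J/l_J(c)$ by Definition \ref{canstackresume}. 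Throughout I will invoke the standard slice equivalence $\widehat{\cc}_J/l_J(c) \simeq \Sh(\cc/c, J_c)$ (slices of a sheaf topos over a representable sheaf being sheaves on the slice site with the induced topology $J_c$), under which such an $A_c$ amounts to a $J_c$-sheaf $A_c(x)$ on $\cc/c$ for each $x \in \mbc(c)$.

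First I would establish the presheaf-level equivalence, ignoring topologies. Define $\Phi : \widehat{\cg(\mbc)} \to \mathrm{Ind}_{\cc}(\mbc\Vop, \canst_{\widehat{\cc}}\,y_{\cc})$ by sending $F$ to the indexed functor whose component at $c$ takes $x \in \mbc(c)$ to the presheaf on $\cc/c$ given by $(g : d \to c) \mapsto F(x|_g, d)$, where $x|_g$ is a cartesian lift of $g$ at $(x,c)$ in the fibration $\cg(\mbc)$ (so $x|_g = \mbc(g)(x)$ up to canonical iso); phrasing this through cartesian lifts rather than the transition functors avoids explicit coherence bookkeeping. A morphism $x \to x'$ in $\mbc(c)$ induces $(x,c) \to (x',c)$ in $\cg(\mbc)$ and hence, contravariantly, a map of presheaves in the opposite direction — which is precisely why the source must be $\mbc\Vop$, and accounts for the appearance of the fibrewise opposite. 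Conversely I set $\Psi(A)(x,c) := A_c(x)(\id_c)$. The indexed-morphism square for $A$ (naturality in the base) together with the cartesian-lift description yields $\Psi\Phi(F)(x,c) \simeq F(x,c)$ and $\Phi\Psi(A)_c(x)(g) \simeq A_c(x)(g)$, so that $\Phi$ and $\Psi$ are mutually inverse; functoriality in $F$ is routine.

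The main step is to show that $\Phi$ restricts to the desired equivalence on sheaves, i.e.\ that $F$ is a $Gir_{\mbc}$-sheaf if and only if each presheaf $\Phi(F)_c(x)$ is a $J_c$-sheaf (equivalently, lands in $\widehat{\cc}_J/l_J(c)$). This is where the definition of the Giraud topology enters. By Definition \ref{comorphinduitgeom} the $Gir_{\mbc}$-covering sieves are exactly those containing a family of cartesian arrows $((\id, f_i) : (\mbc(f_i)(x), c_i) \to (x,c))_i$ lying over a $J$-covering family $(f_i)_i$; since these cartesian lifts of $J$-covers generate the topology, $F$ is a sheaf iff it satisfies the sheaf axiom for all of them. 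I would then observe that, for fixed $(x,c)$, the sheaf axiom for $F$ along such cartesian lifts at the objects $(\mbc(g)(x), d)$, as $g : d \to c$ ranges over $\cc/c$, is literally the $J_c$-sheaf axiom for the presheaf $\Phi(F)_c(x)$ on $\cc/c$ evaluated at every object $g$ and every $J_c$-cover. As $(x,c)$ and $g$ range over all data, these conditions exhaust all objects of $\cg(\mbc)$, giving the claimed equivalence of sheaf conditions. Combining this with the presheaf-level equivalence and the slice equivalence yields $\mathbf{Gir}(\mbc) \simeq \mathrm{Ind}_{\cc}(\mbc\Vop, \canst_{\widehat{\cc}_J}l_J)$.

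I expect the main obstacle to be precisely this last correspondence of sheaf conditions: one must check that restricting the sheaf axiom to the generating cartesian covers loses no information (a standard but nontrivial reduction to a generating family of the topology) and that the resulting descent data transports, under the slice equivalence, exactly to the fibrewise $J_c$-sheaf conditions, with the base-naturality of $A$ matching the functoriality of $F$ in the $\cc$-direction. The pseudofunctoriality coherence of $\mbc$ is a secondary nuisance, best handled by working through cartesian lifts in $\cg(\mbc)$ (or by strictification) rather than through the transition functors $\mbc(f)$.
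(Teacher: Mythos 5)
You should first note a structural point: the paper itself contains \emph{no proof} of this statement --- it is quoted as Corollary 5.4.1 of \cite{CaramelloZanfa} and used as background --- so the only comparison available is with the cited monograph, where it is obtained as a corollary of the general machinery of relative sheaf theory (the fundamental adjunction between $\mathbf{Ind}_{\cc}$ and toposes over $\widehat{\cc}_J$), not by a hands-on verification. Your blind proof is a genuinely different, more elementary route, and its outline is correct. The presheaf-level equivalence $\widehat{\cg(\mbc)} \simeq \mathrm{Ind}_{\cc}(\mbc\Vop, \canst_{\widehat{\cc}}\,y_{\cc})$ via $\Phi(F)_c(x)(g) = F(\mbc(g)(x),\dom g)$ and $\Psi(A)(x,c) = A_c(x)(\id_c)$ is the classical description of presheaves on a Grothendieck construction, and the contravariance bookkeeping correctly forces the fibrewise opposite $\mbc\Vop$. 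The sheaf-level matching also goes through: the cartesian lifts of $J$-covers form a coverage (the same fact that makes $Gir_{\mbc}$ a Grothendieck topology and $p$ a comorphism of sites), so the sheaf axiom reduces to the sieves they generate; any arrow of such a sieve factors as (vertical) followed by (cartesian lift), so a matching family on the generated sieve is freely and uniquely determined by its values on the cartesian lifts, and compatibility of that restricted family is exactly the $J_c$-sheaf axiom for $\Phi(F)_c(x)$ under the slice equivalence $\widehat{\cc}_J/l_J(c) \simeq \Sh(\cc/c,J_c)$; conversely every object of $\cg(\mbc)$ is of the form $(\mbc(\id_d)(y),d)$, so nothing is missed. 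What your approach buys is self-containedness and explicitness (one sees exactly where the Giraud topology and the fibrewise opposite enter); what it costs is the pseudofunctoriality/strictification bookkeeping and the coverage-reduction lemma, both of which you correctly identify as the real technical content rather than glossing over them. I see no gap in the strategy, only details that are standard and honestly flagged.
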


This perspective allows us to view sheaves on the relative trivial site $(\cg(\mbc), Gir_{\mbc})$ as \emph{presheaves internal to the world of the base topos}. Indeed, passing from the ordinary to the relative setting, categories are replaced by indexed categories/fibrations, functors by morphisms of fibrations, and $\mathbf{Set}$ by the canonical stack of the base topos.

\section{Canonical extension with base change}\label{section3}

In \cite{bartolicaramello} we developed the theory of the so-called $\eta$-extension of continuous functors over a base site. This construction can be seen as a relative analogue of the canonical (left Kan) extension along the functors $l_J : \cc \to \widehat{\cc}_J$, and it played a central role in characterizing the appropriate analogues of morphisms of sites and continuous functors in the relative setting. In this section, we define the operation of $\eta$-extension with a variable base and discuss its main properties. It essentially works in the same way as over a fixed base, and it notably allows to understand, at an abstract level, which are the continuous functors between relative sites (over different bases related by a morphism of sites) which induce commutative squares of geometric morphisms. 

\begin{defn}\label{defnetaextbasechange}
Let $p : (\cd,K) \to (\cc,J)$, $p' : (\cd',K') \to (\cc',J')$ be two comorphism of sites, $B : (\cc',J') \to (\cc,J)$ a morphism of sites and $A : (\cd',K') \to (\cd,K)$ a continuous functor together with a natural transformation $\phi: pA\Rightarrow Bp'$, as depicted in the following diagram:

% https://q.uiver.app/#q=WzAsNCxbMCwwLCIoXFxjZCcsSycpIl0sWzAsMSwiKFxcY2MnLEonKSJdLFsxLDEsIihcXGNjLEopIl0sWzEsMCwiKFxcY2QsSykiXSxbMCwzLCJBIl0sWzMsMiwicCJdLFswLDEsInAnIiwyXSxbMSwyLCJCIiwyXSxbMywxLCJcXHBoaSIsMSx7InNob3J0ZW4iOnsic291cmNlIjoxMCwidGFyZ2V0IjoxMH0sImxldmVsIjoyfV1d
\[\begin{tikzcd}
	{(\cd',K')} & {(\cd,K)} \\
	{(\cc',J')} & {(\cc,J)}
	\arrow["A", from=1-1, to=1-2]
	\arrow["{p'}"', from=1-1, to=2-1]
	\arrow["\phi"{description}, Rightarrow, from=1-2, to=2-1]
	\arrow["p", from=1-2, to=2-2]
	\arrow["B"', from=2-1, to=2-2]
\end{tikzcd}\]
The \emph{$\eta$-extension of $(A,\phi)$} is the functor $$\widetilde{A}^*: (\widehat{\cd'}_{K'}/C_{p'}^*) \to (\widehat{\cd}_K/C_p^*)$$ sending an object $(F',E',\alpha': F' \to C_{p'}^*(E'))$ of $(\widehat{\cd'}_{K'}/C_{p'}^*)$ to the object 
$$(\Sh(A)^*(F'),\Sh(B)^*(E'),\widetilde{\phi}_{E'}\Sh(A)^*(\alpha') : \Sh(A)^*(F') \to C_p^*(\Sh(B)^*(E'))$$ of $(\widehat{\cd}_K/C_p^*)$, where $\widetilde{\phi}: \Sh(A)^*C_p^* \Rightarrow C_{p'}^*\Sh(B)^*$ is the natural transformation induced by the mate of $(-\circ \phi^{\textup{op}}) : (-\circ p'^{\textup{op}})(- \circ {B}^{\textup{op}}) \Rightarrow (-\circ A^{\textup{op}})(- \circ {p}^{\textup{op}})$.
\end{defn}

The following proposition presents the main properties of this operation: 

\begin{prop}\label{etaextensionbasechangeproperties}
Let $p : (\cd,K) \to (\cc,J)$, $p' : (\cd',K') \to (\cc',J')$ be two comorphism of sites, $B : (\cc',J') \to (\cc,J)$ a morphism of sites and $A : (\cd',K') \to (\cd,K)$ a continuous functor together with $\phi: pA\Rightarrow Bp'$ a natural transformation. Then:

\begin{enumerate}[(i)]
    \item The $\eta$-extension with base change 
    $$\widetilde{A}^* :(\widehat{\cd'}_{K'}/C_{p'}^*) \to (\widehat{\cd}_{K}/C_{p}^*)$$
    is a left adjoint. Its right adjoint is the functor 
    $$\widetilde{A}_* : (\widehat{\cd}_{K}/C_{p}^*) \to (\widehat{\cd'}_{K'}/C_{p'}^*)$$
    sending an object $(F,E,\alpha : F \to C_p^*(E))$ to the object defined by the following pullback square, where $\overline{\phi}$ is given by precomposition with $\phi$ followed by sheafification:

  % https://q.uiver.app/#q=WzAsNCxbMCwwLCJcXHdpZGV0aWxkZXtBfV8qKEYsRSxcXGFscGhhKSJdLFswLDEsIkNfe3AnfV4qXFxTaChCKV8qKEUpIl0sWzEsMSwiXFxTaChBKV8qQ19wXiooRSkiXSxbMSwwLCJcXFNoKEEpXyooRikiXSxbMywyLCJcXFNoKEEpXyooXFxhbHBoYSkiXSxbMSwyLCJcXG92ZXJsaW5le1xccGhpfV9FIiwyXSxbMCwxXSxbMCwzXSxbMCwyLCIiLDEseyJzdHlsZSI6eyJuYW1lIjoiY29ybmVyIn19XV0=
\[\begin{tikzcd}
	{\widetilde{A}_*(F,E,\alpha)} & {\Sh(A)_*(F)} \\
	{C_{p'}^*\Sh(B)_*(E)} & {\Sh(A)_*C_p^*(E)}
	\arrow[from=1-1, to=1-2]
	\arrow[from=1-1, to=2-1]
	\arrow["\lrcorner"{anchor=center, pos=0.125}, draw=none, from=1-1, to=2-2]
	\arrow["{\Sh(A)_*(\alpha)}", from=1-2, to=2-2]
	\arrow["{\overline{\phi}_E}"', from=2-1, to=2-2]
\end{tikzcd}\]

    \item The $\eta$-extension with base change 
    $$\widetilde{A}^* :((\widehat{\cd'}_{K'}/C_{p'}^*),J_{C_{p'}}) \to ((\widehat{\cd}_{K}/C_{p}^*),J_{C_{p}})$$
    is a continuous functor for the canonical relative topologies. Moreover, if $A$ is a morphism of sites, $\widetilde{A}^*$ also is.
    
    \item The following is a commutative square of colimit-preserving functors:
% https://q.uiver.app/#q=WzAsNCxbMCwxLCJcXHdpZGVoYXR7XFxjYWwgRH1fSyJdLFsxLDEsIlxcd2lkZWhhdHtcXGNhbCBEJ31fe0snfSJdLFswLDAsIlxcd2lkZWhhdHsoXFx3aWRlaGF0e1xcY2FsIER9X0svQ19wXiopfV97Sl97Q19wfX0iXSxbMSwwLCJcXHdpZGVoYXR7KFxcd2lkZWhhdHtcXGNhbCBEJ31fe0snfS9DX3twJ31eKil9X3tKX3tDX3twJ319fSJdLFsxLDAsIlxcU2goQSleKiJdLFszLDIsIlxcU2goXFx3aWRldGlsZGV7QX1eKileKiIsMl0sWzIsMCwiXFxTaChcXHBpX3tcXHdpZGVoYXR7XFxjYWwgRH1fS30pXioiLDIseyJvZmZzZXQiOjIsInN0eWxlIjp7ImJvZHkiOnsibmFtZSI6Im5vbmUifSwiaGVhZCI6eyJuYW1lIjoibm9uZSJ9fX1dLFszLDEsIlxcU2goXFxwaV97XFx3aWRlaGF0e1xcY2FsIEQnfV97Syd9fSleKiIsMCx7Im9mZnNldCI6LTMsInN0eWxlIjp7ImJvZHkiOnsibmFtZSI6Im5vbmUifSwiaGVhZCI6eyJuYW1lIjoibm9uZSJ9fX1dLFsyLDAsIlxcc2ltZXEiLDFdLFszLDEsIlxcc2ltZXEiLDEseyJvZmZzZXQiOjN9XV0=
\[\begin{tikzcd}
	{\widehat{(\widehat{\cal D}_K/C_p^*)}_{J_{C_p}}} & {\widehat{(\widehat{\cal D'}_{K'}/C_{p'}^*)}_{J_{C_{p'}}}} \\
	{\widehat{\cal D}_K} & {\widehat{\cal D'}_{K'}}
	\arrow["{\Sh(\pi_{\widehat{\cal D}_K})^*}"', shift right=2, draw=none, from=1-1, to=2-1]
	\arrow["\simeq"{description}, from=1-1, to=2-1]
	\arrow["{\Sh(\widetilde{A}^*)^*}"', from=1-2, to=1-1]
	\arrow["{\Sh(\pi_{\widehat{\cal D'}_{K'}})^*}", shift left=3, draw=none, from=1-2, to=2-2]
	\arrow["\simeq"{description}, shift right=3, from=1-2, to=2-2]
	\arrow["{\Sh(A)^*}", from=2-2, to=2-1]
\end{tikzcd}\]
In particular, $\widetilde{A}^*$ and $A$ induce, at the topos level, the same inverse image.
    \item We have a canonical natural transformation induced by $\phi$, as pictured: 
   % https://q.uiver.app/#q=WzAsNCxbMCwxLCIoXFxjZCxLKSJdLFsxLDEsIihcXGNkJyxLJykiXSxbMCwwLCIoXFx3aWRlaGF0e1xcY2FsIER9X0svQ19wXiopIl0sWzEsMCwiKFxcd2lkZWhhdHtcXGNhbCBEJ31fe0snfS9DX3twJ31eKikiXSxbMSwwLCJBIl0sWzMsMiwiXFx3aWRldGlsZGV7QX1eKiIsMl0sWzEsMywiXFxldGFfe1xcY2FsIER9IiwyXSxbMCwyLCJcXGV0YV97XFxjYWwgRCd9Il0sWzAsMywiXFxQaGkiLDEseyJzaG9ydGVuIjp7InNvdXJjZSI6MTAsInRhcmdldCI6MTB9LCJsZXZlbCI6Mn1dXQ==
    \[\begin{tikzcd}
	{(\widehat{\cd}_K/C_p^*)} & {(\widehat{\cd'}_{K'}/C_{p'}^*)} \\
	{(\cd,K)} & {(\cd',K')}
	\arrow["{\widetilde{A}^*}"', from=1-2, to=1-1]
	\arrow["{\eta_{\cd'}}", from=2-1, to=1-1]
	\arrow["\Phi"{description}, Rightarrow, from=2-1, to=1-2]
	\arrow["{\eta_{\cd}}"', from=2-2, to=1-2]
	\arrow["A", from=2-2, to=2-1]
    \end{tikzcd}\]
    Moreover, if $\phi$ is an isomorphism, this canonical arrow $\Phi$ also is an isomorphism.
    
    \item The following square commutes:
% https://q.uiver.app/#q=WzAsNCxbMiwwLCIoXFx3aWRlaGF0e1xcY2FsIER9X0svQ19wXiopIl0sWzAsMCwiKFxcd2lkZWhhdHtcXGNhbCBEJ31fe0snfS9DX3twJ31eKikiXSxbMCwxLCJcXHdpZGVoYXR7XFxjYyd9X3tKJ30iXSxbMiwxLCJcXHdpZGVoYXR7XFxjY31fe0p9Il0sWzEsMCwiXFx3aWRldGlsZGV7QX1eKiJdLFsxLDIsIlxccGlfe0Nfe3AnfX0iLDJdLFswLDMsIlxccGlfe0NfcH0iXSxbMiwzLCJcXFNoKEIpXioiLDJdLFs1LDYsIlxcc2ltZXEiLDEseyJzaG9ydGVuIjp7InNvdXJjZSI6MjAsInRhcmdldCI6MjB9LCJzdHlsZSI6eyJib2R5Ijp7Im5hbWUiOiJub25lIn0sImhlYWQiOnsibmFtZSI6Im5vbmUifX19XV0=
\[\begin{tikzcd}
	{(\widehat{\cal D'}_{K'}/C_{p'}^*)} && {(\widehat{\cal D}_K/C_p^*)} \\
	{\widehat{\cc'}_{J'}} && {\widehat{\cc}_{J}}
	\arrow["{\widetilde{A}^*}", from=1-1, to=1-3]
	\arrow[""{name=0, anchor=center, inner sep=0}, "{\pi_{C_{p'}}}"', from=1-1, to=2-1]
	\arrow[""{name=1, anchor=center, inner sep=0}, "{\pi_{C_p}}", from=1-3, to=2-3]
	\arrow["{\Sh(B)^*}"', from=2-1, to=2-3]
	\arrow["\simeq"{description}, draw=none, from=0, to=1]
\end{tikzcd}\]
    \item The functor $\widetilde{A}^*$ induces functors on the fibers (as in Remark \ref{remfiber}):

    $$\widetilde{A}^*_E : \widehat{\cd'}_{K'}/C_{p'}^*(E') \to \widehat{\cd}_{K}/C_{p}^*(\Sh(B)^*(E'))$$

    \item When $A$ is a morphism of sites, the following conditions are equivalent:
    \begin{itemize}
        \item $\widetilde{\phi} : C_{p'}\Sh(A) \simeq \Sh(B)C_{p}$ is an isomorphism, that is, $(A,\phi)$ induces a morphism of relative toposes $[\Sh(B)C_{p}] \to [C_{p'}]$.
        \item $\widetilde{A}^*$ is a morphism of fibrations with base change (it sends cartesian arrows for $\pi_{C_{p'}}$ to cartesian ones for $\pi_{C_p}$).
        \item $\widetilde{A}^*$ preserves finite limits not only globally, but also fibrewise, that is, the $\widetilde{A}^*_E$'s are finite-limit preserving.
    \end{itemize}
\end{enumerate}
\end{prop}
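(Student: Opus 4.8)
The plan is to prove the three conditions equivalent by establishing $(a)\Leftrightarrow(c)$ and $(a)\Leftrightarrow(b)$, where $(a)$ is the first bullet (the comparison is an isomorphism), $(b)$ the second (morphism of fibrations with base change), and $(c)$ the third (fibrewise preservation of finite limits). The whole argument rests on two structural facts already available. First, the canonical stacks $\pi_{C_p}$ and $\pi_{C_{p'}}$ are \emph{cartesian} fibrations, their bases $\widehat{\cc}_J$, $\widehat{\cc'}_{J'}$ being toposes; hence their fibres are the slice toposes, reindexing is pullback, and the terminal object of each fibre is an identity arrow. Second, since $A$ and $B$ are morphisms of sites, the inverse images $\Sh(A)^*$, $\Sh(B)^*$ (as well as $C_p^*$, $C_{p'}^*$) are left exact. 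Throughout I would use that, by part (v), $\widetilde{A}^*$ lies over $\Sh(B)^*$, together with the explicit description of its fibre functors $\widetilde{A}^*_{E'}$ from part (vi) and the characterization of cartesian arrows of the canonical stack as pullback squares (Definition \ref{canstackresume}(c)).

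For $(a)\Leftrightarrow(c)$ I would unwind the fibre functor $\widetilde{A}^*_{E'}\colon \widehat{\cd'}_{K'}/C_{p'}^*(E') \to \widehat{\cd}_K/C_p^*(\Sh(B)^*(E'))$: it sends $(F',\alpha')$ to $(\Sh(A)^*(F'),\,\widetilde\phi_{E'}\circ\Sh(A)^*(\alpha'))$, i.e. it is the slice functor induced by the left-exact $\Sh(A)^*$ followed by postcomposition with $\widetilde\phi_{E'}$. The terminal object of the source slice is $\id_{C_{p'}^*(E')}$, whose image has structure map $\widetilde\phi_{E'}$; since the terminal object of the target slice is an identity, $\widetilde{A}^*_{E'}$ preserves the terminal object if and only if $\widetilde\phi_{E'}$ is an isomorphism. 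Thus $(c)$ forces $\widetilde\phi_{E'}$ to be iso for every $E'$, giving $(a)$; conversely, if $\widetilde\phi_{E'}$ is an isomorphism then postcomposition with it is an equivalence, and composing with the left-exact slice functor induced by $\Sh(A)^*$ shows $\widetilde{A}^*_{E'}$ is left exact, giving $(c)$.

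For $(a)\Rightarrow(b)$ I would take a cartesian arrow of $\pi_{C_{p'}}$ — a pullback square over some $u'\colon E_1'\to E_2'$ — and apply $\widetilde{A}^*$. Since $\Sh(A)^*$ is left exact it carries that pullback to a pullback; naturality of the isomorphism $\widetilde\phi$ in $u'$ then identifies this image, via the vertical isomorphisms $\widetilde\phi_{E_1'}$, $\widetilde\phi_{E_2'}$, with the square underlying $\widetilde{A}^*$ applied to the arrow, so the latter is again a pullback, i.e. cartesian. For the converse $(b)\Rightarrow(a)$, which I expect to be the main obstacle, I would test $(b)$ on the canonical cartesian arrow $\kappa\colon 1_{E'}=(C_{p'}^*(E'),E',\id)\to(1,1,\id)$ lying over $E'\to 1$, namely the reindexing of the global terminal, whose domain is the fibre-terminal object over $E'$. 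As $\widetilde{A}^*$ preserves the global terminal (a consequence of left-exactness of $\Sh(A)^*$ and $\Sh(B)^*$) and, by $(b)$, sends $\kappa$ to a cartesian arrow over $\Sh(B)^*(E')\to 1$ into the global terminal, its domain $\widetilde{A}^*(1_{E'})=(\Sh(A)^*C_{p'}^*(E'),\Sh(B)^*(E'),\widetilde\phi_{E'})$ must coincide, up to a vertical isomorphism, with the fibre-terminal object over $\Sh(B)^*(E')$, which is an identity; this forces $\widetilde\phi_{E'}$ to be an isomorphism for every $E'$, hence $(a)$.

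The delicate point throughout is the bookkeeping of the structure maps under $\widetilde{A}^*$ and the identification of fibre-terminal objects with reindexings of the global terminal in the canonical stacks; once these are in place, left-exactness of $\Sh(A)^*$ does the real work. Finally I would note that the isomorphism of inverse-image transformations $\widetilde\phi$ obtained in $(a)$ is equivalent, by passing to mates, to the isomorphism $C_{p'}\Sh(A)\simeq\Sh(B)C_p$ of geometric morphisms recorded in the first bullet, which is precisely the assertion that $(A,\phi)$ induces a morphism of relative toposes.
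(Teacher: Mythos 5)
Your proposal addresses only item (vii) of the proposition. The statement has seven parts: the adjunction $\widetilde{A}^* \dashv \widetilde{A}_*$ with the explicit pullback description of $\widetilde{A}_*$ (i), continuity of $\widetilde{A}^*$ and the morphism-of-sites property when $A$ is one (ii), the identification of the inverse images induced by $\widetilde{A}^*$ and by $A$ at the topos level (iii), the canonical $2$-cell $\Phi$ and its invertibility when $\phi$ is invertible (iv), the commutation over $\Sh(B)^*$ (v), the existence of the fibre functors (vi), and finally the three-way equivalence (vii). You prove none of (i)--(iv), and you explicitly use (v) and (vi) as ``already available'' structural facts even though they are themselves part of the statement to be proven. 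Using earlier items of a multi-part proposition inside the proof of a later one is legitimate only if those earlier items are also established; as it stands, the bulk of the proposition is left unproven. (For comparison, the paper disposes of all seven items at once by observing that the arguments of Proposition 3.4 of \cite{bartolicaramello}, which treats the fixed-base case, carry over essentially verbatim to the base-change setting; that is the route you would need to follow, or reproduce, for (i)--(vi).)

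That said, the argument you give for (vii) is mathematically sound, and is in fact more detailed than anything the paper records. The equivalence between invertibility of $\widetilde{\phi}$ and fibrewise left-exactness via terminal objects of slices (an object of a slice category is terminal precisely when its structure map is invertible, so fibrewise preservation of terminals is exactly invertibility of each $\widetilde{\phi}_{E'}$; conversely, invertibility exhibits $\widetilde{A}^*_{E'}$ as a left-exact slice functor followed by an equivalence), the pullback-pasting argument showing that invertibility of $\widetilde{\phi}$ makes $\widetilde{A}^*$ cartesian, and the test of cartesianness on the reindexings $(C_{p'}^*(E'),E',\mathrm{id}) \to (1,1,\mathrm{id})$ of the global terminal, correctly exploit the characterization of cartesian arrows of the canonical stack as pullback squares (Definition \ref{canstackresume}(c)) and the left-exactness of $\Sh(A)^*$ and $\Sh(B)^*$ available once $A$ and $B$ are morphisms of sites. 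If you supplement this with proofs of (i)--(vi) --- for instance by adapting the fixed-base arguments as the paper indicates --- your treatment of (vii) would stand as a valid completion of the proof.
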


\begin{proof}
The proof of the proposition is completely analogous to that of Proposition 3.4 \cite{bartolicaramello} and left to the reader. 
\end{proof}

\begin{remark}\label{remA_B}
In general, in the same situation as previously
% https://q.uiver.app/#q=WzAsNCxbMCwwLCIoe1xcY2FsIEQnfSxLJykiXSxbMSwwLCIoe1xcY2FsIER9LEspIl0sWzAsMSwiKHtcXGNhbCBDfScsSicpIl0sWzEsMSwiKHtcXGNhbCBDfSxKKSJdLFswLDEsIkEiXSxbMiwzLCJCIiwyXSxbMCwyLCJwJyIsMl0sWzEsMywicCJdLFsxLDIsIlxccGhpIiwxLHsibGV2ZWwiOjJ9XV0=
\[\begin{tikzcd}
	{({\cal D'},K')} & {({\cal D},K)} \\
	{({\cal C}',J')} & {({\cal C},J)}
	\arrow["A", from=1-1, to=1-2]
	\arrow["{p'}"', from=1-1, to=2-1]
	\arrow["\phi"{description}, Rightarrow, from=1-2, to=2-1]
	\arrow["p", from=1-2, to=2-2]
	\arrow["B"', from=2-1, to=2-2]
\end{tikzcd}\]
the fifth point of the proposition shows that we can factorize the functor $\widetilde{A}^*$ as follows:

% https://q.uiver.app/#q=WzAsNSxbMiwxLCJcXHdpZGVoYXR7XFxjY31fe0p9Il0sWzEsMSwiXFx3aWRlaGF0e1xcY2MnfV97Sid9Il0sWzIsMCwiKFxcd2lkZWhhdHtcXGNkfV97S30vQ19wXiopIl0sWzAsMCwiKFxcd2lkZWhhdHtcXGNkJ31fe0snfS9DX3AnXiopIl0sWzEsMCwiKFxcd2lkZWhhdHtcXGNkfV97S30vQ19wXipcXFNoKEIpXiopIl0sWzEsMCwiXFxTaChCKV4qIiwyXSxbMywxLCJcXHBpX3tDX3twJ319IiwyXSxbMiwwLCJcXHBpX3tDX3B9Il0sWzQsMSwiXFxwaV97XFxTaChCKUNfcH0iXSxbNCwyLCJcXFNoKEIpXipfe1Nfe0NfcH19IiwwLHsibGFiZWxfcG9zaXRpb24iOjMwLCJvZmZzZXQiOi0xfV0sWzMsNCwiXFx3aWRldGlsZGV7QX1fRiJdLFszLDIsIlxcd2lkZXRpbGRle0F9IiwwLHsiY3VydmUiOi01fV0sWzQsMCwiIiwyLHsic3R5bGUiOnsibmFtZSI6ImNvcm5lciJ9fV1d
\[\begin{tikzcd}
	{(\widehat{\cd'}_{K'}/C_p'^*)} & {(\widehat{\cd}_{K}/C_p^*\Sh(B)^*)} & {(\widehat{\cd}_{K}/C_p^*)} \\
	& {\widehat{\cc'}_{J'}} & {\widehat{\cc}_{J}}
	\arrow["{\widetilde{A}^*_B}", from=1-1, to=1-2]
	\arrow["{\widetilde{A}^*}", bend left = 40, from=1-1, to=1-3]
	\arrow["{\pi_{C_{p'}}}"', from=1-1, to=2-2]
	\arrow["{\Sh(B)^*_{S_{C_p}}}"{pos=0.3}, shift left, from=1-2, to=1-3]
	\arrow["{\pi_{\Sh(B)C_p}}", from=1-2, to=2-2]
	\arrow["\lrcorner"{anchor=center, pos=0.000125}, draw=none, from=1-2, to=2-3]
	\arrow["{\pi_{C_p}}", from=1-3, to=2-3]
	\arrow["{\Sh(B)^*}"', from=2-2, to=2-3]
\end{tikzcd}\]

As we saw in Definition \ref{canstackresume}, the pullback of $(\widehat{\cd}_K/C_p^*)$ along $\Sh(B)^*$ coincides with the canonical stack of $\Sh(B)C_p$. Furthermore, the projection $\Sh(B)^*_{S_{C_p}}$ of this pullback reflects cartesian arrows (Proposition \ref{projectionreflectcart}). Consequently, $\widetilde{A}^*$ is a morphism of fibrations if and only if $\widetilde{A}^*_B$ is. For this reason, we will occasionally use the notation $\widetilde{A}^*$ to denote $\widetilde{A}_B^*$.
\end{remark}

As shown by the following result, the $\eta$-extension operation with base change is functorial:

\begin{prop}\label{functorialityetaext}
Let $p : (\cd,K) \to (\cc,J)$, $p' : (\cd',K') \to (\cc',J')$, $p'' : (\cd'',K'') \to (\cc'',J'')$ be comorphisms of sites, $B : (\cc',J') \to (\cc,J)$ and $B' : (\cc',J'') \to (\cc',J')$ morphisms of sites, $A : (\cd',K') \to (\cd,K)$ and $A': (\cd'',K'') \to (\cd',K')$ continuous functors, together with natural transformations $\phi: pA\Rightarrow Bp'$ and $\phi' : p'A' \Rightarrow B'p''$, as depicted:
% https://q.uiver.app/#q=WzAsNixbMSwxLCIoe1xcY2FsIEMnfSxKJykiXSxbMiwxLCIoe1xcY2FsIEN9LEopIl0sWzIsMCwiKFxcY2QsSykiXSxbMSwwLCIoXFxjZCcsSycpIl0sWzAsMCwiKFxcY2QnJyxLJycpIl0sWzAsMSwiKHtcXGNhbCBDfScnLEonJykiXSxbMCwxLCJCIiwyXSxbMywwLCJwJyIsMl0sWzIsMSwicCJdLFszLDIsIkEiXSxbNCwzLCJBJyJdLFs1LDAsIkInIiwyXSxbNCw1LCJwJyciLDJdLFszLDUsIlxccGhpJyIsMSx7InNob3J0ZW4iOnsic291cmNlIjoxMCwidGFyZ2V0IjoxMH0sImxldmVsIjoyfV0sWzIsMCwiXFxwaGkiLDEseyJzaG9ydGVuIjp7InNvdXJjZSI6MTAsInRhcmdldCI6MTB9LCJsZXZlbCI6Mn1dXQ==
\[\begin{tikzcd}
	{(\cd'',K'')} & {(\cd',K')} & {(\cd,K)} \\
	{({\cc}'',J'')} & {({\cc'},J')} & {({\cc},J)}
	\arrow["{A'}", from=1-1, to=1-2]
	\arrow["{p''}"', from=1-1, to=2-1]
	\arrow["A", from=1-2, to=1-3]
	\arrow["{\phi'}"{description}, Rightarrow, from=1-2, to=2-1]
	\arrow["{p'}"', from=1-2, to=2-2]
	\arrow["\phi"{description}, Rightarrow, from=1-3, to=2-2]
	\arrow["p", from=1-3, to=2-3]
	\arrow["{B'}"', from=2-1, to=2-2]
	\arrow["B"', from=2-2, to=2-3]
\end{tikzcd}\]

The following isomorphism holds, where $\widetilde{A}^*$ (resp. $\widetilde{A'}^*$,  $\widetilde{AA'}^*$) are the $\eta$-extensions of $(A,\phi)$ (resp. $(A',\phi')$, $(AA',(B\phi') \circ (\phi A'))$):
$$\widetilde{AA'}^* \simeq \widetilde{A}^*\circ \widetilde{A'}^*.$$
\end{prop}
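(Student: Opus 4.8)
The plan is to evaluate both sides on a generic object $(F'',E'',\alpha'' : F''\to C_{p''}^*E'')$ of $(\widehat{\cd''}_{K''}/C_{p''}^*)$ and to compare the three components of the resulting objects. By Definition \ref{defnetaextbasechange}, applying $\widetilde{A'}^*$ and then $\widetilde{A}^*$ produces the first component $\Sh(A)^*\Sh(A')^*(F'')$ and the second component $\Sh(B)^*\Sh(B')^*(E'')$, whereas $\widetilde{AA'}^*$ produces $\Sh(AA')^*(F'')$ and $\Sh(BB')^*(E'')$ respectively. The first two comparisons are immediate: the direct images are given by precomposition and are therefore \emph{strictly} functorial, $\Sh(AA')_*=\Sh(A')_*\Sh(A)_*$ and $\Sh(BB')_*=\Sh(B')_*\Sh(B)_*$; passing to left adjoints yields the canonical coherence isomorphisms $\Sh(AA')^*\simeq\Sh(A)^*\Sh(A')^*$ and $\Sh(BB')^*\simeq\Sh(B)^*\Sh(B')^*$, which identify the first two components.

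The substance of the proof lies in matching the third components, namely the structure maps. Here everything is controlled by the comparison $2$-cell $\widetilde{\phi}$ which, by Definition \ref{defnetaextbasechange}, is the mate of the whiskered natural transformation $(-\circ\phi\op)$ with respect to the relevant adjunctions (and similarly $\widetilde{\phi'}$ is the mate of $(-\circ\phi'\op)$). The cocycle governing the composite situation is $(B\phi')\circ(\phi A') : p(AA')\Rightarrow(BB')p''$, and at the level of precomposition functors $\bigl(-\circ((B\phi')\circ(\phi A'))\op\bigr)$ is exactly the pasting of $(-\circ\phi\op)$ and $(-\circ\phi'\op)$ along the intermediate precomposition functors. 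Since the mate correspondence is compatible with pasting, the mate of this pasted $2$-cell is the pasting of the individual mates; concretely, modulo the coherence isomorphisms above, the structure cell $\widetilde{(B\phi')\circ(\phi A')}$ coincides with the composite $(\widetilde{\phi}\,\Sh(B')^*)\circ(\Sh(A)^*\,\widetilde{\phi'})$ from $\Sh(A)^*\Sh(A')^*C_{p''}^*$ to $C_p^*\Sh(B)^*\Sh(B')^*$. Substituting this into the formula for the structure map of $\widetilde{A}^*\widetilde{A'}^*(F'',E'',\alpha'')$ and comparing with that of $\widetilde{AA'}^*(F'',E'',\alpha'')$ gives the agreement of the third components.

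Finally, I would check that the two functors also agree on morphisms and that the comparison is natural: a morphism of the comma category is a compatible pair of arrows, on which both functors act by applying the respective inverse images componentwise, so the coherence isomorphisms for $\Sh(-)^*$ — being natural — assemble into a natural isomorphism $\widetilde{AA'}^*\simeq\widetilde{A}^*\widetilde{A'}^*$. Conceptually, the whole argument may be packaged by observing that $\widetilde{A}^*$ is the functor induced between comma categories by the lax square with horizontal edges $C_{p'}^*,C_p^*$, vertical edges $\Sh(B)^*,\Sh(A)^*$ and comparison $\widetilde{\phi}$; this induced-functor assignment is pseudofunctorial under vertical pasting of lax squares, and the pasted square is precisely the one defining $\widetilde{AA'}^*$.

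I expect the only real obstacle to be the verification in the second paragraph: one must confirm that the mate of the composite cocycle is genuinely the pasting of the individual mates \emph{and} that this identification is compatible with the two coherence isomorphisms used to match the first two components. This is a bookkeeping exercise in the mate/pasting calculus rather than a conceptual difficulty, and — exactly as for Proposition \ref{etaextensionbasechangeproperties} — it runs parallel to the fixed-base computations carried out in \cite{bartolicaramello}.
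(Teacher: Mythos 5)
Your proposal is correct: unwinding Definition \ref{defnetaextbasechange} on objects, identifying the first two components via the canonical isomorphisms $\Sh(AA')^*\simeq\Sh(A)^*\Sh(A')^*$ and $\Sh(BB')^*\simeq\Sh(B)^*\Sh(B')^*$ (coming from the strict equalities of the right adjoints given by precomposition), and matching the structure maps via the compatibility of the mate correspondence with pasting is precisely the routine verification the paper has in mind when it declares the proof ``straightforward.'' The whiskered composite $(\widetilde{\phi}\,\Sh(B')^*)\circ(\Sh(A)^*\,\widetilde{\phi'})$ you identify is exactly the structure cell produced by $\widetilde{A}^*\circ\widetilde{A'}^*$, so your argument is complete modulo the standard Kelly--Street mate calculus you cite.
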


\begin{proof}
Straightforward.
\end{proof}

The following proposition spells out the conditions for an arbitrary continuous functor to induce an indexed weak geometric morphism, in the case of relative toposes presented by comorphisms of sites. 

\begin{prop}\label{concreteconditionsgeneral}
Let $p : (\cd,K) \to (\cc,J)$, $p' : (\cd',K') \to (\cc',J')$ be two comorphism of sites, $B : (\cc',J') \to (\cc,J)$ a morphism of sites and $A : (\cd',K') \to (\cd,K)$ a continuous functor together with $\phi: pA\Rightarrow Bp'$ a natural transformation, as pictured:

% https://q.uiver.app/#q=WzAsNCxbMCwwLCIoe1xcY2FsIEQnfSxLJykiXSxbMSwwLCIoe1xcY2FsIER9LEspIl0sWzAsMSwiKHtcXGNhbCBDfScsSicpIl0sWzEsMSwiKHtcXGNhbCBDfSxKKSJdLFswLDEsIkEiXSxbMiwzLCJCIiwyXSxbMCwyLCJwJyIsMl0sWzEsMywicCJdLFsxLDIsIlxccGhpIiwxLHsibGV2ZWwiOjJ9XV0=
\[\begin{tikzcd}
	{({\cal D'},K')} & {({\cal D},K)} \\
	{({\cal C}',J')} & {({\cal C},J)}
	\arrow["A", from=1-1, to=1-2]
	\arrow["{p'}"', from=1-1, to=2-1]
	\arrow["\phi"{description}, Rightarrow, from=1-2, to=2-1]
	\arrow["p", from=1-2, to=2-2]
	\arrow["B"', from=2-1, to=2-2]
\end{tikzcd}\]
Then the following conditions are equivalent:

\begin{enumerate}[(i)]
    \item The pair $(\widetilde{A}^* \dashv \widetilde{A}_*)$ constitutes an indexed weak geometric morphism between the canonical stacks $S_{C_{p'}}$ and $S_{\Sh(B)C_{p}}$ (i.e. a pair of \emph{indexed} adjoint functors).
    \item The functors on the fibers
    
    $$\widetilde{A}^*_{E'} : S_{C_{p'}}(E') \to S_{\Sh(B)C_{p}}(E')$$ (for $E'$ object of $\widehat{\cc'}_{J'}$) 
    constitute a morphism of indexed categories.
    
    \item For every arrow $f' : c'' \to c'$ in $\cc'$ and object $d'$ of $\cd'$ together with an arrow $u' : p(d') \to c'$, the two following conditions are satisfied:
\begin{itemize}
    \item For any pair $(u'' : p(d) \to B(c''),g : d \to A(d'))$ such that $B(f')u''=B(u')\phi_{d'}p(g)$, there exist a covering $(f_i : d_i \to d)_i$ for $K$ together with triplets $(x_i : d_i \to A(\overline{d_i'}),\overline{g_i'} : \overline{d_i'} \to d',\overline{u_i'} : p'(\overline{d_i'}) \to c'')$ with $p(g_i')u'=f'\overline{u_i'} $, such that $B(u'')p(f_i) = B(\overline{u_i'})\phi_{\overline{d_i'}}p(x_i)$ and $gf_i = A(\overline{g_i'})x_i$.
     \item For every two triplets $(x_1 : d \to A(\overline{d_1'}),g_1' : \overline{d_1'} \to d',\overline{u_1'} : p'(\overline{d_1'}) \to c'')$ and $(x_2 : d \to A(\overline{d_2'}),g_2' : \overline{d_2'} \to d',\overline{u_2'} : p(\overline{d_2'}) \to c'')$ such that $B(\overline{u_1'})\phi_{\overline{d_1'}}p(x_1) = B(\overline{u_2'})\phi_{\overline{d_2'}}p(x_2)$ and $A(\overline{g_1'})x_1 =A(\overline{g_2'})x_2$, there exist a covering $(f_i : d_i \to d)_i$ for $K$ such that, denoting by $P^{f'}_{(d',u')}$ the presheaf defined by the following pullback square
% https://q.uiver.app/#q=WzAsNCxbMCwwLCJQXntmJ31feyh1JyxkJyl9Il0sWzAsMSwie1xcY2FsIEMnfShwJy0sYycnKSJdLFsxLDEsIntcXGNhbCBDJ30ocCctLGMnKSJdLFsxLDAsIntcXGNhbCBEfScoLSxkJykiXSxbMCwzXSxbMywyLCJldl97dSd9Il0sWzEsMiwiZicgXFxjaXJjIC0iLDJdLFswLDFdLFswLDIsIiIsMSx7InN0eWxlIjp7Im5hbWUiOiJjb3JuZXIifX1dXQ==
\[\begin{tikzcd}
	{P^{f'}_{(d',u')}} & {{\cal D}'(-,d')} \\
	{{\cal C'}(p'-,c'')} & {{\cal C'}(p'-,c')}
	\arrow[from=1-1, to=1-2]
	\arrow[from=1-1, to=2-1]
	\arrow["\lrcorner"{anchor=center, pos=0.125}, draw=none, from=1-1, to=2-2]
	\arrow["{ev_{u'}}", from=1-2, to=2-2]
	\arrow["{f' \circ -}"', from=2-1, to=2-2]
\end{tikzcd}\]
    we have that $(x_1f_i : d_i \to A(\overline{d_1'}),g_1' : \overline{d_1'} \to d',\overline{u_1'} : p'(\overline{d_1}') \to c'')$ and $(x_2f_i : d_i \to A(\overline{d_2'}),g_2' : \overline{d_2'} \to d',\overline{u_2'} : p(\overline{d_2}') \to c'')$ are in the same connected component of $(d_i / A\pi^{f'}_{(d',u')})$ with $\pi^{f'}_{(d',u')}$ being the projection functor $\int P^{f'}_{(d',u')} \to {\cd'}$.
\end{itemize}
    
\end{enumerate}
\end{prop}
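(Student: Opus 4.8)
The three conditions split into two equivalences, $(i)\Leftrightarrow(ii)$ and $(ii)\Leftrightarrow(iii)$, which I would establish separately. The first is a formal consequence of the theory of fibred adjunctions, exploiting that the global adjunction $\widetilde{A}^* \dashv \widetilde{A}_*$ is already available from Proposition \ref{etaextensionbasechangeproperties}(i); the second is the genuinely combinatorial part, where one unwinds the fibrewise commutativity into the stated site-level condition by reducing to representables via the density of $\eta$.

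For $(i)\Leftrightarrow(ii)$, the plan is to work over the common base $\widehat{\cc'}_{J'}$ by passing, as in Remark \ref{remA_B}, to the factorisation $\widetilde{A}^*_B : S_{C_{p'}} \to S_{\Sh(B)C_p}$, together with the right adjoint coming from the pullback description of $\widetilde{A}_*$ in Proposition \ref{etaextensionbasechangeproperties}(i). Recall that in a canonical stack the reindexing functors are the pullback functors along the structural maps (Definition \ref{canstackresume}(b)), and that, by Proposition \ref{etaextensionbasechangeproperties}(v), $\widetilde{A}^*_B$ and its right adjoint commute with the projections to $\widehat{\cc'}_{J'}$, so the unit and counit of the adjunction are vertical. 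Under these circumstances the standard result on fibred adjunctions applies: a fibred functor admitting a global right adjoint with vertical unit and counit in fact admits a fibred right adjoint, the adjunction being then fibrewise. Consequently the pair $(\widetilde{A}^* \dashv \widetilde{A}_*)$ is an indexed weak geometric morphism exactly when $\widetilde{A}^*_B$ preserves cartesian arrows, i.e.\ is a morphism of indexed categories; and by Remark \ref{remA_B} (via the reflecting projection of Proposition \ref{projectionreflectcart}) this is equivalent to the fibre functors $\widetilde{A}^*_{E'}$ commuting with reindexing, which is precisely $(ii)$.

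For $(ii)\Leftrightarrow(iii)$, the plan is to test the morphism-of-indexed-categories condition on generating objects. Both composites of the relevant square preserve colimits (since $\widetilde{A}^*$ is a left adjoint and pullback functors between slices of a topos preserve colimits), so since $\eta_{\cd'}$ is a dense morphism and comorphism of sites (Proposition \ref{etadense}) it suffices to check commutativity of the reindexing squares along arrows $l_{J'}(f')$ (for $f' : c'' \to c'$ in $\cc'$) on the generating objects of the fibre over $l_{J'}(c')$, namely the objects $l_{K'}(d') \to C_{p'}^*l_{J'}(c')$ determined by a $d'$ together with a map $u' : p'(d') \to c'$ (obtained from $\eta_{\cd'}(d')$ by postcomposition with $C_{p'}^*l_{J'}(u')$). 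For such data the fibrewise pullback along $l_{J'}(f')$ is computed, before sheafification, by the presheaf $P^{f'}_{(d',u')}$ of the statement, whose value at $d''$ is the set of pairs $(g : d'' \to d',\, h : p'(d'') \to c'')$ with $u'\,p'(g) = f'\,h$; applying $\widetilde{A}^*$ then amounts, by Proposition \ref{etaextensionbasechangeproperties}(iii), to applying $\Sh(A)^* \simeq a_K\,\lan_{A}$ and transporting the structural map along the mate $\widetilde{\phi}$. The two ways around the square produce a canonical comparison morphism of sheaves on $(\cd,K)$, and $(ii)$ holds iff it is invertible. Writing $\Sh(A)^*$ as the sheafification of a left Kan extension expresses the relevant presheaf at $d$ as $\pi_0$ of the comma category $(d \downarrow A\pi^{f'}_{(d',u')})$ — exactly the categories appearing in the continuity criterion of Proposition \ref{cofinalitycond} — so that the comparison is an isomorphism of sheaves iff it is a local isomorphism; splitting this into local surjectivity and local injectivity yields precisely the two bullets of $(iii)$. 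The base-change datum enters through $\widetilde{\phi}$, which accounts for the equations $B(f')u'' = B(u')\phi_{d'}p(g)$, $p(g_i')u' = f'\overline{u_i'}$, $B(u'')p(f_i) = B(\overline{u_i'})\phi_{\overline{d_i'}}p(x_i)$ and $gf_i = A(\overline{g_i'})x_i$, recording the compatibilities forced by the defining pullback of $P^{f'}_{(d',u')}$ and the coherence of the mate.

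I expect the main obstacle to lie in the $(ii)\Leftrightarrow(iii)$ step, specifically in matching the abstract statement ``the comparison morphism of sheaves is invertible'' with the precise two-clause combinatorial condition. The delicate points are: first, justifying the reduction to the objects indexed by $(d',u')$ and verifying that $P^{f'}_{(d',u')}$ genuinely models fibrewise reindexing before sheafification; second, extracting the first bullet (a local essential-surjectivity statement: every section comes, on a $K$-cover, from coherent data on the pullback side) and the second bullet (a local connectedness statement ensuring identified sections agree, on a $K$-cover, up to a zig-zag in the comma category $(d_i \downarrow A\pi^{f'}_{(d',u')})$) from the plus-construction description of sheafification; and third, carrying $(B,\phi)$ correctly through all the sheafifications so that the displayed equations come out exactly as stated. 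None of these steps is conceptually hard once the translation dictionary is set up, but the bookkeeping is intricate, which is presumably why the analogous fixed-base statement was treated at length in \cite{bartolicaramello}.
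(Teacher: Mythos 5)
Your proposal is correct and takes essentially the same route as the paper. The paper's own proof likewise treats (i)$\Leftrightarrow$(ii) as the purely formal step and defers the equivalence with (iii) to the fixed-base argument (Proposition 6.3.1 of \cite{locfib}), whose content is precisely your plan: restrict, using colimit-preservation of all functors involved and the fact that the generators of the fibre $S_{C_{p'}}(l_{J'}(c'))$ are the objects determined by pairs $(d',u')$, to the reindexing squares along arrows $l_{J'}(f')$; form the canonical comparison morphism of presheaves between the two paths (whose sections at $d$ are exactly connected components of $(d/A\pi^{f'}_{(d',u')})$, via the Kan-extension description of $\Sh(A)^*$); and identify invertibility after sheafification with the locally-epi and locally-mono conditions, which are the two bullets of (iii). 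The only variation is in (ii)$\Rightarrow$(i): the paper observes that $\widetilde{A}_*$ is indexed \emph{unconditionally}, because $\widetilde{A}^*$ is a morphism of opfibrations (Proposition 6.2.6 of \cite{locfib}), so that (ii) immediately upgrades the adjunction to an indexed one, whereas you derive the fibred right adjoint from cartesianness of $\widetilde{A}^*_B$ together with verticality of the unit and counit; both are standard fibred-adjunction arguments, but note that the verticality you invoke is not literally given by Proposition \ref{etaextensionbasechangeproperties}(v) (which concerns only $\widetilde{A}^*$ and the projections) — it comes from the explicit construction of the factorization $\widetilde{A}_*^B$ through the pullback, as in Theorem \ref{charactgeomtrelmorphfib}(ii).
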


\begin{proof}

It is immediate that (i) implies (ii), since both components of an indexed adjunction are, by definition, indexed functors. For (ii) $\Rightarrow$ (i), we know that $\widetilde{A}^*$ is a morphism of opfibrations (Proposition 6.2.6 \cite{locfib}), which ensures that $\widetilde{A}_*$ is automatically indexed. It then remains for $\widetilde{A}^*$ to be an indexed functor, so that the pair $(\widetilde{A}_*, \widetilde{A}^*)$ forms an adjunction between indexed categories.

The equivalence between the first and third conditions is entirely analogous to the fixed base case of Proposition 6.3.1 of \cite{locfib}; the proof is left to the reader.
\end{proof}

\subsection{Direct images and canonical stacks}

In the previous section, we observed that the $\eta$-extension of a continuous functor $A$ factors through the direct image of the canonical stack, and that it is a morphism of fibrations if and only if one of its factors (namely, $\widetilde{A}^*_B$ from Remark \ref{remA_B}) is itself a morphism of fibrations. When a relative topos is presented via a fibration rather than an arbitrary comorphism, a base change is already available at the level of sites. The direct image can be taken either before or after applying the canonical stack construction.

In this section, we investigate the relation between the direct image of the canonical stack of a relative presheaf topos and the canonical stack of the relative presheaf topos associated to the direct image of the fibration. We will see that, if it does not commute in general, there is a canonical comparison indexed weak geometric morphism between them.

In order to study the canonical stack of the direct image, the following proposition will be crucial:

\begin{prop}\label{pullbackcontinuousalongfib}
Let $B : ({\cc'},J') \to ({\cc},J)$ be a continuous functor, and $\mathbb C$ a $\cc$-indexed category. The projection functor $B_{\mathbb C}$ is also continuous for the respective Giraud topologies:

% https://q.uiver.app/#q=WzAsNCxbMCwxLCIoXFxjYycsSicpIl0sWzEsMSwiKFxcY2MsSikiXSxbMCwwLCIoXFxjZyhcXG1iYyBCKSxHaXJfe0onfV57XFxtYmMgQn0pIl0sWzEsMCwiKFxcY2coXFxtYmMpLEdpcl97Sn1ee1xcbWJjfSkiXSxbMCwxLCJCIiwyXSxbMywxLCJwIl0sWzIsMCwicCciLDJdLFsyLDMsIkJfe1xcbWJjfSJdLFsyLDQsIiIsMSx7ImxldmVsIjoxLCJzdHlsZSI6eyJuYW1lIjoiY29ybmVyIn19XV0=
\[\begin{tikzcd}
	{(\cg(\mbc B),Gir_{\mbc B})} & {(\cg(\mbc),{Gir}_{\mbc})} \\
	{(\cc',J')} & {(\cc,J)}
	\arrow["{B_{\mbc}}", from=1-1, to=1-2]
	\arrow["{p'}"', from=1-1, to=2-1]
	\arrow["p", from=1-2, to=2-2]
	\arrow[""{name=0, anchor=center, inner sep=0}, "B"', from=2-1, to=2-2]
	\arrow["\lrcorner"{anchor=center, pos=0.000125}, draw=none, from=1-1, to=0]
\end{tikzcd}\]
\end{prop}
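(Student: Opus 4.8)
The plan is to verify continuity of $B_{\mbc}$ directly through the explicit criterion of Proposition~\ref{cofinalitycond}, applied to $B_{\mbc} : (\cg(\mbc B),Gir_{\mbc B}) \to (\cg(\mbc),Gir_{\mbc})$, checking separately the cover-preservation condition (i) and the connectedness condition (ii). Condition (i) is immediate: by Definition~\ref{comorphinduitgeom} a $Gir_{\mbc B}$-covering sieve on an object $(x,c')$ contains the cartesian lifts of a $J'$-covering family $(f'_i : c'_i \to c')_i$, and $B_{\mbc}$ sends these to the cartesian lifts of $(B(f'_i))_i$ at $x$ (it preserves cartesian arrows, since a cartesian arrow of $\cg(\mbc B)$ has invertible fibre component, which $B_{\mbc}$ leaves unchanged). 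As $B$ is continuous, hence cover-preserving, $(B(f'_i))_i$ is $J$-covering, so the image sieve contains cartesian lifts over a $J$-covering family and is $Gir_{\mbc}$-covering.

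For condition (ii), I would first reduce to the case where the two arrows $f,g$ of the covering sieve $S$ are among its cartesian generators: any arrow of $S$ factors through a generator, and this factorisation furnishes a morphism in the relevant comma category joining the general object to a generator object, so connectedness for generators yields connectedness in general. The decisive simplification obtained from this reduction is a \emph{fibre rigidity}: once $f,g$ are the cartesian lifts (with identity fibre component) of generators $f'_1,f'_2$, the universal property of cartesian arrows forces the fibre components of the top and left edges of the commutative square in $\cg(\mbc)$ to coincide with a single arrow $\zeta : z \to \mbc(k)(x)$, where $(z,c)=D$ is the lower-left vertex and $k : c \to B(c')$ is the common projection of the two composites. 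Thus all fibre data of the square is rigidly determined by $\zeta$.

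I would then project the square along $p$ to obtain, in $\cc$, a commutative square fitting the continuity criterion for $B$ relative to the $J'$-covering sieve $p'(S)$ generated by $(f'_i)_i$. Continuity of $B$ supplies a $J$-covering family $(t_j : c_j \to c)_j$ together with, for each $j$, a zig-zag in $(c_j \downarrow B\pi_{p'(S)})$ connecting the two projected composites. I would lift this cover to a $Gir_{\mbc}$-covering family $(D_j \to D)_j$ by taking cartesian lifts of the $t_j$ at $z$, and then lift the zig-zag objectwise: an object $(s' : c'' \to c',\, \gamma : c_j \to B(c''))$ of the base zig-zag lifts to the cartesian lift of $s'$ at $x$ (which lies in $S$, as $s'$ factors through some generator) equipped with the comma structure arrow of base component $\gamma$ and fibre component $\mbc(t_j)(\zeta)$. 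The coherence that makes this work is that $B(s')\circ\gamma = k\circ t_j$ is \emph{constant} along the zig-zag, so the single fibre arrow $\mbc(t_j)(\zeta)$ serves at every object; moreover each connecting morphism of the base zig-zag lifts uniquely to a morphism of $(D_j \downarrow B_{\mbc}\pi_S)$ by the universal property of cartesian arrows. One checks the two endpoints of the lifted zig-zag agree with the restrictions of the given square edges to $D_j$, which establishes (ii).

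The main obstacle is precisely this last transport of the connectedness datum from $\cc$ up to $\cg(\mbc)$. It is overcome by exploiting that Giraud-covering sieves are generated by cartesian arrows: this is what both rigidifies the fibre data into the single arrow $\zeta$ (merely reindexed by $\mbc(t_j)$ along the lift) and guarantees the unique existence of the lifted connecting morphisms, so that the entire verification reduces cleanly to the already-available continuity of $B$ over the base.
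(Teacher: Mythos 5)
Your proof is correct and follows essentially the same route as the paper's: both verify the two conditions of Proposition \ref{cofinalitycond}, handling cover-preservation via cartesian lifts of a covering family, and handling the connectedness condition by projecting the commutative square to the base, invoking continuity of $B$ to obtain a $J$-covering family together with connecting zig-zags, and then lifting the cover and the zig-zags back to $\cg(\mbc)$ along cartesian lifts. Your explicit treatment of the reduction to cartesian generators and of the fibre rigidity (the single arrow $\zeta$, reindexed by $\mbc(t_j)$, serving at every object of the lifted zig-zag) makes precise what the paper's lifted diagram uses implicitly.
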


\begin{proof}
We refer to the concrete characterization of continuous functors in terms of cofinality conditions expressed by Proposition \ref{cofinalitycond}. 

If we have a covering $((1,v_i') : (\mathbb CB(v_i')(x),c_i') \to (x,c'))_i$ for the Giraud topology on $\mathcal{G}(\mathbb CB)$, it is sent to $((1,B(v_i')) : (\mathbb C(B(v_i'))(x),B(c_i')) \to (x,B(x')))_i$ by $B_{\mathbb C}$, which is a covering since it consists of a family of cartesian arrows $(1,B(v_i'))$ with $B(v_i')$ $J$-covering (since $B$ is cover-preserving).
For the second part of the cofinality conditions, let $((1,v_i') : (\mathbb CB(v_i')(x),c_i') \to (x,c'))_i$ be covering for $Gir_{\mathbb CB}$, and let a commutative square as below:

% https://q.uiver.app/#q=WzAsNCxbMSwwLCIoeF97aWp9LGNfe2lqfSkiXSxbMSwyLCJGX3tcXG1hdGhiYiBDfSh4LGMnKSJdLFswLDEsIkZfe1xcbWF0aGJiIEN9KFxcbWF0aGJiIENGKHZfaSkoeCksYydfaSkiXSxbMiwxLCJGX3tcXG1hdGhiYiBDfShcXG1hdGhiYiBDRih2X2opKHgpLGMnX2opIl0sWzIsMSwiRl97XFxtYXRoYmIgQ30oMSx2X2kpIiwyXSxbMywxLCJGX3tcXG1hdGhiYiBDfSgxLHZfaikiXSxbMCwyLCIodV9pLGdfaSkiLDJdLFswLDMsIih1X2osZ19qKSJdXQ==
\[\begin{tikzcd}
	& {(x_{ij},c_{ij})} \\
	{B_{\mathbb C}(\mathbb CB(v_i)(x),c'_i)} && {B_{\mathbb C}(\mathbb CB(v_j)(x),c'_j)} \\
	& {B_{\mathbb C}(x,c')}
	\arrow["{(u_i,g_i)}"', from=1-2, to=2-1]
	\arrow["{(u_j,g_j)}", from=1-2, to=2-3]
	\arrow["{B_{\mathbb C}(1,v_i')}"', from=2-1, to=3-2]
	\arrow["{B_{\mathbb C}(1,v_j')}", from=2-3, to=3-2]
\end{tikzcd}\]

Since $B$ is continuous and the $v_i'$ are $J'$-covering, there exist a $J$-covering $(w_{ijk} : c_{ijk} \to c_{ij})_{k}$ and a connecting zig-zag as in the following diagram:

% https://q.uiver.app/#q=WzAsNyxbMyw0LCJGKGMnKSJdLFswLDMsIkYoY19pJykiXSxbNiwzLCJGKGNfaicpIl0sWzMsMiwiY197aWp9Il0sWzMsMCwiY197aWprfSJdLFsyLDMsIkYoY18xJykiXSxbNCwzLCJGKGNfbicpIl0sWzMsMSwiZ19pIiwyXSxbMywyLCJnX2oiXSxbMSwwLCJGKHZfaScpIiwyXSxbMiwwLCJGKHZfaicpIl0sWzQsMywid197aWprfSIsMV0sWzUsMSwiRihhXzEpIiwxXSxbNiwyLCJGKGFfbikiLDFdLFs1LDAsIkYodidfMSkiLDFdLFs2LDAsIkYodidfbikiLDFdLFs1LDYsIiIsMSx7InN0eWxlIjp7ImJvZHkiOnsibmFtZSI6InNxdWlnZ2x5In0sImhlYWQiOnsibmFtZSI6Im5vbmUifX19XSxbNCw1LCJzXzEiLDFdLFs0LDYsInNfbiIsMV1d
\[\begin{tikzcd}
	&&& {c_{ijk}} \\
	\\
	&&& {c_{ij}} \\
	{B(c_i')} && {B(c_1')} && {B(c_n')} && {B(c_j')} \\
	&&& {B(c')}
	\arrow["{w_{ijk}}"{description}, from=1-4, to=3-4]
	\arrow["{s_1}"{description}, from=1-4, to=4-3]
	\arrow["{s_n}"{description}, from=1-4, to=4-5]
	\arrow["{g_i}"', from=3-4, to=4-1]
	\arrow["{g_j}", from=3-4, to=4-7]
	\arrow["{B(v_i')}"', from=4-1, to=5-4]
	\arrow["{B(a_1)}"{description}, from=4-3, to=4-1]
	\arrow[squiggly, no head, from=4-3, to=4-5]
	\arrow["{B(v'_1)}"{description}, from=4-3, to=5-4]
	\arrow["{B(a_n)}"{description}, from=4-5, to=4-7]
	\arrow["{B(v'_n)}"{description}, from=4-5, to=5-4]
	\arrow["{B(v_j')}", from=4-7, to=5-4]
\end{tikzcd}\]

Then, we just need to lift this zig-zag to obtain one for our setting in $\mathcal{G}(\mathbb CB)$:

% https://q.uiver.app/#q=WzAsNyxbMiwyLCIoeF97aWp9LGNfe2lqfSkiXSxbMiw0LCJGX3tcXG1hdGhiYiBDfSh4LGMnKSJdLFswLDMsIkZfe1xcbWF0aGJiIEN9KFxcbWF0aGJiIENGKHZfaScpKHgpLGMnX2kpIl0sWzQsMywiRl97XFxtYXRoYmIgQ30oXFxtYXRoYmIgQ0Yodl9qJykoeCksYydfaikiXSxbMSwzLCJGX3tcXG1hdGhiYiBDfShcXG1hdGhiYiBDRih2XzEnKSh4KSxjJ18xKSJdLFszLDMsIkZfe1xcbWF0aGJiIEN9KFxcbWF0aGJiIENGKHZfbicpKHgpLGMnX24pIl0sWzIsMCwiKFxcbWF0aGJiIEMod197aWprfSkoeF97aWp9KSxjX3tpamt9KSJdLFsyLDEsIkZfe1xcbWF0aGJiIEN9KDEsdl9pJykiLDJdLFszLDEsIkZfe1xcbWF0aGJiIEN9KDEsdl9qJykiXSxbMCwyLCIodV9pLGdfaSkiLDJdLFswLDMsIih1X2osZ19qKSJdLFs0LDIsIkZfe1xcbWF0aGJiIEN9KDEsYV8xKSJdLFs0LDEsIkZfe1xcbWF0aGJiIEN9KDEsdidfMSkiLDFdLFs1LDMsIkZfe1xcbWF0aGJiIEN9KDEsYV9uKSIsMl0sWzQsNSwiIiwyLHsic3R5bGUiOnsiYm9keSI6eyJuYW1lIjoic3F1aWdnbHkifSwiaGVhZCI6eyJuYW1lIjoibm9uZSJ9fX1dLFs1LDEsIkZfe1xcbWF0aGJiIEN9KDEsdiduKSIsMV0sWzYsMCwiKDEsd197aWprbH0pIiwxXSxbNiw0LCIoXFxtYXRoYmIgQyh3X3tpamt9KSh1X2kpLHNfMSkiLDFdLFs2LDUsIihcXG1hdGhiYiBDKHdfe2lqa30pKHVfaSksc19uKSIsMV1d
\begin{adjustbox}{scale=0.6}
$\begin{tikzcd}
	&& {(\mathbb C(w_{ijk})(x_{ij}),c_{ijk})} \\
	\\
	&& {(x_{ij},c_{ij})} \\
	{B_{\mathbb C}(\mathbb CB(v_i')(x),c'_i)} & {B_{\mathbb C}(\mathbb CB(v_1')(x),c'_1)} && {B_{\mathbb C}(\mathbb CB(v_n')(x),c'_n)} & {B_{\mathbb C}(\mathbb CB(v_j')(x),c'_j)} \\
	&& {B_{\mathbb C}(x,c')}
	\arrow["{(1,w_{ijkl})}"{description}, from=1-3, to=3-3]
	\arrow["{(\mathbb C(w_{ijk})(u_i),s_1)}"{description}, from=1-3, to=4-2]
	\arrow["{(\mathbb C(w_{ijk})(u_i),s_n)}"{description}, from=1-3, to=4-4]
	\arrow["{(u_i,g_i)}"', from=3-3, to=4-1]
	\arrow["{(u_j,g_j)}", from=3-3, to=4-5]
	\arrow["{B_{\mathbb C}(1,v_i')}"', from=4-1, to=5-3]
	\arrow["{B_{\mathbb C}(1,a_1)}", from=4-2, to=4-1]
	\arrow[squiggly, no head, from=4-2, to=4-4]
	\arrow["{B_{\mathbb C}(1,v'_1)}"{description}, from=4-2, to=5-3]
	\arrow["{B_{\mathbb C}(1,a_n)}"', from=4-4, to=4-5]
	\arrow["{B_{\mathbb C}(1,v'n)}"{description}, from=4-4, to=5-3]
	\arrow["{B_{\mathbb C}(1,v_j')}", from=4-5, to=5-3]
\end{tikzcd}$
\end{adjustbox}

\end{proof}

Let us now assume that $A$ is a morphism of fibrations between relative sites with their Giraud topologies:

% https://q.uiver.app/#q=WzAsNCxbMCwxLCIoXFxjYycsSicpIl0sWzEsMSwiKFxcY2MsSikiXSxbMCwwLCIoXFxjZyhcXG1iYycpLEdpcl97Sid9XntcXG1iYyd9KSJdLFsxLDAsIihcXGNnKFxcbWJjKSxHaXJfe0p9XntcXG1iY30pIl0sWzAsMSwiQiIsMl0sWzMsMSwicCJdLFsyLDAsInAnIiwyXSxbMiwzLCJBIl0sWzIsMSwiXFxzaW1lcSIsMSx7InN0eWxlIjp7ImJvZHkiOnsibmFtZSI6Im5vbmUifSwiaGVhZCI6eyJuYW1lIjoibm9uZSJ9fX1dXQ==
\[\begin{tikzcd}
	{(\cg(\mbc'),Gir_{\mbc'})} & {(\cg(\mbc),Gir_{\mbc})} \\
	{(\cc',J')} & {(\cc,J)}
	\arrow["A", from=1-1, to=1-2]
	\arrow["{p'}"', from=1-1, to=2-1]
	\arrow["\simeq"{description}, draw=none, from=1-1, to=2-2]
	\arrow["p", from=1-2, to=2-2]
	\arrow["B"', from=2-1, to=2-2]
\end{tikzcd}\]

We can factorize $A$ as follows:

% https://q.uiver.app/#q=WzAsNSxbMSwxLCIoXFxjYycsSicpIl0sWzIsMSwiKFxcY2MsSikiXSxbMCwwLCIoXFxjZyhcXG1iYycpLEdpcl97Sid9XntcXG1iYyd9KSJdLFsyLDAsIihcXGNnKFxcbWJjKSxHaXJfe0p9XntcXG1iY30pIl0sWzEsMCwiKFxcY2coXFxtYmMgQiksR2lyX3tKJ31ee1xcbWJjIEJ9KSJdLFswLDEsIkIiLDJdLFszLDEsInAiXSxbMiwwLCJwJyIsMl0sWzQsMF0sWzQsMywiQl97XFxtYmN9Il0sWzIsNCwiQV9CIl0sWzIsMywiQSIsMCx7ImN1cnZlIjotM31dLFs0LDUsIiIsMSx7ImxldmVsIjoxLCJzdHlsZSI6eyJuYW1lIjoiY29ybmVyIn19XV0=
\begin{equation} \label{diag:pullback_site}
\begin{adjustbox}{center, max width=\linewidth}
\begin{tikzcd}
	{(\cg(\mbc'),Gir_{\mbc'})} & {(\cg(\mbc B),Gir_{\mbc B})} & {(\cg(\mbc),Gir_{\mbc})} \\
	& {(\cc',J')} & {(\cc,J)}
	\arrow["{A_B}", from=1-1, to=1-2]
	\arrow["A", bend left = 25, from=1-1, to=1-3]
	\arrow["{p'}"', from=1-1, to=2-2]
	\arrow["{B_{\mbc}}", from=1-2, to=1-3]
	\arrow[from=1-2, to=2-2]
	\arrow["p", from=1-3, to=2-3]
	\arrow[""{name=0, anchor=center, inner sep=0}, "B"', from=2-2, to=2-3]
	\arrow["\lrcorner"{anchor=center, pos=0.125}, draw=none, from=1-2, to=0]
\end{tikzcd}
\end{adjustbox}
\end{equation}
 
Since $A_B$ is a morphism of fibrations, it is continuous for the Giraud topologies, and the previous proposition ensures that $B_{\mbc}$ also is. 

These functors induce the following new factorization at the level of canonical stacks:

% https://q.uiver.app/#q=WzAsNixbMSwyLCJcXHdpZGVoYXR7XFxjYWwgQyd9X3tKJ30iXSxbMiwyLCJcXHdpZGVoYXR7XFxjYWwgQ31fSiJdLFsyLDEsIihHaXJfe0p9KFxcbWF0aGJiIEMpL0Nfe3B9XiopIl0sWzEsMSwiKEdpcl97Sn0oXFxtYXRoYmIgQykvQ197cH1eKlNoKEIpXiopIl0sWzAsMSwiKEdpcl97Sid9KFxcbWF0aGJiIEMnKS9DX3twJ31eKikiXSxbMSwwLCIoR2lyX3tKJ30oXFxtYXRoYmIgQ0IpL0Nfe3AnX0J9KSJdLFswLDEsIlxcU2goQileKiIsMl0sWzMsMiwiXFxTaChCKV4qX3tTX3tcXG1hdGhiYiBDJ319IiwyLHsib2Zmc2V0IjoxfV0sWzIsMSwiXFxwaV97Q19wfSJdLFszLDAsIlxccGlfe1xcU2goQilDX3twfX0iLDJdLFs0LDMsIlxcd2lkZXRpbGRle0F9XipfQiIsMl0sWzQsMCwiXFxwaV97Q197cCd9fSIsMix7ImN1cnZlIjoxfV0sWzQsNSwiXFx3aWRldGlsZGV7QV9CfV4qIl0sWzUsMiwiXFx3aWRldGlsZGV7Ql97XFxtYXRoYmIgQ319XioiXSxbNSwzLCJcXG51X3tcXG1hdGhiYiBDfV5CIiwwLHsic3R5bGUiOnsiYm9keSI6eyJuYW1lIjoiZGFzaGVkIn19fV0sWzMsNiwiIiwxLHsibGV2ZWwiOjEsInN0eWxlIjp7Im5hbWUiOiJjb3JuZXIifX1dXQ==
\[\begin{tikzcd}
	& {(\mathbf{Gir}(\mathbb CB)/C_{p'_B})} \\
	{(\mathbf{Gir}(\mathbb C')/C_{p'}^*)} & {(\mathbf{Gir}(\mathbb C)/C_{p}^*\Sh(B)^*)} & {(\mathbf{Gir}(\mathbb C)/C_{p}^*)} \\
	& {\widehat{\cc'}_{J'}} & {\widehat{\cc}_J}
	\arrow["{\nu_{\mathbb C}^B}", dashed, from=1-2, to=2-2]
	\arrow["{\widetilde{B_{\mathbb C}}^*}", from=1-2, to=2-3]
	\arrow["{\widetilde{A_B}^*}", from=2-1, to=1-2]
	\arrow["{\widetilde{A}^*_B}"', from=2-1, to=2-2]
	\arrow["{\pi_{C_{p'}}}"', bend right =10, from=2-1, to=3-2]
	\arrow["{\Sh(B)^*_{S_{\mathbb C}}}"', shift right, from=2-2, to=2-3]
	\arrow["{\pi_{\Sh(B)C_{p}}}"', from=2-2, to=3-2]
	\arrow["{\pi_{C_p}}", from=2-3, to=3-3]
	\arrow[""{name=0, anchor=center, inner sep=0}, "{\Sh(B)^*}"', from=3-2, to=3-3]
	\arrow["\lrcorner"{anchor=center, pos=0.125}, draw=none, from=2-2, to=0]
\end{tikzcd}\]

Indeed, $B_{\mbc}A_B \simeq A$, whence the functoriality of the $\eta$-extension (Proposition \ref{functorialityetaext}) yields $\widetilde{A}^* \simeq \widetilde{B_{\mbc}}^* \widetilde{A_B}^*$. The functor $\nu_{\mbc}^B$ comes from the universal property of the pullback.

Since $A$ is a morphism of fibrations, we know that $A_B$ is a morphism of fibrations; hence, $\widetilde{A_B}^*$ is a morphism of fibrations in virtue of Theorem 6.3.5. \cite{locfib}. Since $\Sh(B)^*_{S_{\mathbb C}}$ is a morphism of fibrations with base change, and we have that $\widetilde{A}^* \simeq \Sh(B)^*_{S_{\mathbb C}}\circ \nu_{\mathbb C}^B \circ \widetilde{A_B}^*$, there just remains to show that $\nu_{\mathbb C}^B$ is a morphism of fibrations if we want to obtain that $\widetilde{A}^*$ also is. 

The preceding discussion highlights the central role of the comparison functor $\nu^B_{\mbc}$, which relates the direct image of the canonical stack to the canonical stack of the direct image: 

% https://q.uiver.app/#q=WzAsNyxbMywxLCJcXG1hdGhjYWx7R30oXFxtYXRoYmIgQykiXSxbMywyLCIoe1xcY2FsIEN9LEopIl0sWzEsMiwiKHtcXGNhbCBDJ30sSicpIl0sWzEsMSwiXFxtYXRoY2Fse0d9KFxcbWF0aGJiIEMgQikiXSxbNCwwLCJcXG1hdGhjYWx7R30oU197XFxtYXRoYmIgQ30pIl0sWzIsMCwiXFxtYXRoY2Fse0d9KFNfe1xcbWF0aGJiIEN9QikiXSxbMCwwLCJcXG1hdGhjYWx7R30oU197XFxtYXRoYmIgQyBCfSkiXSxbMCwxLCJwIiwyXSxbMiwxLCJCIiwyXSxbMywwLCJCX3tcXG1hdGhiYiBDfSIsMV0sWzMsMiwicCciLDJdLFswLDQsIlxcZXRhIiwxXSxbNCwxLCJcXHBpIiwxLHsibGFiZWxfcG9zaXRpb24iOjcwLCJjdXJ2ZSI6LTF9XSxbMyw1LCJcXGV0YSBCIiwxXSxbNSw0LCJCX3tTX3tcXG1hdGhiYiBDfX0iLDFdLFs1LDIsIlxccGknIiwxLHsibGFiZWxfcG9zaXRpb24iOjcwLCJjdXJ2ZSI6LTF9XSxbMyw2LCJcXGV0YSciLDFdLFs2LDQsIlxcd2lkZXRpbGRle0Jfe1xcbWF0aGJiIEN9fSIsMCx7ImN1cnZlIjotNH1dLFs2LDUsIlxcbnVfe0J9XntcXG1hdGhiYiBDfSIsMix7InN0eWxlIjp7ImJvZHkiOnsibmFtZSI6ImRhc2hlZCJ9fX1dLFs2LDIsIlxccGlfQiIsMix7ImxhYmVsX3Bvc2l0aW9uIjo3MCwiY3VydmUiOjF9XSxbNSwxLCIiLDIseyJzdHlsZSI6eyJuYW1lIjoiY29ybmVyIn19XSxbMyw4LCIiLDEseyJsZXZlbCI6MSwic3R5bGUiOnsibmFtZSI6ImNvcm5lciJ9fV1d
\[\begin{tikzcd}
	{\mathcal{G}(S_{\mathbb C B})} && {\mathcal{G}(S_{\mathbb C}B)} && {\mathcal{G}(S_{\mathbb C})} \\
	& {\mathcal{G}(\mathbb C B)} && {\mathcal{G}(\mathbb C)} \\
	& {({\cc'},J')} && {({\cc},J)}
	\arrow["{\nu_{B}^{\mathbb C}}"', dashed, from=1-1, to=1-3]
	\arrow["{\widetilde{B_{\mathbb C}}}", bend left =30, from=1-1, to=1-5]
	\arrow["{\pi_B}"'{pos=0.7}, bend right = 10, from=1-1, to=3-2]
	\arrow["{B_{S_{\mathbb C}}}"{description}, from=1-3, to=1-5]
	\arrow["{\pi'}"{description, pos=0.7}, bend left = 10, from=1-3, to=3-2]
	\arrow["\lrcorner"{anchor=center, pos=0.000125}, draw=none, from=1-3, to=3-4]
	\arrow["\pi"{description, pos=0.7}, bend left = 10, from=1-5, to=3-4]
	\arrow["{\eta'}"{description}, from=2-2, to=1-1]
	\arrow["{\eta B}"{description}, from=2-2, to=1-3]
	\arrow["{B_{\mathbb C}}"{description}, from=2-2, to=2-4]
	\arrow["{p'}"', from=2-2, to=3-2]
	\arrow["\eta"{description}, from=2-4, to=1-5]
	\arrow["p"', from=2-4, to=3-4]
	\arrow[""{name=0, anchor=center, inner sep=0}, "B"', from=3-2, to=3-4]
	\arrow["\lrcorner"{anchor=center, pos=0.000125}, draw=none, from=2-2, to=0]
\end{tikzcd}\]

This indexed weak geometric morphism can be understood by the means of the indexed $2$-category 
    \[
    \textup{RelTrivSites}: \mathbf{Sites}^{\textup{op}} \to \mathbf{CAT}
    \]
which sends a site $({\cc},J)$ to the bicategory of fibrations over ${\cc}$ endowed with their Giraud topologies relative to $J$ and morphisms of fibrations between them; it acts on a morphism of sites $B : ({\cc'},J') \to ({\cc},J)$ by sending a fibration $\mathcal{G}(\mathbb C)$ (endowed with its Giraud topology relative to $J$) to the fibration $\mathcal{G}(\mathbb CB)$ (endowed with its Giraud topology relative to $J'$). 

Notice that we could restrict our attention to fibered categories over the site, since these are equivalent to relative trivial sites over the base site. However, performing base change along a morphism of sites (or more generally, along a continuous functor) naturally induces continuous functors for the associated Giraud topologies (cf. Proposition \ref{pullbackcontinuousalongfib}). For this reason, it is useful to regard such indexed categories as trivial relative sites.

For each site $({\cc},J)$, we have a $2$-functor

$$S^{({\cc},J)} : \textup{RelTrivSites}({\cc},J) \to \textup{RelTrivSites}({\cc},J)$$ 
sending a relative trivial site to its canonical stack, and a morphism of fibrations to its $\eta$-extension (which is a morphism of fibrations, see Theorem 6.3.5 \cite{locfib} ). This does not a priori constitute an indexed $2$-natural transformation from $\textup{RelTrivSites}$ to itself; indeed, in the following square:

% https://q.uiver.app/#q=WzAsNCxbMCwxLCJSZWxUcml2U2l0ZXMoe1xcY2FsIEN9LEopIl0sWzEsMSwiUmVsVHJpdlNpdGVzKHtcXGNhbCBDJ30sSicpIl0sWzAsMCwiUmVsVHJpdlNpdGVzKHtcXGNhbCBDfSxKKSJdLFsxLDAsIlJlbFRyaXZTaXRlcyh7XFxjYWwgQyd9LEonKSJdLFsyLDAsIlNeeyh7XFxjYWwgQ30sSil9Il0sWzMsMSwiU157KHtcXGNhbCBDJ30sSicpfSJdLFsxLDAsIkZfKiJdLFszLDIsIkZfKiIsMl0sWzIsMSwiXFxudV5GIiwxLHsibGV2ZWwiOjJ9XV0=
\[\begin{tikzcd}
	{\textup{RelTrivSites}({\cc},J)} & {\textup{RelTrivSites}({\cc'},J')} \\
	{\textup{RelTrivSites}({\cc},J)} & {\textup{RelTrivSites}({\cc'},J')}
	\arrow["{S^{({\cc},J)}}", from=1-1, to=2-1]
	\arrow["{\nu^B}"{description}, Rightarrow, from=1-1, to=2-2]
	\arrow["{B_*}"', from=1-2, to=1-1]
	\arrow["{S^{({\cc'},J')}}", from=1-2, to=2-2]
	\arrow["{B_*}", from=2-2, to=2-1]
\end{tikzcd}\]

\noindent we only have the comparison arrow $\nu^B$ as mentioned just before, which does not a priori need to be an isomorphism. The fact that it is pointwise a morphism of fibrations is equivalent to $\widetilde{B_{\mathbb C'}}$ also being for each $\cc'$-indexed category $\mathbb C'$: this comes from the fact that $B_{S_{\mathbb C'}} \nu^B_{\mathbb C'} \simeq \widetilde{B_{\mathbb C'}}$ and $B_{S_{\mathbb C'}}$ reflects cartesian arrows(Proposition \ref{projectionreflectcart}).

\begin{remark}
Let $({\cc},J)$ and $({\cc'},J')$ be cartesian sites, $\mathbb C$ and $\mathbb C'$ cartesian indexed categories, a morphism of sites $B$ and a morphism of fibrations $A$, as pictured:

  % https://q.uiver.app/#q=WzAsNCxbMCwxLCJ7XFxjYWwgQ30iXSxbMSwxLCJ7XFxjYWwgQyd9Il0sWzEsMCwiXFxtYXRoY2Fse0d9KFxcbWF0aGJiIEMnKSJdLFswLDAsIlxcbWF0aGNhbHtHfShcXG1hdGhiYiBDKSJdLFswLDEsIkIiLDJdLFszLDAsInAiLDJdLFsyLDEsInAnIl0sWzMsMiwiQSJdXQ==
\[\begin{tikzcd}
	{\mathcal{G}(\mathbb C)} & {\mathcal{G}(\mathbb C')} \\
	{{\cc}} & {{\cc'}}
	\arrow["A", from=1-1, to=1-2]
	\arrow["p"', from=1-1, to=2-1]
	\arrow["{p'}", from=1-2, to=2-2]
	\arrow["B"', from=2-1, to=2-2]
\end{tikzcd}\]
We have the factorization of $A$ as $A \simeq A_B \circ B_{\mbc '}$ (as in \ref{diag:pullback_site}) and the functor $A$ preserves finite limits if and only if $A_B$ does. Indeed, one can easily check that $B_{\mbc '}$ preserves finite limits fibrewise, and postcomposition with morphisms of cartesian fibrations reflects the preservation of finite limits (cf. Proposition \ref{imdirectecartreflectlimfin}). Hence, $A$ preserving finite limits is equivalent to the preservation of finite limits by $A_B$.

In the particular case of $\nu_{\mathbb C'}^B$, the previous discussion yields that $\widetilde{B_{\mathbb C'}}$ being cartesian is equivalent to $\nu_{\mathbb C'}^B$ also being, as $B_{S_{\mathbb C'}} \nu^B_{\mathbb C'} \simeq \widetilde{B_{\mathbb C'}}$ and $B_{S_{\mathbb C'}}$ reflects the preservation of finite limits. 
\end{remark}

In order to obtain that $\nu_{\mathbb C'}^B$ is a morphism of fibrations, the following result will be useful. It is an adaptation of Theorem 6.3.2 \cite{locfib} to the case of local fibrations (see Definition 3.2.1 \cite{locfib}) with base change: it gives a characterization of the continuous functors between local fibrations (over different base sites) which induce morphisms of fibrations with base change at the level of the associated canonical stacks. Since fibrations are particular cases of local fibrations, the result also notably applies to fibrations.

\begin{thm}\label{charactgeomtrelmorphfib}
Let $p$ and $p'$ be local fibrations, $B$ a morphism of sites and $A$ a continuous functor making the following square commutative (up to isomorphism):
% https://q.uiver.app/#q=WzAsNCxbMCwxLCIoe1xcY2FsIEN9LEopIl0sWzEsMSwiKHtcXGNhbCBDfScsSicpIl0sWzAsMCwiKHtcXGNhbCBEfSxLKSJdLFsxLDAsIih7XFxjYWwgRH0nLEsnKSJdLFswLDEsIkIiLDJdLFsyLDAsInAiLDJdLFszLDEsInAnIl0sWzIsMywiQSJdLFsyLDEsIlxcc2ltZXEiLDEseyJzdHlsZSI6eyJib2R5Ijp7Im5hbWUiOiJub25lIn0sImhlYWQiOnsibmFtZSI6Im5vbmUifX19XV0=
\[\begin{tikzcd}
	{({\cd},K)} & {({\cd}',K')} \\
	{({\cc},J)} & {({\cc}',J')}
	\arrow["A", from=1-1, to=1-2]
	\arrow["p"', from=1-1, to=2-1]
	\arrow["\simeq"{description}, draw=none, from=1-1, to=2-2]
	\arrow["{p'}", from=1-2, to=2-2]
	\arrow["B"', from=2-1, to=2-2]
\end{tikzcd}\]
Then the following are equivalent:

\begin{enumerate}[(i)]
    \item The continuous functor $A$ is a morphism of local fibrations with base change (i.e. it sends locally cartesian arrows for $p$ on locally cartesian arrows for $p'$).
    \item The functor $\widetilde{A}_B^* : \mathcal{G}(S_{C_p}) \to \mathcal{G}(S_{\Sh(B)C_{p'}})$ is a morphism of fibrations and is the indexed left adjoint of $\widetilde{A}_*^B$, which is the obvious factorization of $\widetilde{A}_*$ through $\cg(S_{\Sh(B)C_{p'}})$; that is, they constitute together an indexed weak geometric morphism $\widetilde{A}_B : S_{\Sh(B)C_{p'}} \to S_{C_p}$.
    \item The functor $\widetilde{A}^* : \mathcal{G}(S_{C_p}) \to \mathcal{G}(S_{C_{p'}})$ is a morphism of fibrations with base change.
\end{enumerate}
\noindent
Moreover, for $A$ a morphism of sites, the following conditions are equivalent:

\begin{enumerate}[(i)]
    \item $A$ is a morphism of local fibrations with base change (it sends locally cartesian arrows for $p$ on locally cartesian arrows for $p'$).
    \item  $\widetilde{A}_B^* : \mathcal{G}(S_{C_p}) \to \mathcal{G}(S_{\Sh(B)C_{p'}})$ is a morphism of cartesian fibrations.
    \item  $\widetilde{A}^* : \mathcal{G}(S_{C_p}) \to \mathcal{G}(S_{C_{p'}})$ is a morphism of cartesian fibrations with base change.
    \item  $\Sh(A)$ is not only a geometric morphism, but also a relative one $$\Sh(A) : [\Sh(B)C_{p'}] \to [C_p].$$
\end{enumerate}
\end{thm}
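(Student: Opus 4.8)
The plan is to establish the equivalence by reducing everything to the fixed-base case (Theorem 6.3.2 and 6.3.5 of \cite{locfib}) via the factorization $A \simeq B_{\mbc} \circ A_B$ developed in Remark \ref{remA_B} and the subsequent discussion. The strategy mirrors that of Proposition \ref{etaextensionbasechangeproperties}(vii), but now at the level of local fibrations rather than merely at the level of fibres, so the work consists in showing that each condition can be transported across the pullback square defining the direct image.

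First I would treat the first block of equivalences. The functoriality of the $\eta$-extension (Proposition \ref{functorialityetaext}) gives $\widetilde{A}^* \simeq \Sh(B)^*_{S_{\mbc}} \circ \nu_{\mbc}^B \circ \widetilde{A_B}^*$, and by Proposition \ref{projectionreflectcart} the factor $\Sh(B)^*_{S_{\mbc}}$ reflects cartesian arrows. This immediately reduces condition (iii) to the statement that $\widetilde{A_B}^*$ is a morphism of fibrations, \emph{provided} one knows that $\nu_{\mbc}^B$ is. Since $A_B$ lives over a \emph{fixed} base $(\cc',J')$, the equivalence between $A_B$ (resp.\ $A$) being a morphism of local fibrations and $\widetilde{A_B}^*$ being a morphism of fibrations is exactly the fixed-base statement of Theorem 6.3.2 of \cite{locfib}; the passage between $A$ and its factor $A_B$ reduces to checking that $B_{\mbc}$ sends locally cartesian arrows to locally cartesian ones, which follows from the fact that it is a pullback projection (compare Proposition \ref{projectionreflectcart} and the fibrewise argument in the Remark after diagram \eqref{diag:pullback_site}). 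The indexed-adjunction statement relating $\widetilde{A}_B^*$ and $\widetilde{A}_*^B$ then follows from Proposition \ref{concreteconditionsgeneral}, since $\widetilde{A}^*$ is a morphism of opfibrations and hence $\widetilde{A}_*$ is automatically indexed.

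For the second block, where $A$ is a morphism of sites, I would argue identically but replace ``morphism of fibrations'' by ``morphism of cartesian fibrations'' throughout, using the cartesian version of the fixed-base theorem and Proposition \ref{imdirectecartreflectlimfin} (which guarantees that $q^F_{\mbc}$, and hence $\Sh(B)^*_{S_{\mbc}}$, reflects finite limits) in place of Proposition \ref{projectionreflectcart}. The crucial last equivalence, with condition (iv) asserting that $\Sh(A)$ is a relative geometric morphism, is obtained by combining the third block of Proposition \ref{etaextensionbasechangeproperties}(vii): being a relative morphism of toposes means precisely that $\widetilde{\phi}$ is an isomorphism, which is there shown equivalent to fibrewise preservation of finite limits by $\widetilde{A}^*$, and in turn to $\widetilde{A}^*$ being a morphism of fibrations with base change.

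\emph{The hard part will be} the verification that $\nu_{\mbc}^B$ is a morphism of fibrations, since this is exactly the comparison between the direct image of the canonical stack and the canonical stack of the direct image that the surrounding discussion flags as non-trivial. I expect to handle this not by a direct computation but by invoking that $B_{S_{\mbc}} \circ \nu_{\mbc}^B \simeq \widetilde{B_{\mbc}}$ together with the reflection of cartesian arrows by $B_{S_{\mbc}}$, thereby reducing the cartesianness of $\nu_{\mbc}^B$ to that of $\widetilde{B_{\mbc}}$; the latter is the $\eta$-extension of the pullback projection $B_{\mbc}$ and can be analysed through the concrete filtering conditions of Proposition \ref{concreteconditionsgeneral} applied to the pair $(B_{\mbc}, \id)$. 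This is precisely the structural subtlety that the general (non-cartesian) setting forces upon us and that the fibrewise finite-limit argument of the cartesian case sidesteps.
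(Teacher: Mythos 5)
Your proposal takes a genuinely different route from the paper, and as it stands it has real gaps. The paper never uses the site-level factorization $A \simeq B_{\mbc}\circ A_B$ or the comparison functor $\nu_{\mbc}^B$ in this proof. It proves (ii)$\Leftrightarrow$(iii) purely at the topos level, from the factorization $\widetilde{A}^* \simeq B_{S_{C_{p'}}}\widetilde{A}_B^*$ of Remark \ref{remA_B}: this factorization is available in full generality because the pullback of the canonical stack $S_{C_{p'}}$ along $\Sh(B)^*$ \emph{is} the canonical stack $S_{\Sh(B)C_{p'}}$ (Definition \ref{canstackresume}), and the pullback projection reflects cartesian arrows (Proposition \ref{projectionreflectcart}). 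It then proves (i)$\Leftrightarrow$(iii) directly, by redoing the fixed-base proof of Theorem 6.3.2 of \cite{locfib}, i.e.\ verifying the conditions of Proposition \ref{concreteconditionsgeneral}(iii) using that $B$ is a cover-preserving filtering functor. Your route instead needs the $\eta$-extension $\widetilde{A_B}^*$ of the site-level factor $A_B$ and the comparison $\nu_{\mbc}^B$; but that machinery is developed in the paper only for \emph{fibrations endowed with their Giraud topologies} (Proposition \ref{pullbackcontinuousalongfib} and the discussion around diagram \ref{diag:pullback_site}). The theorem concerns arbitrary local fibrations $p' : (\cd',K') \to (\cc',J')$: you do not say what topology the site-level pullback carries, why $A_B$ is continuous for it, why its projection to $(\cc,J)$ is a comorphism of sites, nor consequently how $\nu_{\mbc}^B$ is even defined in this generality. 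Note also that condition (ii) of the theorem concerns the factorization $\widetilde{A}_B^*$ of $\widetilde{A}^*$ through the topos-level pullback, not the $\eta$-extension $\widetilde{A_B}^*$ of $A_B$; identifying the two is precisely what requires $\nu_{\mbc}^B$, so you cannot elide the distinction.

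Second, your treatment of the declared hard part is both circular in spirit and incomplete. In the paper, the cartesianness of $\widetilde{B_{\mbc}}$ and $\nu_{\mbc}^B$ is a \emph{consequence} of this very theorem (Proposition \ref{comparisondirectimage} and the preceding discussion), so it cannot be an input; your plan to establish it independently by checking the filtering conditions of Proposition \ref{concreteconditionsgeneral}(iii) for the pair $(B_{\mbc},\id)$ is essentially the same verification that the paper performs directly for $(A,\phi)$ when proving (i)$\Leftrightarrow$(iii) — the detour through $\nu_{\mbc}^B$ relocates the hard work rather than avoiding it, and you never carry it out. Finally, even granting all of this, your factorization only yields one direction of the equivalences: to deduce (i) or (ii) from (iii) you would need $\nu_{\mbc}^B$ to \emph{reflect} cartesian arrows (reflection is known only for the pullback projections $\Sh(B)^*_{S_{\mbc}}$ and $B_{S_{\mbc}}$, via Proposition \ref{projectionreflectcart}), and this is nowhere addressed in your argument.
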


\begin{proof}
The equivalence between conditions (iii) and (ii) in the first part of the theorem follows easily by exploiting the factorization $\widetilde{A}^*  \simeq B_{S_{C_{p'}}}\widetilde{A}_B^*$ and the fact that $B_{S_{C_{p'}}}$ reflects cartesian arrows (Proposition \ref{projectionreflectcart}), while the equivalence between conditions (i) and (iii) can be proved in the same way as for the case without base change, by adjusting the proof using the characterization of $B$ as a cover-preserving filtering functor to show the conditions of Proposition \ref{concreteconditionsgeneral} (iii).

The second part of the theorem is an instantiation of the first in the case of $A$ being a morphism of sites. In this setting, one can exploit the fact that $B_{S_{C_{p'}}}$ reflects not only cartesian arrows but also finite limits, and the equivalence between indexed geometric morphisms and relative geometric morphisms (subsection 6.1 of \cite{locfib}). Note that this is not the case for indexed weak geometric morphisms and relative weak geometric morphisms (cf. subsection 6.1 of \cite{locfib}).
\end{proof}

\begin{cor}
Let $p : (\cg(\mbc),K) \to (\cc,J)$ and $p' : (\cg(\mbc '),K') \to (\cc',J')$ two relative sites with $B : (\cc,J) \to (\cc',J')$ a morphism of sites, and $A: (\cg(\mbc),K) \to (\cg(\mbc '),K')$ a morphism of fibrations. We have:

\begin{enumerate}[(i)]
    \item If $A$ is continuous then $(\widetilde{A}^*_B \dashv \widetilde{A}_*^B)$ is an indexed weak geometric morphism $\widetilde{A}_B : S_{\Sh(B)C_{p'}} \to S_{C_p}$.
    \item If $A$ is a morphism of sites then $\Sh(A) : [\Sh(B)C_{p'}] \to [C_p]$ is a relative geometric morphism.
\end{enumerate}
\end{cor}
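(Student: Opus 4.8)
The plan is to obtain this corollary as a direct instantiation of Theorem \ref{charactgeomtrelmorphfib}, which already establishes the desired characterization in the broader setting of local fibrations with base change. Since $p : (\cg(\mbc),K) \to (\cc,J)$ and $p' : (\cg(\mbc '),K') \to (\cc',J')$ are genuine fibrations, they are in particular local fibrations (Definition 3.2.1 of \cite{locfib}), so the commutative square formed by $A$, $B$, $p$ and $p'$ is an instance of the data to which the theorem applies. The only hypothesis of the theorem that is not literally among our assumptions is that $A$ be a \emph{morphism of local fibrations with base change} (condition (i) in each of the two parts); continuity of $A$ in part (i), and $A$ being a morphism of sites in part (ii), are assumed verbatim, and the two conclusions we want are precisely condition (ii) of the first part and condition (iv) of the second part.

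The crux of the argument is therefore to verify that our hypothesis that $A$ is a \emph{morphism of fibrations} with base change (i.e. that it sends $p$-cartesian arrows to $p'$-cartesian arrows) implies that it is a morphism of local fibrations with base change, namely that it preserves locally cartesian arrows. First I would recall that, for a genuine fibration, every arrow factors as a cartesian arrow followed by a vertical one, and that the locally cartesian arrows are exactly those whose vertical part becomes invertible after restriction along a covering. Preservation of the cartesian factor is immediate from $A$ being a morphism of fibrations, so the point reduces to checking that $A$ carries such \emph{local isomorphisms} to local isomorphisms; this is where the continuity of $A$ (assumed in (i), and automatic from being a morphism of sites in (ii)), together with the cover-preservation of $B$, enters, through the cofinality description of continuous functors recalled in Proposition \ref{cofinalitycond}. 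This step is the only genuine obstacle: it is precisely the content of the remark preceding the theorem that ``fibrations are particular cases of local fibrations'', and it is exactly here that one uses that $p$ and $p'$ are honest fibrations rather than merely local ones.

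Granting this identification, the rest is formal. For (i), $A$ is continuous by hypothesis, so the first part of Theorem \ref{charactgeomtrelmorphfib} applies and the implication (i)$\Rightarrow$(ii) yields that $(\widetilde{A}^*_B \dashv \widetilde{A}_*^B)$ constitutes an indexed weak geometric morphism $\widetilde{A}_B : S_{\Sh(B)C_{p'}} \to S_{C_p}$, which is the assertion. For (ii), $A$ is now a morphism of sites, so the second part of the theorem applies; the implication from $A$ being a morphism of local fibrations with base change to condition (iv) gives that $\Sh(A) : [\Sh(B)C_{p'}] \to [C_p]$ is a relative geometric morphism. Thus, once the fibration-versus-local-fibration reduction of the second paragraph is in place, no further computation is required and the corollary follows.
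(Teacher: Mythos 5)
Your proposal is correct and follows essentially the same route as the paper: both reduce the corollary to Theorem \ref{charactgeomtrelmorphfib} by observing that fibrations are local fibrations and that a continuous morphism of fibrations is a morphism of local fibrations (your observation that morphisms of sites are automatically continuous, so part (ii) is covered, is also right). The one difference is that the crux you labor over in your second paragraph — that continuity upgrades preservation of cartesian arrows to preservation of locally cartesian arrows — is not proved in the paper at all but simply cited as Proposition 4.4.3 of \cite{locfib}; your sketch of it (factorization into vertical and cartesian parts, then handling the local isomorphisms via Proposition \ref{cofinalitycond}) is plausible but left incomplete, so if you do not invoke the citation you would still owe a full argument for that step.
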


\begin{proof}
As fibrations are local fibrations, and continuous morphisms of fibrations are morphisms of local fibrations (Proposition 4.4.3 \cite{locfib}), the statements of the corollary follow as direct consequences of the previous theorem.
\end{proof}

\begin{remark}
A full concrete characterization of the lax morphisms of sites between arbitrary comorphisms of sites with base change inducing squares of commuting geometric morphisms has been given in section 2 of \cite{CaramelloOsmond}.
\end{remark}

Since fibrations are local fibrations and the projection functor from a pullback is a morphism of fibrations with base change (cf. Proposition \ref{projectionreflectcart}), the previous result allows us to obtain the following proposition:

\begin{prop}\label{comparisondirectimage}
Let $p' : \cg(\mbc ') \to (\cc ',J')$ a fibration, $B : (\cc,J) \to (\cc ',J')$ a morphism of sites, and $B_{\mbc '}$ the projection functor in the following pullback diagram:
% https://q.uiver.app/#q=WzAsNCxbMCwxLCIoe1xcY2FsIEN9LEopIl0sWzEsMSwiKHtcXGNhbCBDfScsSicpIl0sWzAsMCwiKFxcbWF0aGNhbHtHfShcXG1hdGhiYiBDJ0IpLEdpcl97Sn1ee1xcbWF0aGJiIEMnQn0pIl0sWzEsMCwiKFxcbWF0aGNhbHtHfShcXG1hdGhiYiBDJyksR2lyX3tKJ31ee1xcbWF0aGJiIEMnfSkiXSxbMCwxLCJCIiwyXSxbMiwwLCJwIiwyXSxbMywxLCJwJyJdLFsyLDMsIkJfe1xcbWF0aGJiIEMnfSJdLFsyLDEsIlxcc2ltZXEiLDEseyJzdHlsZSI6eyJib2R5Ijp7Im5hbWUiOiJub25lIn0sImhlYWQiOnsibmFtZSI6Im5vbmUifX19XSxbMiwxLCIiLDEseyJzdHlsZSI6eyJuYW1lIjoiY29ybmVyIn19XV0=
\[\begin{tikzcd}
	{(\mathcal{G}(\mathbb C'B),Gir_{\mathbb C'B})} & {(\mathcal{G}(\mathbb C'),Gir_{\mathbb C'})} \\
	{({\cc},J)} & {({\cc}',J')}
	\arrow["{B_{\mathbb C'}}", from=1-1, to=1-2]
	\arrow["p"', from=1-1, to=2-1]
	\arrow["\simeq"{description}, draw=none, from=1-1, to=2-2]
	\arrow["\lrcorner"{anchor=center, pos=0.125}, draw=none, from=1-1, to=2-2]
	\arrow["{p'}", from=1-2, to=2-2]
	\arrow["B"', from=2-1, to=2-2]
\end{tikzcd}\]
Then the functors $\widetilde{B_{\mathbb C'}}$ and $\nu_{\mathbb C'}^B$ in the commutative diagram
% https://q.uiver.app/#q=WzAsNSxbMSwyLCIoe1xcY2FsIEN9LEopIl0sWzIsMiwiKHtcXGNhbCBDfScsSicpIl0sWzAsMCwiKEdpcl97Sn0oXFxtYXRoYmIgQydCKS9DX3twfV4qbF97Sn0pIl0sWzIsMSwiKEdpcl97Sid9KFxcbWF0aGJiIEMnKS9DX3twJ31eKmxfe0onfSkiXSxbMSwxLCIoR2lyX3tKJ30oXFxtYXRoYmIgQycpL0Nfe3AnfV4qbF97Sid9QikiXSxbMCwxLCJCIiwyXSxbMywxLCJcXHBpJyJdLFsyLDMsIlxcd2lkZXRpbGRle0Jfe1xcbWF0aGJiIEMnfX0iLDAseyJjdXJ2ZSI6LTJ9XSxbMiw0LCJcXG51XkJfe1xcbWF0aGJiIEMnfSIsMCx7InN0eWxlIjp7ImJvZHkiOnsibmFtZSI6ImRhc2hlZCJ9fX1dLFs0LDAsIlxccGlfQiciLDJdLFs0LDMsIkJfe1Nfe1xcbWF0aGJiIEMnfX0iXSxbMiwwLCJcXHBpIiwyXSxbNCw1LCIiLDAseyJsZXZlbCI6MSwic3R5bGUiOnsibmFtZSI6ImNvcm5lciJ9fV1d
\[\begin{tikzcd}
	{(\mathbf{Gir}(\mathbb C'B)/C_{p}^*l_{J})} \\
	& {(\mathbf{Gir}(\mathbb C')/C_{p'}^*l_{J'}B)} & {(\mathbf{Gir}(\mathbb C')/C_{p'}^*l_{J'})} \\
	& {({\cc},J)} & {({\cc}',J')}
	\arrow["{\nu^B_{\mathbb C'}}", dashed, from=1-1, to=2-2]
	\arrow["{\widetilde{B_{\mathbb C'}}}", bend left = 18, from=1-1, to=2-3]
	\arrow["\pi"', from=1-1, to=3-2]
	\arrow["{B_{S_{\mathbb C'}}}", from=2-2, to=2-3]
	\arrow["{\pi_B'}"', from=2-2, to=3-2]
	\arrow["{\pi'}", from=2-3, to=3-3]
	\arrow[""{name=0, anchor=center, inner sep=0}, "B"', from=3-2, to=3-3]
	\arrow["\lrcorner"{anchor=center, pos=0.125}, draw=none, from=2-2, to=0]
\end{tikzcd}\]
satisfy the following properties:

\begin{enumerate}[(i)]
    \item The functor $\widetilde{B_{\mathbb C'}}$ is a morphism of fibrations with base change.

    \item The functor $\nu_{\mathbb{C}'}^B$ acts by sending an object $(F,E,\alpha:F \to C_p^*l_J(E))$ of $(\mathbf{Gir}(\mbc'B)/C_p^*l_J)$ to the object $(\Sh(B_{\mbc'})^*(F),E,\psi_E\Sh(B_{\mbc'})^*(\alpha))$ of ${(\mathbf{Gir}(\mathbb C')/C_{p'}^*l_{J'}B)}$, where $\psi$ is the natural transformation $\Sh(B_{\mathbb C})^*C_{p}^* \Rightarrow  C_{p'}^*\Sh(B)^*$ induced by the natural isomorphism $p'B_{\mathbb C'}\simeq Bp$. 
    
    \item The functor $\nu_{\mathbb{C}'}^B$ is the $\eta$-extension of $\eta_{\mathbb C'} B$, that is ($\widetilde{\eta_{\mbc'}B}^*$), as described in the diagram: 

    % https://q.uiver.app/#q=WzAsMyxbMSwxLCJcXG1hdGhjYWx7R30oXFxtYXRoYmIgQydCKSJdLFswLDAsIlxcbWF0aGNhbHtHfShTX3tcXG1hdGhiYiBDJ31CKSJdLFsyLDAsIlxcbWF0aGNhbHtHfShTX3tcXG1hdGhiYiBDJ0J9KSJdLFswLDEsIlxcZXRhX3tcXG1hdGhiYiBDJ31CIl0sWzIsMSwiXFxudV97XFxtYXRoYmIgQyd9XkIiLDJdLFswLDIsIlxcZXRhX3tcXG1hdGhiYiBDJyBCfSIsMl0sWzAsNCwiXFxzaW1lcSIsMSx7InNob3J0ZW4iOnsidGFyZ2V0IjoyMH0sInN0eWxlIjp7ImJvZHkiOnsibmFtZSI6Im5vbmUifSwiaGVhZCI6eyJuYW1lIjoibm9uZSJ9fX1dXQ==
        \[\begin{tikzcd}
	{\mathcal{G}(S_{\mathbb C'}B)} && {\mathcal{G}(S_{\mathbb C'B})} \\
	& {\mathcal{G}(\mathbb C'B)}
	\arrow[""{name=0, anchor=center, inner sep=0}, "{\nu_{\mathbb C'}^B}"', from=1-3, to=1-1]
	\arrow["{\eta_{\mathbb C'}B}", from=2-2, to=1-1]
	\arrow["{\eta_{\mathbb C' B}}"', from=2-2, to=1-3]
	\arrow["\simeq"{description}, draw=none, from=2-2, to=0]
        \end{tikzcd}\]
    It is a morphism of fibrations and opfibrations (see subsection 6.1 of \cite{locfib}), and constitutes the inverse image part of an indexed weak geometric morphism:

    $$\widetilde{\eta_{\mbc'}B} : S_{\Sh(B)C_{p'}} \to S_{C_{p}}.$$

    \item Denoting by $\overline{\psi}_G$ the mate of the natural transformation $\Sh(B_{\mathbb C})^*C_{p}^* \Rightarrow  C_{p'}^*\Sh(B)^*$ induced by the natural isomorphism $p'B_{\mathbb C'}\simeq Bp$, the (indexed) direct image $\widetilde{\eta_{\mbc'}B}_*$ of $\widetilde{\eta_{\mbc'}B}$ acts by sending an object $(F',E,\alpha' : F' \to C_{p'}^*\Sh(B)^*(E'))$ of ${(\mathbf{Gir}(\mathbb C')/C_{p'}^*l_{J'}B)}$ to the object $(F,E,\alpha)$ of $(\mathbf{Gir}(\mbc'B)/C_p^*l_J)$ pictured in the following pullback diagram:

% https://q.uiver.app/#q=WzAsNCxbMCwxLCJcXFNoKEJfe1xcbWF0aGJiIEN9KV8qKEYnKSJdLFsxLDEsIlxcU2goQl97XFxtYXRoYmIgQ30pXypDX3twJ31eKlxcU2goQileKihHKSJdLFsxLDAsIkNfe3B9XiooRykiXSxbMCwwLCJGIl0sWzAsMSwiXFxTaChCX3tcXG1hdGhiYiBDfSlfKihcXGFscGhhJykiLDIseyJvZmZzZXQiOjF9XSxbMiwxLCJcXHBzaV9HIl0sWzMsMiwiXFxhbHBoYSJdLFszLDBdLFszLDQsIiIsMSx7ImxldmVsIjoxLCJzdHlsZSI6eyJuYW1lIjoiY29ybmVyIn19XV0=
\[\begin{tikzcd}
	F & {C_{p}^*(E)} \\
	{\Sh(B_{\mathbb C})_*(F')} & {\Sh(B_{\mathbb C})_*C_{p'}^*\Sh(B)^*(E)}
	\arrow["\alpha", from=1-1, to=1-2]
	\arrow[from=1-1, to=2-1]
	\arrow["{\overline{\psi}_E}", from=1-2, to=2-2]
	\arrow[""{name=0, anchor=center, inner sep=0}, "{\Sh(B_{\mathbb C})_*(\alpha')}"', shift right, from=2-1, to=2-2]
	\arrow["\lrcorner"{anchor=center, pos=0.125}, draw=none, from=1-1, to=0]
\end{tikzcd}\]   
\end{enumerate}
\end{prop}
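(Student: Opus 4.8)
The four assertions all revolve around the single comparison functor \(\nu^B_{\mbc'}\), and the unifying idea of my plan is to recognise it as an instance of the \(\eta\)-extension machinery of Section \ref{section3} applied to the projection \(B_{\mbc'}\) of the defining pullback. The point is that \(B_{\mbc'}\) is a \emph{continuous morphism of fibrations with base change}: it sends cartesian arrows to cartesian ones, being the projection of a pullback of fibrations (as recalled just before the statement, cf. Proposition \ref{projectionreflectcart}), and it is continuous for the Giraud topologies by Proposition \ref{pullbackcontinuousalongfib}. Everything else should then follow by instantiating the general results about \(\widetilde{(-)}^*\) to \(A=B_{\mbc'}\), together with the factorisation of Remark \ref{remA_B} and the idempotency of the canonical stack construction.

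For (i), since \(B_{\mbc'}\) is a continuous morphism of (local) fibrations with base change, the equivalence (i)\(\Leftrightarrow\)(iii) of Theorem \ref{charactgeomtrelmorphfib} immediately gives that its \(\eta\)-extension \(\widetilde{B_{\mbc'}}=\widetilde{B_{\mbc'}}^*\) is a morphism of fibrations with base change. For (ii), I would read off the action of \(\nu^B_{\mbc'}\) from Definition \ref{defnetaextbasechange} and the factorisation \(\widetilde{B_{\mbc'}}^*\simeq B_{S_{\mbc'}}\circ\nu^B_{\mbc'}\) of Remark \ref{remA_B}: the full extension \(\widetilde{B_{\mbc'}}^*\) sends \((F,E,\alpha)\) to \((\Sh(B_{\mbc'})^*(F),\Sh(B)^*(E),\psi_E\Sh(B_{\mbc'})^*(\alpha))\) with \(\psi=\widetilde{\phi}\) the mate of the pullback isomorphism \(p'B_{\mbc'}\simeq Bp\), and \(\nu^B_{\mbc'}\) is precisely the factor \(\widetilde{B_{\mbc'}}^*_B\) that leaves the base component \(E\) over \(\cc\) untouched while \(B_{S_{\mbc'}}\) applies \(\Sh(B)^*\) to it. This is a matter of unwinding definitions.

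The heart of the proof is (iii). I would first identify \(\eta_{\mbc'}B\) as a continuous morphism of fibrations \emph{over} \(\cc\): it is the base change along \(B\) of the morphism of fibrations \(\eta_{\mbc'}\) (Proposition \ref{etamorphfib}), base change preserving this property, and it is continuous by Theorem \ref{trivialsitescont}. Its \(\eta\)-extension then maps the canonical stack \(\cg(S_{\mbc'B})\) of the domain comorphism \(p\) to the canonical stack of the codomain comorphism \(\pi'_B:\cg(S_{\mbc'}B)\to\cc\), and the crucial step is to identify this latter with \(\cg(S_{\mbc'}B)\) itself. Indeed, by Remark \ref{remA_B} the target \(\cg(S_{\mbc'}B)\) is the canonical stack \(\cg(S_{\Sh(B)C_{p'}})\), so by Theorem \ref{canonicalrelativesiteMoritaequi} we have \(C_{\pi'_B}\simeq\Sh(B)C_{p'}\); hence the canonical stack construction is idempotent up to equivalence and reproduces \(\cg(S_{\mbc'}B)\). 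Under this identification the canonical comparison \(\Phi\) of Proposition \ref{etaextensionbasechangeproperties}(iv) (an isomorphism here, as the base change is the identity) yields \(\widetilde{\eta_{\mbc'}B}^*\circ\eta_{\mbc'B}\simeq\eta_{\mbc'}B\), which together with the density of \(\eta_{\mbc'B}\) pins down \(\widetilde{\eta_{\mbc'}B}^*\simeq\nu^B_{\mbc'}\). That \(\nu^B_{\mbc'}\) is a morphism of fibrations then follows from (i) since \(B_{S_{\mbc'}}\) reflects cartesian arrows (Proposition \ref{projectionreflectcart}) in the relation \(\widetilde{B_{\mbc'}}^*\simeq B_{S_{\mbc'}}\circ\nu^B_{\mbc'}\); that it is a morphism of opfibrations and the inverse-image part of an indexed weak geometric morphism \(\widetilde{\eta_{\mbc'}B}:S_{\Sh(B)C_{p'}}\to S_{C_p}\) follows from the Corollary to Theorem \ref{charactgeomtrelmorphfib} applied to the continuous morphism of fibrations \(\eta_{\mbc'}B\).

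Finally, (iv) describes the right adjoint \(\widetilde{\eta_{\mbc'}B}_*=(\nu^B_{\mbc'})_*\). I would instantiate the pullback formula of Proposition \ref{etaextensionbasechangeproperties}(i) for \(A=B_{\mbc'}\), which computes \(\widetilde{B_{\mbc'}}_*\) as the pullback of \(\Sh(B_{\mbc'})_*(F')\to\Sh(B_{\mbc'})_*C_{p'}^*(-)\leftarrow C_p^*\Sh(B)_*(-)\), and then pass to the factorisation of \(\widetilde{B_{\mbc'}}_*\) through \(\cg(S_{\Sh(B)C_{p'}})\) promised in Theorem \ref{charactgeomtrelmorphfib}(ii); on an object \((F',E,\alpha')\) of \(\cg(S_{\mbc'}B)\) this gives exactly the displayed pullback square, with \(\overline{\psi}_E\) the mate of \(\psi\). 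The main obstacle throughout is bookkeeping: one must keep coherent the many avatars of the canonical stack (the base-site versions over \(\cc\) versus the topos-level ones over \(\widehat{\cc}_J\), and the pullback identifications of Remark \ref{remA_B}) and track the natural transformations \(\psi\), \(\overline{\psi}\) and \(\widetilde{\phi}\) correctly. The one genuinely conceptual point is the idempotency identification in (iii): it is what makes the \(\eta\)-extension of \(\eta_{\mbc'}B\) land in \(\cg(S_{\mbc'}B)\) rather than in its a priori larger canonical stack, and thereby realises \(\nu^B_{\mbc'}\) as a bona fide \(\eta\)-extension.
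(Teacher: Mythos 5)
Your proposal matches the paper's proof closely on points (i), (ii) and (iv): (i) is Theorem \ref{charactgeomtrelmorphfib} applied to the continuous morphism of fibrations with base change $B_{\mbc'}$, (ii) is the unwinding of Definition \ref{defnetaextbasechange} against the universal property of the bipullback $(\mathbf{Gir}(\mbc')/C_{p'}^*l_{J'}B)$, and (iv) is the instantiation of the pullback formula of Proposition \ref{etaextensionbasechangeproperties}(i). Your ``idempotency'' observation in (iii) --- that $\cg(S_{\mbc'}B)$ is the site-level canonical stack of $\Sh(B)C_{p'}$, so that by Theorem \ref{canonicalrelativesiteMoritaequi} the canonical stack of $C_{\pi_B'}$ is again $\cg(S_{\mbc'}B)$ --- is indeed needed, implicitly, in the paper's comparison, and you are right to single it out; likewise your appeal to the corollary following Theorem \ref{charactgeomtrelmorphfib} for the indexed weak geometric morphism and the opfibration property is exactly what the paper does.

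The genuine gap is the final inference of (iii). From $\widetilde{\eta_{\mbc'}B}^*\circ\eta_{\mbc'B}\simeq\eta_{\mbc'}B\simeq\nu^B_{\mbc'}\circ\eta_{\mbc'B}$ you conclude $\widetilde{\eta_{\mbc'}B}^*\simeq\nu^B_{\mbc'}$ ``by density of $\eta_{\mbc'B}$'', but density of a morphism of sites only controls what happens after sheafification: from $G_1\eta\simeq G_2\eta$ one gets $\Sh(G_1)^*\simeq\Sh(G_2)^*$, hence $l\circ G_1\simeq l\circ G_2$ for the canonical functor $l$ of the codomain site, and this yields $G_1\simeq G_2$ only when the codomain topology is subcanonical, i.e.\ when $l$ is fully faithful. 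Here the codomain is $(\cg(S_{\mbc'}B),J_{\Sh(B)C_{p'}})$, whose topology is \emph{not} subcanonical: under the Morita equivalence of Theorem \ref{canonicalrelativesiteMoritaequi} its canonical functor is identified with the projection $(F,E,\alpha)\mapsto F$, which is not faithful in general. Nor is the image of $\eta_{\mbc'B}$ categorically dense in $(\mathbf{Gir}(\mbc'B)/C_p^*l_J)$: an object $(F,E,\alpha)$ decomposes as a fibrewise colimit of cocartesian pushforwards of $\eta$-images only after refining along coverings, since an arrow $l(x,c)\to C_p^*l_J(E)$ factors through the canonical one only locally. So the step fails as stated. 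It can be repaired either (a) abstractly, by checking that both functors are fibrewise cocontinuous morphisms of opfibrations ($\nu^B_{\mbc'}$ by your point (ii), $\widetilde{\eta_{\mbc'}B}^*$ because it is an indexed left adjoint) and that such functors \emph{are} determined by their restriction along $\eta_{\mbc'B}$ via the local decomposition just described, or (b), as the paper does, by directly comparing the explicit formula of point (ii) with Definition \ref{defnetaextbasechange}: with identity base change, $\widetilde{\eta_{\mbc'}B}^*$ acts as an inverse image on the first component, as the identity on the second, and by the canonical natural transformation on the third, which is literally the description of $\nu^B_{\mbc'}$ obtained in (ii). Option (b) is the shorter and safer completion.
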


\begin{proof}
Since $B_{\mathbb{C}'}$ is a morphism of fibrations with base change, the previous theorem ensures that $\widetilde{B_{\mathbb{C}'}}$ is also a morphism of fibrations with base change, which proves point (i) of the proposition.

As the fibration $(\mathbf{Gir}(\mathbb C')/C_{p'}^*l_{J'}B)$ is a bipullback, any functor into it is (up to equivalence) determined by its composites with the associated projection functors. A direct computation shows that the functor described in point (ii) indeed corresponds to $\nu_{\mbc'}^B$ after applying these projection functors.

It can be readily verified that $\nu_{\mbc'}^B$ coincides with the $\eta$-extension of $\eta_{\mbc'}B$, by comparing the expression derived in point (ii) with the general construction of $\eta$-extensions. Moreover, since $\eta_{\mbc'}B$ is a continuous morphism of fibrations (see Proposition \ref{trivialsitescont}), the first point of the previous corollary ensures that $\nu_{\mbc'}^B \simeq \widetilde{\eta_{\mbc'}B}^*$ is the inverse image part of an indexed weak geometric morphism. In particular, $\nu_{\mbc'}^B$ is a morphism of opfibrations (see subsection 6.1 of \cite{locfib}). This completes the proof of point (iii).

Point (iv) follows directly from a straightforward computation, using the explicit description of $\widetilde{\eta_{\mbc'}B}^*$ given in point (ii).
\end{proof}

\begin{ex}
It is interesting to describe the behavior of the comparison indexed weak geometric morphism (Proposition \ref{comparisondirectimage}) in the case of an étale topos. Let $f : \cf \to \ce$ be a relative topos and $F$ an object of $\cf$; the projection $\pi_F : \cf/F \to \cf$ is a fibration, and the Giraud topology on it is the canonical topology on the category $\cf/F$ seen as a topos, whence its Giraud topos is itself. The canonical stack of the direct image of $\cf/F$ and the direct image of the canonical stack $S_{\cf/F}$ can be computed as follows:

\begin{itemize}
    \item The direct image of $S_{\cf/F}$ is the canonical stack $S_{f\pi_F}$, whence the direct image of $S_{\cf/F}$ is the canonical stack associated to the geometric morphism $\cf/F \to \cf \to \ce$.

    \item On the other hand, the direct image of the fibration $\cf/F$ along the morphism of sites $f^*$ is the fibration $\ce/f_*(F) \to \ce$. As previously mentioned, its Giraud topos is itself, so that the canonical stack of the direct image is the one associated to the geometric morphism $\ce /f_*(F) \to \ce$.
\end{itemize}

The inverse image of the comparison indexed weak geometric morphism $\widetilde{\eta_{F}f^*}$ between these two relative toposes is readily seen to be given, on the fiber over the terminal object $\mathbf{1}_{\ce}$, by:

% https://q.uiver.app/#q=WzAsNCxbMCwwLCJcXGNlL2ZfKihGKSJdLFsxLDAsIlxcY2YvRiJdLFswLDEsIihFLEYsXFxhbHBoYSA6IEUgXFx0byBmXyooRikpIl0sWzEsMSwiKGZeKihFKSxGLFxcYWxwaGFedCA6IGZeKihFKSBcXHRvIEYpIl0sWzAsMSwiKFxcbnVfe0Z9XntmXip9KV97XFxtYXRoYmZ7MX1fe1xcY2V9fSJdLFsyLDMsIiIsMCx7InN0eWxlIjp7InRhaWwiOnsibmFtZSI6Im1hcHMgdG8ifX19XV0=
\[\begin{tikzcd}[row sep=tiny]
	{\ce/f_*(F)} & {\cf/F} \\
	{(E,F,\alpha : E \to f_*(F))} & {(f^*(E),F,\alpha^t : f^*(E) \to F)}
	\arrow["{(\nu_{F}^{f^*})_{\mathbf{1}_{\ce}}}", from=1-1, to=1-2]
	\arrow[maps to, from=2-1, to=2-2]
\end{tikzcd}\]
\end{ex}

\begin{remark}
Whilst the previous proposition yields an indexed weak geometric morphism $( \widetilde{\eta_{\mathbb C'}B}^* \dashv \widetilde{\eta_{\mbc'}B}_* ) : S_{\Sh(B)C_{p'}} \to S_{C_p}$ where $\widetilde{\eta_{\mbc'}B}^* \simeq \nu_{\mbc'}^B$, such an adjunction is \emph{not}, in general, an indexed geometric morphism. Indeed, this phenomenon already appears in the case of étale toposes, as treated in the previous example. The terminal object of $\ce/f_*(F)$ is the identity morphism on $f_*(F)$, which is sent by $(\nu_{F}^{f^*})_{\mathbf{1}_{\ce}}$ to the unit $f_*f^*(F) \to F$ of the adjunction $(f^* \dashv f_*)$ at $F$; in general, this unit is not an isomorphism. As a consequence, $(\nu_{F}^{f^*})_{\mathbf{1}_{\ce}}$ does not preserve the terminal object, and the functor $(\nu_{F}^{f^*})$ is not an indexed geometric morphism.
\end{remark}

\section{Pullbacks of relative presheaf toposes}\label{section4}

\subsection{The cartesian case}\label{subsect1}

In this subsection, we present Giraud's computation (see \cite{giraud.classifying}) of the pullback of a presheaf topos on a \emph{cartesian stack}. Working in the cartesian setting makes it possible to formulate certain conditions - in particular those related to flatness  -  in a structural way already at the level of sites. Accordingly, his proof relies almost entirely on his cartesian version of Diaconescu's theorem. 

His result is the following: for a cartesian stack on a topos (with its canonical topology) $p: \cg (\mbc)\to \ce$ and a relative topos $f : \cf \to \ce$, the following square is a bipullback of toposes:

\[
\begin{tikzcd}
	{\mathbf{Gir}(f^*\mathbb C)} & {\mathbf{Gir}(\mbc)} \\
	{{{\cal F}}} & {{{\cal E}}}
	\arrow["{f^{\mathbb C}}", from=1-1, to=1-2]
	\arrow["{C_{p'}}"', from=1-1, to=2-1]
	\arrow["\lrcorner"{anchor=center, pos=0.000125}, draw=none, from=1-1, to=2-2]
	\arrow["{C_p}", from=1-2, to=2-2]
	\arrow["f"', from=2-1, to=2-2]
\end{tikzcd}
\]

As mentioned earlier, the key ingredient of Giraud's proof is its cartesian relative Diaconescu's theorem. Indeed, as in the absolute case, the cartesian setting allows for a convenient expression of flatness conditions, yielding a very manageable description of geometric morphisms at the level of sites: they are precisely the cartesian morphisms of fibrations. This is the content of the following theorem:

\begin{thm}[Proposition 2.4 \cite{giraud.classifying}]
Let $\ce$ be a topos, $f : \cf \to \ce$ a geometric morphism, and $p:\mathcal{G}(\mathbb C) \to \ce$ a cartesian stack. There is an equivalence of categories:
\[
\mathbf{FibCart}_{\ce}(\mathcal{G}(\mathbb C),({\cal F}/f^*))\simeq \mathbf{Top}/{\ce}([f],[C_p])
\]
\noindent where $\mathbf{FibCart}_{\ce}$ denotes the category of morphisms of cartesian fibrations over $\ce$.
\end{thm}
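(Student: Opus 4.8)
The plan is to present $\cf$ through its canonical relative site and to read off a relative geometric morphism as a morphism of sites, which the cartesian hypothesis will then let us identify with a cartesian morphism of fibrations. By Theorem~\ref{canonicalrelativesiteMoritaequi} applied to $f$, the topos $\cf$ is presented by the canonical relative site $((\cf/f^*),J_f)$, whose underlying fibration is the canonical stack $S_f$; since $\ce$ has finite limits, $S_f$ is a \emph{cartesian} stack. A relative geometric morphism $[f]\to[C_p]$ is precisely a geometric morphism $g:\cf\to\mathbf{Gir}(\mbc)$ with $C_p\,g\simeq f$, and, by Diaconescu's theorem applied to the trivial relative site $(\cg(\mbc),Gir_{\mbc})$ presenting $\mathbf{Gir}(\mbc)$, such a $g$ is the same datum as a morphism of sites $M:=g^{*}l_{Gir_{\mbc}}:(\cg(\mbc),Gir_{\mbc})\to(\cf,J_{\cf}^{\textup{can}})$. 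The condition $C_p\,g\simeq f$ translates, through the structural arrows $l_{Gir_{\mbc}}(x,c)\to C_p^{*}(c)$ built into the canonical functor $\eta_{\mbc}$ and the resulting identification $g^{*}C_p^{*}\simeq f^{*}$, into a canonical natural transformation $\lambda:M\Rightarrow f^{*}p$.

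With this in hand I would organize the correspondence through the dense projection $\pi_{\cf}:(\cf/f^*)\to\cf$. On underlying functors the pair $(M,\lambda)$ is exactly the data of a functor $\widehat{M}:\cg(\mbc)\to(\cf/f^*)$ over $\ce$ satisfying $\pi_{\cf}\widehat{M}=M$: an object $(x,c)$ is sent to the object $\lambda_{(x,c)}:M(x,c)\to f^{*}(c)$ of $(\cf/f^{*})$, and the compatibility of $\lambda$ with the fibration structure makes $\widehat{M}$ a morphism of fibrations over $\ce$; conversely, every morphism of fibrations $A:\cg(\mbc)\to(\cf/f^*)$ over $\ce$ recovers the pair $(\pi_{\cf}A,\lambda_A)$. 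These assignments are functorial and define $\Psi:A\mapsto(\text{the geometric morphism induced by }\pi_{\cf}A)$ and its candidate inverse $\Phi:g\mapsto\widehat{M}$. It then remains to match the two relevant conditions across this bijection: that $M$ be a morphism of sites on the topos side, and that $A=\widehat{M}$ be \emph{cartesian} on the fibration side.

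This matching is carried out in two moves. First, by the cartesian characterization of morphisms of relative sites (the second part of Theorem~\ref{charactgeomtrelmorphfib} together with the corollary following it), a morphism of fibrations $A:\cg(\mbc)\to(\cf/f^*)$ from the trivial relative site to the \emph{cartesian} canonical stack $S_f$ is a morphism of the relative sites $(\cg(\mbc),Gir_{\mbc})\to((\cf/f^*),J_f)$ if and only if it is cartesian. Second, since $\pi_{\cf}$ is a dense morphism of sites (Theorem~\ref{canonicalrelativesiteMoritaequi}), Proposition~\ref{densereflectmorphsites} gives that $A$ is a morphism of sites if and only if $\pi_{\cf}A=M$ is one. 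Chaining the two equivalences yields the desired ``$A$ cartesian $\iff$ $M$ a morphism of sites'', which closes the correspondence on objects; the $2$-cells (geometric transformations on the topos side, natural transformations of morphisms of fibrations on the other) correspond under the same bijection, so that $\Phi$ and $\Psi$ are mutually quasi-inverse, the uniqueness clause of Diaconescu's theorem ensuring $\Psi\Phi\simeq\id$ and $\Phi\Psi\simeq\id$.

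I expect the genuine content to be concentrated in the equivalence ``$A$ cartesian $\iff$ $A$ a morphism of relative sites''. This is exactly where the cartesian hypothesis is indispensable: it is what makes the filtering/flatness conditions of Diaconescu's theorem collapse to the mere fibrewise preservation of finite limits (cf. Proposition~\ref{imdirectecartreflectlimfin}). Without cartesianness these conditions do not simplify in this way --- precisely the obstruction the remainder of the paper circumvents through the theory of $\eta$-extensions with base change.
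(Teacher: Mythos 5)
The paper itself contains no proof of this statement: it is imported verbatim as Proposition 2.4 of \cite{giraud.classifying}, so your attempt can only be judged on internal correctness. Your architecture is reasonable and several steps are sound: absolute Diaconescu's theorem applied to $(\cg(\mbc),Gir_{\mbc})$, the lift of $(M,\lambda)$ to a functor $\widehat{M}$ into $(\cf/f^*)$ (which is indeed a morphism of fibrations, by Proposition \ref{etamorphfib} together with left exactness of $g^*$), and the reflection of the morphism-of-sites property along the dense $\pi_{\cf}$ via Proposition \ref{densereflectmorphsites} and Theorem \ref{canonicalrelativesiteMoritaequi}. You also correctly identify the crux: the equivalence, for a morphism of fibrations $A:\cg(\mbc)\to(\cf/f^*)$ over $\ce$, between ``$A$ is a morphism of relative sites'' and ``$A$ is cartesian''.

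But precisely at that crux your proof has a genuine gap: you assert the equivalence follows from ``the second part of Theorem \ref{charactgeomtrelmorphfib} together with the corollary following it'', and it does not. The second part of Theorem \ref{charactgeomtrelmorphfib} \emph{presupposes} that $A$ is a morphism of sites, and its cartesianness conditions bear on the $\eta$-extension $\widetilde{A}^*$ --- a functor between canonical stacks, which are cartesian fibrations for \emph{any} $\mbc$ --- not on $A$ itself; the corollary likewise only gives the implication ``morphism of fibrations and of sites $\Rightarrow$ relative geometric morphism''. Neither statement says that such an $A$ preserves finite limits fibrewise, nor that a fibrewise finite-limit-preserving morphism of fibrations is covering-flat. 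What is missing is exactly the bridge between left exactness of $\widetilde{A}^*$ and left exactness of $A$: concretely, one needs (a) that for a \emph{cartesian} stack $\mbc$ the canonical functor $\eta_{\mbc}$ is fibrewise left exact, so that $A$, being the restriction of $\widetilde{A}^*$ along $\eta_{\mbc}$ (Proposition \ref{etaextensionbasechangeproperties}(iv)) with codomain identified, via $[C_{\pi_f}]\simeq[f]$, with $(\cf/f^*)$ itself, inherits fibrewise finite limits; and (b) the classical fact that a cover-preserving, finite-limit-preserving functor on a finitely complete site is a morphism of sites. Point (a) is where the cartesian hypothesis actually does its work --- it is the substance of Giraud's own argument --- and it is established neither in your proposal nor in any result of this paper you could cite; as written, the argument is circular at its central step. (A smaller instance of the same issue: Proposition \ref{imdirectecartreflectlimfin}, which you invoke for the ``collapse of flatness to fibrewise limit preservation'', concerns reflection of finite limits by pullback projections and does not address flatness at all.)
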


In order to deduce that $\mathbf{Gir}(g^*\mathbb C)$ is indeed the pullback of $\mathbf{Gir}(\mbc)$ when $\mathcal{G}(\mathbb C)$ is a cartesian stack, the proof proceeds as follows: given a geometric morphism $f$, the aim is to obtain an equivalence between geometric morphisms $h$ completing it in a commutative square, and relative geometric morphisms $\overline{h}$ as pictured:

% https://q.uiver.app/#q=WzAsNSxbMiwyLCJ7XFxjYWwgRX0iXSxbMiwxLCJcXG1hdGhiZntHaXJ9KFxcbWF0aGJiIEMpIl0sWzEsMiwiXFxjYWwgRiJdLFsxLDEsIlxcbWF0aGJme0dpcn0oZl4qXFxtYXRoYmIgQykiXSxbMCwwLCJcXGNhbCBHIl0sWzEsMCwiQ19wIl0sWzIsMCwiZiIsMl0sWzMsMiwiQ197cCd9IiwyXSxbMywxLCJmXntcXG1hdGhiYiBDfSJdLFszLDAsIiIsMSx7InN0eWxlIjp7Im5hbWUiOiJjb3JuZXIifX1dLFs0LDIsImciLDIseyJjdXJ2ZSI6Mn1dLFs0LDEsImgiLDAseyJjdXJ2ZSI6LTMsInN0eWxlIjp7ImJvZHkiOnsibmFtZSI6ImRhc2hlZCJ9fX1dLFs0LDMsIlxcb3ZlcmxpbmV7aH0iLDEseyJzdHlsZSI6eyJib2R5Ijp7Im5hbWUiOiJkYXNoZWQifX19XV0=
\[\begin{tikzcd}
	{\cal G} \\
	& {\mathbf{Gir}(f^*\mathbb C)} & {\mathbf{Gir}(\mathbb C)} \\
	& {\cal F} & {{\cal E}}
	\arrow["{\overline{h}}"{description}, dashed, from=1-1, to=2-2]
	\arrow["h", bend left = 24, dashed, from=1-1, to=2-3]
	\arrow["g"', bend right = 18, from=1-1, to=3-2]
	\arrow["{f^{\mathbb C}}", from=2-2, to=2-3]
	\arrow["{C_{p'}}"', from=2-2, to=3-2]
	\arrow["\lrcorner"{anchor=center, pos=0.125}, draw=none, from=2-2, to=3-3]
	\arrow["{C_p}", from=2-3, to=3-3]
	\arrow["f"', from=3-2, to=3-3]
\end{tikzcd}\]

By the previous cartesian relative Diaconescu's theorem, relative geometric morphisms $\overline{h}$ are in correspondence with morphisms of cartesian fibrations $\overline{H}$ as pictured:

% https://q.uiver.app/#q=WzAsMyxbMSwyLCJcXGNhbCBGIl0sWzEsMSwiXFxjZyhmXipcXG1hdGhiYiBDKSJdLFswLDAsIihcXGNhbCBHL2deKikiXSxbMSwwLCJwJyJdLFsxLDIsIlxcb3ZlcmxpbmV7SH0iLDEseyJzdHlsZSI6eyJib2R5Ijp7Im5hbWUiOiJkYXNoZWQifX19XSxbMiwwLCJcXHBpX2ciLDIseyJjdXJ2ZSI6Mn1dXQ==
\[\begin{tikzcd}
	{(\cg/g^*)} \\
	& {\cg(f^*\mathbb C)} \\
	& {\cal F}
	\arrow["{\pi_g}"', bend right =16, from=1-1, to=3-2]
	\arrow["{\overline{H}}"{description}, dashed, from=2-2, to=1-1]
	\arrow["{p'}", from=2-2, to=3-2]
\end{tikzcd}\]

The morphism of cartesian fibrations $\overline{H} : \cg(f^*\mbc) \to (\cg/g^*)$ can be transposed into another morphism of fibrations $H : \cg(\mbc) \to f_*(\cg/g^*)$. Here, recall that $f_*(\cg/g^*) \simeq (\cf/(fg)^*)$ is the canonical stack of the geometric morphism $fg$ (see Definition \ref{canstackresume}). As finite limits and their preservation are computed fibrewise for fibrations, one can easily show that the transposition of general morphisms of fibrations restricts to morphisms of cartesian fibrations:

% https://q.uiver.app/#q=WzAsNCxbMCwxLCJcXG1hdGhiZntGaWJDYXJ0fV97XFxjYWwgRn0oW3AnXSxbXFxwaV9nXSkiXSxbMSwxLCJcXG1hdGhiZntGaWJDYXJ0fV97XFxjYWwgRX0oW3BdLFtcXHBpX3tmZ31dKSJdLFswLDAsIlxcbWF0aGJme0ZpYn1fe1xcY2FsIEZ9KFtwJ10sW1xccGlfZ10pIl0sWzEsMCwiXFxtYXRoYmZ7RmlifV97XFxjYWwgRX0oW3BdLFtcXHBpX3tmZ31dKSJdLFsyLDMsIlxcc2ltZXEiLDEseyJzdHlsZSI6eyJoZWFkIjp7Im5hbWUiOiJub25lIn19fV0sWzIsMywiYWRqdW5jdGlvbiIsMSx7Im9mZnNldCI6LTMsInN0eWxlIjp7ImJvZHkiOnsibmFtZSI6Im5vbmUifSwiaGVhZCI6eyJuYW1lIjoibm9uZSJ9fX1dLFswLDIsIiIsMSx7InN0eWxlIjp7InRhaWwiOnsibmFtZSI6Imhvb2siLCJzaWRlIjoidG9wIn19fV0sWzEsMywiIiwxLHsic3R5bGUiOnsidGFpbCI6eyJuYW1lIjoiaG9vayIsInNpZGUiOiJ0b3AifX19XSxbMCwxLCJyZXN0cmljdGlvbiIsMSx7Im9mZnNldCI6LTMsInN0eWxlIjp7ImJvZHkiOnsibmFtZSI6Im5vbmUifSwiaGVhZCI6eyJuYW1lIjoibm9uZSJ9fX1dLFswLDEsIlxcc2ltZXEiLDEseyJzdHlsZSI6eyJoZWFkIjp7Im5hbWUiOiJub25lIn19fV1d
\[\begin{tikzcd}
	{\mathbf{Fib}_{\cal F}([p'],[\pi_g])} & {\mathbf{Fib}_{\cal E}([p],[\pi_{fg}])} \\
	{\mathbf{FibCart}_{\cal F}([p'],[\pi_g])} & {\mathbf{FibCart}_{\cal E}([p],[\pi_{fg}])}
	\arrow["\simeq"{description}, no head, from=1-1, to=1-2]
	\arrow["adjunction"{description}, shift left=3, draw=none, from=1-1, to=1-2]
	\arrow[hook, from=2-1, to=1-1]
	\arrow["restriction"{description}, shift left=3, draw=none, from=2-1, to=2-2]
	\arrow["\simeq"{description}, no head, from=2-1, to=2-2]
	\arrow[hook, from=2-2, to=1-2]
\end{tikzcd}\]

Hence, the cartesian morphisms of fibrations $\overline{H}$ described above are in equivalence with morphisms of cartesian fibrations $H$. A second application of the cartesian relative Diaconescu's theorem yields an equivalence between these morphisms of cartesian fibrations $H$ and the morphisms of relative toposes $h: [fg] \to [C_p]$, thus completing the proof.

All the steps can be visualized in the following diagram:
\[\begin{tikzcd}
	& {\mathbf{Fib}_{\cal F}([p'],[\pi_g])} & {\mathbf{Fib}_{\cal E}([p],[\pi_{fg}])} \\
	{step.2} & {\mathbf{FibCart}_{\cal F}([p'],[\pi_g])} & {\mathbf{FibCart}_{\cal E}([p],[\pi_{fg}])} & {step.3} \\
	{step.1} & {\mathbf{Geom}_{\cal F}([g],[C_{p'}])} & {\mathbf{Geom}_{\cal E}([fg],[C_{p}])} & {step.4}
	\arrow["\simeq"{description}, no head, from=1-2, to=1-3]
	\arrow["adjunction"{description}, shift left=3, draw=none, from=1-2, to=1-3]
	\arrow[bend left = 16, squiggly, maps to, from=2-1, to=2-4]
	\arrow[hook, from=2-2, to=1-2]
	\arrow["restriction"{description}, shift left=3, draw=none, from=2-2, to=2-3]
	\arrow["\simeq"', no head, from=2-2, to=2-3]
	\arrow["\simeq"{description}, no head, from=2-2, to=3-2]
	\arrow["{Diac. cart.}"', shift right=2, draw=none, from=2-2, to=3-2]
	\arrow[hook, from=2-3, to=1-3]
	\arrow["\simeq"{description}, no head, from=2-3, to=3-3]
	\arrow["{Diac. cart.}", shift left=3, draw=none, from=2-3, to=3-3]
	\arrow[squiggly, maps to, from=2-4, to=3-4]
	\arrow[squiggly, maps to, from=3-1, to=2-1]
	\arrow["\simeq"', no head, from=3-2, to=3-3]
\end{tikzcd}\]

\subsection{The general setting}\label{subsec2}

As commented by Giraud when establishing this result \cite{giraud.classifying}:

\emph{As a by-product [of the cartesian relative Diaconescu theorem], we obtain the existence of fibered products in the bicategory of toposes. This result was first announced by M. Hakim several years ago but was never published. I suspect that any written proof would have to address rather subtle technical difficulties regarding finite limits, which are bypassed here thanks to the results of [transposition of cartesian morphisms of fibrations]}

Indeed, Giraud's proof works thanks to the good behavior of finite limits and their preservation under transposition along direct and inverse images functors. 

Our aim is to show that, more generally, for \emph{any fibration} (i.e. not necessarily a cartesian one, nor a stack) $\mbc$ which is $J^{\textup{can}}_{\ce}$-small (in the sense of Definition \ref{J-smallgen}), we have a bipullback diagram:

% https://q.uiver.app/#q=WzAsNCxbMCwwLCJcXG1hdGhiZntHaXJ9KGZeKlxcbWJjKSJdLFswLDEsIlxcY2UiXSxbMSwxLCJcXGNlIl0sWzEsMCwiXFxtYXRoYmZ7R2lyfShcXG1iYykiXSxbMSwyLCJmIiwyXSxbMywyLCJDX3AiXSxbMCwxLCJDX3twJ30iLDJdLFswLDMsImZee1xcbWJjfSJdLFswLDIsIiIsMSx7InN0eWxlIjp7Im5hbWUiOiJjb3JuZXIifX1dXQ==
\[\begin{tikzcd}
	{\mathbf{Gir}(f^*\mbc)} & {\mathbf{Gir}(\mbc)} \\
	\cf & \ce
	\arrow["{f^{\mbc}}", from=1-1, to=1-2]
	\arrow["{C_{p'}}"', from=1-1, to=2-1]
	\arrow["\lrcorner"{anchor=center, pos=0.125}, draw=none, from=1-1, to=2-2]
	\arrow["{C_p}", from=1-2, to=2-2]
	\arrow["f"', from=2-1, to=2-2]
\end{tikzcd}\]

A preliminary observation is that, denoting by $\zeta_{\mathbb C}$ the unit of the stackification-inclusion adjunction, the following equivalence holds (Proposition 6.3.8 of \cite{locfib}):

% https://q.uiver.app/#q=WzAsMyxbMCwwLCJHaXJfSihcXGNnKFxcbWJjKSkiXSxbMiwwLCJHaXJfSihcXGNnKHNfSihcXG1iYykpKSJdLFsxLDEsIlxcd2lkZWhhdHtcXGNjfV9KIl0sWzEsMCwiXFxTaCh7XFx6ZXRhX3tcXG1iY319KSIsMl0sWzAsMiwiQ19wIiwyXSxbMSwyLCJDX3tzX0oocCl9Il0sWzEsMCwiXFxzaW1lcSIsMSx7Im9mZnNldCI6LTMsInN0eWxlIjp7ImJvZHkiOnsibmFtZSI6Im5vbmUifSwiaGVhZCI6eyJuYW1lIjoibm9uZSJ9fX1dXQ==
\[\begin{tikzcd}
	{\mathbf{Gir}(\cg(\mbc))} && {\mathbf{Gir}(\cg(s_J(\mbc)))} \\
	& {\widehat{\cc}_J}
	\arrow["{C_p}"', from=1-1, to=2-2]
	\arrow["{\Sh({\zeta_{\mbc}})}"', from=1-3, to=1-1]
	\arrow["\simeq"{description}, shift left=3, draw=none, from=1-3, to=1-1]
	\arrow["{C_{s_J(p)}}", from=1-3, to=2-2]
\end{tikzcd}\]

Let $F: (\cc,J) \to (\cc',J')$ be a morphism of sites, and let $p : \mathcal{G}(\mathbb C) \to \cc$ be a fibration. The inverse image of the stack $s_J(\mbc)$ along $F$ is given by $s_{J'} \circ \mathrm{Lan}_F (s_J \mbc)$. Since stackification with respect to $J$ or $J'$ does not affect the associated Giraud toposes (which are the objects of interest here), we may equivalently work with $\mathrm{Lan}_F(\mbc)$ or with $s_{J'} \mathrm{Lan}_F s_J (\mbc)$ when considering the inverse image $\St(F)^*(\mbc)$. For brevity, we will often denote this operation by $f^*\mbc$, as it extends the inverse image of sheaves to stacks.

The next subsections are devoted to prove the result that there is a (commutative) square 

% https://q.uiver.app/#q=WzAsNCxbMCwxLCJcXGNmIl0sWzEsMSwiXFxjZSJdLFswLDAsIihcXG1hdGhjYWx7R30oZl4qKFxcbWF0aGJiIEMpKSxHaXJfe2ZeKlxcbWJjfSkiXSxbMSwwLCIoXFxtYXRoY2Fse0d9KFxcbWJjKSxHaXJfe1xcbWJjfSkiXSxbMSwwLCJmXioiXSxbMywxLCJwIl0sWzIsMCwicCciLDJdLFszLDIsIkZee1xcbWJjfSIsMl1d
\[\begin{tikzcd}
	{(\mathcal{G}(f^*(\mathbb C)),Gir_{f^*\mbc})} & {(\mathcal{G}(\mbc),Gir_{\mbc})} \\
	\cf & \ce
	\arrow["{p'}"', from=1-1, to=2-1]
	\arrow["{F^{\mbc}}"', from=1-2, to=1-1]
	\arrow["p", from=1-2, to=2-2]
	\arrow["{f^*}", from=2-2, to=2-1]
\end{tikzcd}\]
inducing a pullback square of geometric morphisms

% https://q.uiver.app/#q=WzAsNCxbMCwwLCJcXG1hdGhiZntHaXJ9KGZeKlxcbWJjKSJdLFsxLDAsIlxcbWF0aGJme0dpcn0oXFxtYmMpIl0sWzAsMSwiXFxjZiJdLFsxLDEsIlxcY2UiXSxbMiwzLCJmIiwyXSxbMSwzLCJDX3AiXSxbMCwyLCJDX3twJ30iLDJdLFswLDEsIlxcU2goRl57XFxtYmN9KSJdLFswLDQsIiIsMCx7ImxldmVsIjoxLCJzdHlsZSI6eyJuYW1lIjoiY29ybmVyIn19XV0=
\[\begin{tikzcd}
	{\mathbf{Gir}(f^*\mbc)} & {\mathbf{Gir}(\mbc)} \\
	\cf & \ce
	\arrow["{\Sh(F^{\mbc})}", from=1-1, to=1-2]
	\arrow["{C_{p'}}"', from=1-1, to=2-1]
	\arrow["{C_p}", from=1-2, to=2-2]
	\arrow[""{name=0, anchor=center, inner sep=0}, "f"', from=2-1, to=2-2]
	\arrow["\lrcorner"{anchor=center, pos=0.125}, draw=none, from=1-1, to=0]
\end{tikzcd}\]

In other words, the pullback of the Giraud topos of a fibration along a geometric morphism is the Giraud topos of its inverse image by this geometric morphism. 

To obtain pullbacks of relative presheaf toposes in full generality, a first step is to consider morphisms of fibrations of the form $\mathbf{Fib}_{\ce}(\mathcal{G}(\mathbb C),({\cg} / g^*f^*))$, where $\mathbb C$ is a $\ce$-indexed category, $f : {{\cal F}} \to \ce$ and $g : \cg \to \cf$ two relative toposes. Indeed, such functors can be transposed along the adjunctions induced by the base-change morphism $(f^*  \dashv f_*)$. The question is: does this transposition operation restrict to morphisms of sites? As previously noted, this is easy to verify when $\mbc$ is cartesian, since morphisms of sites are then characterized by a fibrewise preservation of finite limits. However, morphisms of sites arising from a fibration $\cg(\mbc)$ that is not cartesian are \emph{not} described by a fibrewise condition, which makes their transposition more subtle.

A natural first step is to consider the correspondence between morphisms of fibrations and indexed weak geometric morphisms, as established by the so-called indexed weak Diaconescu’s theorem (Theorem 6.3.5 of \cite{locfib}). Let $f : {\cal F} \to {\cal E}$ be a geometric morphism between two base toposes, and let $g : {\cal G} \to {\cal F}$ be a topos over ${\cal F}$. Given an ${\cal E}$-indexed category $\mathbb C$, and denoting by $p'$ the projection functor $p' : \cg(f^*\mbc) \to \cf$, the indexed weak Diaconescu’s theorem yields the following equivalences:

% https://q.uiver.app/#q=WzAsNCxbMCwxLCJcXG1hdGhiZntGaWJ9X3tcXGNmfShbcCddLFtcXHBpX2ddKSJdLFsxLDEsIlxcbWF0aGJme0ZpYn1fe1xcY2V9KFtwXSxbXFxwaV97Zmd9XSkiXSxbMCwwLCJcXG1hdGhiZntJbmRXZWFrR2VvbX1fe1xcY2Z9KFNfe2ZeKlxcbWJjfSxTX2cpIl0sWzEsMCwiXFxtYXRoYmZ7SW5kV2Vha0dlb219X3tcXGNlfShTX3tcXG1iY30sU197Zmd9KSJdLFswLDEsIlxcc2ltZXEiLDMseyJzdHlsZSI6eyJib2R5Ijp7Im5hbWUiOiJub25lIn0sImhlYWQiOnsibmFtZSI6Im5vbmUifX19XSxbMiwwLCJcXHNpbWVxIiwzLHsic3R5bGUiOnsiYm9keSI6eyJuYW1lIjoibm9uZSJ9LCJoZWFkIjp7Im5hbWUiOiJub25lIn19fV0sWzMsMSwiXFxzaW1lcSIsMyx7InN0eWxlIjp7ImJvZHkiOnsibmFtZSI6Im5vbmUifSwiaGVhZCI6eyJuYW1lIjoibm9uZSJ9fX1dLFsyLDMsIlxcc2ltZXEiLDMseyJzdHlsZSI6eyJib2R5Ijp7Im5hbWUiOiJub25lIn0sImhlYWQiOnsibmFtZSI6Im5vbmUifX19XV0=
\[\begin{tikzcd}
	{\mathbf{IndWeakGeom}_{\cf}(S_{f^*\mbc},S_g)} & {\mathbf{IndWeakGeom}_{\ce}(S_{\mbc},S_{fg})} \\
	{\mathbf{Fib}_{\cf}([p'],[\pi_g])} & {\mathbf{Fib}_{\ce}([p],[\pi_{fg}])}
	\arrow["\simeq"{marking, allow upside down}, draw=none, from=1-1, to=1-2]
	\arrow["\simeq"{marking, allow upside down}, draw=none, from=1-1, to=2-1]
	\arrow["\simeq"{marking, allow upside down}, draw=none, from=1-2, to=2-2]
	\arrow["\simeq"{marking, allow upside down}, draw=none, from=2-1, to=2-2]
\end{tikzcd}\]

Indeed, the vertical equivalences follow from indexed weak Diaconescu's theorem, and the one at the bottom comes from to the adjointness of inverse and direct images along $f^*$. 

Hence, for the categories of canonical stacks over a base topos and indexed weak geometric morphisms between them, the operation of inverse image of an indexed presheaf topos is given by the presheaf topos on the inverse image, as described:
\begin{equation} \label{eq:weak_adjunction}
\begin{adjustbox}{scale=0.9,center}
\(
\mathbf{WeakIndGeom}_{\cf}(f^*_{\mathbf{Weak}}(S_{{\mbc}}),S_{g})
\simeq 
\mathbf{WeakIndGeom}_{\ce}(S_{{\mbc}},f_*^{\mathbf{Weak}}(S_{g}))
\)
\end{adjustbox}
\end{equation}
Here, $  f^*_{\mathbf{Weak}} \dashv f_*^{\mathbf{Weak}}$ are respectively the functors sending the canonical stack of a relative presheaf topos $S_{{\mbc}}$ to the canonical stack of the relative presheaf topos on the inverse image $S_{{f^*\mbc}}$, and a canonical stack $S_g$ to its direct image $S_{fg}$. This adjunction parallels the first result of Section 3 in \cite{pitts}, but is formulated in terms of canonical stacks rather than cocomplete indexed categories, and involves fibrations instead of internal categories.

In order to obtain pullbacks of relative presheaf toposes, and in light of the equivalence between relative geometric morphisms and indexed geometric morphisms discussed in subsection 6.1 of \cite{locfib}, one strategy is to show that equivalence \ref{eq:weak_adjunction} restricts to those indexed weak geometric morphisms that are in particular indexed geometric morphisms.

From Proposition 3.4 (iii) \cite{bartolicaramello}, it is known that a \emph{fibrewise preservation of finite limits} characterization for morphisms of sites holds - albeit at a higher level of abstraction, namely, the canonical stack. Indeed, the theorem asserts that a morphism of fibrations is a morphism of sites if and only if its $\eta$-extension is preserves finite limits fibrewise. As mentioned in the previous subsection, transpositions along direct and inverse images preserve the fibrewise finite limits preservation; the main obstruction lies in the subtle interaction between the inverse image operation and the canonical stack construction, on which the notion of $\eta$-extension is based. A priori, the operations of taking the canonical stack and taking the inverse image do not commute in a way that would ensure direct compatibility with the notion of $\eta$-extension. However, the introduction of an appropriate topology will eventually turn the inverse image of the canonical stack and the canonical stack of the inverse image into Morita-equivalent relative sites. This will in turn yield that the adjunction induced by inverse and direct images along $f$ transposes morphisms of sites into morphisms of sites:

\[
\mathbf{FibSites}_{\cf}([p_{f^*\mathbb{C}}],[\pi_g]) \simeq \mathbf{FibSites}_{\ce}([p_{\mathbb{C}}],[\pi_{fg}]).
\]
The full situation is pictured in the following diagram:
\vspace{0.5cm}

\begin{adjustbox}{scale=0.5}
$\begin{tikzcd}[column sep=tiny]
	{{\mathbf{IndWeakGeom}_{{\cal F}}(S_g,S_{C_{p'}})}} &&& {\mathbf{IndWeakGeom}_{{\cal E}}(S_{fg},S_{C_{p}})} \\
	& {{\mathbf{IndGeom}_{{\cal F}}(S_g,S_{C_{p'}})}} & {{\mathbf{IndGeom}_{{\cal E}}(S_{fg},S_{C_{p}})}} \\
	& {\mathbf{FibSites}_{{\cal F}}((\mathcal{G}(f^*\mathbb C),Gir_{f^*\mathbb C}),(({\cg}/g^*),J_g))} & {\mathbf{FibSites}_{{\cal E}}((\mathcal{G}(\mathbb C),Gir_{\mathbb C}),(({\cg}/g^*f^*),J_{fg}))} \\
	{\mathbf{Fib}_{{\cal F}}({\cg}(f^*\mathbb C),({\cg}/g^*))} &&& {\mathbf{Fib}_{{\cal E}}({\cg}(\mathbb C),({\cg}/g^*f^*))}
	\arrow["{Ind.Weak.Diac.}"{description}, shift right=14, draw=none, from=1-1, to=4-1]
	\arrow["\simeq"{description}, from=1-4, to=1-1]
	\arrow[hook', from=2-2, to=1-1]
	\arrow["{Ind.Diac.}"', shift right=3, draw=none, from=2-2, to=3-2]
	\arrow[hook, from=2-3, to=1-4]
	\arrow["{?}"', dashed, no head, from=2-3, to=2-2]
	\arrow["{Ind.Diac.}", shift left=3, draw=none, from=2-3, to=3-3]
	\arrow["\simeq"{description}, no head, from=3-2, to=2-2]
	\arrow[hook, from=3-2, to=4-1]
	\arrow["\simeq"{description}, no head, from=3-3, to=2-3]
	\arrow["{?}"', dashed, no head, from=3-3, to=3-2]
	\arrow[hook', from=3-3, to=4-4]
	\arrow["\simeq"{description}, no head, from=4-1, to=1-1]
	\arrow["\simeq"{description}, no head, from=4-4, to=1-4]
	\arrow["{Ind.Weak.Diac.}"{description}, shift right=14, draw=none, from=4-4, to=1-4]
	\arrow["\simeq"{description}, from=4-4, to=4-1]
	\arrow["transposition"{description}, shift left=5, draw=none, from=4-4, to=4-1]
\end{tikzcd}$
\end{adjustbox}
\vspace{0.3cm}\\
Here, the vertical equivalences are given by relative Diaconescu's theorem and indexed weak Diaconescu's theorem, the lower horizontal equivalence is given by transposition. Building on the preceding discussion, the remainder of the paper is devoted to establishing that the equivalence given by transposition restricts to morphisms of sites, that is, to proving the existence of the dashed arrows labelled \emph{?} in the above diagram.

\subsection{The structure morphism}\label{subsec3}

In this subsection, the structure morphism $f^{\mbc}$ is investigated, relating the pullback of a Giraud topos (i.e., a relative presheaf topos) to the original one, as depicted in the following diagram:

% https://q.uiver.app/#q=WzAsNCxbMSwxLCJcXGNhbCBFIl0sWzAsMSwiXFxjYWwgRiJdLFsxLDAsIlxcbWF0aGJme0dpcn0oXFxtYXRoYmIgQykiXSxbMCwwLCJcXG1hdGhiZntHaXJ9KFxcbWF0aGJiIGZeKkMpIl0sWzEsMCwiZiIsMl0sWzIsMCwiQ197cF97XFxtYXRoYmIgQ319Il0sWzMsMSwiQ197cF97Zl4qXFxtYXRoYmIgQ319IiwyXSxbMywyLCJDX3tmX3tcXG1hdGhiYiBDfX0iXV0=
\[\begin{tikzcd}
	{\mathbf{Gir}(f^*\mathbb C)} & {\mathbf{Gir}(\mathbb C)} \\
	{{\cal F}} & {{\cal E}}
	\arrow["{f^{\mbc}}", from=1-1, to=1-2]
	\arrow["{C_{p_{f^*\mathbb C}}}"', from=1-1, to=2-1]
	\arrow["{C_{p_{\mathbb C}}}", from=1-2, to=2-2]
	\arrow["f"', from=2-1, to=2-2]
\end{tikzcd}\]

The morphism is shown to be induced by the functor $F^{\mathbb C} = q^{f^*}_{\mbc}\varsigma_{\mathbb C}^{f}$, as depicted in the following square:
% https://q.uiver.app/#q=WzAsNSxbMCwxLCJ7XFxjYWwgRn0iXSxbMSwxLCJ7XFxjYWwgRX0iXSxbMiwwLCJcXG1hdGhjYWx7R30oXFxtYXRoYmIgQykiXSxbMCwwLCJcXG1hdGhjYWx7R30oZl4qXFxtYXRoYmIgQykiXSxbMSwwLCJ7XFxjYWwgR30oKGZeKlxcbWF0aGJiIEMpZl4qKSJdLFsxLDAsImZeKiIsMl0sWzMsMCwicCciLDJdLFs0LDMsInFee2ZeKn1fe1xcbWJjfSIsMl0sWzQsMV0sWzQsMCwiIiwxLHsic3R5bGUiOnsibmFtZSI6ImNvcm5lciJ9fV0sWzIsNCwiXFx2YXJzaWdtYV97XFxtYXRoYmIgQ31eZiIsMl0sWzIsMSwicCJdXQ==
\[\begin{tikzcd}
	{\mathcal{G}(f^*\mathbb C)} & {{\cg}((f^*\mathbb C)f^*)} & {\mathcal{G}(\mathbb C)} \\
	{{{\cal F}}} & {{{\cal E}}}
	\arrow["{p'}"', from=1-1, to=2-1]
	\arrow["{q^{f^*}_{\mbc}}"', from=1-2, to=1-1]
	\arrow["\lrcorner"{anchor=center, pos=0.125, rotate=-90}, draw=none, from=1-2, to=2-1]
	\arrow[from=1-2, to=2-2]
	\arrow["{\varsigma_{\mathbb C}^{f}}"', from=1-3, to=1-2]
	\arrow["p", from=1-3, to=2-2]
	\arrow["{f^*}"', from=2-2, to=2-1]
\end{tikzcd}\]
\noindent where $\varsigma_{\mathbb C}^{f}$ is the unit of the adjunction $f^* \dashv f_*$, and $q^{f^*}_{\mbc}$ is the projection from the pullback in $\textup{\bf Cat}$ (see Definition \ref{defimdirecte}).

Moreover, the Giraud topos construction is shown to be invariant under dense morphisms of sites.

In the case of cartesian fibrations, one can readily verify that both functors $q^{f^*}_{\mbc}$ and $\varsigma_{\mathbb C}^{f}$ preserve finite limits: for the pullback projection $q^{f^*}_{\mbc}$ it is the Proposition \ref{imdirectecartreflectlimfin}, while for the unit $\varsigma_{\mathbb C}^{f}$ it follows from the fact that it is the transpose of the identity morphism, which is a cartesian morphism of cartesian fibrations. Moreover, since $\varsigma_{\mathbb C}^{f}$ and $q^{f^*}_{\mbc}$ are morphisms of fibrations, their composition $F^{\mbc}$ preserves covers for Giraud topologies, and is thus a morphism of sites. In this cartesian setting, this morphism of sites is precisely the one inducing the projection geometric morphism denoted by $f_{\mbc}$. Crucially, this proof does not require any explicit description of the inverse image fibration $f^*\mbc$: it solely relies on the fact that finite limits are computed fibrewise, and that morphisms of cartesian fibrations preserve them fibrewise. In the general case of a non-cartesian fibration, this reasoning no longer applies. One possible approach is to compute the inverse image $f^*\mbc$ explicitly; however, as mentioned after Proposition \ref{inverseimagepropdef} this leads to a rather heavy computation. Combined with the general concrete definition of morphisms of sites (involving locally filtering conditions) this would make the process particularly cumbersome. The following result suggests that working with comorphisms of sites rather than with morphisms of sites (in particular $f^*$) offers a more manageable framework, and will naturally lead us to exploit the abstract duality of \cite{denseness} between morphisms and comorphisms of sites.

\begin{prop}\label{pullbackcomorphalongfib}
Let $G : ({\cd},K) \to ({\cc},J)$ be a comorphism of sites and $\mathbb C$ a $\cc$-indexed category. In the following pseudopullback square
% https://q.uiver.app/#q=WzAsNCxbMCwxLCIoe1xcY2FsIER9LEspIl0sWzEsMSwiKHtcXGNhbCBDfSxKKSJdLFsxLDAsIihcXG1hdGhjYWx7R30oXFxtYXRoYmIgQyksR2lyX0pee1xcbWF0aGJiIEN9KSJdLFswLDAsIihcXG1hdGhjYWx7R30oXFxtYXRoYmIgQ0cpLEdpcl9LXntcXG1hdGhiYiBDR30pIl0sWzAsMSwiRyIsMl0sWzMsMCwicF97XFxtYXRoYmIgQ0d9IiwyXSxbMywyLCJHX3tcXG1hdGhiYiBDfSJdLFsyLDEsInBfe1xcbWF0aGJiIEN9Il0sWzMsMSwiIiwxLHsic3R5bGUiOnsibmFtZSI6ImNvcm5lciJ9fV1d
\[\begin{tikzcd}
	{(\mathcal{G}(\mathbb CG),Gir_{\mathbb CG})} & {(\mathcal{G}(\mathbb C),Gir_{\mathbb C})} \\
	{({\cd},K)} & {({\cc},J)}
	\arrow["{q^G_{\mbc}}", from=1-1, to=1-2]
	\arrow["{p_{\mathbb CG}}"', from=1-1, to=2-1]
	\arrow["\lrcorner"{anchor=center, pos=0.125}, draw=none, from=1-1, to=2-2]
	\arrow["{p_{\mathbb C}}", from=1-2, to=2-2]
	\arrow["G"', from=2-1, to=2-2]
\end{tikzcd}\]
the functor $q^G_{\mbc}$ is a comorphism of sites.
\end{prop}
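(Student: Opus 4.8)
The plan is to verify the comorphism-of-sites condition for $q^G_{\mbc}$ directly from its definition, exploiting the completely explicit description of the Giraud covers (Definition \ref{comorphinduitgeom}). Recall that the objects of $\cg(\mbc G) = \cg(\mbc\circ G^{\mathrm{op}})$ are the pairs $(x,d)$ with $d$ an object of $\cd$ and $x$ an object of $\mbc(G(d))$, and that $q^G_{\mbc}$ acts by $(x,d)\mapsto (x,G(d))$ on objects and by $G$ on the base component of arrows. I would fix such an object $(x,d)$ and a $Gir_{\mbc}$-covering sieve $S$ on its image $(x,G(d))$. By definition $S$ contains a family of cartesian arrows $(\kappa_i : (\mbc(f_i)(x),c_i) \to (x,G(d)))_i$ whose projections $(f_i : c_i \to G(d))_i$ form a $J$-covering family; the goal is then to produce a $Gir_{\mbc G}$-covering sieve $S'$ on $(x,d)$ with $q^G_{\mbc}(S')\subseteq S$.

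First I would pass from the family $(f_i)_i$ to the $J$-covering sieve $T$ it generates on $G(d)$, and invoke the hypothesis that $G$ is a comorphism of sites: this yields a $K$-covering sieve $R$ on $d$ with $G(R)\subseteq T$. For each arrow $g : d_g \to d$ in $R$ I would take its cartesian lift $\lambda_g : (\mbc G(g)(x),d_g)\to (x,d)$ at $x$ in $\cg(\mbc G)$, and let $S'$ be the sieve these generate. Since the projections of the $\lambda_g$ are exactly the arrows of the $K$-covering sieve $R$ (a covering sieve, hence a fortiori a covering family), the family $(\lambda_g)_g$ is a family of cartesian arrows over a $K$-covering family, so $S'$ is $Gir_{\mbc G}$-covering by the very definition of the Giraud topology.

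The heart of the argument is then checking $q^G_{\mbc}(S')\subseteq S$. Because $S$ is a sieve and $S'$ is generated by the $\lambda_g$, it suffices to show $q^G_{\mbc}(\lambda_g)\in S$ for every $g\in R$. Now $q^G_{\mbc}(\lambda_g)$ has base component $G(g)$ and identity fibre component, so it is precisely the cartesian lift $\mu_g$ of $G(g)$ at $x$ in $\cg(\mbc)$; and since $g\in R$ we have $G(g)\in T$, i.e.\ $G(g) = f_i\circ h$ for some index $i$ and some $h : G(d_g)\to c_i$. Applying the universal property of the cartesian arrow $\kappa_i$ (lying over $f_i$) to the arrow $\mu_g$ (lying over $f_i\circ h$) yields a unique factorization $\mu_g = \kappa_i\circ \tilde h$ with $\tilde h$ over $h$; as $\kappa_i\in S$ and $S$ is closed under precomposition, $\mu_g\in S$, as required. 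This establishes the comorphism condition.

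I expect the only real subtlety to be bookkeeping rather than conceptual. One must take care that the ``projection is a covering family'' clause of the Giraud topology is applied to $R$, and that the pseudofunctoriality of $\mbc$ — whereby $\mbc G(g)(x)=\mbc(G(g))(x)$ holds only up to the coherence isomorphism — is absorbed into the universal property of cartesian arrows rather than treated by hand. Working with the cartesian lifts and invoking the lifting property of $\kappa_i$ directly (instead of composing cartesian arrows and tracking coherence cells) keeps the argument clean; this mirrors, in the simpler comorphism setting, the strategy of Proposition \ref{pullbackcontinuousalongfib}, and crucially sidesteps the heavy explicit description of inverse images that a morphism-of-sites approach would force upon us.
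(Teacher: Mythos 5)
Your proof is correct and follows essentially the same route as the paper's: lift the $J$-covering family underlying the Giraud cover through the comorphism $G$ to a $K$-covering on $d$, take its cartesian lifts in $\cg(\mbc G)$ to get a $Gir_{\mbc G}$-cover, and check that its image under $q^G_{\mbc}$ lands in the original sieve. The only difference is that you spell out, via the universal property of the cartesian arrows $\kappa_i$, the factorization step that the paper asserts in one line ("this covering is sent into the sieve generated by the $(1,v_i)_i$"), which is a welcome precision rather than a deviation.
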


\begin{proof}
If we have some covering for $Gir_{\mathbb C}$ of an object $(x,G(d))$ as follows: $((1,v_i) : (\mathbb C (v_i)(x),c_i) \to (x,G(d)))_i$ we can lift the covering $(v_i)_i$ through the comorphism $G$: we have a covering $(u_j)_j$ for $K$ such that for each $j$ there exist a $i$ and an arrow $a_j$ such that $G(u_j) = a_jv_i$. This covering can be lifted into $\mathcal{G}(\mathbb CG)$: $((1,u_j) : (\mathbb CG(u_j)(x),dom(u_j)) \to (x,d))_j $. This covering is sent by $G_{\mathbb C}$ into the sieve generated by the $(1,v_i)_i$, yielding a lifting for the initial covering.
\end{proof}

The key point is that up to stackification, which does not affect the constructions at the topos level, since a fibration and its stackification yield the same relative presheaf topos, $\mathbb CG$ is the inverse image of $\mbc$ along the geometric morphism induced by $G$ as a comorphism. Since inverse images of geometric morphisms arising from comorphisms correspond to pseudopullbacks at the fibration level (or to precomposition at the indexed level), they are easier to compute and manipulate. In contrast, deducing that the functor linking a fibration to its inverse image along a geometric morphism induced by a morphism of sites (such as an inverse image functor) itself induces a geometric morphism at the topos level would have been significantly more involved.

\begin{remarks}
\begin{enumerate}[(a)]
    \item Since all four functors in the previous proposition are comorphisms of sites and make the square commute at the site level, they induce, by functoriality, a commutative square at the topos level.
    \item If the comorphism of sites $p$ is a left adjoint, the inverse image along its right adjoint $F$ is computed as the pullback along $p$ (see Proposition \ref{adjointsinversedirectimage}). This yields a \emph{pair} of adjoint functors comparing the fibration and its inverse image:

    % https://q.uiver.app/#q=WzAsNSxbMiwxLCIoe1xcY2FsIEN9LEopIl0sWzEsMSwiKHtcXGNhbCBEfSxLKSJdLFsyLDAsIlxcbWF0aGNhbHtHfShcXG1hdGhiYiBDKSJdLFsxLDAsIlxcbWF0aGNhbHtHfShcXG1hdGhiYiBDcCkiXSxbMCwwLCJcXG1hdGhjYWx7R30oTGFuX0YgXFxtYXRoYmIgQykiXSxbMSwwLCJwIiwwLHsib2Zmc2V0IjotMn1dLFswLDEsIkYiLDAseyJvZmZzZXQiOi0yfV0sWzMsMV0sWzMsMiwicF97XFxtYXRoYmIgQ30iLDAseyJvZmZzZXQiOi0zfV0sWzIsMywiKExhbl9GKV97XFxtYXRoYmIgQ30iLDAseyJvZmZzZXQiOi0yfV0sWzIsMF0sWzQsMywiXFxzaW1lcSIsMSx7InN0eWxlIjp7ImJvZHkiOnsibmFtZSI6Im5vbmUifSwiaGVhZCI6eyJuYW1lIjoibm9uZSJ9fX1dLFs1LDYsIiIsMCx7ImxldmVsIjoxLCJzdHlsZSI6eyJuYW1lIjoiYWRqdW5jdGlvbiJ9fV0sWzgsOSwiIiwwLHsibGV2ZWwiOjEsInN0eWxlIjp7Im5hbWUiOiJhZGp1bmN0aW9uIn19XV0=
\[\begin{tikzcd}
	{\mathcal{G}(Lan_F \mathbb C)} & {\mathcal{G}(\mathbb Cp)} & {\mathcal{G}(\mathbb C)} \\
	& {({\cd},K)} & {({\cc},J)}
	\arrow["\simeq"{description}, draw=none, from=1-1, to=1-2]
	\arrow[""{name=0, anchor=center, inner sep=0}, "{q^p_{\mathbb C}}", shift left=3, from=1-2, to=1-3]
	\arrow[from=1-2, to=2-2]
	\arrow[""{name=1, anchor=center, inner sep=0}, "{(Lan_F)^{\mathbb C}}", shift left=2, from=1-3, to=1-2]
	\arrow[from=1-3, to=2-3]
	\arrow[""{name=2, anchor=center, inner sep=0}, "p", shift left=2, from=2-2, to=2-3]
	\arrow[""{name=3, anchor=center, inner sep=0}, "F", shift left=2, from=2-3, to=2-2]
	\arrow["\dashv"{anchor=center, rotate=-90}, draw=none, from=0, to=1]
	\arrow["\dashv"{anchor=center, rotate=-90}, draw=none, from=2, to=3]
\end{tikzcd}\]

    acting as follows: ${q}^p_{\mathbb C}$ is the usual projection of the pullback, sending an object $(x,d)$ to $(x,p(d))$, and $(Lan_F)^{\mathbb C}$ sends an object $(x,c)$ to the object $(\mathbb C(\epsilon_c)(x),F(E))$. The adjunction between them follows directly from the adjunction between $F$ and $p$.
.
    For example, for the case of an essential geometric morphism $f$, that is with an exceptional left adjoint $f_! \dashv  f^*$, there is an expression of $f^*\mathbb C$ as the following pullback, which now comes equipped with two adjoint functors
    % https://q.uiver.app/#q=WzAsNCxbMCwwLCJcXG1hdGhjYWx7R30oXFxtYXRoYmIgQykiXSxbMCwxLCJ7XFxjYWwgRX0iXSxbMSwxLCJ7XFxjYWwgRn0iXSxbMSwwLCJcXG1hdGhjYWx7R30oXFxtYXRoYmIgQ2ZfISkiXSxbMiwxLCJmXyEiXSxbMCwxXSxbMywwLCJ7Zl8hfV97XFxtYXRoYmIgQ30iLDAseyJvZmZzZXQiOi0yfV0sWzMsMl0sWzMsMSwiIiwxLHsic3R5bGUiOnsibmFtZSI6ImNvcm5lciJ9fV0sWzAsMywiXFxjaGlfe1xcbWF0aGJiIEN9IiwwLHsib2Zmc2V0IjotMn1dLFs2LDksIiIsMCx7ImxldmVsIjoxLCJzdHlsZSI6eyJuYW1lIjoiYWRqdW5jdGlvbiJ9fV1d
\[\begin{tikzcd}
	{\mathcal{G}(\mathbb C)} & {\mathcal{G}(\mathbb Cf_!)} \\
	{{{\cal E}}} & {{{\cal F}}}
	\arrow[""{name=0, anchor=center, inner sep=0}, "{f^*{\mathbb C}}", shift left=2, from=1-1, to=1-2]
	\arrow[from=1-1, to=2-1]
	\arrow[""{name=1, anchor=center, inner sep=0}, "{{f_!}_{\mathbb C}}", shift left=2, from=1-2, to=1-1]
	\arrow["\lrcorner"{anchor=center, pos=0.125, rotate=-90}, draw=none, from=1-2, to=2-1]
	\arrow[from=1-2, to=2-2]
	\arrow["{f_!}", from=2-2, to=2-1]
	\arrow["\dashv"{anchor=center, rotate=90}, draw=none, from=1, to=0]
\end{tikzcd}\]
    \end{enumerate}
\end{remarks}

Within the framework of the abstract duality developed in \cite{denseness}, the following notations will be adopted:
    % https://q.uiver.app/#q=WzAsNSxbMywxLCJ7XFxjYWwgRX0iXSxbMywwLCJcXG1hdGhjYWx7R30oXFxtYXRoYmIgQykiXSxbMSwwLCJcXG1hdGhjYWx7R30oTGFuX3tpX2Z9e1xcbWF0aGJiIEN9KSJdLFsxLDEsIih7XFxjYWwgRn1cXGRvd25hcnJvdyBmXiopIl0sWzAsMCwiXFxtYXRoY2Fse0d9KHtcXG1hdGhiYiBDfVxccGlfZikiXSxbMSwwXSxbMywwLCJcXHBpX2YiLDJdLFswLDMsImlfZiIsMix7Im9mZnNldCI6M31dLFsxLDIsImlfZl57XFxtYXRoYmIgQ30iLDIseyJvZmZzZXQiOjJ9XSxbMiwzXSxbMiwxLCJcXHBpX2Zee1xcbWF0aGJiIEN9IiwyLHsib2Zmc2V0IjoxfV0sWzQsMiwiXFxzaW1lcSIsMSx7InN0eWxlIjp7ImJvZHkiOnsibmFtZSI6Im5vbmUifSwiaGVhZCI6eyJuYW1lIjoibm9uZSJ9fX1dLFs2LDcsIiIsMix7ImxldmVsIjoxLCJzdHlsZSI6eyJuYW1lIjoiYWRqdW5jdGlvbiJ9fV0sWzEwLDgsIiIsMSx7ImxldmVsIjoxLCJzdHlsZSI6eyJuYW1lIjoiYWRqdW5jdGlvbiJ9fV1d
    \[\begin{tikzcd}
	{\mathcal{G}({\mathbb C}\pi_f)} & {\mathcal{G}(Lan_{i_f}{\mathbb C})} && {\mathcal{G}(\mathbb C)} \\
	& {({{\cal F}}/ f^*)} && {{{\cal E}}}
	\arrow["\simeq"{description}, draw=none, from=1-1, to=1-2]
	\arrow[""{name=0, anchor=center, inner sep=0}, "{\pi_f^{\mathbb C}}"', shift right, from=1-2, to=1-4]
	\arrow[from=1-2, to=2-2]
	\arrow[""{name=1, anchor=center, inner sep=0}, "{i_f^{\mathbb C}}"', shift right=2, from=1-4, to=1-2]
	\arrow[from=1-4, to=2-4]
	\arrow[""{name=2, anchor=center, inner sep=0}, "{\pi_f}"', from=2-2, to=2-4]
	\arrow[""{name=3, anchor=center, inner sep=0}, "{i_f}"', shift right=3, from=2-4, to=2-2]
	\arrow["\dashv"{anchor=center, rotate=90}, draw=none, from=0, to=1]
	\arrow["\dashv"{anchor=center, rotate=90}, draw=none, from=2, to=3]
    \end{tikzcd}\]    

According to the preceding proposition, when the inverse image of a fibration is computed via a comorphism of sites, one obtains a geometric morphism between the Giraud topos of the fibration and that of its inverse image. Leveraging the abstract duality, the goal is to deduce an analogous result in the case of morphisms of sites. The following diagram relates the two situations:

% https://q.uiver.app/#q=WzAsNixbMCwzLCJ7XFxjYWwgRn0iXSxbMiwzLCJ7XFxjYWwgRX0iXSxbMiwyLCJcXG1hdGhjYWx7R30oXFxtYXRoYmIgQykiXSxbMCwyLCJcXG1hdGhjYWx7R30oZl4qXFxtYXRoYmIgQykiXSxbMSwwLCJcXG1hdGhjYWx7R30oTGFuX3tpX2Z9e1xcbWF0aGJiIEN9KSJdLFsxLDEsIih7XFxjYWwgRn1cXGRvd25hcnJvdyBmXiopIl0sWzEsMCwiZl4qIiwyXSxbMywwXSxbMiwxXSxbMiwzLCJGXntcXG1iY30iLDJdLFs1LDAsIlxccGlfe1xcY2FsIEZ9IiwxLHsibGFiZWxfcG9zaXRpb24iOjMwfV0sWzUsMSwiXFxwaV9mIiwyXSxbMSw1LCJpX2YiLDIseyJvZmZzZXQiOjN9XSxbNCwyLCJcXHBpX2Zee1xcbWF0aGJiIEN9IiwxLHsibGFiZWxfcG9zaXRpb24iOjMwfV0sWzQsMywiXFxwaV97XFxjYWwgRn1ee1xcbWF0aGJiIEN9IiwyXSxbNCw1XSxbMiw0LCJpX2Zee1xcbWF0aGJiIEN9IiwxLHsibGFiZWxfcG9zaXRpb24iOjMwLCJvZmZzZXQiOjR9XSxbMTEsMTIsIiIsMix7ImxldmVsIjoxLCJzdHlsZSI6eyJuYW1lIjoiYWRqdW5jdGlvbiJ9fV0sWzEzLDE2LCIiLDEseyJsZXZlbCI6MSwic3R5bGUiOnsibmFtZSI6ImFkanVuY3Rpb24ifX1dXQ==
\[\begin{tikzcd}
	& {\mathcal{G}(Lan_{i_f}{\mathbb C})} \\
	& {({{\cal F}}\downarrow f^*)} \\
	{\mathcal{G}(f^*\mathbb C)} && {\mathcal{G}(\mathbb C)} \\
	{{{\cal F}}} && {{{\cal E}}}
	\arrow[from=1-2, to=2-2]
	\arrow["{\pi_{{\cal F}}^{\mathbb C}}"', from=1-2, to=3-1]
	\arrow[""{name=0, anchor=center, inner sep=0}, "{\pi_f^{\mathbb C}}"{description, pos=0.3}, from=1-2, to=3-3]
	\arrow["{\pi_{{\cal F}}}"{description, pos=0.3}, from=2-2, to=4-1]
	\arrow[""{name=1, anchor=center, inner sep=0}, "{\pi_f}"', from=2-2, to=4-3]
	\arrow[from=3-1, to=4-1]
	\arrow[""{name=2, anchor=center, inner sep=0}, "{i_f^{\mathbb C}}"{description, pos=0.3}, shift right=4, from=3-3, to=1-2]
	\arrow["{F^{\mbc}}"', from=3-3, to=3-1]
	\arrow[from=3-3, to=4-3]
	\arrow[""{name=3, anchor=center, inner sep=0}, "{i_f}"', shift right=3, from=4-3, to=2-2]
	\arrow["{f^*}"', from=4-3, to=4-1]
	\arrow["\dashv"{anchor=center, rotate=34}, draw=none, from=0, to=2]
	\arrow["\dashv"{anchor=center, rotate=34}, draw=none, from=1, to=3]
\end{tikzcd}\]

It was previously established (see Proposition \ref{pullbackcomorphalongfib}) that the functor relating a fibration to its inverse image, when computed via a comorphism of sites, is itself a comorphism of sites, and hence induces a geometric morphism between the associated relative presheaf toposes. The abstract duality of \cite{denseness} (as recalled in Theorem \ref{canonicalrelativesiteMoritaequi}) provides a Morita equivalence between the morphism of sites $f^*$ and the comorphism $\pi_f$. It is thus natural to expect that the property established for comorphisms - here, for $\pi_f$ - also holds for the original morphism $f^*$. To address this question, the analysis now turns to the behavior of the functor $\pi_{\cf}^{\mbc}$, in connection with the invariance of relative presheaf toposes under dense morphisms of sites.

\begin{prop}\label{invariancegiraud}
Let $({\cc},J)$ be a site and $\mathbb C$ a $\cc$-indexed category. They provide the following square of functors:

% https://q.uiver.app/#q=WzAsNCxbMCwxLCIoe1xcY2FsIEN9LEopIl0sWzEsMSwiXFx3aWRlaGF0e1xcY2FsIEN9X0oiXSxbMCwwLCJcXG1hdGhjYWx7R30oXFxtYXRoYmIgQykiXSxbMSwwLCJcXG1hdGhjYWx7R30oTGFuX3tsX0p9XFxtYXRoYmIgQykiXSxbMCwxLCJsIiwyXSxbMiwwXSxbMiwzLCJsXntcXG1hdGhiYiBDfSJdLFszLDFdXQ==
\[\begin{tikzcd}
	{\mathcal{G}(\mathbb C)} & {\mathcal{G}(Lan_{l_J}\mathbb C)} \\
	{({\cc},J)} & {\widehat{\cc}_J}
	\arrow["{l^{\mathbb C}}", from=1-1, to=1-2]
	\arrow[from=1-1, to=2-1]
	\arrow[from=1-2, to=2-2]
	\arrow["l"', from=2-1, to=2-2]
\end{tikzcd}\]

This square induces, at the topos-level, the following equivalences:

% https://q.uiver.app/#q=WzAsNCxbMCwxLCJcXHdpZGVoYXR7XFxjYWwgQ31fSiJdLFsxLDEsIlxcd2lkZWhhdHtcXHdpZGVoYXR7XFxjYWwgQ31fSn1fe0pee2Nhbn1fe1xcd2lkZWhhdHtcXGNhbCBDfV9KfX0iXSxbMCwwLCJHaXJfSihcXG1hdGhiYiBDKSJdLFsxLDAsIkdpcl97Sl57Y2FufV97XFx3aWRlaGF0e1xcY2FsIEN9X0p9fShMYW5fe2xfSn1cXG1hdGhiYiBDKSJdLFsxLDAsIlxcU2gobCkiLDAseyJvZmZzZXQiOi0zfV0sWzIsMF0sWzMsMiwiXFxTaChsXntcXG1hdGhiYiBDfSkiLDAseyJvZmZzZXQiOi0xfV0sWzMsMV0sWzIsMywiQ197bF57XFxtYXRoYmIgQ319IiwwLHsib2Zmc2V0IjotNH1dLFsyLDMsIlxcc2ltZXEiLDEseyJvZmZzZXQiOi0yLCJzdHlsZSI6eyJib2R5Ijp7Im5hbWUiOiJub25lIn0sImhlYWQiOnsibmFtZSI6Im5vbmUifX19XSxbMCwxLCJDX3tsfSIsMCx7Im9mZnNldCI6LTF9XSxbMCwxLCJcXHNpbWVxIiwxLHsib2Zmc2V0IjoxLCJzdHlsZSI6eyJib2R5Ijp7Im5hbWUiOiJub25lIn0sImhlYWQiOnsibmFtZSI6Im5vbmUifX19XV0=
\[\begin{tikzcd}
	{\mathbf{Gir}(\mathbb C)} & {\mathbf{Gir}(Lan_{l_J}\mathbb C)} \\
	{\widehat{\cc}_J} & {\widehat{\widehat{\cc}_J}_{J^{\textup{can}}_{\widehat{\cc}_J}}}
	\arrow["{C_{l^{\mathbb C}}}", shift left=4, from=1-1, to=1-2]
	\arrow["\simeq"{description}, shift left=2, draw=none, from=1-1, to=1-2]
	\arrow[from=1-1, to=2-1]
	\arrow["{\Sh(l^{\mathbb C})}", shift left, from=1-2, to=1-1]
	\arrow[from=1-2, to=2-2]
	\arrow["{C_{l}}", shift left, from=2-1, to=2-2]
	\arrow["\simeq"{description}, shift right, draw=none, from=2-1, to=2-2]
	\arrow["{\Sh(l)}", shift left=3, from=2-2, to=2-1]
\end{tikzcd}\]
\end{prop}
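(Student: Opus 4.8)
The plan is to reduce the whole square to the density of the canonical functor $l=l_J$ at the base and then to lift this density to the total categories of the two fibrations. I begin with the bottom row. The functor $l_J:(\cc,J)\to(\widehat{\cc}_J,J^{\textup{can}}_{\widehat{\cc}_J})$ is simultaneously a dense morphism and a dense comorphism of sites --- this is the classical fact that the canonical topology on a Grothendieck topos presents that topos (the comorphism incarnation being an instance of Theorem \ref{canonicalrelativesiteMoritaequi} for the identity geometric morphism on $\widehat{\cc}_J$). Hence $\Sh(l)$ and $C_{l}$ are mutually quasi-inverse equivalences $\widehat{\widehat{\cc}_J}_{J^{\textup{can}}_{\widehat{\cc}_J}}\simeq\widehat{\cc}_J$, which gives the bottom equivalence together with the data needed for compatibility.

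For the top row the strategy is to show that the induced functor $l^{\mathbb C}:(\mathcal{G}(\mathbb C),Gir_{\mathbb C})\to(\mathcal{G}(Lan_{l_J}\mathbb C),Gir_{Lan_{l_J}\mathbb C})$ is again a dense morphism of sites, so that $\Sh(l^{\mathbb C})$ is an equivalence with quasi-inverse $C_{l^{\mathbb C}}$. Continuity is immediate, since $l^{\mathbb C}$ is a morphism of fibrations with base change $l_J$ and is therefore continuous for the Giraud topologies by Theorem \ref{trivialsitescont}. It then remains to verify the density criterion, and here the shape of the Giraud topology does the work: given an object $(y,E)$ of $\mathcal{G}(Lan_{l_J}\mathbb C)$, the density of $l_J$ supplies a $J^{\textup{can}}$-covering of $E$ by representables $l_J(c_i)\to E$, and lifting these cartesianly in the fibration produces a $Gir_{Lan_{l_J}\mathbb C}$-covering of $(y,E)$ by objects lying in the fibres $(Lan_{l_J}\mathbb C)(l_J(c_i))$. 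One is thus reduced, fibrewise, to showing that every object of such a fibre is, up to a further $Gir$-cover, in the image of $l^{\mathbb C}$; this follows by combining the unit $\mathbb C\Rightarrow (Lan_{l_J}\mathbb C)\circ l_J^{\mathrm{op}}$ of the Kan-extension adjunction with a second application of the density of $l_J$, which guarantees that any map $l_J(c_i)\to l_J(c')$ is, locally on a $J$-cover of $c_i$, of the form $l_J(g)$ for a genuine arrow $g$ of $\cc$, so that the corresponding element of the fibre descends to one coming from $\mathbb C$. Functoriality of $\Sh(-)$ and $C_{(-)}$ applied to the commuting square of sites then places the top equivalence over the bottom one.

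The main obstacle is precisely this fibrewise step, because $Lan_{l_J}\mathbb C$ is given by the opaque pseudocolimit of Proposition \ref{inverseimagepropdef} and one would rather not compute it. The cleanest way around it is to trade the explicit colimit for an abstract argument. Since $\Sh(l_J)$ is an equivalence of toposes, the associated inverse-image $2$-functor on stacks $\St(l_J)^{*}=s_{J^{\textup{can}}}\,Lan_{l_J}\,s_{J}$ is an equivalence of the $2$-categories of stacks, and, being the inverse image of an equivalence, it preserves finite limits and hence commutes with the fibrewise-opposite operation $(-)^{V}$. Using the description $\mathbf{Gir}(-)\simeq\mathrm{Ind}(-^{V},\mathcal{S}\,l)$ of Corollary 5.4.1 of \cite{CaramelloZanfa} over $(\widehat{\cc}_J,J^{\textup{can}})$, together with the identification of the restricted canonical stack $\mathcal{S}_{\widehat{\widehat{\cc}_J}}\,l_{\widehat{\cc}_J}\simeq\mathcal{S}_{\widehat{\cc}_J}$ (Definition \ref{canstackresume}) and the adjunction $Lan_{l_J}\dashv(-\circ l_J^{\mathrm{op}})$ of Proposition \ref{inverseimagepropdef}, one obtains the chain $\mathbf{Gir}(Lan_{l_J}\mathbb C)\simeq\mathrm{Ind}_{\widehat{\cc}_J}\big((Lan_{l_J}\mathbb C)^{V},\mathcal{S}_{\widehat{\cc}_J}\big)\simeq\mathrm{Ind}_{\cc}\big(\mathbb C^{V},\mathcal{S}_{\widehat{\cc}_J}\,l_J\big)\simeq\mathbf{Gir}(\mathbb C)$, bypassing the colimit entirely. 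Because stackification leaves Giraud toposes unchanged (subsection \ref{subsec2}), replacing $Lan_{l_J}\mathbb C$ by $\St(l_J)^{*}\mathbb C$ throughout is harmless. A final check that this chain of equivalences is the one induced by $l^{\mathbb C}$, and that it is compatible with the base equivalence, completes the argument.
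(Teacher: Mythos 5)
Your final (abstract) argument is correct in substance and reaches the conclusion by a genuinely different route from the paper. The paper's proof invokes the fundamental adjunction (Corollary 5.3.7 of \cite{CaramelloZanfa}): the square of 2-adjunctions relating $\mathbf{Ind}_{\cc}$, $\mathbf{Ind}_{\widehat{\cc}_J}$, $\mathbf{Top}/\widehat{\cc}_J$ and $\mathbf{Top}/\widehat{\widehat{\cc}_J}_{J^{\textup{can}}_{\widehat{\cc}_J}}$ visibly commutes on right adjoints, the face induced by $l$ consists of equivalences (again because $l$ is a dense morphism and comorphism of sites), and uniqueness of left adjoints then identifies $\mathbf{Gir}(Lan_{l_J}\mathbb{C})$ with $C_l\circ\mathbf{Gir}(\mathbb{C})$; since this all happens inside the slice 2-categories, compatibility with the structure morphisms to the bases comes for free. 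You instead apply Corollary 5.4.1, $\mathbf{Gir}(-)\simeq\mathrm{Ind}(-^{V},\mathcal{S}\,l)$, over both sites and transpose along $Lan_{l_J}\dashv(-\circ l_J^{\mathrm{op}})$ inside the hom-category. This is a pointwise version of the same adjunction-juggling; what it costs you are two extra identifications, namely $\mathcal{S}_{\widehat{\widehat{\cc}_J}}\,l_{J^{\textup{can}}}\simeq\mathcal{S}_{\widehat{\cc}_J}$ (fine, since $\Sh(l)$ is an equivalence) and $(Lan_{l_J}\mathbb{C})^{V}\simeq Lan_{l_J}(\mathbb{C}^{V})$.

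Two caveats. First, your justification of the latter commutation --- that $\St(l_J)^{*}$ ``preserves finite limits and hence commutes with $(-)^{V}$'' --- is a non sequitur: preservation of finite limits has nothing to do with commuting with fibrewise opposites. The fact itself is true, but for a different reason: $(-)^{\mathrm{op}}$ is a 2-equivalence $\Cat^{\mathrm{co}}\to\Cat$, hence preserves pseudocolimits and so commutes with the pointwise formula of Proposition \ref{inverseimagepropdef}; alternatively, on stacks $\St(l_J)^{*}$ is quasi-inverse to $(-\circ l_J^{\mathrm{op}})$, which obviously commutes with $(-)^{V}$, and quasi-inverses inherit such commutations. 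Second, your deferred ``final check'' --- that the composite equivalence is the one induced by $l^{\mathbb{C}}$, compatibly with the base equivalence --- is not a routine verification: it is exactly what the subsequent corollary in the paper uses (it composes $\Sh(l^{\mathbb{C}})$ with other functors), and an abstract equivalence of categories does not by itself say anything about $l^{\mathbb{C}}$. To be fair, the paper's own proof leaves the same identification implicit in the naturality of the fundamental adjunction, so this is a shared looseness rather than a defect specific to your argument. Finally, abandoning your first strategy (proving directly that $l^{\mathbb{C}}$ is a dense morphism of sites) was the right call: the fibrewise step founders exactly where you say, since objects and arrows of the fibres of $Lan_{l_J}\mathbb{C}$ are classes in a pseudocolimit and only locally descend to $\mathbb{C}$.
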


\begin{proof}
The fundamental adjunction of Corollary 5.3.7. \cite{CaramelloZanfa}, for a site $({\cc},J)$ and the canonical site of its associated topos $(\widehat{\cc}_J,J^{\textup{can}}_{\widehat{\cc}_J})$, ensures the following square of adjunctions:
% https://q.uiver.app/#q=WzAsNCxbMCwwLCJcXG1hdGhiZntJbmR9X3tcXGNhbCBDfSJdLFswLDIsIlxcbWF0aGJme0luZH1fe1xcd2lkZWhhdHtcXGNhbCBDfV9KfSJdLFsyLDAsIlxcbWF0aGJme1RvcH0ve1xcd2lkZWhhdHtcXGNhbCBDfV9KfSJdLFsyLDIsIlxcbWF0aGJme1RvcH0vXFx3aWRlaGF0e3tcXHdpZGVoYXR7XFxjYWwgQ31fSn19X3tKXntcXHdpZGVoYXR7XFxjYWwgQ31fSn1fe2Nhbn19Il0sWzAsMiwiXFxMYW1iZGFfe3tcXGNhbCBDfX0iLDAseyJvZmZzZXQiOi0yfV0sWzEsMywiXFxMYW1iZGFfe3tcXHdpZGVoYXR7XFxjYWwgQ31fSn19IiwwLHsib2Zmc2V0IjotMX1dLFszLDEsIlxcR2FtbWFfe3tcXHdpZGVoYXR7XFxjYWwgQ31fSn19IiwwLHsib2Zmc2V0IjotMn1dLFsyLDAsIlxcR2FtbWFfe1xcY2FsIEN9IiwwLHsib2Zmc2V0IjotMX1dLFsxLDAsImxfKiIsMl0sWzAsMSwiTGFuX3tsXntvcH19IiwyLHsib2Zmc2V0IjozfV0sWzMsMiwiKFxcU2gobClcXGNpcmMtKSIsMl0sWzIsMywiKENfbFxcY2lyYy0pIiwyLHsib2Zmc2V0IjozfV0sWzQsNywiIiwwLHsibGV2ZWwiOjEsInN0eWxlIjp7Im5hbWUiOiJhZGp1bmN0aW9uIn19XSxbNSw2LCIiLDAseyJsZXZlbCI6MSwic3R5bGUiOnsibmFtZSI6ImFkanVuY3Rpb24ifX1dLFs5LDgsIiIsMix7ImxldmVsIjoxLCJzdHlsZSI6eyJuYW1lIjoiYWRqdW5jdGlvbiJ9fV0sWzExLDEwLCIiLDEseyJsZXZlbCI6MSwic3R5bGUiOnsibmFtZSI6ImFkanVuY3Rpb24ifX1dXQ==
\[\begin{tikzcd}
	{\mathbf{Ind}_{\cc}} && {\mathbf{Top}/{\widehat{\cc}_J}} \\
	\\
	{\mathbf{Ind}_{\widehat{\cc}_J}} && {\mathbf{Top}/\widehat{{\widehat{\cc}_J}}_{J_{\widehat{\cc}_J}^{\textup{can}}}}
	\arrow[""{name=0, anchor=center, inner sep=0}, "{\Lambda_{{\cc}}}", shift left=2, from=1-1, to=1-3]
	\arrow[""{name=1, anchor=center, inner sep=0}, "{Lan_{l^{\textup{op}}}}"', shift right=3, from=1-1, to=3-1]
	\arrow[""{name=2, anchor=center, inner sep=0}, "{\Gamma_{\cc}}", shift left, from=1-3, to=1-1]
	\arrow[""{name=3, anchor=center, inner sep=0}, "{(C_l\circ-)}"', shift right=3, from=1-3, to=3-3]
	\arrow[""{name=4, anchor=center, inner sep=0}, "{l_*}"', from=3-1, to=1-1]
	\arrow[""{name=5, anchor=center, inner sep=0}, "{\Lambda_{{\widehat{\cc}_J}}}", shift left, from=3-1, to=3-3]
	\arrow[""{name=6, anchor=center, inner sep=0}, "{(\Sh(l)\circ-)}"', from=3-3, to=1-3]
	\arrow[""{name=7, anchor=center, inner sep=0}, "{\Gamma_{{\widehat{\cc}_J}}}", shift left=2, from=3-3, to=3-1]
	\arrow["\dashv"{anchor=center, rotate=-90}, draw=none, from=0, to=2]
	\arrow["\dashv"{anchor=center}, draw=none, from=1, to=4]
	\arrow["\dashv"{anchor=center}, draw=none, from=3, to=6]
	\arrow["\dashv"{anchor=center, rotate=-90}, draw=none, from=5, to=7]
\end{tikzcd}\]

Here, the adjoints of the right face of the square are equivalences, as $l$ is a dense morphism and comorphism of sites. Now, it obviously commutes for right adjoints, that is $l_*\Gamma_{\widehat{\cc}_J} \simeq \Gamma_{\cc}(\Sh(l)\circ -)$, so that it also does for the left adjoints which associate its Giraud topos to an indexed category; hence the Giraud topos of an indexed category $\mbc$ is equivalent to the one of its extension $Lan_{l}(\mbc)$.
\end{proof}

In order to obtain a similar result for arbitrary morphisms of sites, the following funtoriality result for the functor relating a fibration to its inverse image will be useful:

\begin{prop}\label{functorialityofstructuralfunctorinverseimage}
Let $F : {\cc} \to {\cc'}$ and $F': {\cc'} \to {\cc''}$ be two functors, together with $\mathbb C$ a $\cc$-indexed category. The following isomorphism holds: $(Lan_{F'F})^{\mathbb C} \simeq (Lan_F)_{\mathbb C}(Lan_{F'})^{Lan_F\mathbb C}$, as pictured in the diagram

% https://q.uiver.app/#q=WzAsNixbMCwxLCJ7XFxjYWwgQ30iXSxbMSwxLCJ7XFxjYWwgQyd9Il0sWzIsMSwie1xcY2FsIEMnJ30iXSxbMCwwLCJcXG1hdGhjYWx7R30oXFxtYXRoYmIgQykiXSxbMSwwLCJcXG1hdGhjYWx7R30oRlxcbWF0aGJiIEMpIl0sWzIsMCwiXFxtYXRoY2Fse0d9KEYnRlxcbWF0aGJiIEMpIl0sWzAsMSwiRiIsMl0sWzEsMiwiRiciLDJdLFszLDBdLFs0LDFdLFs1LDJdLFszLDQsIihMYW5fRilee1xcbWF0aGJiIEN9IiwyLHsib2Zmc2V0IjoxfV0sWzQsNSwiKExhbl97Rid9KV57TGFuX0ZcXG1hdGhiYiBDfSIsMix7Im9mZnNldCI6MX1dLFszLDUsIihMYW5fe0YnRn0pXntcXG1hdGhiYiBDfSIsMCx7ImN1cnZlIjotM31dXQ==
\[\begin{tikzcd}
	{\mathcal{G}(\mathbb C)} & {\mathcal{G}(F\mathbb C)} & {\mathcal{G}(F'F\mathbb C)} \\
	{{\cal C}} & {{\cal C'}} & {{\cal C''}}
	\arrow["{(Lan_F)^{\mathbb C}}"', shift right, from=1-1, to=1-2]
	\arrow["{(Lan_{F'F})^{\mathbb C}}", bend left = 18, from=1-1, to=1-3]
	\arrow[from=1-1, to=2-1]
	\arrow["{(Lan_{F'})^{Lan_F\mathbb C}}"', shift right, from=1-2, to=1-3]
	\arrow[from=1-2, to=2-2]
	\arrow[from=1-3, to=2-3]
	\arrow["F"', from=2-1, to=2-2]
	\arrow["{F'}"', from=2-2, to=2-3]
\end{tikzcd}\]
\end{prop}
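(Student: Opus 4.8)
The plan is to transport the whole identity down to the level of indexed categories, where the structural functor $(Lan_F)^{\mathbb{C}}$ admits a clean description. Writing $D_F := (-\circ F\op)$ for the direct image along $F$ and $\varsigma^F_{\mathbb{C}} : \mathbb{C} \to D_F(Lan_F\mathbb{C})$ for the unit of the inverse-image $2$-adjunction $Lan_F \dashv D_F$ (Proposition \ref{inverseimagepropdef}), the structural functor is by construction the composite $(Lan_F)^{\mathbb{C}} \simeq q^F_{Lan_F\mathbb{C}}\circ \mathcal{G}(\varsigma^F_{\mathbb{C}})$, where $q^F_{Lan_F\mathbb{C}}$ is the bipullback projection of Definition \ref{defimdirecte} (this is exactly the pattern $F^{\mathbb{C}} = q^{f^*}_{\mathbb{C}}\varsigma^f_{\mathbb{C}}$ of subsection \ref{subsec3}, now for a general $F$). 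Both $(Lan_{F'F})^{\mathbb{C}}$ and $(Lan_{F'})^{Lan_F\mathbb{C}}\circ (Lan_F)^{\mathbb{C}}$ are morphisms of fibrations out of $\mathcal{G}(\mathbb{C})$ with base change $F'F$; by the universal property of the bipullback together with the fact that $q^{F'F}$ reflects cartesian arrows (Proposition \ref{projectionreflectcart}), such morphisms correspond bijectively, up to isomorphism, to morphisms of $\cc$-indexed categories $\mathbb{C}\to D_{F'F}(Lan_{F'}Lan_F\mathbb{C})$. It therefore suffices to check that the two associated indexed morphisms agree.

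First I would record the two facts that drive the comparison. Since $D_{F'F}\simeq D_F\circ D_{F'}$ (precomposition is functorial), passing to left $2$-adjoints yields the pseudofunctoriality isomorphism $Lan_{F'F}\mathbb{C}\simeq Lan_{F'}(Lan_F\mathbb{C})$, which identifies the two codomains appearing in the statement. Moreover, the dictionary between morphisms of fibrations with base change and morphisms of indexed categories is compatible with composition: if $A$ over $F$ corresponds to $\bar A : \mathbb{C}\to D_F\mathbb{D}$ and $A'$ over $F'$ corresponds to $\bar{A'} : \mathbb{D}\to D_{F'}\mathbb{E}$, then $A'A$ over $F'F$ corresponds to $D_F(\bar{A'})\circ \bar A : \mathbb{C}\to D_F D_{F'}\mathbb{E} = D_{F'F}\mathbb{E}$. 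This is the $2$-functoriality of the Grothendieck construction with base change (Definition \ref{defimdirecte} and the fibration/indexed-category equivalence of \cite{CaramelloZanfa}).

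Applying this dictionary with $\bar A = \varsigma^F_{\mathbb{C}}$ and $\bar{A'} = \varsigma^{F'}_{Lan_F\mathbb{C}}$, the composite $(Lan_{F'})^{Lan_F\mathbb{C}}\circ (Lan_F)^{\mathbb{C}}$ corresponds to the indexed morphism
\[
\mathbb{C}\xrightarrow{\ \varsigma^F_{\mathbb{C}}\ } D_F(Lan_F\mathbb{C}) \xrightarrow{\ D_F(\varsigma^{F'}_{Lan_F\mathbb{C}})\ } D_F D_{F'}\big(Lan_{F'}Lan_F\mathbb{C}\big).
\]
The final step is to recognise this as the unit $\varsigma^{F'F}_{\mathbb{C}}$: indeed it is exactly the standard formula for the unit of the composite adjunction $Lan_{F'}Lan_F \dashv D_F D_{F'}$, transported along the identification $Lan_{F'F}\simeq Lan_{F'}Lan_F$ of the previous paragraph. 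Since $(Lan_{F'F})^{\mathbb{C}}$ corresponds by definition to $\varsigma^{F'F}_{\mathbb{C}}$, the two functors carry the same associated indexed morphism and are consequently isomorphic.

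I expect the genuine work to be bookkeeping rather than conceptual: one must verify that the pseudofunctoriality isomorphism $Lan_{F'F}\simeq Lan_{F'}Lan_F$ is compatible with the bipullback projections, so that the equality of indexed morphisms above upgrades to an isomorphism of the structural functors themselves (equivalently, that pasting the two bipullback squares along $F$ and $F'$ reconstructs, up to canonical isomorphism, the single bipullback square along $F'F$). Tracking these coherence isomorphisms carefully is the only subtle point; everything else is the composite-unit law together with the indexed description of $(Lan_{(-)})^{\mathbb{C}}$.
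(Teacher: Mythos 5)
Your proposal is correct, but it takes a genuinely different route from the paper's own proof. The paper argues by reduction to representables: it first verifies the identity when $\mathbb{C}$ is a slice $\cc/c$, where all three structural functors are explicit ($[u]\mapsto [F(u)]$, etc.), and then extends to arbitrary $\mathbb{C}$ by writing $\mathcal{G}(\mathbb{C})$ as the lax colimit of representables (Proposition 2.9.5 of \cite{CaramelloZanfa}) and using that $\mathrm{Lan}_F$, being a left biadjoint, commutes with lax colimits, the naturality of the unit of $\mathrm{Lan}_F \dashv (-\circ F^{\mathrm{op}})$ making the resulting cocones compatible. You instead never leave the indexed-category formalism: you translate both sides of the identity, via the correspondence between morphisms of fibrations with base change $F'F$ and indexed morphisms $\mathbb{C}\to (-\circ (F'F)^{\mathrm{op}})$-images (a translation that is legitimate exactly because the bipullback projections reflect cartesian arrows, Proposition \ref{projectionreflectcart}), and then identify both translations with the unit of the composite $2$-adjunction $\mathrm{Lan}_{F'}\mathrm{Lan}_F \dashv (-\circ F'^{\mathrm{op}})(-\circ F^{\mathrm{op}})$, invoking the composite-unit law together with the canonical isomorphism $\mathrm{Lan}_{F'F}\simeq \mathrm{Lan}_{F'}\mathrm{Lan}_F$ obtained from uniqueness of left $2$-adjoints to the (strictly equal) direct-image functors. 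Your route buys formality and brevity: the statement collapses to a law valid in any $2$-category, the only residual work being the coherence bookkeeping you flag at the end (compatibility of $\mathrm{Lan}_{F'F}\simeq \mathrm{Lan}_{F'}\mathrm{Lan}_F$ with the bipullback projections), a point which the paper's proof also leaves implicit at a comparable level of detail. The paper's route buys concreteness, exhibiting the comparison explicitly on slices, at the cost of invoking the lax-colimit presentation of fibrations and its preservation by $\mathrm{Lan}_F$. Both arguments ultimately rest on the same input, namely the $2$-adjunction of Proposition \ref{inverseimagepropdef}.
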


\begin{proof}
First, the result is established for $\mathbb C$ representable; a colimit argument then extends it to arbitrary fibrations.

So let $c$ be an object of ${\cc}$ and $\mathbb C$ the associated representable. The associated diagram is

% https://q.uiver.app/#q=WzAsNixbMCwxLCJ7XFxjYWwgQ30iXSxbMSwxLCJ7XFxjYWwgQyd9Il0sWzIsMSwie1xcY2FsIEMnJ30iXSxbMCwwLCJ7XFxjYWwgQ30vYyJdLFsxLDAsIntcXGNhbCBDJ30vRihjKSJdLFsyLDAsIntcXGNhbCBDJyd9L0YnRihjKSJdLFswLDEsIkYiLDJdLFsxLDIsIkYnIiwyXSxbMywwXSxbNCwxXSxbNSwyXSxbMyw0LCJGX2MiXSxbNCw1LCJGJ197RihjKX0iXSxbMyw1LCIoRidGKV97Y30iLDEseyJjdXJ2ZSI6LTN9XV0=
\[\begin{tikzcd}
	{{\cc}/c} & {{\cc'}/F(c)} & {{\cc''}/F'F(c)} \\
	{{\cc}} & {{\cc'}} & {{\cc''}}
	\arrow["{F^c}", from=1-1, to=1-2]
	\arrow["{(F'F)^{c}}"{description}, bend left = 25, from=1-1, to=1-3]
	\arrow[from=1-1, to=2-1]
	\arrow["{F'^{F(c)}}", from=1-2, to=1-3]
	\arrow[from=1-2, to=2-2]
	\arrow[from=1-3, to=2-3]
	\arrow["F"', from=2-1, to=2-2]
	\arrow["{F'}"', from=2-2, to=2-3]
\end{tikzcd}\]
where the upper functor send $[u : \overline{c} \to c]$ to $[F'F(u)]$, and $F^c : [u] \mapsto [F(u)]$, and $(F')^{F(c)} : [u'  : c' \to F(c)] \mapsto [F'(u')]$. Hence, the proposition is verified for these functors.

Since $\mathcal{G}(\mathbb C) \simeq \colim_{lax}(\mathbb C)$ (see Proposition 2.9.5. \cite{CaramelloZanfa}), and the inverse image functor commutes with lax colimits - as a left biadjoint -, it follows that the commutation of the following squares results from the very definition of the functors $(\mathrm{Lan}F)^{\mathbb C}$, together with the naturality of the unit of the adjunction $\mathrm{Lan}_F \dashv (-\circ F)$.

% https://q.uiver.app/#q=WzAsNCxbMCwxLCJ7XFxjYWwgQ30vYyJdLFswLDAsIlxcbWF0aGNhbHtHfShcXG1hdGhiYiBDKSJdLFsxLDAsIlxcbWF0aGNhbHtHfShMYW5fRihcXG1hdGhiYiBDKSkiXSxbMSwxLCJ7XFxjYWwgQyd9L0YoYykiXSxbMCwzLCJGX2MiLDJdLFsxLDIsIkZfe1xcbWF0aGJiIEN9Il0sWzAsMV0sWzMsMl1d
\[\begin{tikzcd}
	{\mathcal{G}(\mathbb C)} & {\mathcal{G}(Lan_F(\mathbb C))} \\
	{{\cc}/c} & {{\cc'}/F(c)}
	\arrow["{(Lan_F)^{\mathbb C}}", from=1-1, to=1-2]
	\arrow[from=2-1, to=1-1]
	\arrow["{F^c}"', from=2-1, to=2-2]
	\arrow[from=2-2, to=1-2]
\end{tikzcd}\]
In this square, the vertical functors are determined by the fibered Yoneda lemma (see Proposition 2.3.1 \cite{CaramelloZanfa}). Moreover, every natural transformation involved in the lax colimit presentation of $\mathcal{G}(\mathbb C)$ gives rise to a commutative square, compatible with the previous ones.

Finally, since the identity $(F'F)^c \simeq F^{\mathbb C}F'^{F(c)}$ holds for representable presheaves, and every fibration $\mathcal{G}(\mathbb C)$ is the lax colimit of representable presheaves, with the associated lax cocones being compatible with the functors $F^c$, this yields the identity $(F'F)^{\mathbb C} \simeq F^{\mathbb C}F'^{F\mathbb C}$.
\end{proof}

\begin{cor}
Let $i : ({\cc},J) \to ({\cc}',J')$ be a morphism of sites inducing an equivalence at the level of toposes, and $\mathbb C$ a $\cc$-indexed category. Then, there is the following commutative square at the level of toposes:

% https://q.uiver.app/#q=WzAsNCxbMCwxLCJcXHdpZGVoYXR7XFxjYWwgQyd9X3tKJ30iXSxbMSwxLCJcXHdpZGVoYXR7XFxjYWwgQ31fSiJdLFswLDAsIkdpcl97Sid9KFxcbWF0aGJiIEMnKSJdLFsxLDAsIkdpcl97Sn0oXFxtYXRoYmIgQykiXSxbMiwwXSxbMywxXSxbMiwzLCJDX3tpX3tcXG1hdGhiYiBDfX0iXSxbMiwzLCJcXHNpbWVxIiwzLHsib2Zmc2V0IjoyLCJzdHlsZSI6eyJib2R5Ijp7Im5hbWUiOiJub25lIn0sImhlYWQiOnsibmFtZSI6Im5vbmUifX19XSxbMCwxLCJDX3tpfSIsMCx7Im9mZnNldCI6MX1dLFswLDEsIlxcc2ltZXEiLDEseyJvZmZzZXQiOjMsInN0eWxlIjp7ImJvZHkiOnsibmFtZSI6Im5vbmUifSwiaGVhZCI6eyJuYW1lIjoibm9uZSJ9fX1dXQ==
\[\begin{tikzcd}
	{\mathbf{Gir}(Lan_i\mathbb C)} & {\mathbf{Gir}(\mathbb C)} \\
	{\widehat{\cc'}_{J'}} & {\widehat{\cc}_J}
	\arrow["{\Sh({i^{\mathbb C}})}", from=1-1, to=1-2]
	\arrow["\simeq"{marking, allow upside down}, shift right=2, draw=none, from=1-1, to=1-2]
	\arrow[from=1-1, to=2-1]
	\arrow[from=1-2, to=2-2]
	\arrow["{\Sh({i})}", shift right, from=2-1, to=2-2]
	\arrow["\simeq"{description}, shift right=3, draw=none, from=2-1, to=2-2]
\end{tikzcd}\]
\end{cor}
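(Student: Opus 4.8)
The plan is to reduce the statement to the already-established invariance of the Giraud topos under the canonical functor $l_J$ (Proposition \ref{invariancegiraud}), using the functoriality of the structural functor (Proposition \ref{functorialityofstructuralfunctorinverseimage}) together with the hypothesis that $\Sh(i)^*$ is an equivalence. First I would record the compatibility of the canonical functors with $i$: since $i$ is a morphism of sites, its induced inverse image $\Sh(i)^*$ is computed as $a_{J'}\circ Lan_i$ on representables, and $Lan_i(y_{\cc}(c)) \simeq y_{\cc'}(i(c))$; hence $\Sh(i)^*\circ l_J \simeq l_{J'}\circ i$ as functors $\cc \to \widehat{\cc'}_{J'}$. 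By hypothesis $\Sh(i)$ is an equivalence, so $\Sh(i)^*$ is an equivalence of toposes.

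Next I would apply functoriality of the structural functor (Proposition \ref{functorialityofstructuralfunctorinverseimage}) to the two factorizations of this common composite. Writing $l^{\mathbb C}$, $l'^{Lan_i\mathbb C}$ and $(Lan_{\Sh(i)^*})^{Lan_{l_J}\mathbb C}$ for the relevant structural functors, this yields an isomorphism
\[
(Lan_{\Sh(i)^*})^{Lan_{l_J}\mathbb C}\circ l^{\mathbb C}\;\simeq\; l'^{Lan_i\mathbb C}\circ i^{\mathbb C},
\]
where $i^{\mathbb C} = (Lan_i)^{\mathbb C}$. I would then identify three of the four functors appearing here as inducing equivalences of Giraud toposes: $l^{\mathbb C}$ does by Proposition \ref{invariancegiraud} applied to $(\cc,J)$ and $\mathbb C$; $l'^{Lan_i\mathbb C}$ does by the same proposition applied to $(\cc',J')$ and $Lan_i\mathbb C$; and $(Lan_{\Sh(i)^*})^{Lan_{l_J}\mathbb C}$ does because, $\Sh(i)^*$ being an equivalence, Proposition \ref{adjointsinversedirectimage} identifies $Lan_{\Sh(i)^*}$ with transport along its quasi-inverse, so the associated structural functor is an equivalence of the underlying fibrations respecting their Giraud topologies.

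Passing to Giraud toposes and using the (contravariant) pseudofunctoriality of $\Sh(-)$ for continuous functors, the displayed isomorphism becomes a commuting relation of geometric morphisms in which three terms are equivalences; by cancellation of equivalences (2-out-of-3), $\Sh(i^{\mathbb C})$ is an equivalence as well. The square then commutes over $\Sh(i)$ because $i^{\mathbb C}$ is a morphism of fibrations with base change $i$ (by the very construction underlying Proposition \ref{functorialityofstructuralfunctorinverseimage}), so the square of (co)morphisms of sites commutes and descends to the asserted commutative square of geometric morphisms by functoriality of the induced-topos construction.

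The main obstacle I expect is the middle step: verifying carefully that $Lan$ along the equivalence $\Sh(i)^*$ is genuine transport and therefore induces an equivalence of Giraud toposes, together with the bookkeeping of directions ensuring that the equivalence so produced is precisely $\Sh(i^{\mathbb C})$ and sits over $\Sh(i)$. Everything else is a direct application of the cited results, the conceptual point being that functoriality of the structural functor lets one pivot the general morphism of sites $i$ through the canonical functors $l_J$ and $l_{J'}$, where invariance is already known.
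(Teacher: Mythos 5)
Your proposal is correct and follows essentially the same route as the paper: both apply the functoriality of the structural functor to the two factorizations of $\Sh(i)^*\circ l_J\simeq l_{J'}\circ i$, identify $l^{\mathbb C}$ and $l'^{Lan_i\mathbb C}$ as equivalence-inducing via Proposition \ref{invariancegiraud}, observe that $Lan$ along the equivalence $\Sh(i)^*$ is transport (the paper phrases this via functoriality of $\St(-)^*$, you via Proposition \ref{adjointsinversedirectimage}; same idea), and conclude that $\Sh(i^{\mathbb C})$ is an equivalence by two-out-of-three, commuting over $\Sh(i)$ since $i^{\mathbb C}$ is a morphism of fibrations with base change.
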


\begin{proof}

Because of the previous proposition, the isomorphism $(\Sh(i)^*)^{\widetilde{\mbc}}l^{\mbc} \simeq l'^{Lan_i\mbc}i^{\mbc}$ holds, so that the following diagram is commutative:
% https://q.uiver.app/#q=WzAsOCxbMSwyLCIoe1xcY2FsIEN9LEopIl0sWzIsMiwiKHtcXGNhbCBDfScsSicpIl0sWzEsMSwiXFxtYXRoY2Fse0d9KFxcbWF0aGJiIEMpIl0sWzAsMCwiXFxtYXRoY2Fse0d9KFxcd2lkZXRpbGRle1xcbWF0aGJiIEN9KSJdLFswLDMsIlxcd2lkZWhhdHtcXGNhbCBDfV9KIl0sWzMsMywiXFx3aWRlaGF0e1xcY2FsIEMnfV97Sid9Il0sWzIsMSwiXFxtYXRoY2Fse0d9KExhbl9pXFxtYXRoYmIgQykiXSxbMywwLCJcXG1hdGhjYWx7R30oXFx3aWRldGlsZGV7TGFuX2lcXG1hdGhiYiBDfSkiXSxbMCwxLCJpIiwyXSxbMCw0LCJsIiwyXSxbMSw1LCJsJyJdLFs0LDUsIlxcU2goaSleKiIsMl0sWzMsNF0sWzIsMF0sWzIsMywibF97XFxtYXRoYmIgQ30iXSxbMiw2LCJpXntcXG1hdGhiYiBDfSJdLFs2LDFdLFs2LDcsImwnX3tMYW5faVxcbWF0aGJiIEN9IiwyXSxbNyw1XSxbMyw3LCJcXHNoKGkpXipfe1xcbWF0aGJiIEN9Il0sWzQsNSwiXFxzaW1lcSIsMSx7Im9mZnNldCI6LTMsInN0eWxlIjp7ImJvZHkiOnsibmFtZSI6Im5vbmUifSwiaGVhZCI6eyJuYW1lIjoibm9uZSJ9fX1dXQ==
\[\begin{tikzcd}
	{\mathcal{G}(\widetilde{\mathbb C})} &&& {\mathcal{G}(\widetilde{Lan_i\mathbb C})} \\
	& {\mathcal{G}(\mathbb C)} & {\mathcal{G}(Lan_i\mathbb C)} \\
	& {({\cc},J)} & {({\cc}',J')} \\
	{\widehat{\cc}_J} &&& {\widehat{\cc'}_{J'}}
	\arrow["{(\Sh(i)^*)^{\widetilde{\mbc}}}", from=1-1, to=1-4]
	\arrow[from=1-1, to=4-1]
	\arrow[from=1-4, to=4-4]
	\arrow["{l^{\mathbb C}}", from=2-2, to=1-1]
	\arrow["{i^{\mathbb C}}", from=2-2, to=2-3]
	\arrow[from=2-2, to=3-2]
	\arrow["{l'^{Lan_i\mathbb C}}"', from=2-3, to=1-4]
	\arrow[from=2-3, to=3-3]
	\arrow["i"', from=3-2, to=3-3]
	\arrow["l"', from=3-2, to=4-1]
	\arrow["{l'}", from=3-3, to=4-4]
	\arrow["{\Sh(i)^*}"', from=4-1, to=4-4]
	\arrow["\simeq"{description}, shift left=3, draw=none, from=4-1, to=4-4]
\end{tikzcd}\]

Note that $\St(\Sh(i)^*)\St(l)^* \simeq \St(l')^*\St(i)^*$ holds because of the functoriality of $\St(-)^*$: this justifies that $(\Sh(i)^*)^{\widetilde{\mbc}} : \cg(\widetilde{\mbc}) \to \cg(\widetilde{Lan_i(\mbc)}$ is an equivalence (up to stackification, which does not change anything when passing to relative presheaf toposes). Hence, this diagram induces, at the topos-level, the following commutative diagram:

% https://q.uiver.app/#q=WzAsOCxbMSwyLCJcXHdpZGVoYXR7XFxjYWwgQ31fSiJdLFsyLDIsIlxcd2lkZWhhdHtcXGNhbCBDJ31fe0onfSJdLFsxLDEsIkdpcihcXG1hdGhiYiBDKSJdLFswLDAsIkdpcihcXHdpZGV0aWxkZXtcXG1hdGhiYiBDfSkiXSxbMCwzLCJcXHdpZGVoYXR7XFxjYWwgQ31fSiJdLFszLDMsIlxcd2lkZWhhdHtcXGNhbCBDJ31fe0onfSJdLFsyLDEsIkdpcihMYW5faVxcbWF0aGJiIEMpIl0sWzMsMCwiR2lyKFxcd2lkZXRpbGRle0xhbl9pXFxtYXRoYmIgQ30pIl0sWzQsMCwiXFxTaChsKSJdLFs1LDEsIlxcU2gobCcpIiwyXSxbNSw0LCJcXFNoKGkpIl0sWzMsNF0sWzIsMF0sWzMsMiwiXFxTaChsX3tcXG1hdGhiYiBDfSkiLDJdLFsyLDYsIlxcU2goaV57XFxtYXRoYmIgQ30pIl0sWzYsMV0sWzcsNiwiXFxTaChsJ197TGFuX2lcXG1hdGhiYiBDfSkiXSxbNyw1XSxbNywzLCJcXFNoKFxcU2goaSleKl97XFxtYXRoYmIgQ30pIiwyXSxbNCw1LCJcXHNpbWVxIiwxLHsib2Zmc2V0IjotMywic3R5bGUiOnsiYm9keSI6eyJuYW1lIjoibm9uZSJ9LCJoZWFkIjp7Im5hbWUiOiJub25lIn19fV0sWzUsMSwiXFxzaW1lcSIsMCx7ImxhYmVsX3Bvc2l0aW9uIjo2MCwib2Zmc2V0IjoyLCJzdHlsZSI6eyJib2R5Ijp7Im5hbWUiOiJub25lIn0sImhlYWQiOnsibmFtZSI6Im5vbmUifX19XSxbNCwwLCJcXHNpbWVxIiwyLHsibGFiZWxfcG9zaXRpb24iOjcwLCJvZmZzZXQiOi0xLCJzdHlsZSI6eyJib2R5Ijp7Im5hbWUiOiJub25lIn0sImhlYWQiOnsibmFtZSI6Im5vbmUifX19XSxbMSwwLCJcXFNoKGkpIl0sWzEsMCwiXFxzaW1lcSIsMix7InN0eWxlIjp7ImJvZHkiOnsibmFtZSI6Im5vbmUifSwiaGVhZCI6eyJuYW1lIjoibm9uZSJ9fX1dLFszLDIsIlxcc2ltZXEiLDAseyJzdHlsZSI6eyJib2R5Ijp7Im5hbWUiOiJub25lIn0sImhlYWQiOnsibmFtZSI6Im5vbmUifX19XSxbNyw2LCJcXHNpbWVxIiwyLHsic3R5bGUiOnsiYm9keSI6eyJuYW1lIjoibm9uZSJ9LCJoZWFkIjp7Im5hbWUiOiJub25lIn19fV0sWzcsMywiXFxzaW1lcSIsMCx7InN0eWxlIjp7ImJvZHkiOnsibmFtZSI6Im5vbmUifSwiaGVhZCI6eyJuYW1lIjoibm9uZSJ9fX1dXQ==
\[\begin{tikzcd}
	{\mathbf{Gir}(\widetilde{\mathbb C})} &&& {\mathbf{Gir}(\widetilde{Lan_i\mathbb C})} \\
	& {\mathbf{Gir}(\mathbb C)} & {\mathbf{Gir}(Lan_i\mathbb C)} \\
	& {\widehat{\cc}_J} & {\widehat{\cc'}_{J'}} \\
	{\widehat{\cc}_J} &&& {\widehat{\cc'}_{J'}}
	\arrow["{\Sh(l^{\mathbb C})}"', from=1-1, to=2-2]
	\arrow["\simeq", draw=none, from=1-1, to=2-2]
	\arrow[from=1-1, to=4-1]
	\arrow["{\Sh((\Sh(i)^*)^{\widetilde{\mathbb C}})}"', from=1-4, to=1-1]
	\arrow["\simeq", draw=none, from=1-4, to=1-1]
	\arrow["{\Sh(l'^{Lan_i\mathbb C})}", from=1-4, to=2-3]
	\arrow["\simeq"', draw=none, from=1-4, to=2-3]
	\arrow[from=1-4, to=4-4]
	\arrow["{\Sh(i^{\mathbb C})}", from=2-2, to=2-3]
	\arrow[from=2-2, to=3-2]
	\arrow[from=2-3, to=3-3]
	\arrow["{\Sh(i)}", from=3-3, to=3-2]
	\arrow["\simeq"', draw=none, from=3-3, to=3-2]
	\arrow["{\Sh(l)}", from=4-1, to=3-2]
	\arrow["\simeq"'{pos=0.7}, shift left, draw=none, from=4-1, to=3-2]
	\arrow["\simeq"{description}, shift left=3, draw=none, from=4-1, to=4-4]
	\arrow["{\Sh(l')}"', from=4-4, to=3-3]
	\arrow["\simeq"{pos=0.6}, shift right=2, draw=none, from=4-4, to=3-3]
	\arrow["{\Sh(i)}", from=4-4, to=4-1]
\end{tikzcd}\]

Indeed, the fact that the left and right squares are commutative is given by the invariance of the Giraud topos \ref{invariancegiraud}. The upper face square is commutative because of the functoriality of morphisms of sites, so that $\Sh(i_{\mathbb c})$ is forced to be an equivalence.
\end{proof}

Alternatively, if one does not wish to rely on the abstract result of the fundamental adjunction, a more concrete proof suffices in this case:

\begin{prop}\label{pullbackdensemorphsites}
Let $F : ({\cc'},J') \to ({\cc},J)$ be a dense morphism of sites (Definition 5.1. \cite{denseness}), and $\mathbb C$ a $\cc$-indexed category. Then, the functor $q_{\mathbb C}^F : (\mathcal{G}(\mathbb C F),Gir_{\mbc F}) \to (\mathcal{G}(\mathbb C),Gir_{\mbc})$ is a dense morphism of sites. It induces the following commutative square of toposes:

% https://q.uiver.app/#q=WzAsNCxbMCwxLCJcXHdpZGVoYXR7XFxjYWwgQyd9X3tKJ30iXSxbMSwxLCJcXHdpZGVoYXR7XFxjYWwgQ31fSiJdLFsxLDAsIkdpcl9KKFxcbWF0aGJiIEMpIl0sWzAsMCwiR2lyX0ooXFxtYXRoYmIgQ0YpIl0sWzEsMCwiXFxTaChGKSJdLFsxLDAsIlxcc2ltZXEiLDIseyJzdHlsZSI6eyJib2R5Ijp7Im5hbWUiOiJub25lIn0sImhlYWQiOnsibmFtZSI6Im5vbmUifX19XSxbMywwLCJDX3twJ30iLDJdLFsyLDEsIkNfcCJdLFsyLDMsIlxcU2gocV97XFxtYXRoYmIgQ31eRikiLDJdLFsyLDMsIlxcc2ltZXEiLDEseyJvZmZzZXQiOi0zLCJzdHlsZSI6eyJib2R5Ijp7Im5hbWUiOiJub25lIn0sImhlYWQiOnsibmFtZSI6Im5vbmUifX19XV0=
\[\begin{tikzcd}
	{\mathbf{Gir}(\mathbb CF)} & {\mathbf{Gir}(\mathbb C)} \\
	{\widehat{\cc'}_{J'}} & {\widehat{\cc}_J}
	\arrow["{C_{p'}}"', from=1-1, to=2-1]
	\arrow["{\Sh(q_{\mathbb C}^F)}"', from=1-2, to=1-1]
	\arrow["\simeq"{description}, shift left=3, draw=none, from=1-2, to=1-1]
	\arrow["{C_p}", from=1-2, to=2-2]
	\arrow["{\Sh(F)}", from=2-2, to=2-1]
	\arrow["\simeq"', draw=none, from=2-2, to=2-1]
\end{tikzcd}\]
\end{prop}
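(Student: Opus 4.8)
The plan is to prove the two assertions in turn: first that $q^F_{\mathbb C}$ is a dense morphism of sites, and then that the induced square of geometric morphisms commutes with its two horizontal legs being equivalences. The whole density argument I would carry out at the site level, verifying the conditions of Definition 5.1 of \cite{denseness} by transporting the corresponding denseness data of $F$ through the fibration $\mathcal{G}(\mathbb C)$ by means of cartesian lifts. As a preliminary observation, $q^F_{\mathbb C}$ sends (and in fact reflects) cartesian arrows by Proposition \ref{projectionreflectcart}, so it is a morphism of fibrations with base change and hence continuous for the Giraud topologies by Theorem \ref{trivialsitescont}; continuity is thus automatic, and what remains are the density conditions, whose verification will simultaneously yield that $q^F_{\mathbb C}$ is a morphism of sites (its induced inverse image being an equivalence, hence finite-limit preserving).

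The object-density condition is the clean part, and I would settle it first. Given an object $(x,c)$ of $\mathcal{G}(\mathbb C)$, the density of $F$ produces a $J$-covering family $\{v_i : F(c'_i)\to c\}_i$. Taking the cartesian lift of each $v_i$ at $x$ yields the family $\{(\mathrm{id},v_i) : (\mathbb C(v_i)(x),F(c'_i)) \to (x,c)\}_i$ of cartesian arrows whose projections $\{v_i\}_i$ form a $J$-covering family, so this family is $Gir_{\mathbb C}$-covering by Definition \ref{comorphinduitgeom}. Since $(\mathbb C(v_i)(x),F(c'_i)) = q^F_{\mathbb C}(\mathbb C(v_i)(x),c'_i)$, every object of $\mathcal{G}(\mathbb C)$ is $Gir_{\mathbb C}$-covered by objects lying in the image of $q^F_{\mathbb C}$, which is exactly the required density on objects.

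The two arrow-level conditions (local fullness and local faithfulness) are where I expect the genuine difficulty to lie, and they are the main obstacle. The subtlety is that, although the objects in the image of $q^F_{\mathbb C}$ all sit over objects of the form $F(c')$, an arbitrary morphism of $\mathcal{G}(\mathbb C)$ between two such objects projects to an \emph{arbitrary} arrow of $\cc$, which need not be of the form $F(\text{arrow})$; hence the base component and the fibre component of an arrow must be controlled at the same time. My plan is to factor any such arrow through its cartesian part, refine the base component by a further $J$-covering using the local fullness and faithfulness of $F$, and then lift the resulting factorisations back through the fibration by cartesian lifts, re-covering in $Gir_{\mathbb C}$ as in the object case; the delicate point is checking that these liftings are compatible with the Giraud topology and land in the connected-component conditions demanded by Definition 5.1. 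Once all conditions are verified, $q^F_{\mathbb C}$ is a dense morphism of sites and $\Sh(q^F_{\mathbb C})$ is an equivalence.

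Finally, for the commuting square I would exploit that the underlying functor square commutes, $p\circ q^F_{\mathbb C} = F\circ p'$, by construction of the pullback. Comparing inverse images, the composite $C_{p'}\circ\Sh(q^F_{\mathbb C})$ has inverse image $\Sh(q^F_{\mathbb C})^*\,C_{p'}^*$ while $\Sh(F)\circ C_p$ has inverse image $C_p^*\,\Sh(F)^*$; inserting the explicit descriptions $C_{(-)}^*\simeq a(-\circ(-)^{\mathrm{op}})$ for the comorphisms $p,p'$ and $\Sh(-)^*\simeq a\,\mathrm{lan}_{(-)}$ for the morphisms of sites $F,q^F_{\mathbb C}$, a mate (Beck--Chevalley) computation based on $p\,q^F_{\mathbb C}=F\,p'$ shows the two composites agree on sheafified representables, hence everywhere since all functors involved preserve colimits. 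As $\Sh(F)$ and $\Sh(q^F_{\mathbb C})$ are equivalences, the diagram is the asserted equivalence of relative toposes over the base, giving exactly the invariance of the Giraud topos under the dense morphism of sites $F$.
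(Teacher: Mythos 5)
Your preliminary observations and your object-density argument coincide with the paper's (cartesian lifts of a $J$-covering $(v_i : F(c'_i)\to c)_i$ give a $Gir_{\mbc}$-covering by objects in the image of $q^F_{\mbc}$), but the proposal stops short of a proof at exactly the two points where the content lies. The arrow-level conditions of Definition 5.1 of \cite{denseness} are only announced: you call them ``the main obstacle,'' outline a plan, and explicitly leave ``the delicate point'' unverified. Moreover, the difficulty you anticipate --- having to control the base and fibre components simultaneously --- is not actually there, and seeing why is what makes the paper's verification a few lines long. The fibres of the pulled-back fibration are literally fibres of $\mbc$: $(\mathbb{C}F)(c') = \mathbb{C}(F(c'))$, and $q^F_{\mbc}$ acts as the identity on fibre components, $(u,g') \mapsto (u,F(g'))$. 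Hence, given an arrow $(u,g) : q^F_{\mbc}(x_1,c'_1) \to q^F_{\mbc}(x_2,c'_2)$ in $\mathcal{G}(\mathbb{C})$, local fullness of $F$ yields a $J'$-covering $(v_i : c'_i \to c'_1)_i$ and arrows $g'_i : c'_i \to c'_2$ with $gF(v_i)=F(g'_i)$, and one then has the on-the-nose identity $(u,g)\,q^F_{\mbc}(1,v_i) = q^F_{\mbc}(\mathbb{C}(v_i)(u),g'_i)$: the fibre component comes along for free, and local faithfulness is the same one-line computation. This is how the paper argues; your sketch would likely converge to it, but as written it is a gap, and the ``simultaneous control'' worry is a misdiagnosis.

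The second gap is the commutativity of the topos square. The ``mate (Beck--Chevalley) computation'' you invoke is precisely the statement to be proved: that the canonical transformation $\mathrm{Lan}_{q^F_{\mbc}}(-\circ p'^{\mathrm{op}}) \Rightarrow (-\circ p^{\mathrm{op}})\,\mathrm{Lan}_{F}$ becomes an isomorphism after sheafification. This is not a formal consequence of $p\,q^F_{\mbc} = Fp'$ for an arbitrary commuting square of functors; it holds here for special reasons (the square is a pullback along the fibration $p$, and both horizontal functors are dense), but either a proof or a citation is required, and you give neither. The paper avoids this point entirely: having established that $q^F_{\mbc}$ is a morphism of sites which is moreover a morphism of fibrations with base change, it invokes Theorem \ref{charactgeomtrelmorphfib}~(iv), which says exactly that $\Sh(q^F_{\mbc})$ is then a \emph{relative} geometric morphism $[\Sh(F)C_p] \to [C_{p'}]$, i.e.\ that the square commutes. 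Alternatively, once density of both $F$ and $q^F_{\mbc}$ is known, commutativity follows from a short formal computation using continuity of $p$ and $p'$ (so that $C_p^*$ and $C_{p'}^*$ are precomposition on sheaves), the identities $\Sh(F)_* = (-\circ F^{\mathrm{op}})$, $\Sh(q^F_{\mbc})_* = (-\circ (q^F_{\mbc})^{\mathrm{op}})$, and $\Sh(F)_*\Sh(F)^* \simeq \mathrm{id}$; but some such argument must be supplied for the proposal to be complete.
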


\begin{proof}
The definition of dense morphisms of sites used here is the one given in Definition 5.1 of \cite{denseness}, and all its conditions must be verified.

First, the reflection of covering is immediate since $F$ does reflect them, and the topologies are the Giraud ones.

Now let $(x,c)$ be an object of $\mathcal{G}(\mathbb C)$. In the basis, the object $c$ can be covered with objects coming from $\cc'$: $(v_i : F(c'_i) \to c)_i$ by $F$ being dense, yielding a covering $(q_{\mathbb C}^F(\mathbb C(v_i)(x),c_i') \to (x,c))_i$ for $Gir_{\mbc}$: this ensures that every object of $\mathcal{G}(\mathbb C)$ ca be covered by objects coming from $\mathcal{G}(\mathbb CF)$, together with the first condition for $q_{\mathbb C}^F$ to be a morphism of sites.

For two generalized elements as follows: $(u_1,g_1) : (x,c) \to F_{\mathbb C}(x_1,c'_1)$ and $(u_2,g_2) : (x,c) \to q_{\mathbb C}^F(x_2,c'_2)$, the denseness of $F$ provides a $J$-covering $(v_i : c_i \to c)_i$ such that $g_1v_i = F(g_1^i)$ and $g_2v_i = F(g_2^i)$. This allows to form the following product-shape diagram of generalized elements:

% https://q.uiver.app/#q=WzAsNSxbMCwxLCIoeCxjKSJdLFsxLDAsInFfe1xcbWF0aGJiIEN9XkYoeF8xLGMnXzEpIl0sWzEsMSwicV97XFxtYXRoYmIgQ31eRih4XzIsYydfMikiXSxbMCwyLCIoXFxtYXRoYmIgQyh2X2kpKHgpLEYoYydfaSkpIl0sWzEsMiwicV97XFxtYXRoYmIgQ31eRihcXG1hdGhiYiBDKHZfaSkoeCksYydfaSkiXSxbMywwLCIoMSx2X2kpIl0sWzAsMSwiKHVfMSxnXzEpIl0sWzAsMiwiKHVfMixnXzIpIl0sWzMsNCwiIiwyLHsibGV2ZWwiOjIsInN0eWxlIjp7ImhlYWQiOnsibmFtZSI6Im5vbmUifX19XSxbNCwyLCJxX3tcXG1hdGhiYiBDfV5GKFxcbWF0aGJiIEModl9pKSh1XzIpLGdfMl5pKSJdLFs0LDEsInFfe1xcbWF0aGJiIEN9XkYoXFxtYXRoYmIgQyh2X2kpKHVfMSksZ18xXmkpIiwyLHsiY3VydmUiOjR9XV0=
\[\begin{tikzcd}
	& {q_{\mathbb C}^F(x_1,c'_1)} \\
	{(x,c)} & {q_{\mathbb C}^F(x_2,c'_2)} \\
	{(\mathbb C(v_i)(x),F(c'_i))} & {q_{\mathbb C}^F(\mathbb C(v_i)(x),c'_i)}
	\arrow["{(u_1,g_1)}", from=2-1, to=1-2]
	\arrow["{(u_2,g_2)}", from=2-1, to=2-2]
	\arrow["{(1,v_i)}", from=3-1, to=2-1]
	\arrow[equals, from=3-1, to=3-2]
	\arrow["{q_{\mathbb C}^F(\mathbb C(v_i)(u_2),g_2^i)}", from=3-2, to=2-2]
\end{tikzcd}\]

The third condition for $q_{\mathbb C}^F$ to be a morphism of sites goes the same way.

For the $Gir_{\mbc F}$-fullness, let $(u,g): q_{\mathbb C}^F(x_1,c'_1) \to  q_{\mathbb C}^F(x_2,c'_2)$ be an arrow in $\mathcal{G}(\mathbb C)$. By the $J'$-fullness of $F$, there is a $J'$-covering $(v_i c'_i \to c'_1)$ and a family of arrows $(g'_i : c'_i \to c'_2)_i$ such that $gF(v_i) = F(g'_i)$. This leads to the identity $(u,g)q_{\mathbb C}^F(1,v_i)=q_{\mathbb C}^F(\mathbb C(v_i)(u),g'_i)$, which is the local fullness for $q_{\mathbb C}^F$. 

The square induced at the topos-level is commutative because of Proposition \ref{charactgeomtrelmorphfib} (iv): indeed, $q^F_{\mbc}$ is a morphism of sites which is moreover a morphism of fibrations with base change, so that it induces a relative geometric morphism.
\end{proof}

Everything is now in place to deduce:

\begin{prop}\label{F^Cmorphsites}
Let $f : {{\cal F}} \to {{\cal E}}$ be a relative topos, and $\mathbb C$ a ${\cal E}$-indexed category. The structural morphism pictured in the following diagram:

% https://q.uiver.app/#q=WzAsNCxbMCwxLCJ7XFxjYWwgRn0iXSxbMSwxLCJ7XFxjYWwgRX0iXSxbMSwwLCJcXG1hdGhjYWx7R30oXFxtYXRoYmIgQykiXSxbMCwwLCJcXG1hdGhjYWx7R30oZl4qXFxtYXRoYmIgQykiXSxbMSwwLCJmXioiLDJdLFszLDAsInAnIiwyXSxbMiwxLCJwIl0sWzIsMywiZl4qX3tcXG1hdGhiYiBDfSIsMl1d
\[\begin{tikzcd}
	{\mathcal{G}(f^*\mathbb C)} & {\mathcal{G}(\mathbb C)} \\
	{{{\cal F}}} & {{{\cal E}}}
	\arrow["{p'}"', from=1-1, to=2-1]
	\arrow["{F^{\mathbb C}}"', from=1-2, to=1-1]
	\arrow["p", from=1-2, to=2-2]
	\arrow["{f^*}"', from=2-2, to=2-1]
\end{tikzcd}\]

\noindent is a morphism of sites, inducing a commutative square at the topos-level:

% https://q.uiver.app/#q=WzAsNCxbMCwxLCJ7XFxjYWwgRn0iXSxbMSwxLCJ7XFxjYWwgRX0iXSxbMSwwLCJHaXIoXFxtYmN9Il0sWzAsMCwiR2lyKGZeKlxcbWJjKSJdLFswLDEsImYiLDJdLFszLDAsIkNfe3AnfSIsMl0sWzIsMSwiQ19wIl0sWzMsMiwiZl97XFxtYXRoYmIgQ30iXV0=
\[\begin{tikzcd}
	{\mathbf{Gir}(f^*\mbc)} & {\mathbf{Gir}(\mbc)} \\
	{{{\cal F}}} & {{{\cal E}}}
	\arrow["{f^{\mathbb C}}", from=1-1, to=1-2]
	\arrow["{C_{p'}}"', from=1-1, to=2-1]
	\arrow["{C_p}", from=1-2, to=2-2]
	\arrow["f"', from=2-1, to=2-2]
\end{tikzcd}\]

\end{prop}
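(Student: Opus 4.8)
The plan is to avoid checking the morphism-of-sites condition for $F^{\mbc}$ directly — the filtering/flatness condition attached to the morphism of sites $f^*$ being intractable — and instead to transfer the corresponding property from the comorphism side through the Morita equivalence furnished by the canonical relative site of $f$. First I record that $F^{\mbc}$ is automatically continuous: being a morphism of fibrations with base change, Theorem \ref{trivialsitescont} makes it continuous for the Giraud topologies, so only the morphism-of-sites condition is genuinely at stake. Next I factor it. Writing $\pi_{\cf} : (\cf/f^*, J_f) \to (\cf, J^{\textup{can}}_{\cf})$ and $\pi_f : (\cf/f^*, J_f) \to (\ce, J^{\textup{can}}_\ce)$ for the two projections of the canonical relative site, with $\pi_f \dashv i_f$, one has $f^* \simeq \pi_{\cf}\circ i_f$ and, by Proposition \ref{adjointsinversedirectimage}, $Lan_{i_f}\mbc \simeq \mbc\pi_f$ together with $f^*\mbc \simeq Lan_{\pi_{\cf}}(\mbc\pi_f)$ (all identifications up to stackification, harmless for Giraud toposes). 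Proposition \ref{functorialityofstructuralfunctorinverseimage} then yields the factorization $F^{\mbc} \simeq \pi_{\cf}^{\mbc}\circ i_f^{\mbc}$, with $i_f^{\mbc} : \cg(\mbc) \to \cg(\mbc\pi_f)$ and $\pi_{\cf}^{\mbc} : \cg(\mbc\pi_f) \to \cg(f^*\mbc)$.

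I would then handle the two factors separately. Since $\pi_{\cf}$ is a dense morphism of sites (Theorem \ref{canonicalrelativesiteMoritaequi}), the corollary following Proposition \ref{functorialityofstructuralfunctorinverseimage}, applied to the $(\cf/f^*)$-indexed category $\mbc\pi_f$, shows that $\pi_{\cf}^{\mbc}$ is a dense morphism of sites inducing an equivalence $\Sh(\pi_{\cf}^{\mbc}) : \mathbf{Gir}(f^*\mbc) \xrightarrow{\sim} \mathbf{Gir}(\mbc\pi_f)$ over the equivalence $\Sh(\pi_{\cf}) : \cf \xrightarrow{\sim} \widehat{\cf/f^*}_{J_f}$, and compatible with the projections. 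For the other factor I pass to the comorphism side: $\pi_f$ is a comorphism of sites, so by Proposition \ref{pullbackcomorphalongfib} the pullback projection $\pi_f^{\mbc} = q^{\pi_f}_{\mbc} : \cg(\mbc\pi_f) \to \cg(\mbc)$ is a comorphism of sites, and $\pi_f^{\mbc} \dashv i_f^{\mbc}$. Invoking the abstract duality between morphisms and comorphisms of sites of \cite{denseness} for this adjunction, the right adjoint $i_f^{\mbc}$ of the comorphism $\pi_f^{\mbc}$ is a morphism of sites inducing the same geometric morphism, namely $\Sh(i_f^{\mbc}) \simeq C_{\pi_f^{\mbc}}$.

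With both factors morphisms of sites, their composite $F^{\mbc} \simeq \pi_{\cf}^{\mbc}\circ i_f^{\mbc}$ is a morphism of sites (equivalently, one concludes via Proposition \ref{densereflectmorphsites}(iii), the dense morphism $\pi_{\cf}^{\mbc}$ reflecting the condition from $F^{\mbc}$ to $i_f^{\mbc}$). It then remains to identify the induced geometric morphism and assemble the square. Contravariance of $\Sh(-)$ on morphisms of sites gives $\Sh(F^{\mbc}) \simeq \Sh(i_f^{\mbc})\circ \Sh(\pi_{\cf}^{\mbc}) \simeq C_{\pi_f^{\mbc}}\circ \Sh(\pi_{\cf}^{\mbc})$. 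The commutative square produced by Proposition \ref{pullbackcomorphalongfib} on the comorphism side reads $C_p\circ C_{\pi_f^{\mbc}} \simeq C_{\pi_f}\circ C_{p_{\mbc\pi_f}}$; transporting it along the equivalences $\Sh(\pi_{\cf}^{\mbc})$ and $\Sh(\pi_{\cf})$, using the compatibility $C_{p_{\mbc\pi_f}}\circ\Sh(\pi_{\cf}^{\mbc}) \simeq \Sh(\pi_{\cf})\circ C_{p'}$ and the Morita identification $f \simeq C_{\pi_f}\circ \Sh(\pi_{\cf})$ of Theorem \ref{canonicalrelativesiteMoritaequi}, yields $C_p\circ\Sh(F^{\mbc}) \simeq f\circ C_{p'}$, which is the asserted commutative square at the topos level.

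The main obstacle is precisely the middle step: establishing that $i_f^{\mbc}$, hence $F^{\mbc}$, is a morphism of sites. This cannot be verified fibrewise nor by a direct flatness computation against $f^*$, and it is exactly the difficulty that forced Giraud into the cartesian setting; the whole point of routing through the comorphism $\pi_f$ and the dense morphism $\pi_{\cf}$ is that the comorphism-of-sites property of $\pi_f^{\mbc}$ (a mere lifting of covers, Proposition \ref{pullbackcomorphalongfib}) is elementary, and the morphism/comorphism duality of \cite{denseness} converts it, across the adjunction $\pi_f^{\mbc} \dashv i_f^{\mbc}$, into the sought morphism-of-sites property. A secondary point demanding care is that every identification of inverse images above holds only up to stackification, which is legitimate here since stackification does not alter the associated Giraud toposes.
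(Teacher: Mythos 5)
Your proposal is correct and follows essentially the same route as the paper's own proof: factor $F^{\mbc} \simeq \pi_{\cf}^{\mbc}\circ i_f^{\mbc}$ via Proposition \ref{functorialityofstructuralfunctorinverseimage}, obtain $i_f^{\mbc}$ as a morphism of sites from its adjunction with the comorphism $\pi_f^{\mbc}$ (Proposition \ref{pullbackcomorphalongfib}) through the duality of \cite{denseness}, use the denseness/equivalence property of $\pi_{\cf}^{\mbc}$, and assemble the topos-level square from the comorphism-side square and the Morita identifications. The only cosmetic difference is that you invoke the corollary after Proposition \ref{functorialityofstructuralfunctorinverseimage} for the equivalence induced by $\pi_{\cf}^{\mbc}$, where the paper cites Proposition \ref{pullbackdensemorphsites} directly.
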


\begin{proof}
Consider the following diagram given by the abstract duality of \cite{denseness}:

% https://q.uiver.app/#q=WzAsNixbMCwzLCJ7XFxjYWwgRn0iXSxbMiwzLCJ7XFxjYWwgRX0iXSxbMiwyLCJcXG1hdGhjYWx7R30oXFxtYXRoYmIgQykiXSxbMCwyLCJcXG1hdGhjYWx7R30oZl4qXFxtYXRoYmIgQykiXSxbMSwwLCJcXG1hdGhjYWx7R30oTGFuX3tpX2Z9e1xcbWF0aGJiIEN9KSJdLFsxLDEsIih7XFxjYWwgRn1cXGRvd25hcnJvdyBmXiopIl0sWzEsMCwiZl4qIiwyXSxbMywwXSxbMiwxXSxbMiwzLCJmXipfe1xcbWF0aGJiIEN9IiwyXSxbNSwwLCJcXHBpX3tcXGNhbCBGfSIsMSx7ImxhYmVsX3Bvc2l0aW9uIjozMH1dLFs1LDEsIlxccGlfZiIsMl0sWzEsNSwiaV9mIiwyLHsib2Zmc2V0IjozfV0sWzIsNCwiaV9mXntcXG1hdGhiYiBDfSIsMSx7ImxhYmVsX3Bvc2l0aW9uIjozMH1dLFs0LDMsIlxccGlfe1xcY2FsIEZ9XntcXG1hdGhiYiBDfSIsMl0sWzQsNV0sWzQsMiwiXFxwaV9mXntcXG1hdGhiYiBDfSIsMSx7ImxhYmVsX3Bvc2l0aW9uIjozMCwib2Zmc2V0IjotNH1dLFsxMSwxMiwiIiwyLHsibGV2ZWwiOjEsInN0eWxlIjp7Im5hbWUiOiJhZGp1bmN0aW9uIn19XSxbMTYsMTMsIiIsMSx7ImxldmVsIjoxLCJzdHlsZSI6eyJuYW1lIjoiYWRqdW5jdGlvbiJ9fV1d
\[\begin{tikzcd}
	& {\mathcal{G}(Lan_{i_f}{\mathbb C})} \\
	& {({{\cal F}}/ f^*)} \\
	{\mathcal{G}(f^*\mathbb C)} && {\mathcal{G}(\mathbb C)} \\
	{{{\cal F}}} && {{{\cal E}}}
	\arrow[from=1-2, to=2-2]
	\arrow["{\pi_{{\cal F}}^{\mathbb C}}"', from=1-2, to=3-1]
	\arrow[""{name=0, anchor=center, inner sep=0}, "{\pi_f^{\mathbb C}}"{description, pos=0.3}, shift left=4, from=1-2, to=3-3]
	\arrow["{\pi_{{\cal F}}}"{description, pos=0.3}, from=2-2, to=4-1]
	\arrow[""{name=1, anchor=center, inner sep=0}, "{\pi_f}"', from=2-2, to=4-3]
	\arrow[from=3-1, to=4-1]
	\arrow[""{name=2, anchor=center, inner sep=0}, "{i_f^{\mathbb C}}"{description, pos=0.3}, from=3-3, to=1-2]
	\arrow["{F^{\mathbb C}}"', from=3-3, to=3-1]
	\arrow[from=3-3, to=4-3]
	\arrow[""{name=3, anchor=center, inner sep=0}, "{i_f}"', shift right=3, from=4-3, to=2-2]
	\arrow["{f^*}"', from=4-3, to=4-1]
	\arrow["\dashv"{anchor=center, rotate=-146}, draw=none, from=0, to=2]
	\arrow["\dashv"{anchor=center, rotate=34}, draw=none, from=1, to=3]
\end{tikzcd}\]

Because of \ref{pullbackdensemorphsites}, $\pi_{{\cal F}}^{\mathbb C}$ is a dense morphism of sites (for the Giraud topologies), and a comorphism inducing an equivalence of toposes (compatible with the structure morphisms) by \ref{pullbackdensemorphsites}. Moreover, $i_f^{\mathbb C}$ is a morphism of sites: indeed, it is right adjoint to $\pi_f^{\mathbb C}$ which is a comorphism of sites by \ref{pullbackcomorphalongfib}. Since $\pi_{{\cal F}}i_f=f^*$, Proposition \ref{functorialityofstructuralfunctorinverseimage} ensures that $\pi_{{\cal F}}^{\mathbb C}i_f^{\mathbb C} \simeq F^{\mathbb C}$. Since $\pi_{{\cal F}}^{\mathbb C}$ and $i_f^{\mathbb C}$ are morphisms of sites, their composition $F^{\mathbb C}$ also is.

Moreover, the geometric morphisms induced by $i_f$ and $i_f^{\mathbb C}$ as comorphisms can be identified with the geometric morphisms induced by $\pi_f$ and $\pi_f^{\mathbb C}$ as comorphisms, where the square of comorphisms of sites is commutative at the topos-level: hence, the following square induces a commutative square of geometric morphisms.

% https://q.uiver.app/#q=WzAsNCxbMiwxLCJ7XFxjYWwgRX0iXSxbMiwwLCJcXG1hdGhjYWx7R30oXFxtYXRoYmIgQykiXSxbMCwwLCJcXG1hdGhjYWx7R30oTGFuX3tpX2Z9e1xcbWF0aGJiIEN9KSJdLFswLDEsIih7XFxjYWwgRn1cXGRvd25hcnJvdyBmXiopIl0sWzEsMF0sWzMsMCwiXFxwaV9mIiwxLHsibGFiZWxfcG9zaXRpb24iOjMwLCJvZmZzZXQiOi0yfV0sWzAsMywiaV9mIiwxLHsibGFiZWxfcG9zaXRpb24iOjMwLCJvZmZzZXQiOi0yfV0sWzEsMiwiaV9mXntcXG1hdGhiYiBDfSIsMSx7ImxhYmVsX3Bvc2l0aW9uIjozMH1dLFsyLDNdLFsyLDEsIlxccGlfZl57XFxtYXRoYmIgQ30iLDEseyJsYWJlbF9wb3NpdGlvbiI6MzAsIm9mZnNldCI6LTR9XSxbNSw2LCIiLDIseyJsZXZlbCI6MSwic3R5bGUiOnsibmFtZSI6ImFkanVuY3Rpb24ifX1dLFs5LDcsIiIsMSx7ImxldmVsIjoxLCJzdHlsZSI6eyJuYW1lIjoiYWRqdW5jdGlvbiJ9fV1d
\[\begin{tikzcd}
	{\mathcal{G}(Lan_{i_f}{\mathbb C})} && {\mathcal{G}(\mathbb C)} \\
	{({{\cal F}}/ f^*)} && {{{\cal E}}}
	\arrow[""{name=0, anchor=center, inner sep=0}, "{\pi_f^{\mathbb C}}"{description, pos=0.3}, shift left=4, from=1-1, to=1-3]
	\arrow[from=1-1, to=2-1]
	\arrow[""{name=1, anchor=center, inner sep=0}, "{i_f^{\mathbb C}}"{description, pos=0.3}, from=1-3, to=1-1]
	\arrow[from=1-3, to=2-3]
	\arrow[""{name=2, anchor=center, inner sep=0}, "{\pi_f}"{description, pos=0.3}, shift left=2, from=2-1, to=2-3]
	\arrow[""{name=3, anchor=center, inner sep=0}, "{i_f}"{description, pos=0.3}, shift left=2, from=2-3, to=2-1]
	\arrow["\dashv"{anchor=center, rotate=-90}, draw=none, from=0, to=1]
	\arrow["\dashv"{anchor=center, rotate=-90}, draw=none, from=2, to=3]
\end{tikzcd}\]

The previous observation, combined with the fact that $\pi_{{\cal F}}$ and $\pi_{{\cal F}}^{\mathbb C}$ induce equivalences at the topos level compatible with the structure morphisms, implies that $F^{\mathbb C}$ is a morphism of sites inducing a commutative square at the topos level.
\end{proof}

\subsection{The first part of the equivalence}\label{subsec4}
Now that the existence of the expected projection for the pullback of toposes has been established, it becomes possible to verify that the equivalence of categories induced by the transposition of morphisms of fibrations (\ref{eq:weak_adjunction}) restricts, on one side, to the morphisms of fibrations that are also morphisms of sites.

Recall that, in order to prove that the relative presheaf topos on the inverse image of $\mathbb C$ along $f$ is the pullback of the relative presheaf topos on $\mathbb C$, the strategy is, for an arbitrary relative topos $g : {\cg} \to {{\cal F}}$, to obtain a correspondence between the geometric morphisms as dashed in the following diagram:

% https://q.uiver.app/#q=WzAsNSxbMiwyLCJcXGNhbCBFIl0sWzEsMiwiXFxjYWwgRiJdLFsyLDEsIlxcbWF0aGJme0dpcn0oXFxtYXRoYmIgQykiXSxbMSwxLCJcXG1hdGhiZntHaXJ9KCBmXipcXG1hdGhiYiBDKSJdLFswLDAsIntcXGNhbCBHfSJdLFsxLDAsImYiLDJdLFsyLDAsIkNfe3B9Il0sWzMsMSwiQ197cCd9IiwyXSxbNCwxLCJnIiwyLHsiY3VydmUiOjN9XSxbNCwyLCJoIiwwLHsiY3VydmUiOi0zLCJzdHlsZSI6eyJib2R5Ijp7Im5hbWUiOiJkYXNoZWQifX19XSxbNCwzLCJcXG92ZXJsaW5le2h9IiwxLHsic3R5bGUiOnsiYm9keSI6eyJuYW1lIjoiZGFzaGVkIn19fV0sWzMsMiwiZl57XFxtYmN9Il1d
\[\begin{tikzcd}
	{{\cg}} \\
	& {\mathbf{Gir}( f^*\mathbb C)} & {\mathbf{Gir}(\mathbb C)} \\
	& {{\cal F}} & {{\cal E}}
	\arrow["{\overline{h}}"{description}, dashed, from=1-1, to=2-2]
	\arrow["h", bend left = 24, dashed, from=1-1, to=2-3]
	\arrow["g"', bend right = 24, from=1-1, to=3-2]
	\arrow["{f^{\mbc}}", from=2-2, to=2-3]
	\arrow["{C_{p'}}"', from=2-2, to=3-2]
	\arrow["{C_{p}}", from=2-3, to=3-3]
	\arrow["f"', from=3-2, to=3-3]
\end{tikzcd}\]

Recall that $f^{\mbc}$ has been defined in the previous subsection to be $\Sh(F^{\mbc})$, where $F^{\mbc}$ is the canonical arrow:

% https://q.uiver.app/#q=WzAsNSxbMCwxLCJ7XFxjYWwgRn0iXSxbMSwxLCJ7XFxjYWwgRX0iXSxbMSwwLCJcXG1hdGhjYWx7R30oXFxtYXRoYmIgQykiXSxbMCwwLCJcXG1hdGhjYWx7R30oZl4qXFxtYXRoYmIgQykiXSxbMiwwXSxbMSwwLCJmXioiLDJdLFszLDAsInAnIiwyXSxbMiwxLCJwIl0sWzIsMywiRl57XFxtYmN9IiwyXV0=
\[\begin{tikzcd}
	{\mathcal{G}(f^*\mathbb C)} & {\mathcal{G}(\mathbb C)} & {} \\
	{{{\cal F}}} & {{{\cal E}}}
	\arrow["{p'}"', from=1-1, to=2-1]
	\arrow["{F^{\mbc}}"', from=1-2, to=1-1]
	\arrow["p", from=1-2, to=2-2]
	\arrow["{f^*}"', from=2-2, to=2-1]
\end{tikzcd}\]

Let's study the correspondence between the relative geometric morphisms $h$ and $\overline{h}$: starting with a relative geometric morphism $\overline{h} : [g] \to [C_{p'}]$, one obtains a relative geometric morphism over ${\cal E}$ by postcomposition with the structural arrows $\overline{h}: [fg] \to [fC_{p'}]$; there remains to postcompose $\overline{h}$ with $f^{\mbc}$, yielding a relative geometric morphism $f^{\mbc}\overline{h} := h : [fg] \to [S_{C_{p}}]$. At the level of sites, this operation is provided by composing the morphism of sites $\overline{H}$ associated to the relative geometric morphism $\overline{h}$ with the structural morphism of sites $F^{\mbc}$, as pictured:

% https://q.uiver.app/#q=WzAsNSxbMSwxLCJ7XFxjYWwgRn0iXSxbMiwxLCJ7XFxjYWwgRX0iXSxbMiwwLCJcXG1hdGhjYWx7R30oXFxtYXRoYmIgQykiXSxbMSwwLCJcXG1hdGhjYWx7R30oZl4qXFxtYXRoYmIgQykiXSxbMCwwLCIoe1xcY2FsIEd9L2deKikiXSxbMSwwLCJmXioiLDJdLFszLDAsInAnIiwyXSxbMiwxLCJwIl0sWzIsMywiRl57XFxtYmN9IiwyXSxbNCwwLCJcXHBpX2ciLDJdLFszLDQsIlxcb3ZlcmxpbmV7SH0iLDJdXQ==
\[\begin{tikzcd}
	{({\cg}/g^*)} & {\mathcal{G}(f^*\mathbb C)} & {\mathcal{G}(\mathbb C)} \\
	& {{{\cal F}}} & {{{\cal E}}}
	\arrow["{\pi_g}"', from=1-1, to=2-2]
	\arrow["{\overline{H}}"', from=1-2, to=1-1]
	\arrow["{p'}"', from=1-2, to=2-2]
	\arrow["{F^{\mbc}}"', from=1-3, to=1-2]
	\arrow["p", from=1-3, to=2-3]
	\arrow["{f^*}"', from=2-3, to=2-2]
\end{tikzcd}\]

The goal is to verify that this construction indeed arises as the restriction of the transposition operation:

% https://q.uiver.app/#q=WzAsNCxbMCwxLCJcXG1hdGhiZntGaWJTaXRlc31fe1xcY2FsIEUnfShbcCddLFtcXHBpX2ddKSJdLFsxLDEsIlxcbWF0aGJme0ZpYlNpdGVzfV97XFxjYWwgRX0oW3BdLFtcXHBpX3tmZ31dKSJdLFswLDAsIlxcbWF0aGJme0ZpYn1fe1xcY2FsIEUnfShbcCddLFtcXHBpX2ddKSJdLFsxLDAsIlxcbWF0aGJme0ZpYn1fe1xcY2FsIEV9KFtwXSxbXFxwaV97Zmd9XSkiXSxbMiwzLCJcXHNpbWVxIiwxLHsic3R5bGUiOnsidGFpbCI6eyJuYW1lIjoiaG9vayIsInNpZGUiOiJ0b3AifX19XSxbMiwzLCJhZGp1bmN0aW9uIiwxLHsib2Zmc2V0IjotMywic3R5bGUiOnsiYm9keSI6eyJuYW1lIjoibm9uZSJ9LCJoZWFkIjp7Im5hbWUiOiJub25lIn19fV0sWzAsMiwiIiwxLHsic3R5bGUiOnsidGFpbCI6eyJuYW1lIjoiaG9vayIsInNpZGUiOiJ0b3AifX19XSxbMSwzLCIiLDEseyJzdHlsZSI6eyJ0YWlsIjp7Im5hbWUiOiJob29rIiwic2lkZSI6InRvcCJ9fX1dLFswLDEsInJlc3RyaWN0aW9uIiwxLHsib2Zmc2V0IjotMywic3R5bGUiOnsiYm9keSI6eyJuYW1lIjoibm9uZSJ9LCJoZWFkIjp7Im5hbWUiOiJub25lIn19fV0sWzAsMSwiIiwxLHsib2Zmc2V0IjotMSwic3R5bGUiOnsidGFpbCI6eyJuYW1lIjoiaG9vayIsInNpZGUiOiJ0b3AifX19XV0=
\[\begin{tikzcd}
	{\mathbf{Fib}_{{\cal E}'}([p'],[\pi_g])} & {\mathbf{Fib}_{{\cal E}}([p],[\pi_{fg}])} \\
	{\mathbf{FibSites}_{{\cal E}'}([p'],[\pi_g])} & {\mathbf{FibSites}_{{\cal E}}([p],[\pi_{fg}])}
	\arrow["\simeq"{description}, hook, from=1-1, to=1-2]
	\arrow["adjunction"{description}, shift left=3, draw=none, from=1-1, to=1-2]
	\arrow[hook, from=2-1, to=1-1]
	\arrow["restriction"{description}, shift left=3, draw=none, from=2-1, to=2-2]
	\arrow[shift left, hook, from=2-1, to=2-2]
	\arrow[hook, from=2-2, to=1-2]
\end{tikzcd}\]

Starting from an indexed weak geometric morphism $\overline{h} :S_g \to S_{C_{p'}}$ it can be turned into a morphism of indexed categories $\overline{H} : {f^* \mathbb C} \to S_g$ by indexed weak Diaconescu's theorem (Theorem 6.3.4. \cite{locfib}). If the indexed weak geometric morphism was an indexed geometric morphism, the morphism of fibrations $\overline{H}$ is moreover a morphism of sites (between the obvious relative sites) because of relative Diaconescu's theorem (Theorem 3.15. \cite{bartolicaramello}). Then, precomposing it with the morphism of sites $F^{\mbc} : \mathcal{G}(\mathbb C) \to \mathcal{G}(f^* \mathbb C)$ provides another morphism of sites $\overline{H}F^{\mathbb C} : \mathcal{G}(\mathbb C) \to ({\cg}/g^*)$, as depicted:

% https://q.uiver.app/#q=WzAsNSxbMSwxLCJ7XFxjYWwgRn0iXSxbMiwxLCJ7XFxjYWwgRX0iXSxbMiwwLCJcXG1hdGhjYWx7R30oXFxtYXRoYmIgQykiXSxbMSwwLCJcXG1hdGhjYWx7R30oZl4qXFxtYXRoYmIgQykiXSxbMCwwLCIoe1xcY2FsIEd9L2deKikiXSxbMSwwLCJmXioiLDJdLFszLDAsInAnIiwyXSxbMiwxLCJwIl0sWzIsMywiRl57XFxtYmN9IiwyXSxbNCwwLCJcXHBpX2ciLDJdLFszLDQsIlxcb3ZlcmxpbmV7SH0iLDJdXQ==
\[\begin{tikzcd}
	{({\cg}/g^*)} & {\mathcal{G}(f^*\mathbb C)} & {\mathcal{G}(\mathbb C)} \\
	& {{{\cal F}}} & {{{\cal E}}}
	\arrow["{\pi_g}"', from=1-1, to=2-2]
	\arrow["{\overline{H}}"', from=1-2, to=1-1]
	\arrow["{p'}"', from=1-2, to=2-2]
	\arrow["{F^{\mbc}}"', from=1-3, to=1-2]
	\arrow["p", from=1-3, to=2-3]
	\arrow["{f^*}"', from=2-3, to=2-2]
\end{tikzcd}\]

This morphism of sites induces a geometric morphism $\Sh(\overline{H}F^{\mathbb C})$ making the square commutative, because of the commutation of the following diagram:

% https://q.uiver.app/#q=WzAsNSxbMSwxLCJ7XFxjYWwgRn0iXSxbMiwxLCJ7XFxjYWwgRX0iXSxbMiwwLCJHaXIoXFxtYXRoYmIgQykiXSxbMSwwLCJHaXIoZl4qXFxtYXRoYmIgQykiXSxbMCwwLCJ7XFxjYWwgR30iXSxbMCwxLCJmIiwyXSxbMywwLCJDX3twJ30iXSxbMiwxLCJDX3AiXSxbMywyLCJcXFNoKEZee1xcbWJjfSkiXSxbNCwwLCJDX3tcXHBpX2d9IiwyXSxbNCwzLCJcXFNoKFxcb3ZlcmxpbmV7SH0pIl1d
\[\begin{tikzcd}
	{{\cg}} & {\mathbf{Gir}(f^*\mathbb C)} & {\mathbf{Gir}(\mathbb C)} \\
	& {{{\cal F}}} & {{{\cal E}}}
	\arrow["{\Sh(\overline{H})}", from=1-1, to=1-2]
	\arrow["{C_{\pi_g}}"', from=1-1, to=2-2]
	\arrow["{\Sh(F^{\mbc})}", from=1-2, to=1-3]
	\arrow["{C_{p'}}", from=1-2, to=2-2]
	\arrow["{C_p}", from=1-3, to=2-3]
	\arrow["f"', from=2-2, to=2-3]
\end{tikzcd}\]

Indeed, $\Sh(F^{\mbc})$ makes the square commutative because of \ref{F^Cmorphsites}, and $\overline{H}$ makes the triangle commutative because it is a morphism of sites and fibrations (Theorem 3.15. \cite{bartolicaramello}).

More precisely, we need the following Morita-equivalence result:

\begin{prop}\label{denseprojec}
Let $f : {{\cal F}} \to {{\cal E}}$ and $g : {\cg} \to {{\cal E}}$ be two relative toposes. The projection functor $q_{S_g}^{f^*}$ 

% https://q.uiver.app/#q=WzAsNCxbMCwwLCJ7XFxjYWwgR30vZ14qIl0sWzEsMCwie1xcY2FsIEd9L2deKmZeKiJdLFswLDEsIntcXGNhbCBGfSJdLFsxLDEsIntcXGNhbCBFfSJdLFszLDIsImZeKiJdLFswLDIsIlxccGlfZyIsMl0sWzEsMywiXFxwaV97Zmd9Il0sWzEsMCwiZl5nIiwyXSxbMSwyLCIiLDEseyJzdHlsZSI6eyJuYW1lIjoiY29ybmVyIn19XV0=
\[\begin{tikzcd}
	{({\cg}/g^*)} & {({\cg}/g^*f^*)} \\
	{{{\cal F}}} & {{{\cal E}}}
	\arrow["{\pi_g}"', from=1-1, to=2-1]
	\arrow["{q_{S_g}^{f^*}}"', from=1-2, to=1-1]
	\arrow["\lrcorner"{anchor=center, pos=0.125, rotate=-90}, draw=none, from=1-2, to=2-1]
	\arrow["{\pi_{fg}}", from=1-2, to=2-2]
	\arrow["{f^*}", from=2-2, to=2-1]
\end{tikzcd}\]
induces an equivalence between the two canonical relative sites, as depicted (modulo the Morita-equivalence between a relative topos and the topos on its canonical relative site of \ref{canonicalrelativesiteMoritaequi}):

% https://q.uiver.app/#q=WzAsNCxbMCwwLCJcXHdpZGVoYXR7e1xcY2FsIEd9L2deKn1fe0pfZ30iXSxbMSwwLCJcXHdpZGVoYXR7e1xcY2FsIEd9L2deKmZeKn1fe0pfe2ZnfX0iXSxbMCwxLCJ7XFxjYWwgRn0iXSxbMSwxLCJ7XFxjYWwgRX0iXSxbMiwzLCJmIiwyXSxbMCwyLCJnIiwyXSxbMSwzLCJmZyJdLFswLDEsIlxcU2goZl5nKSJdLFswLDEsIlxcc2ltZXEiLDIseyJzdHlsZSI6eyJib2R5Ijp7Im5hbWUiOiJub25lIn0sImhlYWQiOnsibmFtZSI6Im5vbmUifX19XV0=
\[\begin{tikzcd}
	{\widehat{{(\cg}/g^*)}_{J_g}} & {\widehat{({\cg}/g^*f^*)}_{J_{fg}}} \\
	{{{\cal F}}} & {{{\cal E}}}
	\arrow["{\Sh(q_{S_g}^{f^*})}", from=1-1, to=1-2]
	\arrow["\simeq"', draw=none, from=1-1, to=1-2]
	\arrow["g"', from=1-1, to=2-1]
	\arrow["fg", from=1-2, to=2-2]
	\arrow["f"', from=2-1, to=2-2]
\end{tikzcd}\]
\end{prop}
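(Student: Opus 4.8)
The plan is to observe that both canonical relative sites in the statement present the \emph{same} topos $\cg$ — through their projections onto the first component — and that $q_{S_g}^{f^*}$ is compatible with these two presentations; the desired equivalence then follows formally from the Morita equivalence of Theorem \ref{canonicalrelativesiteMoritaequi}.

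First I would spell out $q_{S_g}^{f^*}$ concretely. Since the canonical stack $S_{fg}$ of the composite $fg : \cg \to \ce$ is the direct image of $S_g$ along $f$, that is the precomposition $S_g \circ (f^*)^{\textup{op}} = S_g f^*$ (Definition \ref{canstackresume}(d)), the fibration $(\cg/g^*f^*) = \cg(S_g f^*)$ is exactly the bipullback of $(\cg/g^*) = \cg(S_g)$ along $f^* : \ce \to \cf$ of Definition \ref{defimdirecte}, and $q_{S_g}^{f^*}$ is its projection. On objects it sends $(G,E,\alpha : G \to g^*f^*(E))$ to $(G, f^*(E), \alpha : G \to g^*(f^*(E)))$, so it \emph{leaves the first component $G \in \cg$ unchanged}. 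Writing $\pi_{\cg}^{g} : (\cg/g^*) \to \cg$ and $\pi_{\cg}^{fg} : (\cg/g^*f^*) \to \cg$ for the two first-component projections (the functors $\pi_{\cg}$ of Theorem \ref{canonicalrelativesiteMoritaequi} associated respectively with the relative topos $g : \cg \to \cf$ over $\cf$ and with $fg : \cg \to \ce$ over $\ce$), this computation yields a commuting triangle $\pi_{\cg}^{g} \circ q_{S_g}^{f^*} = \pi_{\cg}^{fg}$ over $\cg$.

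Next I would bring in the Morita equivalence. By Theorem \ref{canonicalrelativesiteMoritaequi}, $\pi_{\cg}^{g} : ((\cg/g^*),J_g) \to (\cg, J_{\cg}^{\textup{can}})$ is a dense morphism of sites inducing an equivalence $\Sh(\pi_{\cg}^{g}) : \cg \to \widehat{(\cg/g^*)}_{J_g}$ over $\cf$, and likewise $\pi_{\cg}^{fg} : ((\cg/g^*f^*),J_{fg}) \to (\cg, J_{\cg}^{\textup{can}})$ induces an equivalence $\cg \to \widehat{(\cg/g^*f^*)}_{J_{fg}}$ over $\ce$; crucially, both land in the \emph{same} site $(\cg, J_{\cg}^{\textup{can}})$, precisely because $J_g$ and $J_{fg}$ are, by Definition \ref{canstackresume}(e), the sieves that are covering on this first component $\cg$. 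Applying Proposition \ref{densereflectmorphsites}(iii) with the dense morphism $i = \pi_{\cg}^{g}$ and $F' = q_{S_g}^{f^*}$, and using that $i \circ F' = \pi_{\cg}^{fg}$ is a morphism of sites, I conclude that $q_{S_g}^{f^*}$ is itself a morphism of sites.

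Finally, functoriality of $\Sh(-)$ turns the triangle into $\Sh(\pi_{\cg}^{fg}) \simeq \Sh(q_{S_g}^{f^*}) \circ \Sh(\pi_{\cg}^{g})$; since the two outer functors are equivalences, two-out-of-three forces $\Sh(q_{S_g}^{f^*}) : \widehat{(\cg/g^*)}_{J_g} \to \widehat{(\cg/g^*f^*)}_{J_{fg}}$ to be an equivalence. Compatibility with the structure morphisms is then automatic: under the two Morita equivalences of Theorem \ref{canonicalrelativesiteMoritaequi} the functor $\Sh(q_{S_g}^{f^*})$ becomes $\mathrm{id}_{\cg}$, while the two structure morphisms become $g : \cg \to \cf$ and $fg : \cg \to \ce$, so the square collapses to the tautology $f \circ g \simeq fg$. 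I expect the only genuine content to lie in the concrete description of $q_{S_g}^{f^*}$ and the verification of the triangle over $\cg$ (a short object-and-morphism chase, together with the reading of $J_g$ and $J_{fg}$ as ``covering on the first component''); once these are in place the statement is a formal consequence of Theorem \ref{canonicalrelativesiteMoritaequi}, Proposition \ref{densereflectmorphsites} and two-out-of-three, so I do not anticipate a serious obstacle.
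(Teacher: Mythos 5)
Your proposal is correct, and its skeleton coincides with the paper's: the heart of both proofs is the (strictly) commuting triangle $\pi_{\cg}\circ q_{S_g}^{f^*}=\pi'_{\cg}$ over $\cg$, the fact that both first-component projections are dense (bi)morphisms of sites into the same site $(\cg,J^{\textup{can}}_{\cg})$ by Theorem \ref{canonicalrelativesiteMoritaequi}, and a two-out-of-three argument to conclude that $\Sh(q_{S_g}^{f^*})$ is an equivalence. You differ from the paper in how two sub-steps are discharged, and both of your variants are valid. First, for the morphism-of-sites property of $q_{S_g}^{f^*}$: the paper verifies it directly (cover-preservation is immediate since $J_g$ and $J_{fg}$ are the first-component topologies, and flatness follows from finite-limit preservation, which holds because $f^*$ preserves finite limits), whereas you deduce it from Proposition \ref{densereflectmorphsites}(iii) applied to the dense morphism $\pi_{\cg}$ and the triangle; your route is slightly slicker in that it needs no limit computation in the comma categories, at the price of invoking the reflection lemma. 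Second, for compatibility with the structure morphisms: the paper passes to the right adjoints $\tau_g$ and $\tau_{fg}$ of the projections (so that $\pi_g$ and $\pi_{fg}$, as comorphisms, induce the same geometric morphisms as $\tau_g$, $\tau_{fg}$ as morphisms of sites) and applies functoriality to the commuting square $q_{S_g}^{f^*}\tau_{fg}=\tau_g f^*$; you instead cancel the equivalence $\Sh(\pi_{\cg})$ against the over-$\cf$ and over-$\ce$ compatibilities stated in Theorem \ref{canonicalrelativesiteMoritaequi}, reducing the square to $f\circ g\simeq fg$. Your cancellation argument is stated a little loosely (``automatic'') but is sound once written as: $C_{\pi_{fg}}\Sh(q_{S_g}^{f^*})\Sh(\pi_{\cg})\simeq C_{\pi_{fg}}\Sh(\pi'_{\cg})\simeq fg\simeq f\,C_{\pi_g}\Sh(\pi_{\cg})$, followed by cancellation of the equivalence $\Sh(\pi_{\cg})$; the paper's adjoint argument avoids this cancellation by exhibiting an explicit commuting square of morphisms of sites.
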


\begin{proof}
First, since the coverings are those that cover on the first component, it is immediate that $q_{S_g}^{f^*}$ preserves them, and since $f^*$ preserves finite limits, it implies that $q_{S_g}^{f^*}$ also does: $q_{S_g}^{f^*}$ is a morphism of sites.

It is immediate to verify that the following square of functors commutes:

% https://q.uiver.app/#q=WzAsNCxbMCwxLCJ7XFxjYWwgR30vZ14qIl0sWzEsMSwie1xcY2FsIEd9L2deKmZeKiJdLFswLDAsIntcXGNhbCBHfSJdLFsxLDAsIntcXGNhbCBHfSJdLFsxLDAsImZeZyJdLFswLDIsIlxccGlfe1xcY2FsIEd9Il0sWzEsMywiXFxwaV97XFxjYWwgR30nIl0sWzMsMiwiMV97XFxjYWwgR30iLDJdXQ==
\[\begin{tikzcd}
	{{\cg}} & {{\cg}} \\
	{({\cg}/g^*)} & {({\cg}/g^*f^*)}
	\arrow["{1_{\cg}}"', from=1-2, to=1-1]
	\arrow["{\pi_{\cg}}", from=2-1, to=1-1]
	\arrow["{\pi_{\cg}'}", from=2-2, to=1-2]
	\arrow["{q_{S_g}^{f^*}}", from=2-2, to=2-1]
\end{tikzcd}\]

Since $\pi_{\cg}$ and $\pi_{\cg}'$ are dense bimorphism of sites, it ensures that $q_{S_g}^{f^*}$ also induces an equivalence.

Moreover, recall that $ \pi_g \dashv \tau_g$ and $\pi_{fg} \dashv \tau_{fg}$, so that $\pi_{fg}$ and $\pi_g$ induce, as comorphisms, the same geometric morphisms as, respectively, $\tau_{fg}$ and $\tau_g$, as morphisms of sites. It is immediate to see that the following square is commuting:

% https://q.uiver.app/#q=WzAsNCxbMCwwLCJ7XFxjYWwgR30vZ14qIl0sWzEsMCwie1xcY2FsIEd9L2deKmZeKiJdLFswLDEsIntcXGNhbCBGfSJdLFsxLDEsIntcXGNhbCBFfSJdLFszLDIsImZeKiJdLFsyLDAsIlxcdGF1X3tnfSJdLFszLDEsIlxcdGF1X3tmZ30iLDJdLFsxLDAsImZeZyIsMl0sWzAsMywiIiwxLHsic3R5bGUiOnsibmFtZSI6ImNvcm5lciJ9fV1d
\[\begin{tikzcd}
	{({\cg}/g^*)} & {({\cg}/g^*f^*)} \\
	{{{\cal F}}} & {{{\cal E}}}
	\arrow["\lrcorner"{anchor=center, pos=0.125}, draw=none, from=1-1, to=2-2]
	\arrow["{q_{S_g}^{f^*}}"', from=1-2, to=1-1]
	\arrow["{\tau_{g}}", from=2-1, to=1-1]
	\arrow["{\tau_{fg}}"', from=2-2, to=1-2]
	\arrow["{f^*}", from=2-2, to=2-1]
\end{tikzcd}\]

Hence, by functoriality of morphisms of sites, this yields the equivalence at the level of toposes, making the square commuting, as in the proposition.
\end{proof}

As previously, starting with an indexed weak geometric morphism $\overline{h} : S_g \to S_{C_{p'}}$ which also is an indexed geometric morphism, one can turn it into a morphism of sites $\overline{H} : \mathcal{G}(f^*\mathbb C) \to ({\cg} / g^*)$. One can transpose this morphism of sites by applying to it the direct image $f_*(\overline{H})$ and precomposing it with the unit $\varsigma_{\mathbb C}^{f}$ of the adjunction $f^* \dashv f_*$. In pictures:

% https://q.uiver.app/#q=WzAsNSxbMCwwLCJcXG1hdGhjYWx7R30oZl4qXFxtYXRoYmIgQykiXSxbMSwwLCJ7XFxjYWwgR30vZ14qIl0sWzEsMSwie1xcY2FsIEd9L2deKmZeKiJdLFswLDEsIlxcbWF0aGNhbHtHfShmXypmXipcXG1hdGhiYiBDKSJdLFswLDIsIlxcbWF0aGNhbHtHfShcXG1hdGhiYiBDKSJdLFszLDBdLFsyLDEsImZeZyIsMl0sWzAsMSwiXFxvdmVybGluZXtVfSJdLFszLDIsImZfKihcXG92ZXJsaW5le1V9KSJdLFs0LDMsIlxcZXRhX3tcXG1hdGhiYiBDfV5mIl0sWzQsMiwiVSIsMl0sWzQsMCwiZl4qX3tcXG1hdGhiYiBDfSIsMCx7ImN1cnZlIjotNH1dXQ==
\[\begin{tikzcd}
	{\mathcal{G}(f^*\mathbb C)} & {({\cg}/g^*)} \\
	{\mathcal{G}(f_*f^*\mathbb C)} & {({\cg}/g^*f^*)} \\
	{\mathcal{G}(\mathbb C)}
	\arrow["{\overline{H}}", from=1-1, to=1-2]
	\arrow[from=2-1, to=1-1]
	\arrow["{f_*(\overline{H})}", from=2-1, to=2-2]
	\arrow["{q_{S_g}^{f^*}}"', from=2-2, to=1-2]
	\arrow["{F^{\mbc}}", bend left = 55, from=3-1, to=1-1]
	\arrow["{\varsigma_{\mathbb C}^{f}}", from=3-1, to=2-1]
	\arrow["H"', from=3-1, to=2-2]
\end{tikzcd}\]

In this diagram, the transpose of $\overline{H}$ is denoted by $H$. Since $F^{\mbc}$ is a morphism of sites (\ref{F^Cmorphsites}) and the previous diagrams commutes, $q_{S_g}^{f^*}\circ H$ also is a morphism of sites (see Proposition \ref{densereflectmorphsites}). Finally, because of the previous proposition: $q_{S_g}^{f^*}\circ H$ being a morphism of sites and $q_{S_g}^{f^*}$ being a dense one implies $H$ also is a morphism of sites, so that the transposition operation, in this direction, sends morphisms of sites to morphisms of sites (or, equivalently, indexed weak geometric morphisms which are indexed geometric morphisms to indexed weak geometric morphisms which are indexed geometric morphisms).

This proof of the first direction is expressed by the restriction depicted in the following diagram:
\vspace{0.5cm}

\begin{adjustbox}{scale=0.5}
$\begin{tikzcd}[column sep=tiny]
	{{\mathbf{IndWeakGeom}_{{\cal F}}(S_g,S_{C_{p'}})}} &&& {\mathbf{IndWeakGeom}_{{\cal E}}(S_{fg},S_{C_{p}})} \\
	& {{\mathbf{IndGeom}_{{\cal F}}(S_g,S_{C_{p'}})}} & {{\mathbf{IndGeom}_{{\cal E}}(S_{fg},S_{C_{p}})}} \\
	& {\mathbf{FibSites}_{{\cal F}}((\mathcal{G}(f^*\mathbb C),Gir_{f^*\mathbb C}),(({\cg}/g^*),J_g))} & {\mathbf{FibSites}_{{\cal E}}((\mathcal{G}(\mathbb C),Gir_{\mathbb C}),(({\cg}/g^*f^*),J_{fg}))} \\
	{\mathbf{Fib}_{{\cal F}}({\cg}(f^*\mathbb C),({\cg}/g^*))} &&& {\mathbf{Fib}_{{\cal E}}({\cg}(\mathbb C),({\cg}/g^*f^*))}
	\arrow["\simeq"{description}, from=1-1, to=1-4]
	\arrow[hook, from=2-2, to=1-1]
	\arrow[hook', from=2-2, to=2-3]
	\arrow[hook', from=2-3, to=1-4]
	\arrow["\simeq"{description}, no head, from=3-2, to=2-2]
	\arrow[hook, from=3-2, to=3-3]
	\arrow[hook', from=3-2, to=4-1]
	\arrow["\simeq"{description}, no head, from=3-3, to=2-3]
	\arrow[hook, from=3-3, to=4-4]
	\arrow["\simeq"{description}, no head, from=4-1, to=1-1]
	\arrow["\simeq"{description}, from=4-1, to=4-4]
	\arrow["\simeq"{description}, no head, from=4-4, to=1-4]
\end{tikzcd}$
\end{adjustbox}
\vspace{0.5cm}

\noindent That is, the operation of transposition of morphisms of fibrations along $f^* \dashv f_*$ sends morphisms of sites on the left to morphisms of sites to the right. This is the following proposition:

\begin{prop}\label{firstdirection}
Let $f : \cf \to \ce$ and $g : \cg \to \cf$ be relative toposes, $\mbc$ a $\ce$-indexed category and $A : (\cg(f^*\mbc),Gir_{\mbc}) \to ((\cg/g^*),J_{g})$ a morphism of sites and fibrations. The transpose $A^t : (\cg(\mbc),Gir_{\mbc}) \to ((\cg/g^*f^*),J_{fg})$ of $A$ also is a morphism of sites.
\end{prop}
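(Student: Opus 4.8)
The plan is to sidestep any direct verification of the filtering conditions defining a morphism of sites on $A^t$ (which would require an explicit grip on both $f^*\mbc$ and the canonical stack $(\cg/g^*f^*)$), and instead to transfer the property from a composite along the dense projection $q_{S_g}^{f^*}$.

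First I would make the transpose explicit. By the very construction of transposition along the adjunction $f^* \dashv f_*$, the functor $A^t$ is $f_*(A)\circ \varsigma_{\mbc}^{f}$, where $\varsigma_{\mbc}^{f} : \cg(\mbc) \to \cg(f_*f^*\mbc)$ is induced by the unit of $f^* \dashv f_*$ and $f_*(A) : \cg(f_*f^*\mbc) \to (\cg/g^*f^*)$ is the direct image (i.e. the base change along $f^*$) of $A$. I would also recall from Subsection \ref{subsec3} that the structural morphism factors as $F^{\mbc} = q^{f^*}_{\mbc}\varsigma_{\mbc}^{f}$.

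The key step is then the identity
\[
q_{S_g}^{f^*}\circ A^t \simeq A \circ F^{\mbc}.
\]
Indeed, the direct image is computed as the pullback along $f^* : \ce \to \cf$, and its defining projections are natural; applying this functoriality to the morphism of fibrations $A$ yields the commutative square $q_{S_g}^{f^*}\circ f_*(A) \simeq A \circ q^{f^*}_{\mbc}$. Precomposing with $\varsigma_{\mbc}^{f}$ and using $F^{\mbc} = q^{f^*}_{\mbc}\varsigma_{\mbc}^{f}$ gives the claimed identity. Now $A$ is a morphism of sites by hypothesis, and $F^{\mbc}$ is a morphism of sites by Proposition \ref{F^Cmorphsites}; hence their composite $A\circ F^{\mbc}\simeq q_{S_g}^{f^*}\circ A^t$ is a morphism of sites. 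By Proposition \ref{denseprojec}, the projection $q_{S_g}^{f^*} : ((\cg/g^*f^*),J_{fg}) \to ((\cg/g^*),J_g)$ is a dense morphism of sites (it is a morphism of sites inducing an equivalence of toposes), so Proposition \ref{densereflectmorphsites}(iii) applies with $i = q_{S_g}^{f^*}$ and $F' = A^t$: since $q_{S_g}^{f^*}\circ A^t$ is a morphism of sites, so is $A^t$, which is the claim.

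The main obstacle — indeed the whole reason for proceeding this way — is that the morphism-of-sites conditions for $A^t$ cannot be checked by hand on the canonical stack $(\cg/g^*f^*)$, and the inverse image $f^*\mbc$ admits no manageable explicit description. The argument therefore loads all the difficulty onto two structural inputs established earlier: that $F^{\mbc}$ is a morphism of sites (Proposition \ref{F^Cmorphsites}, obtained through the duality between morphisms and comorphisms of sites), and that $q_{S_g}^{f^*}$ induces an equivalence of relative sites (Proposition \ref{denseprojec}). Granting these, everything is formal, and the only point demanding care is the naturality/functoriality of the direct-image projections that produces the identity $q_{S_g}^{f^*}\circ A^t \simeq A \circ F^{\mbc}$.
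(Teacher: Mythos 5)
Your proposal is correct and follows essentially the same route as the paper: you write the transpose as $f_*(A)\circ\varsigma^f_{\mbc}$, use functoriality of the pullback projections to get $q_{S_g}^{f^*}\circ A^t \simeq A\circ F^{\mbc}$, invoke Proposition \ref{F^Cmorphsites} to see this composite is a morphism of sites, and conclude via the density of $q_{S_g}^{f^*}$ (Proposition \ref{denseprojec}) together with the reflection property of Proposition \ref{densereflectmorphsites}. This is exactly the argument given in Subsection \ref{subsec4} preceding the statement.
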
\qed

\subsection{The other half of the equivalence}\label{subsec5}

In this subsection, in order to avoid the overload of upperscripts, the $\eta$-extension of a morphism of sites will be denoted without the upper $*$; for example the $\eta$-extension of a morphism of sites $A$ will be denoted as $\widetilde{A}$ - not $\widetilde{A}^*$.

The goal is now to establish the converse direction: namely, that transposition of morphisms of fibrations along the adjunction $f^* \dashv f_*$ sends morphisms of sites  
\[ H : (\mathcal{G}(\mathbb C), Gir_{\mbc}) \to (({\cg}/g^*f^*), J_{fg}) \]  
to morphisms of sites  
\[ H^t : (\mathcal{G}(f^*\mathbb C), Gir_{f^*\mbc}) \to (({\cg}/g^*), J_g). \]

Since $H$ is the restriction of its $\eta$-extension (that is, we can write $H$ as the composite $H \simeq \widetilde{H}\eta_{\mbc}$, see Proposition 3.4. (iv) \cite{bartolicaramello}), a first way to compute $H^t$ is by taking the transpose of $\widetilde{H}$ and the inverse image of $\eta_\mbc$: this yields $H^t \simeq \widetilde{H}^tf^*(\eta_\mbc)$. On the other hand, since $H^t$ is a morphism of fibrations, it is continuous for the Giraud topology on the domain: hence $H^t$ has an $\eta$-extension and this provides $H^t \simeq \widetilde{H^t}\eta_{f^*\mbc}$. These two ways of computing the transposition are pictured in the following commutative diagram:

% https://q.uiver.app/#q=WzAsNCxbMSwyLCJmXipcXG1iYyJdLFswLDEsImZeKihTX3tcXG1hdGhiYiBDfSkiXSxbMSwwLCJTX2ciXSxbMiwxLCJTX3tmXipcXG1hdGhiYiBDfSJdLFswLDEsImZeKihcXGV0YV97XFxtYmN9KSJdLFswLDMsIlxcZXRhX3tmXipcXG1iY30iLDJdLFsxLDIsIlxcd2lkZXRpbGRle0h9XnQiXSxbMywyLCJcXHdpZGV0aWxkZXtIXnR9IiwyXSxbMCwyLCJIXnQiLDFdXQ==
\[\begin{tikzcd}
	& {S_g} \\
	{f^*(S_{\mathbb C})} && {S_{f^*\mathbb C}} \\
	& {f^*\mbc}
	\arrow["{\widetilde{H}^t}", from=2-1, to=1-2]
	\arrow["{\widetilde{H^t}}"', from=2-3, to=1-2]
	\arrow["{H^t}"{description}, from=3-2, to=1-2]
	\arrow["{f^*(\eta_{\mbc})}", from=3-2, to=2-1]
	\arrow["{\eta_{f^*\mbc}}"', from=3-2, to=2-3]
\end{tikzcd}\]

From $H$ being a morphism of relative sites, it follows that $\widetilde{H}$ is a cartesian morphism of fibrations (see Proposition 3.4. (iii) \cite{bartolicaramello}). Since the transpose of a cartesian morphism of fibrations is again cartesian, $\widetilde{H}^t$ is also a cartesian morphism of fibrations. The goal is now to equip $\cg(f^*(S_{\mbc}))$ with a suitable topology making $\widetilde{H}^t$ continuous (and hence a morphism of sites, since it also preserves finite limits), and ensuring that $f^*(\eta_\mbc)$ is a morphism of sites as well. This will imply that their composition  -  namely $H^t$  -  is a morphism of sites, which is the desired result.

The previous section already provides a commutative square of geometric morphisms:

% https://q.uiver.app/#q=WzAsNCxbMCwxLCJcXGNmIl0sWzEsMSwiXFxjZSJdLFsxLDAsIkdpcihcXG1iYykiXSxbMCwwLCJHaXIoZl4qXFxtYmMpIl0sWzAsMSwiZiIsMl0sWzMsMiwiZl97XFxtYmN9Il0sWzMsMCwiQ197cCd9IiwyXSxbMiwxLCJDX3AiXV0=
\[\begin{tikzcd}
	{\mathbf{Gir}(f^*\mbc)} & {\mathbf{Gir}(\mbc)} \\
	\cf & \ce
	\arrow["{f^{\mbc}}", from=1-1, to=1-2]
	\arrow["{C_{p'}}"', from=1-1, to=2-1]
	\arrow["{C_p}", from=1-2, to=2-2]
	\arrow["f"', from=2-1, to=2-2]
\end{tikzcd}\]

In particular, $f^\mbc : [fC_{p'}] \to [C_p]$ is a morphism of relative toposes. By \ref{etaextensionbasechangeproperties}, it induces a morphism of cartesian indexed categories 
$$\widetilde{(f^\mbc)^*} : S_{C_p} \to S_{fC_{p'}}.$$ Since $S_{fC_{p'}} \simeq f_*(S_{C_{p'}})$ is the direct image of the indexed category $S_{C_{p'}}$ through the morphism of sites $f^*$, one can transpose $\widetilde{(f^\mbc)^*} : S_{C_p} \to S_{fC_{p'}}$ into a morphism of cartesian fibrations $\widetilde{(f^\mbc)^*}^t : f^*(S_{C_p}) \to S_{C_{p'}}$. This establishes a comparison functor between the inverse image of the canonical stack and the canonical stack of the inverse image. The following proposition establishes the good behavior of this functor with respect to $\eta$-extensions:

\begin{prop}\label{commutationcomparisonfunctor}
Let $f : \cf \to \ce$ and $g : \cg \to \cf$ be relative toposes, $\mbc$ a $\ce$-indexed category and $A : \mbc \to S_{fg}$ a morphism of indexed categories. The following is a commutative diagram of morphisms of indexed categories:

% https://q.uiver.app/#q=WzAsNCxbMSwyLCJmXipcXG1iYyJdLFswLDEsImZeKihTX3tcXG1hdGhiYiBDfSkiXSxbMSwwLCJTX2ciXSxbMiwxLCJTX3tmXipcXG1hdGhiYiBDfSJdLFswLDEsImZeKihcXGV0YV97XFxtYmN9KSJdLFswLDMsIlxcZXRhX3tmXipcXG1iY30iLDJdLFszLDIsIlxcd2lkZXRpbGRle0FedH0iLDJdLFsxLDIsIlxcd2lkZXRpbGRle0F9XnQiXSxbMSwzLCJcXGNoaV97XFxtYmN9XmYiLDFdXQ==
\[\begin{tikzcd}
	& {S_g} \\
	{f^*(S_{\mathbb C})} && {S_{f^*\mathbb C}} \\
	& {f^*\mbc}
	\arrow["{\widetilde{A}^t}", from=2-1, to=1-2]
	\arrow["{\widetilde{(f^\mbc)^*}^t}"{description}, from=2-1, to=2-3]
	\arrow["{\widetilde{A^t}}"', from=2-3, to=1-2]
	\arrow["{f^*(\eta_{\mbc})}", from=3-2, to=2-1]
	\arrow["{\eta_{f^*\mbc}}"', from=3-2, to=2-3]
\end{tikzcd}\]
\end{prop}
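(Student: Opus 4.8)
The displayed diagram amounts to two triangles glued along $\widetilde{(f^\mbc)^*}^t$, and I would verify each in turn; both are controlled by the canonical $2$-cell $\Phi$ of Proposition~\ref{etaextensionbasechangeproperties}(iv) attached to the structural morphism $F^{\mbc}$. Since $F^{\mbc}$ sits inside the commutative square of Proposition~\ref{F^Cmorphsites}, the natural transformation $\phi$ feeding into $\Phi$ is invertible, so $\Phi$ is an isomorphism; concretely it provides a canonical isomorphism $\widetilde{(f^\mbc)^*}\circ\eta_{\mbc}\cong\eta_{f^*\mbc}\circ F^{\mbc}$ of morphisms of indexed categories over $\ce$, where on the right the composite (which carries base change $f^*$) is read through the identification $S_{fg}\simeq f_*S_g$ and its analogue for $S_{f^*\mbc}$ from Definition~\ref{canstackresume}(d). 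Recall also that $F^{\mbc}=q^{f^*}_{\mbc}\,\varsigma^f_{\mbc}$, with $\varsigma^f_{\mbc}$ the unit of $f^*\dashv f_*$.

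For the \emph{lower} triangle I would push the precomposition through the adjunction. Naturality of transposition in its variable object gives $\widetilde{(f^\mbc)^*}^t\circ f^*(\eta_{\mbc})\cong\bigl(\widetilde{(f^\mbc)^*}\circ\eta_{\mbc}\bigr)^t$, and the isomorphism $\Phi$ rewrites the bracket as $\eta_{f^*\mbc}\circ F^{\mbc}$. Substituting $F^{\mbc}=q^{f^*}_{\mbc}\,\varsigma^f_{\mbc}$ and transposing, the unit $\varsigma^f_{\mbc}$ is absorbed by the triangle identity for $f^*\dashv f_*$, leaving exactly $\eta_{f^*\mbc}$. The content here is only the routine bookkeeping of the base-change reinterpretation.

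For the \emph{upper} triangle I would avoid working directly with $f^*(\eta_{\mbc})$ (which, as the authors stress, need not be dense) and instead transpose the whole identity back over $\ce$. As $(-)^t$ is a hom-bijection satisfying $(n\circ m)^t\cong f_*(n)\circ m^t$, the asserted isomorphism $\widetilde{A}^t\cong\widetilde{A^t}\circ\widetilde{(f^\mbc)^*}^t$ is equivalent to $\widetilde{A}\cong f_*(\widetilde{A^t})\circ\widetilde{(f^\mbc)^*}$ over $\ce$. Now functoriality of the $\eta$-extension with base change (Proposition~\ref{functorialityetaext}) identifies the right-hand composite with the single $\eta$-extension $\widetilde{A^t\circ F^{\mbc}}$, so it remains to check the base-level identity $A\cong A^t\circ F^{\mbc}$ (again after reinterpretation over $\ce$). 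This last isomorphism is precisely the transpose formula $A\cong f_*(A^t)\circ\varsigma^f_{\mbc}$ combined with $F^{\mbc}=q^{f^*}_{\mbc}\,\varsigma^f_{\mbc}$. Alternatively, one may dispense with functoriality and invoke the density of $\eta_{\mbc}$ (Proposition~\ref{etadense}): both $\widetilde{A}$ and $f_*(\widetilde{A^t})\circ\widetilde{(f^\mbc)^*}$ are colimit-preserving by Proposition~\ref{etaextensionbasechangeproperties}(i) and restrict along $\eta_{\mbc}$ to $A$, hence coincide.

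The real difficulty is not conceptual but notational: one must track with care the direct-image reinterpretation relating morphisms over $\cf$ with base change $f^*$ to morphisms over $\ce$, confirm that transposition interacts with the relevant (post)compositions as claimed, and check that $\Phi$ and the factorization $F^{\mbc}=q^{f^*}_{\mbc}\,\varsigma^f_{\mbc}$ mesh so that the triangle identities deliver $\eta_{f^*\mbc}$ and $A$ exactly. The maneuver of transposing the upper triangle back to $\ce$ is what makes this feasible, since over $\ce$ the canonical functor $\eta_{\mbc}$ is genuinely dense and the functoriality of the $\eta$-extension applies without the obstruction caused by the failure of $f^*$ to commute with the canonical-stack construction.
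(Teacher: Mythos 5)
Your proposal is correct and follows essentially the same route as the paper: both proofs verify the two triangles by transposing them over $\ce$ (exploiting that $\widetilde{(f^{\mbc})^*}^t$ is defined as a transpose), and both rest on the comparison $2$-cell of Proposition \ref{etaextensionbasechangeproperties}(iv) applied to $F^{\mbc}$, the factorization $F^{\mbc}=q^{f^*}_{\mbc}\varsigma_{\mbc}^{f}$, the identity $q^{f^*}_{S_g}\circ A\simeq A^t\circ F^{\mbc}$, and the triangle identities for $f^*\dashv f_*$. The only differences are in packaging: where you invoke naturality of transposition and functoriality of the $\eta$-extension (Proposition \ref{functorialityetaext}), the paper checks the lower triangle against the two projections of the bipullback defining $f_*(S_{f^*\mbc})$ and handles the upper one through the Morita equivalence induced by the dense morphism $q^{f^*}_{S_g}$ (Proposition \ref{denseprojec}), which is also the result silently underlying your ``reinterpretation over $\ce$'' bookkeeping.
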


\begin{proof}
To check the commutativity of the upper and lower triangles, it suffices to verify that their transposes commute. This will be more convenient, since $\widetilde{(f^\mbc)^*}^t$ is defined as a transpose.

For the upper triangle, after transposition, the desired isomorphism is: $\widetilde{A}^* \simeq f_*(\widetilde{A^t})\circ \widetilde{(f^{\mbc})^*}^*$. In picture, this is the commutativity of the following triangle:

% https://q.uiver.app/#q=WzAsMyxbMCwxLCJTX3tDX3B9Il0sWzIsMSwiU197ZkNfe3AnfX0iXSxbMSwwLCJTX3tmZ30iXSxbMCwxLCJcXHdpZGV0aWxkZXtmXipfXFxtYmN9XioiLDFdLFsxLDIsImZfKihcXHdpZGV0aWxkZXtBXnR9KSIsMl0sWzAsMiwiXFx3aWRldGlsZGV7QX0iXV0=
\[\begin{tikzcd}
	& {S_{fg}} \\
	{S_{C_p}} && {S_{fC_{p'}}}
	\arrow["{\widetilde{A}}", from=2-1, to=1-2]
	\arrow["{\widetilde{(f^\mbc)^*}}"{description}, from=2-1, to=2-3]
	\arrow["{f_*(\widetilde{A^t})}"', from=2-3, to=1-2]
\end{tikzcd}\]

First, observe that $A$ and $A^t F^{\mathbb C}$ induce the same weak geometric morphism. Indeed, it is immediate that

% https://q.uiver.app/#q=WzAsNSxbMSwxLCJmXypmXipcXG1hdGhiYiBDIl0sWzAsMSwiXFxtYXRoYmIgQyJdLFsxLDAsImZeKlxcbWF0aGJiIEMiXSxbMiwwLCIoe1xcY2FsIEd9L2deKikiXSxbMiwxLCIoe1xcY2FsIEd9L2deKmZeKikiXSxbMSwwLCJcXHZhcnNpZ21hX3tcXG1hdGhiYiBDfV57Zn0iLDJdLFswLDIsInFee2ZeKn1fe2ZeKlxcbWF0aGJiIEN9Il0sWzIsMywiQV50Il0sWzQsMywicV57Zl4qfV97U19nfSIsMl0sWzAsNCwiZl8qKEFedCkiLDJdLFsxLDQsIkEiLDIseyJjdXJ2ZSI6NH1dLFsxLDIsIkZee1xcbWF0aGJiIEN9Il1d
\[\begin{tikzcd}
	& {f^*\mathbb C} & {S_g} \\
	{\mathbb C} & {f_*f^*\mathbb C} & {S_{fg}}
	\arrow["{A^t}", from=1-2, to=1-3]
	\arrow["{F^{\mathbb C}}", from=2-1, to=1-2]
	\arrow["{\varsigma_{\mathbb C}^{f}}"', from=2-1, to=2-2]
	\arrow["A"', bend right = 39, from=2-1, to=2-3]
	\arrow["{q^{f^*}_{f^*\mathbb C}}", from=2-2, to=1-2]
	\arrow["{f_*(A^t)}"', from=2-2, to=2-3]
	\arrow["{q^{f^*}_{S_g}}"', from=2-3, to=1-3]
\end{tikzcd}\]
is a commutative diagram, so that $q^{f^*}_{S_g} A \simeq A^tF^{\mathbb C}$; but $q^{f^*}_{S_g}$ is a dense morphism of sites \ref{denseprojec}, so that $\Sh(A)\Sh(q^{f^*}_{S_g}) \simeq \Sh(F^{\mathbb C})\Sh(A^t)$. Hence, modulo the equivalence given by $\Sh(q^{f^*}_{S_g})$, the identity $\Sh(A) \simeq \widetilde{(f^{\mbc})^*}\Sh(A^t)$ holds, as $\Sh(F^{\mathbb C}) \simeq f^{\mbc}$. Based on this identity, a straightforward computation ensures the desired commutation: $\widetilde{A} \simeq f_*(\widetilde{A^t})\circ \widetilde{(f^{\mbc})^*}$. 

It remains to prove the commutativity of the lower triangle, that is:

% https://q.uiver.app/#q=WzAsMyxbMSwxLCJmXipcXG1iYyJdLFswLDAsImZeKihTX3tcXG1hdGhiYiBDfSkiXSxbMiwwLCJTX3tmXipcXG1hdGhiYiBDfSJdLFswLDEsImZeKihcXGV0YV97XFxtYmN9KSJdLFswLDIsIlxcZXRhX3tmXipcXG1iY30iLDJdLFsxLDIsIlxcd2lkZXRpbGRle2ZeKl9cXG1iY31edCIsMV1d
\[\begin{tikzcd}
	{f^*(S_{\mathbb C})} && {S_{f^*\mathbb C}} \\
	& {f^*\mbc}
	\arrow["{\widetilde{(f^{\mbc})^*}^t}"{description}, from=1-1, to=1-3]
	\arrow["{f^*(\eta_{\mbc})}", from=2-2, to=1-1]
	\arrow["{\eta_{f^*\mbc}}"', from=2-2, to=1-3]
\end{tikzcd}\]

Again, since $\widetilde{(f^{\mbc})^*}^t$ is defined as a transpose, it is equivalent to check the commutativity of the transpose of the lower triangle, namely:
% https://q.uiver.app/#q=WzAsMyxbMSwxLCJcXG1iYyJdLFswLDAsIlNfe1xcbWF0aGJiIEN9Il0sWzIsMCwiZl8qKFNfe2ZeKlxcbWF0aGJiIEN9KSJdLFswLDEsIlxcZXRhX3tcXG1iY30iXSxbMCwyLCIoXFxldGFfe2ZeKlxcbWJjfSledCIsMl0sWzEsMiwiXFx3aWRldGlsZGV7Zl4qX1xcbWJjfV50IiwxXV0=
\[\begin{tikzcd}
	{S_{\mathbb C}} && {f_*(S_{f^*\mathbb C})} \\
	& \mbc
	\arrow["{\widetilde{(f^{\mbc})^*}}"{description}, from=1-1, to=1-3]
	\arrow["{\eta_{\mbc}}", from=2-2, to=1-1]
	\arrow["{(\eta_{f^*\mbc})^t}"', from=2-2, to=1-3]
\end{tikzcd}\]

In terms of fibrations, the previous triangle is equivalent to the following:

% https://q.uiver.app/#q=WzAsMyxbMSwxLCJcXGNnKFxcbWJjKSJdLFswLDAsIihHaXIoXFxtYmMpL0NfcF4qKSJdLFsyLDAsIihHaXIoZl4qXFxtYmMpL0Nfe3AnfV4qZl4qKSJdLFswLDEsIlxcZXRhX3tcXG1iY30iXSxbMCwyLCIoXFxldGFfe2ZeKlxcbWJjfSledCIsMl0sWzEsMiwiXFx3aWRldGlsZGV7KGZeXFxtYmMpXip9IiwxXV0=
\[\begin{tikzcd}
	{(\mathbf{Gir}(\mbc)/C_p^*)} && {(\mathbf{Gir}(f^*\mbc)/C_{p'}^*f^*)} \\
	& {\cg(\mbc)}
	\arrow["{\widetilde{(f^\mbc)^*}}"{description}, from=1-1, to=1-3]
	\arrow["{\eta_{\mbc}}", from=2-2, to=1-1]
	\arrow["{(\eta_{f^*\mbc})^t}"', from=2-2, to=1-3]
\end{tikzcd}\]

The category $(\mathbf{Gir}(f^*\mbc)/C_{p'}^*f^*)$ is given as the following bipullback:

% https://q.uiver.app/#q=WzAsNCxbMCwwLCIoR2lyKGZeKlxcbWJjKS9DX3twJ31eKmZeKikiXSxbMSwxLCJcXGNmIl0sWzEsMCwiKEdpcihmXipcXG1iYykvQ197cCd9XiopIl0sWzAsMSwiXFxjZSJdLFszLDEsImZeKiIsMl0sWzIsMSwiXFxwaSciXSxbMCwyXSxbMCwzXSxbMCw0LCIiLDEseyJsZXZlbCI6MSwic3R5bGUiOnsibmFtZSI6ImNvcm5lciJ9fV1d
\[\begin{tikzcd}
	{(\mathbf{Gir}(f^*\mbc)/C_{p'}^*f^*)} & {(\mathbf{Gir}(f^*\mbc)/C_{p'}^*)} \\
	\ce & \cf
	\arrow[from=1-1, to=1-2]
	\arrow[from=1-1, to=2-1]
	\arrow["{\pi'}", from=1-2, to=2-2]
	\arrow[""{name=0, anchor=center, inner sep=0}, "{f^*}"', from=2-1, to=2-2]
	\arrow["\lrcorner"{anchor=center, pos=0.125}, draw=none, from=1-1, to=0]
\end{tikzcd}\]

By the universal property of the bipullback, the identity $\widetilde{(f^{\mbc})^*}\eta_{\mbc} \simeq (\eta_{f^*\mbc})^t$ holds if and only if the two sides agree after composition with the two projections. For the projection on ${\cal F}$ it is immediate, since they all are morphisms of fibrations (hence, commuting over the base category). For the second projection, it will be useful to look at the following diagram:

% https://q.uiver.app/#q=WzAsNixbMSwxLCIoR2lyKGZeKlxcbWJjKS9DX3twJ31eKmZeKikiXSxbMiwxLCIoR2lyKGZeKlxcbWJjKS9DX3twJ31eKikiXSxbMCwwLCJcXGNnKFxcbWJjKSJdLFswLDEsIihHaXIoXFxtYmMpL0NfcF4qKSJdLFsyLDAsIlxcY2coZl4qXFxtYmMpIl0sWzEsMCwiXFxjZyhmXypmXipcXG1iYykiXSxbMCwxLCJxXntmXip9X3tTX3tmXipcXG1iY319IiwyLHsib2Zmc2V0IjoxfV0sWzMsMCwiXFx3aWRldGlsZGV7KGZee1xcbWJjfSleKn1fe2ZeKn0iLDIseyJvZmZzZXQiOjF9XSxbMywxLCJcXHdpZGV0aWxkZXsoZl57XFxtYmN9KV4qfSIsMSx7ImN1cnZlIjo1fV0sWzIsMywiXFxldGFfe1xcbWJjfSJdLFs0LDEsIlxcZXRhX3tmXipcXG1iY30iXSxbMiw1LCJcXHZhcnNpZ21hX3tcXG1iY31eZiJdLFs1LDQsInFee2ZeKn1fe2ZeKlxcbWJjfSJdLFs1LDAsImZfKihcXGV0YV97Zl4qXFxtYmN9KSJdLFsyLDQsIkZee1xcbWJjfSIsMCx7ImN1cnZlIjotNX1dXQ==
\[\begin{tikzcd}
	{\cg(\mbc)} & {\cg(f_*f^*\mbc)} & {\cg(f^*\mbc)} \\
	{(\mathbf{Gir}(\mbc)/C_p^*)} & {(\mathbf{Gir}(f^*\mbc)/C_{p'}^*f^*)} & {(\mathbf{Gir}(f^*\mbc)/C_{p'}^*)}
	\arrow["{\varsigma_{\mbc}^f}", from=1-1, to=1-2]
	\arrow["{F^{\mbc}}", bend left = 30, from=1-1, to=1-3]
	\arrow["{\eta_{\mbc}}", from=1-1, to=2-1]
	\arrow["{q^{f^*}_{f^*\mbc}}", from=1-2, to=1-3]
	\arrow["{f_*(\eta_{f^*\mbc})}", from=1-2, to=2-2]
	\arrow["{\eta_{f^*\mbc}}", from=1-3, to=2-3]
	\arrow["{\widetilde{(f^{\mbc})^*}_{f^*}}"', shift right, from=2-1, to=2-2]
	\arrow["{\widetilde{(f^{\mbc})^*}}"{description}, bend right = 30, from=2-1, to=2-3]
	\arrow["{q^{f^*}_{S_{f^*\mbc}}}"', shift right, from=2-2, to=2-3]
\end{tikzcd}\]

Notice that here it is necessary to distinguish between ${\widetilde{(f^{\mbc})^*}}$ and ${\widetilde{(f^{\mbc})^*}_{f^*}}$ (see Remark \ref{remA_B}).

Since $f^{\mbc}$ is induced by the morphism of sites $F^{\mbc}$, one has $\widetilde{(f^{\mbc})^*} \simeq \widetilde{F^{\mbc}}$. Then, by point (iv) of Proposition \ref{etaextensionbasechangeproperties}, the outer rectangle is commutative. The right square of the diagram is commutative by construction. What remains to be verified is that the two paths of the left square, when postcomposed with the projection $q^{f^*}_{S_{f^*\mbc}}$, yield (up to isomorphism) the same functor. 

It holds that $q^{f^*}_{S_{f^*\mbc}} \circ \widetilde{(f^{\mbc})^*}_{f^*} \circ \eta_{\mbc} \simeq \eta_{f^*\mbc} \circ F^{\mbc}$. Moreover, the commutativity of the right square ensures that $\eta_{f^*\mbc} \circ F^{\mbc} \simeq q^{f^*}_{S_{f^*\mbc}} \circ f_*(\eta_{f^*\mbc}) \circ \varsigma_{\mbc}^f$. Finally, since $\varsigma_{\mbc}^f$ is the unit of the adjunction $f^* \dashv f_*$, it follows that $q^{f^*}_{S_{f^*\mbc}} \circ f_*(\eta_{f^*\mbc}) \circ \varsigma_{\mbc}^f \simeq q^{f^*}_{S_{f^*\mbc}} \circ \eta_{f^*\mbc}^t$.

Putting all this together yields the identity $q^{f^*}_{S_{f^*\mbc}}\circ \eta_{f^*\mbc}^t \simeq q^{f^*}_{S_{f^*\mbc}} \circ \widetilde{(f^{\mbc})^*}_{f^*} \circ \eta_{\mbc}$, so that, as required, $\widetilde{(f^{\mbc})^*}_{f^*} \circ \eta_{\mbc} \simeq \eta_{f^*\mbc}^t$.
\end{proof}

Now that $\widetilde{(f^{\mbc})^*}^t$ has been shown to be compatible with both the $\eta$ functors and the $\eta$-extensions, the next step is to prove that, for a suitable topology, it induces an equivalence at the topos level. This is initiated by the following proposition:

\begin{prop}
Let $f : \cf \to \ce$ be a relative topos and $\mbc$ a $\ce$-indexed category. There is a topology $J_{C_{p'}}^{\widetilde{(f^{\mbc})^*}^t}$ on $\cg(f^*(S_{\mbc}))$, namely the induced topology as in Proposition 6.5. \cite{denseness}, making $\widetilde{(f^{\mbc})^*}^t$ a morphism of sites inducing a surjection at the topos-level.
\end{prop}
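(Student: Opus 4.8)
The strategy is to realise $\widetilde{(f^{\mbc})^*}^t$ as a flat functor into the site $(\cg(S_{f^*\mbc}), J_{C_{p'}})$ and to pull back the topology $J_{C_{p'}}$ along it via the induced-topology construction of Proposition 6.5 of \cite{denseness}; that construction turns a flat functor into a morphism of sites and supplies a covering-density criterion for the resulting geometric morphism to be a surjection. Two inputs are therefore needed: flatness of the comparison functor, and the covering-density required for the surjection.

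Flatness is essentially already established. As recalled just before the statement, $\widetilde{(f^{\mbc})^*}$ is a morphism of cartesian indexed categories (Proposition \ref{etaextensionbasechangeproperties} applied to the relative morphism $f^{\mbc}$), and its transpose $\widetilde{(f^{\mbc})^*}^t : f^*(S_{\mbc}) \to S_{f^*\mbc}$ is again a morphism of cartesian fibrations. Since finite limits in a cartesian fibration over the finitely complete topos $\cf$ are computed fibrewise, $\widetilde{(f^{\mbc})^*}^t$ preserves finite limits both fibrewise and globally, i.e. it is flat on total categories.

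Granting the induced topology $J_{C_{p'}}^{\widetilde{(f^{\mbc})^*}^t}$ on $\cg(f^*(S_{\mbc}))$, cover-preservation and the cofinality condition of Proposition \ref{cofinalitycond} hold by the very definition of the induced topology, while flatness provides the remaining finite-limit-preservation requirement; hence $\widetilde{(f^{\mbc})^*}^t$ is a morphism of sites. For the surjection I would invoke the factorisation $\eta_{f^*\mbc} \simeq \widetilde{(f^{\mbc})^*}^t \circ f^*(\eta_{\mbc})$ of Proposition \ref{commutationcomparisonfunctor}: every object in the image of the dense (co)morphism $\eta_{f^*\mbc}$ (Proposition \ref{etadense}) lies in the image of $\widetilde{(f^{\mbc})^*}^t$, so that every object of $\cg(S_{f^*\mbc})$ is $J_{C_{p'}}$-covered by objects in the image of $\widetilde{(f^{\mbc})^*}^t$. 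This covering-density is exactly the condition under which Proposition 6.5 of \cite{denseness} guarantees that the morphism of sites $\widetilde{(f^{\mbc})^*}^t$ induces a surjection at the topos level.

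The step I expect to be the main obstacle is the precise matching with Proposition 6.5 of \cite{denseness}: confirming that the covering-density just described is indeed the hypothesis under which the induced topology yields a surjection (rather than merely some geometric morphism), and checking that $J_{C_{p'}}^{\widetilde{(f^{\mbc})^*}^t}$ interacts correctly with the fibration structure. Concretely, verifying the cofinality clause of Proposition \ref{cofinalitycond} will require some control over the connected components of the comma categories attached to the fibres of $f^*(S_{\mbc})$ and $S_{f^*\mbc}$, and it is here that the non-cartesian subtleties of the general, non-stack setting are liable to resurface.
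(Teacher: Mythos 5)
Your construction of the topology and the morphism-of-sites part essentially coincide with the paper's proof: both rest on the cartesianness of the transpose $\widetilde{(f^{\mbc})^*}^t$ (a morphism of cartesian fibrations, since transposition preserves fibrewise finite limits), which is exactly the hypothesis under which Proposition 6.5 of \cite{denseness} produces the topology $J_{C_{p'}}^{\widetilde{(f^{\mbc})^*}^t}$ whose covering families are those with $J_{C_{p'}}$-covering image; cover-preservation then holds by definition and flatness supplies the rest. (Your closing worry about the cofinality clause of Proposition \ref{cofinalitycond} is beside the point: that clause characterizes \emph{continuity}, which is not what is at stake, and both sites here are cartesian --- canonical stacks and inverse images of cartesian stacks are cartesian --- so no non-cartesian subtlety arises at this step.)

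The genuine gap is in the surjection step. You assert that cover-density of the image of $\widetilde{(f^{\mbc})^*}^t$ (correctly deduced from the factorization $\eta_{f^*\mbc} \simeq \widetilde{(f^{\mbc})^*}^t \circ f^*(\eta_{\mbc})$ and the denseness of $\eta_{f^*\mbc}$) is ``exactly the condition'' under which the induced topology yields a surjection. This is a non sequitur: a morphism of sites can have cover-dense --- even bijective-on-objects --- image and still induce a proper inclusion rather than a surjection. For instance, the identity functor $(\cc,J) \to (\cc,K)$ with $J \subsetneq K$ is a morphism of sites with cover-dense image, yet it induces the subtopos inclusion $\widehat{\cc}_K \hookrightarrow \widehat{\cc}_J$, whose inverse image ($K$-sheafification) is not faithful. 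What actually forces the surjection --- and what the paper invokes via Theorem 6.3(i) of \cite{denseness} --- is the cover-\emph{reflection} built into the very definition of the induced topology: since a family of $\cg(f^*(S_{\mbc}))$ covers precisely when its image covers, $J_{C_{p'}}^{\widetilde{(f^{\mbc})^*}^t}$ is the image topology, i.e.\ the subtopos it defines is exactly the image of the geometric morphism $\widehat{\cg(S_{f^*\mbc})}_{J_{C_{p'}}} \to \widehat{\cg(f^*(S_{\mbc}))}$, so that the factored morphism is the surjective part of the surjection--inclusion factorization. Cover-density of the image plays no role in this proposition; it is the ingredient needed \emph{afterwards} (in the following proposition of the paper) to upgrade the surjection to an equivalence. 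Replacing your covering-density argument by this cover-reflection/image-topology argument closes the gap.
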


\begin{proof}
The existence of the topology follows from the fact that $\widetilde{(f^{\mbc})^*}^t$ is cartesian: by Proposition 6.5 of \cite{denseness}, one can define the topology $J_{C_{p'}}^{\widetilde{(f^{\mbc})^*}^t}$ by declaring as coverings those families whose image under $\widetilde{(f^{\mbc})^*}^t$ are coverings for $J_{C_{p'}}$. Then, by Theorem 6.3 (i) of \cite{denseness} and the very definition of this image topology, the morphism of sites
\[
\widetilde{(f^\mbc)^*}^t : (f^*(S_{\mbc}),J_{C_{p'}}^{\widetilde{(f^{\mbc})^*}^t}) \to (S_{f^*\mbc},J_{C_{p'}})
\]
induces a surjection at the topos level.
\end{proof}

\begin{remark}
This topology is the \emph{image} topology, that is, the subtopos $\widehat{\cg({f^*(S_{\mbc})})}_{J_{C_{p'}}^{\widetilde{(f^{\mbc})^*}^t}}$ of $\widehat{\cg({f^*(S_{\mbc})})}$ is the image of the subtopos $\widehat{\cg({S_{f^*\mbc})}}_{J_{C_{p'}}}$ through the geometric morphism $\Sh(\widetilde{(f^{\mbc})^*}) : \widehat{\cg({S_{f^*\mbc})}} \to \widehat{\cg({f^*(S_{\mbc})})}$ defined between the presheaf toposes. This is depicted in the following diagram:

% https://q.uiver.app/#q=WzAsNCxbMCwxLCJcXHdpZGVoYXR7XFxjZyh7Zl4qKFNfe1xcbWJjfSl9KX0iXSxbMSwxLCJcXHdpZGVoYXR7XFxjZyh7U197Zl4qXFxtYmN9KX0pfSJdLFswLDAsIlxcd2lkZWhhdHtcXGNnKHtmXiooU197XFxtYmN9KX0pfV97Sl97Q197cCd9fV57XFx3aWRldGlsZGV7KGZee1xcbWJjfSleKn1edH19Il0sWzEsMCwiXFx3aWRlaGF0e1xcY2coe1Nfe2ZeKlxcbWJjfSl9KX1fe0pfe0Nfe3AnfX19Il0sWzEsMCwiXFxTaChcXHdpZGV0aWxkZXsoZl57XFxtYmN9KV4qfSkiXSxbMywxLCJpJyIsMCx7InN0eWxlIjp7InRhaWwiOnsibmFtZSI6Imhvb2siLCJzaWRlIjoiYm90dG9tIn19fV0sWzMsMiwiXFxTaChcXHdpZGV0aWxkZXsoZl57XFxtYmN9KV4qfSkiLDIseyJzdHlsZSI6eyJoZWFkIjp7Im5hbWUiOiJlcGkifX19XSxbMiwwLCJpIiwyLHsic3R5bGUiOnsidGFpbCI6eyJuYW1lIjoiaG9vayIsInNpZGUiOiJ0b3AifX19XV0=
\[\begin{tikzcd}
	{\widehat{\cg({f^*(S_{\mbc})})}_{J_{C_{p'}}^{\widetilde{(f^{\mbc})^*}^t}}} & {\widehat{\cg({S_{f^*\mbc})}}_{J_{C_{p'}}}} \\
	{\widehat{\cg({f^*(S_{\mbc})})}} & {\widehat{\cg({S_{f^*\mbc})}}}
	\arrow["i"', hook, from=1-1, to=2-1]
	\arrow["{\Sh(\widetilde{(f^{\mbc})^*})}"', two heads, from=1-2, to=1-1]
	\arrow["{i'}", hook', from=1-2, to=2-2]
	\arrow["{\Sh(\widetilde{(f^{\mbc})^*})}", from=2-2, to=2-1]
\end{tikzcd}\]

There is also a definition of an image topology for a functor $F : (\cc,J) \to \cc'$: it gives the topology $J'$ on $\cc'$ such that the subtopos it induces is the image of the subtopos $\widehat{\cc}_J$ through the comorphism $C_F$ (see Proposition 6.11 \cite{denseness}). One can show that, in our case, the image topology $J_{C_{p'}}^{\widetilde{(f^{\mbc})^*}^t}$ for the morphism of sites $\widetilde{(f^{\mbc})^*}$ is the same as the image topology for $f^*(\eta_{\mbc}) : (\cg(f^*\mbc),Gir_{f^*\mbc}) \to \cg(f^*(S_{\mbc}))$ seen as a comorphism of sites. 
\end{remark}

Now, the following central result can be deduced, asserting that the inverse image of the canonical stack and the canonical stack of the inverse image are Morita-equivalent for relative presheaf toposes:

\begin{prop}
Let $f : \cf \to \ce$ be a relative topos and $\mbc$ a $\ce$-indexed category. The functors $f^*(\eta_\mbc) : (\cg(f^*\mbc),Gir_{f^*\mbc}) \to (\cg(f^*(S_{\mbc})),J_{C_{p'}}^{\widetilde{(f^{\mbc})^*}^t})$ and $\widetilde{(f^{\mbc})^*}^t : (\cg(f^*(S_{\mbc})),J_{C_{p'}}^{\widetilde{(f^{\mbc})^*}^t}) \to (S_{f^*\mbc},J_{C_{p'}})$ are morphisms of sites inducing equivalences at the topos-level.
\end{prop}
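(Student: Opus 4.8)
The plan is to reduce everything to the factorization $\eta_{f^*\mbc} \simeq \widetilde{(f^{\mbc})^*}^t \circ f^*(\eta_\mbc)$ recorded in the lower triangle of Proposition \ref{commutationcomparisonfunctor}, combined with the two facts already at our disposal: that $\eta_{f^*\mbc}$ is a dense morphism of sites (Proposition \ref{etadense}), so that $\Sh(\eta_{f^*\mbc})$ is an equivalence of toposes, and that $\widetilde{(f^{\mbc})^*}^t$, for the image topology $J_{C_{p'}}^{\widetilde{(f^{\mbc})^*}^t}$, is a morphism of sites inducing a \emph{surjection} at the topos level (the preceding proposition). First I would record that, since morphisms of sites induce geometric morphisms contravariantly, the identity $\eta_{f^*\mbc} \simeq \widetilde{(f^{\mbc})^*}^t \circ f^*(\eta_\mbc)$ yields at the topos level a factorization of the equivalence $\Sh(\eta_{f^*\mbc})$ as $\Sh(f^*(\eta_\mbc)) \circ \Sh(\widetilde{(f^{\mbc})^*}^t)$, in which $\Sh(\widetilde{(f^{\mbc})^*}^t)$ is the first-applied factor.

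The topos-level equivalences then drop out of the orthogonal (surjection, inclusion) factorization system on toposes (see \cite{elephant}). Indeed, the composite $\Sh(\eta_{f^*\mbc})$ is an equivalence, hence in particular an inclusion; by the cancellation property of the factorization system (a composite lying in the class of inclusions has its first-applied factor an inclusion), $\Sh(\widetilde{(f^{\mbc})^*}^t)$ is an inclusion. But this factor is also a surjection by the preceding proposition, and a geometric morphism that is at once a surjection and an inclusion is an equivalence; thus $\Sh(\widetilde{(f^{\mbc})^*}^t)$ is an equivalence. Cancelling it from the composite equivalence $\Sh(\eta_{f^*\mbc})$ shows that $\Sh(f^*(\eta_\mbc))$ is an equivalence as well, settling the topos-level assertion for both functors.

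It then remains to verify that $f^*(\eta_\mbc)$ is itself a morphism of sites for the image topology on its codomain (for $\widetilde{(f^{\mbc})^*}^t$ this is exactly the preceding proposition). Here I would argue as in the proof of Proposition \ref{densereflectmorphsites}, the only input being that $\widetilde{(f^{\mbc})^*}^t$ induces an equivalence of toposes. For continuity: given a sheaf $S$ on $\cg(f^*(S_{\mbc}))$, the equivalence $(-\circ \widetilde{(f^{\mbc})^*}^t) \simeq \Sh(\widetilde{(f^{\mbc})^*}^t)_*$ provides a sheaf $T$ on $S_{f^*\mbc}$ with $T \circ \widetilde{(f^{\mbc})^*}^t \simeq S$, whence $S \circ f^*(\eta_\mbc) \simeq T \circ \eta_{f^*\mbc}$ is a sheaf by continuity of the morphism of sites $\eta_{f^*\mbc}$; thus $f^*(\eta_\mbc)$ is continuous. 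Being continuous, it is a morphism of sites as soon as its inverse image preserves finite limits, and this holds because $\Sh(f^*(\eta_\mbc))^* \simeq \big(\Sh(\widetilde{(f^{\mbc})^*}^t)^*\big)^{-1} \circ \Sh(\eta_{f^*\mbc})^*$ is a composite of two finite-limit-preserving functors (an equivalence, and the inverse image of the morphism of sites $\eta_{f^*\mbc}$).

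I expect the only genuine subtlety to be bookkeeping: getting the order of the factors right at the topos level so that the surjection sits precisely in the position where the cancellation property of the (surjection, inclusion) factorization system applies, and observing — as in the proof of Proposition \ref{densereflectmorphsites} — that reflecting the morphism-of-sites property along $\widetilde{(f^{\mbc})^*}^t$ only needs the equivalence of toposes just established, not denseness in the stronger technical sense. All the substantial work has in fact been done already in the preceding results, where $\widetilde{(f^{\mbc})^*}^t$ was shown to be cartesian, the commutation of Proposition \ref{commutationcomparisonfunctor} was established, and the image topology was shown to yield a surjection.
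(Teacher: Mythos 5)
Your proposal follows the same skeleton as the paper's proof: both rest on the factorization $\eta_{f^*\mbc} \simeq \widetilde{(f^{\mbc})^*}^t \circ f^*(\eta_\mbc)$, on the fact that $\Sh(\eta_{f^*\mbc})$ is an equivalence (Proposition \ref{etadense}), and on the surjection provided by the preceding proposition, and both finish by two-out-of-three plus transport of the morphism-of-sites property along the resulting equivalence. But your central step fails. The ``cancellation property'' you invoke --- that a composite lying in the class of inclusions has its \emph{first-applied} factor an inclusion --- is not a property of the (surjection, inclusion) factorization system, nor of any orthogonal factorization system: the correct cancellation law for the right class says that if $g \circ f$ \emph{and} $g$ are inclusions then $f$ is one, and at this stage you know nothing of the sort about $g = \Sh(f^*(\eta_\mbc))$. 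The unrestricted statement is genuinely false for toposes: take a nontrivial group $G$, let $p : \mathbf{Set} \to [G,\mathbf{Set}]$ be the canonical point (inverse image the forgetful functor) and $\gamma : [G,\mathbf{Set}] \to \mathbf{Set}$ the global-sections morphism; then $\gamma \circ p \simeq \mathrm{id}_{\mathbf{Set}}$ is an equivalence, hence an inclusion, and $p$ is a \emph{surjection}, yet $p$ is not an inclusion (its direct image $\mathrm{Map}(G,-)$ is not full) and not an equivalence. This example refutes not only your cancellation step but also its only plausible repair, namely ``composite an equivalence and first-applied factor a surjection implies that factor is an equivalence'': your hypotheses, taken abstractly, do not imply your conclusion, so no amount of bookkeeping with the factorization system can close the gap. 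Any correct argument must exploit the specific relationship between the two sites; the paper does this by working with the composite identity $\Sh(\eta_{f^*\mbc})^* \simeq \Sh(\widetilde{(f^{\mbc})^*}^t)^* \circ \Sh(f^*(\eta_\mbc))^*$ of inverse-image \emph{functors}, extracting essential surjectivity (and fullness --- itself the delicate point, as the same example shows it does not follow from pure category theory) of $\Sh(\widetilde{(f^{\mbc})^*}^t)^*$, with faithfulness supplied by the surjection.

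There is also a circularity in the architecture of your proof: to write $\Sh(\eta_{f^*\mbc}) \simeq \Sh(f^*(\eta_\mbc)) \circ \Sh(\widetilde{(f^{\mbc})^*}^t)$ as a factorization of \emph{geometric morphisms}, you must already know that $f^*(\eta_\mbc)$ is a morphism of sites (or at least continuous), but you only establish this in your last paragraph, and you do so \emph{using} the equivalence of $\Sh(\widetilde{(f^{\mbc})^*}^t)$ that you extracted from that very factorization. The paper avoids the circle by first proving continuity of $f^*(\eta_\mbc)$ directly: it is a morphism of fibrations whose domain carries the Giraud topology, and the image topology on its codomain contains the Giraud topology, so continuity follows from Theorem \ref{trivialsitescont}; only then does it manipulate the induced adjunctions, and the preservation of finite limits by $\Sh(f^*(\eta_\mbc))^*$ is deduced at the very end, essentially as in your last paragraph. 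That final paragraph of yours (transport of continuity and flatness along an equivalence, in the spirit of Proposition \ref{densereflectmorphsites}) is sound once the equivalences are in hand; what is missing is a valid argument for the crux, that the surjection $\Sh(\widetilde{(f^{\mbc})^*}^t)$ is an equivalence.
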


\begin{proof}
The image topology $J_{C_{p'}}^{\widetilde{(f^{\mbc})^*}^t}$ contains Giraud's one: indeed, $\widetilde{(f^{\mbc})^*}^t$ is a morphism of fibrations, and since $J_{C_{p'}}$ includes the Giraud topology, the very definition of $J_{C_{p'}}^{\widetilde{(f^{\mbc})^*}^t}$ forces $J_{C_{p'}}^{\widetilde{(f^{\mbc})^*}^t}$ to include it as well. Therefore, since $f^*(\eta_\mbc)$ is a morphism of fibrations and the topology on its domain is the Giraud one, it follows that it is continuous (see Proposition \ref{trivialsitescont}).

Now, since $\eta_{f^*\mbc} \simeq \widetilde{(f^{\mbc})^*}^t \circ f^*(\eta_\mbc)$, the isomorphism 
$$\Sh(\eta_{f^*\mbc})^* \simeq \Sh(\widetilde{(f^{\mbc})^*}^t)^* \circ \Sh(f^*(\eta_\mbc))^*$$
yields. Since $\Sh(\eta_{f^*\mbc})^*$ is an equivalence (see Proposition \ref{etadense}),  $\Sh(\widetilde{(f^{\mbc})^*}^t)^*$ must be essentially surjective and full. As the previous proposition ensures that $\Sh(\widetilde{(f^{\mbc})^*}^t)^*$ is a surjection, it is also faithful  -  hence an equivalence.

Finally, combining the identity $\Sh(\eta_{f^*\mbc})^* \simeq \Sh(\widetilde{(f^{\mbc})^*}^t)^* \circ \Sh(f^*(\eta_\mbc))^*$ with the fact that both $\Sh(\eta_{f^*\mbc})^*$ and $\Sh(\widetilde{(f^{\mbc})^*}^t)^*$ are equivalences, one can deduce that $\Sh(f^*(\eta_\mbc))^*$ is also an equivalence. This shows that $f^*(\eta_\mbc)$ also is a morphism of sites that induces an equivalence of toposes.
\end{proof}

Finally, this fact allows to deduce that the transpose of a morphism of sites and fibrations also is a morphism of sites and fibrations:

\begin{prop}\label{seconddirection}
Let $f : \cf \to \ce$ and $g : \cg \to \cf$ be relative toposes, $\mbc$ a $\ce$-indexed category, and $A : (\cg(\mbc),Gir_{\mbc}) \to (\cg(S_{fg}),J_{fg})$ a morphism of sites and fibrations. The transpose $A^t : (\cg(f^*\mbc),Gir_{f^*\mbc}) \to (\cg(S_g),J_g)$ of $A$ is a morphism of sites.
\end{prop}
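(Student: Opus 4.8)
The plan is to reduce the statement, via two applications of the cancellation lemma for equivalence-inducing morphisms of sites (Proposition \ref{densereflectmorphsites}), to the single fact that the comparison functor $\widetilde{(f^{\mbc})^*}^t$ is a Morita equivalence, and then to feed in the cartesianness of $\widetilde{A}^t$ inherited from Giraud's cartesian case.

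First I would observe that $A^t$, being the transpose of the morphism of fibrations $A$ along $f^* \dashv f_*$, is itself a morphism of fibrations, hence continuous for the Giraud topology by Theorem \ref{trivialsitescont}; it thus admits an $\eta$-extension and factors as $A^t \simeq \widetilde{A^t}\,\eta_{f^*\mbc}$. Since $\eta_{f^*\mbc}$ is a dense morphism of sites (Proposition \ref{etadense}) and $\widetilde{A^t}$ is continuous (the $\eta$-extension of a continuous functor is continuous, Proposition \ref{etaextensionbasechangeproperties} (ii)), Proposition \ref{densereflectmorphsites} (ii) lets me cancel $\eta_{f^*\mbc}$: proving that $A^t$ is a morphism of sites is equivalent to proving that $\widetilde{A^t}$ is one.

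Next I would invoke the upper triangle of Proposition \ref{commutationcomparisonfunctor}, namely $\widetilde{A}^t \simeq \widetilde{A^t}\circ\widetilde{(f^{\mbc})^*}^t$, together with the preceding proposition, which guarantees that $\widetilde{(f^{\mbc})^*}^t$ is a morphism of sites inducing an equivalence of toposes once $\cg(f^*(S_{\mbc}))$ is equipped with the image topology $J_{C_{p'}}^{\widetilde{(f^{\mbc})^*}^t}$. A second application of Proposition \ref{densereflectmorphsites} (ii) --- cancelling now the equivalence-inducing factor $\widetilde{(f^{\mbc})^*}^t$ sitting to the right of the continuous functor $\widetilde{A^t}$ --- shows that $\widetilde{A^t}$ is a morphism of sites if and only if $\widetilde{A}^t$ is. It therefore remains only to check that $\widetilde{A}^t$ is a morphism of sites.

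This final point is supplied by the cartesian theory: since $A$ is a morphism of relative sites, its $\eta$-extension $\widetilde{A} : S_{\mbc} \to S_{fg}$ is a cartesian morphism of fibrations (Proposition 3.4 (iii) of \cite{bartolicaramello}), and transposition along $f^* \dashv f_*$ preserves cartesianness, so $\widetilde{A}^t : f^*(S_{\mbc}) \to S_g$ is a cartesian morphism between cartesian stacks over $\cf$ and hence a morphism of sites. Reading the two cancellations backwards then yields that $\widetilde{A^t}$, and therefore $A^t$, is a morphism of sites. I expect the only real care to be needed in the bookkeeping of which topology each object carries and in verifying, at each use of Proposition \ref{densereflectmorphsites}, that the retained factor $\widetilde{A^t}$ is continuous and the cancelled factor induces an equivalence; the genuinely substantial input --- that $f^*(S_{\mbc})$ and $S_{f^*\mbc}$ are Morita-equivalent through $\widetilde{(f^{\mbc})^*}^t$ in a way compatible with the $\eta$-extensions --- has already been established in Proposition \ref{commutationcomparisonfunctor} and the proposition immediately preceding, so the present argument is essentially their assembly.
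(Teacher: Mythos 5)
Your two reductions are sound as far as they go: Proposition \ref{densereflectmorphsites} (whose proof only uses that the cancelled factor induces an equivalence of toposes) does let you cancel $\eta_{f^*\mbc}$ and then $\widetilde{(f^{\mbc})^*}^t$, and the continuity of $\widetilde{A^t}$ that both cancellations require does hold, since $A^t$ is a morphism of fibrations, hence continuous for the Giraud topology (Theorem \ref{trivialsitescont}), and $\eta$-extensions of continuous functors are continuous. The genuine gap is in your final step: you claim that $\widetilde{A}^t : f^*(S_{\mbc}) \to S_g$ is a morphism of sites \emph{because} it is a cartesian morphism between cartesian stacks. Cartesianness only supplies the flatness half of the morphism-of-sites condition; it says nothing about cover-preservation. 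And cover-preservation is exactly what is at stake here: for your second cancellation to apply, the domain of $\widetilde{A}^t$ must carry the image topology $J_{C_{p'}}^{\widetilde{(f^{\mbc})^*}^t}$ --- the topology that makes $\widetilde{(f^{\mbc})^*}^t$ equivalence-inducing --- and this topology contains the Giraud topology of $\cg(f^*(S_{\mbc}))$ strictly in general. A (cartesian) morphism of fibrations automatically preserves Giraud covers, but nothing in your argument shows that $\widetilde{A}^t$ preserves the additional covers of the image topology, so the appeal to Giraud's cartesian theory does not close the argument.

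The paper fills precisely this hole using the factorization you already wrote down: since $\widetilde{A}^t \simeq \widetilde{A^t}\circ\widetilde{(f^{\mbc})^*}^t$, and $\widetilde{(f^{\mbc})^*}^t$ is cover-preserving by the very definition of the image topology while $\widetilde{A^t}$ is cover-preserving because it is continuous, the composite $\widetilde{A}^t$ is cover-preserving; combined with its cartesianness (obtained, as you say, by transposing the cartesian $\widetilde{A}$), this makes $\widetilde{A}^t$ a morphism of sites. Note that once this is established, your two cancellations become superfluous: the paper simply observes that $A^t \simeq \widetilde{A}^t \circ f^*(\eta_{\mbc})$ with both factors now known to be morphisms of sites ($f^*(\eta_{\mbc})$ by the Morita-equivalence proposition preceding the statement), and concludes by composition. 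So your proposal is repairable, but the repair is exactly the cover-preservation step the paper performs and that your argument treats as automatic.
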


\begin{proof}
The proof lies in the following commutative diagram (see Proposition \ref{commutationcomparisonfunctor}):

% https://q.uiver.app/#q=WzAsNCxbMSwyLCIoZl4qXFxtYmMsR2lyX3tmXipcXG1iY30pIl0sWzAsMSwiKGZeKihTX3tcXG1hdGhiYiBDfSksSl97Q197cCd9fV57XFx3aWRldGlsZGV7KGZeXFxtYmMpXip9XnR9KSJdLFsyLDEsIihTX3tmXipcXG1hdGhiYiBDfSxKX3tDX3twJ319KSJdLFsxLDAsIihTX2csSl9nKSJdLFswLDEsImZeKihcXGV0YV97XFxtYmN9KSJdLFswLDIsIlxcZXRhX3tmXipcXG1iY30iLDJdLFsxLDIsIlxcd2lkZXRpbGRleyhmXlxcbWJjKV4qfV50IiwxXSxbMiwzLCJcXHdpZGV0aWxkZXtIXnR9IiwyXSxbMSwzLCJcXHdpZGV0aWxkZXtIfV50Il1d
\[\begin{tikzcd}
	& {(S_g,J_g)} \\
	{(f^*(S_{\mathbb C}),J_{C_{p'}}^{\widetilde{(f^\mbc)^*}^t})} && {(S_{f^*\mathbb C},J_{C_{p'}})} \\
	& {(f^*\mbc,Gir_{f^*\mbc})}
	\arrow["{\widetilde{H}^t}", from=2-1, to=1-2]
	\arrow["{\widetilde{(f^\mbc)^*}^t}"{description}, from=2-1, to=2-3]
	\arrow["{\widetilde{H^t}}"', from=2-3, to=1-2]
	\arrow["{f^*(\eta_{\mbc})}", from=3-2, to=2-1]
	\arrow["{\eta_{f^*\mbc}}"', from=3-2, to=2-3]
\end{tikzcd}\]

Since $\widetilde{H}^t \simeq \widetilde{H^t} \circ \widetilde{(f^{\mbc})^*}^t$, the functor $\widetilde{H}^t$ is cover-preserving, as it is the composition of cover-preserving functors. It is also cartesian, and thus qualifies as a morphism of sites. Given that $H^t \simeq \widetilde{H}^t \circ f^*(\eta_\mbc)$ and that $f^*(\eta_\mbc)$ is a (dense) morphism of sites (by the previous proposition), it follows that $H^t$ is itself a morphism of sites.
\end{proof}

This just proves the converse of the previous subsection, that is: the other direction of the transposition along $f^* \dashv f_*$ also restricts to those morphisms of fibrations which are morphisms of sites:
\vspace{0.5cm}

\begin{adjustbox}{scale=0.5}
$\begin{tikzcd}[column sep=tiny]
	{{\mathbf{IndWeakGeom}_{{\cal F}}(S_g,S_{C_{p'}})}} &&& {\mathbf{IndWeakGeom}_{{\cal E}}(S_{fg},S_{C_{p}})} \\
	& {{\mathbf{IndGeom}_{{\cal F}}(S_g,S_{C_{p'}})}} & {{\mathbf{IndGeom}_{{\cal E}}(S_{fg},S_{C_{p}})}} \\
	& {\mathbf{FibSites}_{{\cal F}}((\mathcal{G}(f^*\mathbb C),Gir_{f^*\mathbb C}),(({\cg}/g^*),J_g))} & {\mathbf{FibSites}_{{\cal E}}((\mathcal{G}(\mathbb C),Gir_{\mathbb C}),(({\cg}/g^*f^*),J_{fg}))} \\
	{\mathbf{Fib}_{{\cal F}}({\cg}(f^*\mathbb C),({\cg}/g^*))} &&& {\mathbf{Fib}_{{\cal E}}({\cg}(\mathbb C),({\cg}/g^*f^*))}
	\arrow["\simeq"{description}, from=1-4, to=1-1]
	\arrow[hook', from=2-2, to=1-1]
	\arrow[hook, from=2-3, to=1-4]
	\arrow[hook', from=2-3, to=2-2]
	\arrow["\simeq"{description}, no head, from=3-2, to=2-2]
	\arrow[hook, from=3-2, to=4-1]
	\arrow["\simeq"{description}, no head, from=3-3, to=2-3]
	\arrow[hook', from=3-3, to=3-2]
	\arrow[hook', from=3-3, to=4-4]
	\arrow["\simeq"{description}, no head, from=4-1, to=1-1]
	\arrow["\simeq"{description}, no head, from=4-4, to=1-4]
	\arrow["\simeq"{description}, from=4-4, to=4-1]
\end{tikzcd}$
\end{adjustbox}
\vspace{0.3cm}

\subsection{Main theorem}\label{subsec6}

In this subsection, we synthesize the results of the previous subsections, deriving a universal property for the inverse image of the canonical site, together with the main theorem on the pullback of relative presheaf toposes. 

Equipped with the image topology, the inverse image of the canonical stack, namely the site $(\cg(f^*(S_{\mbc})),J_{C_{p'}}^{\widetilde{(f^{\mbc})^*}^t})$, plays the same role as the canonical relative site of the inverse image $(S_{f^*\mathbb C},J_{C_{p'}})$. Indeed, this relative site has the following universal property:

\begin{prop}
let $f : {{\cal F}} \to {{\cal E}}$ be a geometric morphism, $\mathbb C$ a ${{\cal E}}$-indexed category, and $g : {\cg} \to {{\cal F}}$ another relative topos. There is an equivalence of categories:  $$\mathbf{SitesFib}_{\cf}((\mathcal{G}(f^*(S_{\mbc})),J_{C_{p'}}^{\widetilde{(f^\mbc)^*}^t}),(({\cg}/g^*),J_g)) \simeq \mathbf{Top}/{{\cal F}}([g],[C_{p'}])$$
\end{prop}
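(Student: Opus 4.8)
The plan is to deduce this universal property from the relative Diaconescu theorem for the genuine canonical relative site, and then to transport the resulting equivalence across the Morita equivalence furnished by the preceding proposition. First, observe that $(S_{f^*\mbc},J_{C_{p'}})$ is, by Definition \ref{canstackresume} together with the notational convention adopted for relative presheaf toposes, exactly the canonical relative site of the relative presheaf topos $C_{p'} : \mathbf{Gir}(f^*\mbc) \to \cf$. Applying relative Diaconescu's theorem (Theorem 3.15 of \cite{bartolicaramello}) to this relative site and to the canonical relative site $((\cg/g^*),J_g)$ of $g$, both over $\cf$, yields an equivalence
$$\mathbf{SitesFib}_{\cf}((S_{f^*\mbc},J_{C_{p'}}),((\cg/g^*),J_g)) \simeq \mathbf{Top}/\cf([g],[C_{p'}]).$$

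Next I would transport this equivalence along the comparison functor $\widetilde{(f^{\mbc})^*}^t$. The preceding proposition shows that
$$\widetilde{(f^{\mbc})^*}^t:(\cg(f^*(S_{\mbc})),J_{C_{p'}}^{\widetilde{(f^{\mbc})^*}^t})\to(S_{f^*\mbc},J_{C_{p'}})$$
is a morphism of sites and of fibrations over $\cf$ inducing an equivalence of toposes. Precomposition with it therefore defines a functor
$$(-\circ\widetilde{(f^{\mbc})^*}^t):\mathbf{SitesFib}_{\cf}((S_{f^*\mbc},J_{C_{p'}}),((\cg/g^*),J_g))\longrightarrow\mathbf{SitesFib}_{\cf}((\cg(f^*(S_{\mbc})),J_{C_{p'}}^{\widetilde{(f^{\mbc})^*}^t}),((\cg/g^*),J_g)),$$
which I claim is an equivalence: it is well defined because the composite of two morphisms of sites and fibrations is again one, and Proposition \ref{densereflectmorphsites}(ii) — whose proof uses only that $\widetilde{(f^{\mbc})^*}^t$ induces an equivalence of toposes — shows that a continuous functor $H$ out of $(S_{f^*\mbc},J_{C_{p'}})$ is a morphism of sites precisely when $H\circ\widetilde{(f^{\mbc})^*}^t$ is. Since $\Sh(\widetilde{(f^{\mbc})^*}^t)^*$ is an equivalence, precomposition is fully faithful and essentially surjective at the level of continuous functors, and by the previous point this equivalence restricts to the subcategories of morphisms of sites; composing the two equivalences then gives the desired statement.

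The main obstacle is precisely this second step: checking that precomposition with $\widetilde{(f^{\mbc})^*}^t$ genuinely restricts to an equivalence between the two categories of morphisms of sites \emph{and fibrations}, rather than merely between categories of continuous functors. The morphism-of-sites half is handled by Proposition \ref{densereflectmorphsites}, but one must also verify that the fibration condition is reflected — that if $H\circ\widetilde{(f^{\mbc})^*}^t$ is a morphism of fibrations then so is $H$ — which follows from $\widetilde{(f^{\mbc})^*}^t$ being a morphism of fibrations inducing an equivalence of toposes (cf. Proposition \ref{projectionreflectcart} and Remark \ref{remA_B}) together with essential surjectivity. Alternatively, since the preceding proposition exhibits $(\cg(f^*(S_{\mbc})),J_{C_{p'}}^{\widetilde{(f^{\mbc})^*}^t})$ as a relative site presenting $[C_{p'}]$ over $\cf$, one may bypass the transport entirely and apply relative Diaconescu's theorem directly to it, exactly as was done for the trivial relative site $(\cg(f^*\mbc),Gir_{f^*\mbc})$ in the proof of Proposition \ref{firstdirection}; the only point requiring care in that route is the small-generation of the inverse image of the canonical stack equipped with the image topology, which the transport argument sidesteps.
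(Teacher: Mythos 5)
Your first step (applying relative Diaconescu's theorem to the canonical relative site $(S_{f^*\mbc},J_{C_{p'}})$, which by Theorem \ref{canonicalrelativesiteMoritaequi} is a relative site presenting $[C_{p'}]$) is fine, but the transport step contains a genuine gap. You claim that, because $\Sh(\widetilde{(f^{\mbc})^*}^t)^*$ is an equivalence, precomposition with $\widetilde{(f^{\mbc})^*}^t$ is ``fully faithful and essentially surjective at the level of continuous functors.'' This is not a consequence of a topos-level equivalence: categories of site-level functors are not Morita-invariant in this naive sense. The problem is essential surjectivity (and even faithfulness): given a morphism of sites and fibrations $H'$ out of $(\cg(f^*(S_{\mbc})),J_{C_{p'}}^{\widetilde{(f^{\mbc})^*}^t})$, the topos-level data only produce a geometric morphism $\cg \to \mathbf{Gir}(f^*\mbc)$; to lift it to an actual functor $H : \cg(S_{f^*\mbc}) \to (\cg/g^*)$ with $H\circ\widetilde{(f^{\mbc})^*}^t \simeq H'$ one needs the composite inverse image to carry objects of $\cg(S_{f^*\mbc})$ into objects in the image of the canonical functor of $((\cg/g^*),J_g)$, \emph{functorially}. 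Since $J_g$ is not subcanonical (under the equivalence $\widehat{(\cg/g^*)}_{J_g}\simeq\cg$ the canonical functor $l_{J_g}$ becomes the first projection $\pi_{\cg}$, which is neither full nor faithful), object-wise liftings neither exist automatically nor assemble into a functor, and isomorphism after precomposition does not reflect back. Note also that Proposition \ref{densereflectmorphsites}(ii), which you invoke for the reflection of the morphism-of-sites property, takes the continuity of $H$ as a \emph{hypothesis}; in your argument $H$ has not been constructed as a continuous (or any) site-level functor, so there is nothing to apply it to. This lifting is precisely the hard content of the statement, and it is what the paper supplies by explicit construction: the inverse direction is obtained from a morphism of sites $\overline{H}$ on the trivial relative site by transposing ($\overline{H}^t$, Proposition \ref{seconddirection}), $\eta$-extending, and transposing again, using the identity $\widetilde{\overline{H}^t}^t \simeq \widetilde{\overline{H}}\circ\widetilde{(f^{\mbc})^*}^t$ of Proposition \ref{commutationcomparisonfunctor}; the $\eta$-extension $\widetilde{\overline{H}}$ \emph{is} the required site-level lift, and no appeal to a general Morita-invariance of functor categories is needed.

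Your closing alternative — applying relative Diaconescu's theorem directly to $(\cg(f^*(S_{\mbc})),J_{C_{p'}}^{\widetilde{(f^{\mbc})^*}^t})$, since the preceding proposition exhibits it as a site whose topology contains the Giraud topology and which presents $[C_{p'}]$ over $\cf$ — is actually the more promising route, and genuinely different from the paper's: it would replace the explicit two-sided construction by a single citation of the general theorem. But as stated it is a sketch, not a proof: besides the small-generation issue you flag and do not resolve, one must check that the equivalence $\widehat{\cg(f^*(S_{\mbc}))}_{J_{C_{p'}}^{\widetilde{(f^{\mbc})^*}^t}} \simeq \mathbf{Gir}(f^*\mbc)$ is an equivalence \emph{of relative toposes over $\cf$} (this does follow from $f^*(\eta_{\mbc})$ and $\widetilde{(f^{\mbc})^*}^t$ being morphisms of fibrations over $\cf$, but it needs saying), so that the $\mathbf{Top}/\cf$ side of Diaconescu's equivalence is indeed $[C_{p'}]$. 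As submitted, the main argument fails at the transport step and the fallback is not carried out.
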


\begin{proof}

Morphisms of sites and fibrations $$\overline{H}' : (\cg(f^*(S_{\mbc})),J_{C_{p'}}^{\widetilde{(f^{\mbc})^*}^t}) \to (({\cg}/g^*),J_g)$$ correspond to morphisms of sites and fibrations $$\overline{H} : \mathcal{G}(f^*(\mathbb C)) \to (({\cg}/g^*),J_{g})$$  (or, equivalently, to morphisms of relative toposes  $[fg] \to [C_p]$) in the way described below.  

A morphism of sites and fibrations $$\overline{H}' : (\cg(f^*(S_{\mbc})),J_{C_{p'}}^{\widetilde{(f^{\mbc})^*}^t}) \to (({\cg}/g^*),J_g)$$ is precomposed with $f^*(\eta_{\mathbb C})$ which is a (dense) morphism of sites and fibrations (Proposition \ref{commutationcomparisonfunctor}): this provides a morphism of sites and fibrations $\overline{H}'f^*(\eta_{\mbc}) : (\cg(f^*\mbc),Gir_{f^*\mbc}) \to ((\cg/g^*),J_g)$, and, by relative Diaconescu's theorem, this is equivalently a morphism of relative toposes $[g] \to [C_{p'}]$.

In the other direction, a morphism of sites and fibrations 
$$\overline{H} : (\mathcal{G}(f^*(\mathbb C)),Gir_{f^*\mbc}) \to (({\cg}/g^*),J_{g})$$ 
corresponds to a morphism of sites and fibrations
$$\overline{H}^t : (\mathcal{G}(\mathbb C),Gir_{\mbc}) \to (({\cg}/g^*f^*),J_{fg})$$
(see Proposition \ref{seconddirection}). By Proposition \ref{commutationcomparisonfunctor} applied to the morphism of sites and fibrations $\overline{H}^t$, the following diagram commutes:

% https://q.uiver.app/#q=WzAsNCxbMSwyLCIoZl4qXFxtYmMsR2lyX3tmXipcXG1iY30pIl0sWzAsMSwiKGZeKihTX3tcXG1hdGhiYiBDfSksSl97Q197cCd9fV57XFx3aWRldGlsZGV7KGZeXFxtYmMpXip9XnR9KSJdLFsyLDEsIihTX3tmXipcXG1hdGhiYiBDfSxKX3tDX3twJ319KSJdLFsxLDAsIihTX2csSl9nKSJdLFswLDEsImZeKihcXGV0YV97XFxtYmN9KSJdLFswLDIsIlxcZXRhX3tmXipcXG1iY30iLDJdLFsxLDIsIlxcd2lkZXRpbGRleyhmXlxcbWJjKV4qfV50IiwxXSxbMiwzLCJcXHdpZGV0aWxkZXtcXG92ZXJsaW5le0h9fSIsMl0sWzEsMywiXFx3aWRldGlsZGV7XFxvdmVybGluZXtIfV50fV50Il1d
\[\begin{tikzcd}
	& {(S_g,J_g)} \\
	{(f^*(S_{\mathbb C}),J_{C_{p'}}^{\widetilde{(f^\mbc)^*}^t})} && {(S_{f^*\mathbb C},J_{C_{p'}})} \\
	& {(f^*\mbc,Gir_{f^*\mbc})}
	\arrow["{\widetilde{\overline{H}^t}^t}", from=2-1, to=1-2]
	\arrow["{\widetilde{(f^\mbc)^*}^t}"{description}, from=2-1, to=2-3]
	\arrow["{\widetilde{\overline{H}}}"', from=2-3, to=1-2]
	\arrow["{f^*(\eta_{\mbc})}", from=3-2, to=2-1]
	\arrow["{\eta_{f^*\mbc}}"', from=3-2, to=2-3]
\end{tikzcd}\]

Since $\overline{H}$ is a morphism of sites, $\widetilde{\overline{H}}$ also is. Hence, $\widetilde{\overline{H}}\circ \widetilde{(f^\mbc)^*}^t \simeq \widetilde{\overline{H}^t}^t$ is a morphism of sites. 

One can easily show that these two correspondences actually constitute an equivalence, in the light of ${\widetilde{(f^\mbc)^*}^t}$ being a dense morphism of sites.
\end{proof}

Now, combining \ref{firstdirection} and \ref{seconddirection}:

\begin{prop}
Let $f : \cf \to \ce$ and $g : \cg \to \cf$ be two relative toposes, and $\mbc$ a $\ce$-indexed category. Transposition of morphisms of fibrations along $f^* \dashv f_*$ restricts to an equivalence of categories:

$$\mathbf{FibSites}_{\cf}((\cg(f^*\mbc),Gir_{f^*\mbc}),((\cg/g^*),J_g))$$
$$\simeq$$
$$\mathbf{FibSites}_{\ce}((\cg(\mbc),Gir_{\mbc}),((\cg/g^*f^*),J_{fg}))$$
\end{prop}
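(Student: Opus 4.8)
The plan is to leverage the fact that, at the level of \emph{all} morphisms of fibrations, transposition along the adjunction $f^* \dashv f_*$ already furnishes an equivalence
\[
T : \mathbf{Fib}_{\cf}(\cg(f^*\mbc),(\cg/g^*)) \xrightarrow{\ \sim\ } \mathbf{Fib}_{\ce}(\cg(\mbc),(\cg/g^*f^*)),
\]
with quasi-inverse $T^{-1}$ given by the reverse transposition; this is the lower (\emph{transposition}) equivalence recorded in the diagram of subsection \ref{subsec2}, resting on the inverse/direct image adjunction on indexed categories together with the identifications $(\cg/g^*f^*) \simeq f_*(\cg/g^*) = S_{fg}$ and $(\cg/g^*) = S_g$ of Definition \ref{canstackresume}(d). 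Since each $\mathbf{FibSites}$ is, by definition, the \emph{full} subcategory of the corresponding $\mathbf{Fib}$ spanned by those morphisms of fibrations that are moreover morphisms of sites (for the Giraud topology on the source and $J_g$, resp. $J_{fg}$, on the target), it suffices to show that $T$ and $T^{-1}$ each send objects of the source subcategory into the target subcategory.

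First I would invoke Proposition \ref{firstdirection}: it states precisely that $T$, which carries $A : (\cg(f^*\mbc),Gir_{f^*\mbc}) \to ((\cg/g^*),J_g)$ to its transpose $A^t : (\cg(\mbc),Gir_{\mbc}) \to ((\cg/g^*f^*),J_{fg})$, preserves the property of being a morphism of sites. Hence $T$ restricts to a functor $\mathbf{FibSites}_{\cf} \to \mathbf{FibSites}_{\ce}$. Dually, Proposition \ref{seconddirection} asserts that $T^{-1}$, carrying a morphism of sites $(\cg(\mbc),Gir_{\mbc}) \to (\cg(S_{fg}),J_{fg})$ to its transpose $(\cg(f^*\mbc),Gir_{f^*\mbc}) \to (\cg(S_g),J_g)$, is again a morphism of sites; so $T^{-1}$ restricts to a functor $\mathbf{FibSites}_{\ce} \to \mathbf{FibSites}_{\cf}$.

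It then remains to conclude that these two restrictions are mutually quasi-inverse. This is automatic from fullness: the natural isomorphisms witnessing $T^{-1}T \simeq \mathrm{Id}$ and $TT^{-1} \simeq \mathrm{Id}$ have components lying in the ambient hom-categories $\mathbf{Fib}_{\cf}(\dots)$ and $\mathbf{Fib}_{\ce}(\dots)$, and since $\mathbf{FibSites}_{\cf}$ and $\mathbf{FibSites}_{\ce}$ are full subcategories containing the objects involved, those components are themselves morphisms of the subcategories and assemble into the required natural isomorphisms between the restricted functors. This delivers the claimed equivalence.

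The genuine content has already been discharged in Propositions \ref{firstdirection} and \ref{seconddirection}, so I do not expect a real obstacle at this stage. The only points requiring care are bookkeeping ones: checking that the identifications $(\cg/g^*f^*) \simeq S_{fg}$ and $(\cg/g^*) \simeq S_g$, together with the matching of the target topologies $J_{fg}$ and $J_g$, make the two propositions speak about the two directions of one and the same transposition $T, T^{-1}$, and that fullness indeed lets the coherence isomorphisms of the ambient equivalence descend to the subcategories. With these identifications in place, the two preceding propositions close the \emph{?}-labelled gap of the diagram in subsection \ref{subsec2}, which was the whole point of the construction.
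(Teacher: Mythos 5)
Your proposal is correct and takes essentially the same approach as the paper: the paper obtains this proposition precisely by combining Propositions \ref{firstdirection} and \ref{seconddirection} (transposition in either direction preserves the property of being a morphism of sites) and leaves the rest unwritten. The fullness and restriction-of-an-equivalence bookkeeping you spell out is exactly the implicit content of the paper's proof.
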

\qed

The full picture of the equivalences is finally depicted in the diagram:
\vspace{0.5cm}

\begin{adjustbox}{scale=0.5}
$\begin{tikzcd}[column sep=tiny]
	{{\mathbf{IndWeakGeom}_{{\cal F}}(S_g,S_{C_{p'}})}} &&& {\mathbf{IndWeakGeom}_{{\cal E}}(S_{fg},S_{C_{p}})} \\
	& {{\mathbf{IndGeom}_{{\cal F}}(S_g,S_{C_{p'}})}} & {{\mathbf{IndGeom}_{{\cal E}}(S_{fg},S_{C_{p}})}} \\
	& {\mathbf{FibSites}_{{\cal F}}((\mathcal{G}(f^*\mathbb C),Gir_{f^*\mathbb C}),(({\cg}/g^*),J_g))} & {\mathbf{FibSites}_{{\cal E}}((\mathcal{G}(\mathbb C),Gir_{\mathbb C}),(({\cg}/g^*f^*),J_{fg}))} \\
	{\mathbf{Fib}_{{\cal F}}({\cg}(f^*\mathbb C),({\cg}/g^*))} &&& {\mathbf{Fib}_{{\cal E}}({\cg}(\mathbb C),({\cg}/g^*f^*))}
	\arrow["\simeq"{description}, no head, from=1-4, to=1-1]
	\arrow[hook', from=2-2, to=1-1]
	\arrow[hook, from=2-3, to=1-4]
	\arrow["\simeq"', no head, from=2-3, to=2-2]
	\arrow[no head, from=3-2, to=2-2]
	\arrow[hook, from=3-2, to=4-1]
	\arrow[no head, from=3-3, to=2-3]
	\arrow["\simeq"', no head, from=3-3, to=3-2]
	\arrow[hook', from=3-3, to=4-4]
	\arrow["\simeq"{description}, no head, from=4-1, to=1-1]
	\arrow["\simeq"{description}, no head, from=4-1, to=4-4]
	\arrow["\simeq"{description}, no head, from=4-4, to=1-4]
\end{tikzcd}$
\end{adjustbox}
\vspace{0.4cm}

\noindent This allows to deduce the main theorem of the article:

\begin{prop}
Let $f : \cf \to \ce$ be a relative topos and $\mbc$ a $\ce$-indexed category. The following commutative square of geometric morphisms is a bipullback of toposes:

% https://q.uiver.app/#q=WzAsNCxbMCwxLCJcXGNmIl0sWzEsMSwiXFxjZSJdLFsxLDAsIlxcbWF0aGJme0dpcn0oXFxtYmMpIl0sWzAsMCwiXFxtYXRoYmZ7R2lyfShmXipcXG1iYykiXSxbMiwxLCJDX3AiXSxbMCwxLCJmIiwyXSxbMywwLCJDX3twJ30iLDJdLFszLDIsImZeKntcXG1iY30iXSxbMywxLCIiLDEseyJzdHlsZSI6eyJuYW1lIjoiY29ybmVyIn19XV0=
\[\begin{tikzcd}
	{\mathbf{Gir}(f^*\mbc)} & {\mathbf{Gir}(\mbc)} \\
	\cf & \ce
	\arrow["{f^*{\mbc}}", from=1-1, to=1-2]
	\arrow["{C_{p'}}"', from=1-1, to=2-1]
	\arrow["\lrcorner"{anchor=center, pos=0.125}, draw=none, from=1-1, to=2-2]
	\arrow["{C_p}", from=1-2, to=2-2]
	\arrow["f"', from=2-1, to=2-2]
\end{tikzcd}\]
\end{prop}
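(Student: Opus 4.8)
The plan is to establish the bipullback property through its universal characterization, synthesizing the equivalences assembled in the previous subsections; throughout, I use the standing hypothesis that $\mbc$ be $J^{\textup{can}}_{\ce}$-small, which guarantees that all the relative sites invoked below are genuine (small-generated) sites. Recall that the displayed square is a bipullback of toposes precisely when, for every relative topos $g : \cg \to \cf$, the comparison functor
\[
\mathbf{Top}/\cf([g],[C_{p'}]) \longrightarrow \mathbf{Top}/\ce([fg],[C_p])
\]
is an equivalence, pseudonaturally in $g$. Here I use that a cone over the cospan $\cf \xrightarrow{f} \ce \xleftarrow{C_p} \mathbf{Gir}(\mbc)$ with apex $\cg$ and first leg $g$ is exactly an object of $\mathbf{Top}/\ce([fg],[C_p])$, while a lifting through $C_{p'}$ is exactly an object of $\mathbf{Top}/\cf([g],[C_{p'}])$; the comparison functor sends a lifting $[g] \to [C_{p'}]$ to its postcomposition with the structural morphism $f^{\mbc}$ and the edges of the square.

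First I would translate both hom-categories into the indexed world. By the equivalence between relative geometric morphisms and indexed geometric morphisms (subsection 6.1 of \cite{locfib}) one has
\[
\mathbf{Top}/\cf([g],[C_{p'}]) \simeq \mathbf{IndGeom}_{\cf}(S_g,S_{C_{p'}}),
\qquad
\mathbf{Top}/\ce([fg],[C_p]) \simeq \mathbf{IndGeom}_{\ce}(S_{fg},S_{C_p}).
\]
Next, relative Diaconescu's theorem (Theorem 3.15 of \cite{bartolicaramello}) identifies each of these with a category of morphisms of sites and fibrations between the relevant trivial relative sites and canonical relative sites:
\[
\mathbf{IndGeom}_{\cf}(S_g,S_{C_{p'}}) \simeq \mathbf{FibSites}_{\cf}\big((\cg(f^*\mbc),Gir_{f^*\mbc}),((\cg/g^*),J_g)\big),
\]
\[
\mathbf{IndGeom}_{\ce}(S_{fg},S_{C_p}) \simeq \mathbf{FibSites}_{\ce}\big((\cg(\mbc),Gir_{\mbc}),((\cg/g^*f^*),J_{fg})\big).
\]
Finally, the penultimate proposition of this subsection — obtained by combining Propositions \ref{firstdirection} and \ref{seconddirection} — shows that transposition along $f^* \dashv f_*$ restricts to an equivalence between precisely these two $\mathbf{FibSites}$ categories. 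Composing the three equivalences yields the required equivalence of hom-categories, pseudonatural in $g$ since each ingredient is.

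The hard part will be compatibility rather than the bare existence of an abstract equivalence: one must check that the composite of the three equivalences coincides, up to canonical isomorphism, with the geometrically-defined comparison functor induced by the square, namely postcomposition with $f^{\mbc} = \Sh(F^{\mbc})$. This is what promotes an equivalence of hom-categories to a genuine bipullback. To settle it I would trace the cone attached to a lifting $[g] \to [C_{p'}]$ through the constructions. Proposition \ref{F^Cmorphsites} guarantees that $F^{\mbc}$ is the morphism of sites presenting $f^{\mbc}$ and that the square commutes at the topos level, so that site-level postcomposition with $F^{\mbc}$ realizes topos-level postcomposition with $f^{\mbc}$. The explicit descriptions of the transposition given in subsections \ref{subsec4} and \ref{subsec5} — via the dense projection $q^{f^*}_{S_g}$, the unit $\varsigma^{f}_{\mbc}$ of $f^* \dashv f_*$, and the Morita equivalence between the inverse image of the canonical stack and the canonical stack of the inverse image — then exhibit this site-level composition as exactly the transposition operation. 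With these identifications in place, the pseudonaturality in $g$ is inherited from that of the Diaconescu-type equivalences and of the transposition, and the remaining verifications amount to the routine pasting of the two halves of subsections \ref{subsec4} and \ref{subsec5}.
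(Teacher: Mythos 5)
Your proposal is correct and follows essentially the same route as the paper's own proof: the chain $\mathbf{Top}/\cf([g],[C_{p'}]) \simeq \mathbf{IndGeom}_{\cf}(S_g,S_{C_{p'}}) \simeq \mathbf{FibSites}_{\cf}(\cdots) \simeq \mathbf{FibSites}_{\ce}(\cdots) \simeq \mathbf{IndGeom}_{\ce}(S_{fg},S_{C_{p}}) \simeq \mathbf{Top}/\ce([fg],[C_p])$, with the middle step given by the transposition equivalence obtained from Propositions \ref{firstdirection} and \ref{seconddirection}. Your extra insistence that this composite equivalence be identified with postcomposition by $f^{\mbc}$ is left implicit in the paper's final proof but is exactly what subsection \ref{subsec4} establishes (via the identification $q^{f^*}_{S_g}\circ H \simeq \overline{H}\circ F^{\mbc}$, Proposition \ref{F^Cmorphsites} and Proposition \ref{denseprojec}), so it is a welcome precision rather than a divergence.
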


\begin{proof}
Indeed, the previous proposition combined with the relative Diaconescu's theorem yields an equivalence:

$${\mathbf{IndGeom}_{{\cal E}}(S_{fg},S_{C_{p}})} \simeq {{\mathbf{IndGeom}_{{\cal F}}(S_g,S_{C_{p'}})}} $$

Now recall the equivalence 

$$\mathbf{IndGeom}_{{\cal E}}(S_f,S_{f'}) \simeq \mathbf{Top}/{{\cal E}}([f],[f'])$$

for any relative toposes $f: {{\cal F}} \to {{\cal E}}$ and $f' : {{\cal F}}' \to {{\cal E}}$ (see subsection 6.1 \cite{locfib}). So, for an arbitrary relative topos $g :{\cg} \to {{\cal F}}$, the equivalence $\mathbf{IndGeom}_{{\cal F}}(S_g,S_{C_{p'}}) \simeq \mathbf{IndGeom}_{{\cal E}}(S_{fg},S_{C_{p}})$ induces the equivalence

$$\mathbf{Top}/{{\cal E}}([fg],[C_{p}]) \simeq {{\mathbf{Top}/{{\cal F}}([g],[C_{p'}])}}$$ 
which is the desired universal property.
\end{proof}

\vspace{1cm}

\bibliographystyle{alpha}
\bibliography{LBib}

\textsc{Léo Bartoli} 

\vspace{0.2cm}
{\small \textsc{Department of Mathematics, ETH Zurich, Rämistrasse 101
8092 Zurich, Switzerland.}\\
\emph{E-mail address:} \texttt{lbartoli@ethz.ch}

\vspace{0.2cm}

{\small \textsc{Istituto Grothendieck ETS, Corso Statuto 24, 12084 Mondovì, Italy.}\\
	\emph{E-mail address:} \texttt{leo.bartoli@ctta.igrothendieck.org}}

\vspace{0.6cm}

\textsc{Olivia Caramello} 

\vspace{0.2cm}
{\small \textsc{Dipartimento di Scienza e Alta Tecnologia, Universit\`a degli Studi dell'Insubria, via Valleggio 11, 22100 Como, Italy.}\\
	\emph{E-mail address:} \texttt{olivia.caramello@uninsubria.it}}

\vspace{0.2cm}

{\small \textsc{Istituto Grothendieck ETS, Corso Statuto 24, 12084 Mondovì, Italy.}\\
	\emph{E-mail address:} \texttt{olivia.caramello@igrothendieck.org}}

\end{document}